\newtheorem{te}{Theorem}[section]
\newtheorem{prop}[te]{Proposition}
\newtheorem{co}[te]{Corollary}
\newtheorem{qu}[te]{Question}
\newtheorem{lemme}[te]{Lemma}
\theoremstyle{definition}
\newtheorem{fact}[te]{Fact}
\newtheorem{notation}[te]{Notation}
\newtheorem{de}[te]{Definition}
\newtheorem{ex}[te]{Example}
\theoremstyle{remark}
\newtheorem{rk}[te]{Remark}
\newlength{\plarg}
\renewcommand{\leq}{\leqslant}
\renewcommand{\geq}{\geqslant}
\title{Homogeneity in Coxeter groups and split crystallographic groups}
\date{\today}
\author{Simon Andr{\' e}}
\author{Gianluca Paolini}
\begin{document}

\begin{minipage}{\linewidth}

\vspace{0mm}

\begin{abstract}
We prove that affine Coxeter groups, even hyperbolic Coxeter groups and one-ended hyperbolic Coxeter groups are homogeneous in the sense of model theory. More generally, we prove that many (Gromov) hyperbolic groups generated by torsion elements are homogeneous. In contrast, we construct split crystallographic groups that are not homogeneous, and hyperbolic (in fact, virtually free) Coxeter groups that are not homogeneous (or, to be more precise, not $\mathrm{EAE}$-homogeneous). We also prove that, on the other hand, irreducible split crystallographic groups and torsion-generated hyperbolic groups are almost homogeneous. We also prove that finitely generated abelian-by-finite groups are homogeneous if and only if they are profinitely homogeneous, i.e., any tuple of words from the group is profinitely rigid. We use this to deduce that affine Coxeter groups are profinitely homogeneous, a result of independent interest in the profinite context.
\end{abstract}

\thanks{Research of Gianluca Paolini was  supported by project PRIN 2022 ``Models, sets and classification'', prot. 2022TECZJA, and by INdAM Project 2024 (Consolidator grant) ``Groups, Crystals and Classifications''.}
	
\end{minipage}

\maketitle

\tableofcontents

\section{Introduction}

\thispagestyle{empty}

\smallskip

In recent years, the notion of homogeneity has been central in the model theory of finitely generated groups. Recall that the \emph{type} of a finite tuple $u$ of elements of a group $G$, denoted by $\mathrm{tp}(u)$, is the set of first-order formulas $\phi(x)$ (where $x$ denotes a tuple of variables of the same arity as $u$) such that $\phi(u)$ is satisfied by $G$. Obviously, two finite tuples $u,v$ that are in the same $\mathrm{Aut}(G)$-orbit have the same type; the group $G$ is said to be $\aleph_0$-\emph{homogeneous}, or simply \emph{homogeneous}, if the converse holds: for any integer $n\geq 1$ and tuples $u,v\in G^n$ having the same type, there is an element $\sigma\in \mathrm{Aut}(G)$ such that $\sigma(u)=v$. One of the major results in this area is the homogeneity of finitely generated free groups, proved by Perin and Sklinos \cite{PS12} and independently by Ould Houcine \cite{OH11}, relying on techniques introduced by Sela in his work on the Tarski problem for free groups (see \cite{Sel06} and other papers in the series), but the question of homogeneity remains open for many interesting classes of finitely generated groups, in particular in the presence of torsion.

\smallskip

The main motivation behind the present work is to continue our development of the model theory of (finitely generated) Coxeter groups (see \cite{MUHLHERR2022297,PS23,AP24}). In this paper we study the problem of homogeneity for this class of groups. Interestingly, our investigations on this topic lead to results and questions of independent interest, notably on first-order rigidity, profinite rigidity and {\em profinite homogeneity}. This last notion has already appeared elsewhere under the name {\em profinite rigidity of words}; we arrived at it independently. We will elaborate further on the connection with profinite rigidity of words below.

\smallskip

Recall that a \emph{Coxeter group} is a group that admits a presentation of the form \[\langle s_1,\ldots,s_n \ \vert \ (s_is_j)^{m_{ij}}=1, \ \text{for all} \ i,j \rangle,\] where $m_{ii}=1$ and $m_{ij} = m_{ji} \in \mathbb{N}^{\ast}\cup \lbrace \infty\rbrace$ for every $1\leq i,j\leq n$ (the relation $(s_is_j)^{\infty}=1$ means that $s_is_j$ has infinite order). Such a presentation is called a \emph{Coxeter presentation}. A Coxeter group is said to be \emph{even} (respectively \emph{right-angled}) if it admits a Coxeter presentation such that $m_{ij}$ is even or infinite for all $i\neq j$ (respectively $m_{ij}$ belongs to $\lbrace 2,\infty\rbrace$ for all $i\neq j$). A Coxeter group is said to be \emph{spherical} if it is finite (in which case it is obviously homogeneous) and \emph{affine} if it is virtually abelian and infinite. Among the infinite and non-affine Coxeter groups, a class of particular interest is that of Coxeter groups that are hyperbolic in the sense of Gromov (at the intersection of the irreducible affine Coxeter groups and the hyperbolic Coxeter groups there is only the infinite dihedral group). 


\smallskip


In this paper, we give a complete solution to the problem of homogeneity of affine Coxeter groups, and hyperbolic even or one-ended Coxeter groups, and in both cases these results lead to more general results in two important classes of finitely generated groups: split crystallographic groups and torsion-generated hyperbolic groups (that is, hyperbolic groups generated by elements of finite order). Recall that a finitely generated group is \emph{one-ended} if it does not split non-trivially as an HNN extension or as an amalgamated product over a finite group. Our main result on Coxeter groups is the following (see Theorem~\ref{AE} for a more general result on homogeneity in hyperbolic groups generated by torsion).

\begin{de}\label{AEdef}
An \emph{$\mathrm{AE}$-formula} or \emph{$\forall\exists$-formula} (in the language of groups) is a first-order formula of the form $\phi(x):\forall y  \ \exists z \ \theta(x,y,z)$ where $\theta(x,y,z)$ is a quantifier-free formula and $x,y,z$ denote finite tuples of variables. The \emph{$\mathrm{AE}$-type} of a finite tuple $u$ of elements of a group $G$ is the set of $\mathrm{AE}$-formulas $\phi(x)$ (where $x$ denotes a tuple of variables of the same arity as $u$) such that $\phi(u)$ is satisfied by $G$. The group $G$ is said to be \emph{$\mathrm{AE}$-homogeneous} if, for any integer $n\geq 1$ and any tuples $u,v\in G^n$ with the same $\mathrm{AE}$-type, there is an automorphism $\sigma$ of $G$ such that $\sigma(u)=v$. We define in a similar way \emph{$\mathrm{EAE}$-formulas}, the \emph{$\mathrm{EAE}$-type} of a finite tuple of elements, and \emph{$\mathrm{EAE}$-homogeneity}.
\end{de}

\begin{te}\label{theorem1}
Affine Coxeter groups, hyperbolic even Coxeter groups and hyperbolic one-ended Coxeter groups are homogeneous (in fact, $\mathrm{AE}$-homogeneous).
\end{te}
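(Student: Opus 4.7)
My plan is to split the theorem into two essentially disjoint cases: the affine case, which I would handle by direct algebraic methods, and the two hyperbolic cases, which I would handle via a common Sela-style framework (such as the Theorem \ref{AE} on torsion-generated hyperbolic groups alluded to in the introduction).

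For an affine Coxeter group $W$, I would use its structure as a split crystallographic group $W_0 \ltimes L$, with $W_0$ a finite reflection group and $L \cong \mathbb{Z}^n$ a $W_0$-invariant lattice. The type of a tuple $u \in W^n$ determines the order of each entry, its image in the finite quotient $W_0$, and essentially all of its centralizer data. Given $u,v$ with $\mathrm{tp}(u) = \mathrm{tp}(v)$, I would first align their projections in $W_0$ by an automorphism of $W_0$ (induced by a symmetry of the Coxeter diagram), then exploit the well-known homogeneity of finitely generated abelian groups, so that $\mathrm{Aut}(L)$ acts transitively on same-type tuples in $L$, to align their translational parts. The delicate step is to combine these two matchings into a single automorphism of $W$, which should follow from a careful analysis of $\mathrm{Aut}(W)$ using the classification of irreducible affine Coxeter groups.

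For hyperbolic even or hyperbolic one-ended Coxeter groups, I would argue as follows. Given $u,v \in G^n$ with $\mathrm{tp}(u)=\mathrm{tp}(v)$, I would first extract a homomorphism $\phi \colon G \to G$ with $\phi(u) = v$, using the fact that in a torsion-generated hyperbolic group, ``being an automorphism'' is captured by an $\mathrm{AE}$-condition. Then, using the JSJ decomposition of $G$ over virtually cyclic subgroups (the standard hyperbolic JSJ in the one-ended case; in the even case one must additionally control splittings over finite subgroups using the structure theory of Coxeter groups), I would apply a Sela-style shortening argument: modular automorphisms (Dehn twists along edges, surface-type automorphisms of surface vertex groups, and automorphisms of abelian-type vertex groups) should shorten $\phi$ until it becomes an automorphism $\sigma$ with $\sigma(u)=v$.

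The main obstacle will be the shortening step in the presence of torsion. Unlike in the free case of Perin--Sklinos and Ould Houcine, rigid vertex groups in the JSJ may be non-trivial and contain finite subgroups that any automorphism must preserve, and virtually cyclic edge groups may themselves carry torsion, restricting the available Dehn-twist subgroup. I expect the even-ness or one-endedness hypothesis to be precisely what is needed to make the rigid part tame enough---its contribution to $\mathrm{Aut}(G)$-orbits being determined, up to finite ambiguity, by the first-order type---for the shortening to converge to an actual automorphism matching $u$ to $v$. The strengthening from homogeneity to $\mathrm{AE}$-homogeneity should then follow by checking that the witness automorphism can be read off from an $\mathrm{AE}$-formula in the parameters $u,v$, which is standard once the shortening machinery is set up correctly.
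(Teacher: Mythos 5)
Your proposal has genuine gaps in both halves, and in each case the missing ingredient is precisely the one the paper's own counterexamples show cannot be dispensed with.

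On the affine side, the strategy ``align the images in $W_0$ by an automorphism of $W_0$, align the translational parts using homogeneity of $\mathbb{Z}^n$, then combine'' does not work as stated. An automorphism of $W=L\rtimes W_0$ restricts on $L$ to an element of the normalizer of the holonomy image in $\mathrm{GL}(L)$, not to an arbitrary element of $\mathrm{Aut}(L)\cong\mathrm{GL}_n(\mathbb{Z})$, so transitivity of $\mathrm{Aut}(L)$ on same-type tuples of $L$ gives you nothing usable; and the ``delicate step'' of combining the two alignments is exactly where homogeneity can fail: Theorem \ref{counterexample_theorem2} and Section \ref{another_count_sec} produce split crystallographic groups in which same-type tuples from the point group (respectively from the translation subgroup) are \emph{not} automorphic. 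The paper's actual argument is of a different nature: Lemma \ref{key_lemma_1} extracts from the type equality an endomorphism $\varphi$ with $\varphi(u)=u'$ that preserves the universally definable translation subgroup and sends non-conjugate finite subgroups to non-conjugate finite subgroups; one iterates $\varphi'\circ\varphi$, arranges a power to be the identity on $G_0$ after an inner correction, and then uses \emph{irreducibility of the holonomy representation} to force that power to be the identity on $L$ when $\langle u\rangle$ is infinite, whence $\varphi'$ is surjective and hence an automorphism by Hopficity; the finite-$\langle u\rangle$ case and the reducible (general affine) case require separate arguments (Lemma \ref{non_affine_nor_333}, the special treatment of $\tilde{A}_n$, and the product decomposition). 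The irreducibility input, which is the engine of the whole affine proof, is absent from your sketch.

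On the hyperbolic side your outline is close in spirit to Proposition \ref{main_theorem1} and Corollary \ref{main_theorem1.1} (endomorphisms extracted from $\mathrm{AE}$-formulas, relative Stallings and JSJ splittings, shortening via relative modular automorphisms, preretractions). But the point you attribute vaguely to ``even-ness or one-endedness making the rigid part tame enough'' is in fact a precise hypothesis on the \emph{finite edge groups} $C$ of a reduced Stallings splitting: the image of $N_G(C)\to\mathrm{Aut}(C)$ must equal $\mathrm{Inn}(C)$ (Theorem \ref{main_theorem}). This is what allows the vertex-by-vertex isomorphisms produced by the shortening argument to be glued into a single global automorphism. For one-ended groups the condition is vacuous (the Stallings splitting is a point); for even Coxeter groups it is verified via Bahls' retraction of $G$ onto each special finite subgroup. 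Without it the conclusion genuinely fails: Theorem \ref{EAE_intro} exhibits a virtually free (hence hyperbolic) Coxeter group that is not $\mathrm{EAE}$-homogeneous, so your expectation that the shortening ``should converge to an actual automorphism'' cannot be realized in general and must be replaced by isolating and verifying this normalizer condition.
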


For example, all the triangle groups are homogeneous. In fact, from some of the results of Section \ref{direct_products}, it follows that any direct product of finitely many such groups is homogeneous (see Corollary \ref{cor_triangle}). Recall that triangle groups correspond to regular tessellations of the sphere, the Euclidean plane or the hyperbolic plane, and so they are, respectively, spherical, affine or one-ended hyperbolic Coxeter groups. 

\smallskip

Theorem \ref{theorem1} may seem restrictive at first glance, but our next result shows that non-homogeneous groups exist in classes slightly larger than those considered in Theorem \ref{theorem1}. Recall that a \emph{crystallographic group} of dimension $n\geq 1$ is a discrete and cocompact subgroup of $\mathrm{Isom}(\mathbb{R}^n)$, the group of isometries of the Euclidean space $\mathbb{R}^n$. Equivalently, by the First Bieberbach Theorem, a crystallographic group is a finitely generated virtually abelian group without a non-trivial normal finite subgroup; in particular, such a group $G$ admits a splitting as a short exact sequence of the form $1\rightarrow T\rightarrow G\rightarrow G_0\rightarrow 1$, where $T$ is a normal subgroup isomorphic to $\mathbb{Z}^n$, called the \emph{translation subgroup} of $G$, and $G_0$ is a finite group (acting faithfully on $\mathbb{Z}^n$), and $G$ is said to be \emph{split} if this exact sequence is split. Recall that (direct products of finitely many) irreducible affine Coxeter groups are crystallographic groups. It is known that there exist non-homogeneous polycyclic-by-finite groups, but, to the best of our knowledge, no example of non-homogeneous finitely generated abelian-by-finite group is known (however, without the assumption of finite generation, there are known examples: in Section 6 of \cite{NS90}, the authors prove that there exist non-homogeneous groups that are elementarily equivalent to $\mathbb{Z}$). In contrast to Theorem \ref{theorem1}, we prove the following result, where a group is called $\mathrm{EAE}$-homogeneous if tuples satisfying the same existential-universal-existential first-order formulas are automorphic (see Definition~\ref{AEdef} for a precise definition). 

\begin{te}\label{theorem2}
There exist split crystallographic groups that are not homogeneous, and there exist virtually free Coxeter groups that are not $\mathrm{EAE}$-homogeneous.
\end{te}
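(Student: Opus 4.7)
The theorem contains two separate existence claims; I address them in parallel.

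For the split crystallographic part, the strategy is to exploit the rigidity of $\mathrm{Aut}(G)$ for a split crystallographic group $G = T \rtimes_\rho G_0$ with $T = \mathbb{Z}^n$. Since $T$ is characteristic in $G$ (for instance, as its Fitting subgroup), any $\phi \in \mathrm{Aut}(G)$ restricts to an element $\phi_T \in GL_n(\mathbb{Z})$ that lies in the normalizer $N := N_{GL_n(\mathbb{Z})}(\rho(G_0))$, and hence the $\mathrm{Aut}(G)$-orbit of any $t \in T$ is contained in $N \cdot t$ (up to conjugation by elements of $G_0$, which does not enlarge orbits). The plan is then to choose $G_0$ and $\rho$ so that $N$ has a proper orbit on a collection of primitive vectors that are nevertheless first-order indistinguishable in $G$. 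A natural recipe exploits Gassmann-type phenomena: pick $G_0$ with two non-conjugate subgroups $H_1, H_2$ that afford isomorphic permutation modules, and build $\rho$ so that vectors stabilized by $H_1$ and by $H_2$ have identical first-order data (stabilizer type, orbit-length data) but cannot be exchanged by any $M \in N$, since no automorphism of $G_0$ conjugates $H_1$ to $H_2$.

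For the virtually free Coxeter part, I would use the Bass--Serre description of a virtually free Coxeter group as the fundamental group of a finite graph of finite (spherical) Coxeter groups. The goal is to design a Coxeter diagram whose associated group $W$ contains two involutions $s, s'$ sitting in isomorphic vertex groups but occupying non-equivalent positions in the graph of groups. The rigidity of $\mathrm{Aut}(W)$ (which must respect the visual/JSJ decomposition up to Nielsen-like moves) then forbids any automorphism exchanging $s$ and $s'$. To ensure the two involutions share the same $\mathrm{EAE}$-type, the distinguishing feature should require strictly more quantifier alternation than $\exists\forall\exists$ to detect; a way to arrange this is to encode the distinction in a property of the form ``for every short certificate there is a counter-certificate for which some equation has a solution'', which requires at least a $\forall\exists\forall$-prefix.

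The main obstacle in both parts is the indistinguishability side, namely verifying that the constructed tuples share the same type (respectively, the same $\mathrm{EAE}$-type). For the crystallographic case, the natural tool is an Ehrenfeucht--Fra\"iss\'e / back-and-forth argument after interpreting $G$ in a module over $\mathbb{Z}[G_0]$, where quantifier elimination collapses the problem to module-theoretic data that does not separate $H_1$- and $H_2$-stabilized vectors. For the virtually free Coxeter case, one performs a similar back-and-forth game truncated at $\mathrm{EAE}$-complexity, using the local structure of the Bass--Serre tree around $s$ and $s'$ to extend each partial isomorphism across three quantifier alternations. The non-automorphism side is comparatively routine in both constructions, as it follows from structural rigidity results that are either classical (for crystallographic groups) or developed earlier in the paper (for virtually free Coxeter groups).
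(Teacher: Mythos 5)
There is a genuine gap: in both halves of your plan, the step you yourself identify as ``the main obstacle'' --- verifying that the constructed tuples have the same type (resp.\ the same $\mathrm{EAE}$-type) --- is exactly the step for which you offer no working mechanism, and the substitutes you sketch would not go through. For the crystallographic half, the paper does not work inside a single group $T\rtimes_\rho G_0$ with a Gassmann pair of stabilizers; it takes two \emph{non-isomorphic} split crystallographic groups $G_1,G_2$ with $\widehat{G}_1\simeq\widehat{G}_2$ (these exist by number-theoretic constructions via class groups of cyclotomic fields) and shows $G_1\times G_2$ is not homogeneous. The indistinguishability is obtained not by an Ehrenfeucht--Fra\"iss\'e game but by Oger's theorem that a finitely generated abelian-by-finite group is an \emph{elementary} submodel of its profinite completion: an explicit automorphism of $\widehat{G_1\times G_2}$ swapping the two factors forces equality of types in $G_1\times G_2$. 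Without Oger's theorem (or a genuine quantifier-elimination result), your proposed ``interpret $G$ in a module over $\mathbb{Z}[G_0]$ and do back-and-forth'' has no content: first-order indistinguishability in the group is precisely what must be proved, and isomorphism of permutation modules does not by itself yield it. Note also that your plan to find non-automorphic but indistinguishable \emph{primitive vectors of $T$} in a well-chosen single group collides with the paper's own Theorem~\ref{type_det1}: in an irreducible split crystallographic group every tuple generating an infinite subgroup is type-determined, so such an example must be reducible, and the paper's reducible example (Section~\ref{another_count_sec}) again routes the indistinguishability through the profinite completion.

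For the virtually free Coxeter half the gap is the same but more severe. The paper's indistinguishability argument rests on a specific and deep external input, Theorem~\ref{EAE} (from \cite{And19a}): under precise hypotheses on the normalizer of a finite subgroup $C$, the HNN extension $G'=\langle G,t\mid [t,C]=1\rangle$ is an $\mathrm{EAE}$-extension of $G$. One then exhibits an explicit amalgam $G=A\ast_C B$ of finite Coxeter groups (with $A\simeq S_6\times S_6$, $B\simeq S_4\times S_3\times S_5$, $C\simeq(\mathbb{Z}/2\mathbb{Z})^4$), verifies the hypotheses of that theorem, constructs an automorphism of $G'$ swapping two designated involution-products $x,y$, and separately shows no automorphism of $G$ swaps them. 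Your proposal to establish equality of $\mathrm{EAE}$-types by ``a back-and-forth game truncated at $\mathrm{EAE}$-complexity using the local structure of the Bass--Serre tree'' is not a viable route: there is no known elementary game-theoretic control of $\mathrm{AE}$- or $\mathrm{EAE}$-types in (virtually free) hyperbolic groups --- this is precisely why the Sela-type machinery and the $\mathrm{EAE}$-extension theorem are needed --- and the heuristic about ``certificates and counter-certificates'' does not identify any concrete group or any concrete pair of elements. The non-automorphism side, which you call routine, is indeed the easier part in both constructions; but as it stands the proposal is missing the two key ideas ($G\preccurlyeq\widehat{G}$ and the $\mathrm{EAE}$-extension theorem) on which the entire proof depends.
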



More precisely, on the crystallographic side, we will prove the following result (on the hyperbolic side, we refer the reader to Theorem \ref{EAE_intro} and Subsection \ref{example} for an explicit example of a virtually free Coxeter group that is not $\mathrm{EAE}$-homogeneous).

\begin{te}\label{counterexample_theorem}Let $G_1$ and $G_2$ be non-isomorphic split crystallographic groups such that $\widehat{G}_1\simeq \widehat{G}_2$ (meaning that $G_1$ and $G_2$ have the same set of finite quotients), then $G_1 \times G_2$ is not homogeneous.\end{te}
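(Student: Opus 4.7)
My plan is to show that $G := G_1 \times G_2$ fails \emph{profinite} homogeneity, then invoke the equivalence between profinite homogeneity and homogeneity for finitely generated abelian-by-finite groups (established earlier in the paper) to conclude that $G$ is not homogeneous. Concretely, I will exhibit elements $u, v \in G$ and an automorphism $\hat{\sigma} \in \mathrm{Aut}(\widehat{G})$ of the profinite completion such that $\hat{\sigma}(u) = v$ in $\widehat{G}$, while no automorphism of $G$ sends $u$ to $v$.

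By the First Bieberbach Theorem, write $G_i = T_i \rtimes H_i$, with $T_i \simeq \mathbb{Z}^{n_i}$ the translation subgroup (characteristic in $G_i$) and $H_i$ finite. The hypothesis $\widehat{G_1} \simeq \widehat{G_2}$ forces $n_1 = n_2 =: n$ and $H_1 \simeq H_2 =: H$, and presents $G_1, G_2$ via two integral representations $\rho_1, \rho_2 : H \to \mathrm{GL}_n(\mathbb{Z})$ that are conjugate in $\mathrm{GL}_n(\widehat{\mathbb{Z}})$ via some isomorphism $\Phi : \widehat{G_1} \overset{\sim}{\to} \widehat{G_2}$ restricting to $\gamma \in \mathrm{GL}_n(\widehat{\mathbb{Z}})$ on translation subgroups, but, since $G_1 \not\simeq G_2$, not conjugate in $\mathrm{GL}_n(\mathbb{Z})$. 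Because $\rho_1 \otimes \mathbb{Q} \simeq \rho_2 \otimes \mathbb{Q}$ (same character), there is a $\mathbb{Q}$-linear $H$-equivariant isomorphism $f : T_1 \otimes \mathbb{Q} \overset{\sim}{\to} T_2 \otimes \mathbb{Q}$ with $f(d T_1) \subseteq T_2$ for some positive integer $d$. Pick $t_1 \in d T_1$ so that $t_2 := f(t_1) \in T_2 \setminus d T_2$, and set $u := (t_1, 1_{G_2})$ and $v := (1_{G_1}, t_2)$ in $G$.

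For the profinite conjugacy, the twisted swap $\hat{\sigma}(x_1, x_2) := (\Phi^{-1}(x_2), \Phi(x_1))$ is an automorphism of $\widehat{G} \simeq \widehat{G_1} \times \widehat{G_2}$, and direct computation gives $\hat{\sigma}(u) = (\Phi^{-1}(1_{G_2}), \Phi(t_1)) = (1_{G_1}, t_2) = v$; both $u$ and $v$ lie in $G$. For the $\mathrm{Aut}(G)$-obstruction, suppose $\sigma \in \mathrm{Aut}(G)$ with $\sigma(u) = v$. Then $\sigma$ preserves the characteristic subgroup $T := T_1 \oplus T_2$ and restricts to a $\mathbb{Z}$-linear automorphism of $T$ whose off-diagonal component $g : T_1 \to T_2$ satisfies $g(t_1) = t_2$. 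Writing $t_1 = d t_1'$ with $t_1' \in T_1$, we get $t_2 = g(t_1) = d \cdot g(t_1') \in d T_2$, contradicting $t_2 \notin d T_2$.

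The main obstacle is converting the conclusion ``$G$ admits $u, v$ with $\hat\sigma(u) = v$ in $\widehat{G}$ but no $\sigma \in \mathrm{Aut}(G)$ with $\sigma(u) = v$'' into the precise form of failure of profinite homogeneity required by the paper's equivalence theorem, which may be phrased via orbits in each finite quotient. This translation relies on standard compactness and inverse-limit arguments: the existence of $\hat{\sigma}$ implies that the images of $u$ and $v$ lie in the same $\mathrm{Aut}(Q)$-orbit for every finite quotient $Q$ of $G$ (and conversely, compatible local automorphisms assemble to an element of $\mathrm{Aut}(\widehat{G})$); once this is established, the paper's equivalence of profinite homogeneity and homogeneity for finitely generated abelian-by-finite groups immediately gives that $G$ is not homogeneous.
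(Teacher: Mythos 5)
Your overall strategy (exhibit $u,v\in G$ that are automorphic in $\widehat{G}$ but not in $G$, then use $G\preccurlyeq\widehat{G}$ / the profinite-homogeneity equivalence) is exactly the paper's, and that reduction is sound. The gap is in the construction of the witnessing pair. You set $u=(t_1,1)$, $v=(1,t_2)$ with $t_2=f(t_1)$ for a $\mathbb{Q}$-linear equivariant $f$, but the profinite automorphism $\hat\sigma$ sends $u$ to $(1,\Phi(t_1))=(1,\gamma(t_1))$ where $\gamma\in\mathrm{GL}_n(\widehat{\mathbb{Z}})$ is the restriction of $\Phi$ to the translation subgroups --- and there is no reason that $\gamma(t_1)$ equals $f(t_1)$, nor even that $\gamma(t_1)$ lies in $T_2$ at all (a $\widehat{\mathbb{Z}}$-linear map need not preserve the integral lattice). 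Worse, your own obstruction argument applies verbatim on the profinite side and shows that \emph{no} automorphism of $\widehat{G}$ can send $u$ to $v$: any such automorphism preserves the characteristic subgroup $\widehat{T}_1\oplus\widehat{T}_2$ and acts $\widehat{\mathbb{Z}}$-linearly on it, hence sends $d$-divisible elements to $d$-divisible elements; since $d\widehat{T}_2\cap T_2=dT_2$, the hypotheses $t_1\in dT_1$ and $t_2\notin dT_2$ rule out a profinite automorphism just as they rule out an integral one. A divisibility invariant is detected by finite quotients, so it can never separate $\mathrm{Aut}(G)$-orbits from $\mathrm{Aut}(\widehat{G})$-orbits; the non-conjugacy of $\rho_1$ and $\rho_2$ over $\mathbb{Z}$ versus their conjugacy over $\widehat{\mathbb{Z}}$ is a genuinely global (class-group) phenomenon, not a local one.

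The paper avoids this trap by taking the tuple inside the finite point group rather than the translation lattice: after normalizing the isomorphism $\widehat{T}\rtimes_\alpha K_1\cong\widehat{T}\rtimes_\beta K_2$ so that $f(K_1)=K_2$ (Lemma \ref{WLOG}), the swap automorphism of $\widehat{G}$ carries the subgroup $K_1\subseteq G$ exactly onto $K_2\subseteq G$, so both tuples genuinely lie in $G$; the contradiction on the $G$-side then comes from showing (via irreducibility of $\alpha$ and $\beta$) that an automorphism of $G$ realizing the swap would have to exchange $T_A$ and $T_B$ and hence induce an isomorphism $T\rtimes_\alpha K\cong T\rtimes_\beta K$. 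If you want to salvage a translation-subgroup witness, you must first enlarge the groups so that the profinite swap fixes specific integral vectors exactly, which is what the paper's second counterexample (Subsection \ref{another_count_sec}) does; as written, your pair $(u,v)$ witnesses nothing.
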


Notice that it is known that for every prime number $p \geq 23$ there exist split crystallographic groups $G_1,G_2$ of the form $\mathbb{Z}^{p-1} \rtimes\mathbb{Z}/p\mathbb{Z}$ such that $G_1\not\simeq G_2$ but $G_1,G_2$ have the same set of finite quotients (see \cite{bri,FNP}). 

\smallskip

However, we prove the following result, where a group $G$ is said to be \emph{uniformly almost homogeneous} if there exists an integer $n\geq 1$ (which only depends on $G$) such that for any $k\geq 1$ and any $u\in G^k$, the set of $k$-tuples of elements of $G$ having the same type as $u$ is the union of at most $n$ orbits under the action of $\mathrm{Aut}(G)$. Note that the group $G$ is homogeneous if and only if one can take $n=1$ in this definition. This notion was introduced by the first named author in \cite{And18b}, where it was proved that finitely generated virtually free groups are uniformly almost homogeneous. Recall that a crystallographic group $1\rightarrow T\rightarrow G\rightarrow G_0\rightarrow 1$ is called \emph{irreducible} if the natural morphism $\rho: G_0\rightarrow \mathrm{GL}_n(\mathbb{Z})$ (see Subsection \ref{prelim_crystallo} for details), viewed as a representation $G_0\rightarrow\mathrm{GL}_n(\mathbb{Q})$, is irreducible (that is, the only linear subspaces of $\mathbb{Q}^n$ that are stable under \mbox{the action of $\rho(G_0)$ are $\mathbb{Q}^n$ and $\lbrace 0\rbrace$).}

\begin{te}\label{almost_h}
Irreducible split crystallographic groups and torsion-generated hyperbolic groups are uniformly almost homogeneous. In particular, hyperbolic Coxeter groups are uniformly almost homogeneous.\end{te}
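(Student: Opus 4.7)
The plan is to treat the two classes separately, leveraging the homogeneity results already established in the paper.

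For torsion-generated hyperbolic groups $G$, I would use the Stallings--Dunwoody decomposition of $G$ as the fundamental group of a finite graph of groups $\Gamma$ with finite edge groups and vertex groups that are either finite or one-ended. Each one-ended vertex group is itself torsion-generated and hyperbolic, hence AE-homogeneous by the general form of Theorem \ref{theorem1} (namely Theorem \ref{AE}). I would then follow the strategy of \cite{And18b} used for virtually free groups: given two tuples $u,v\in G^k$ with $\mathrm{tp}(u)=\mathrm{tp}(v)$, put them in Bass--Serre normal form with respect to $\Gamma$ and show that the type of $u$ controls the combinatorial shape of this normal form up to finitely many discrete choices, as well as the types of the resulting vertex-group pieces. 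The AE-homogeneity of the one-ended vertex groups combined with the uniform almost homogeneity of the virtually free ``skeleton'' from \cite{And18b} then yields an automorphism of $G$ sending $u$ to $v$, up to a number of orbits bounded by a constant depending only on $G$, independently of $k$. The ``in particular'' statement on hyperbolic Coxeter groups is then immediate, since any Coxeter group is generated by involutions.

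For irreducible split crystallographic groups $G=\mathbb{Z}^n\rtimes_\rho G_0$, I would exploit the rigidity coming from irreducibility. The translation subgroup $T=\mathbb{Z}^n$ is characteristic, so every $\sigma\in\mathrm{Aut}(G)$ restricts to some $\sigma_T\in N_{\mathrm{GL}_n(\mathbb{Z})}(\rho(G_0))$ and induces $\sigma_0\in\mathrm{Aut}(G_0)$. Irreducibility of $\rho$ together with Schur's lemma implies that $\mathrm{End}_{\mathbb{Q}[G_0]}(\mathbb{Q}^n)$ is a division algebra over $\mathbb{Q}$, which forces $N_{\mathrm{GL}_n(\mathbb{Q})}(\rho(G_0))$ to be an arithmetic group of bounded complexity; in particular, the product-type obstruction exploited in Theorem \ref{counterexample_theorem} cannot arise, as $G$ does not decompose as such a product. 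I would then show that the type of a tuple determines its $G_0$-component up to the finite group $\mathrm{Aut}(G_0)$ and its translation part up to the $\rho$-compatible subgroup of $\mathrm{GL}_n(\mathbb{Z})$, which yields a uniform bound on the number of $\mathrm{Aut}(G)$-orbits per type.

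I expect the main obstacle to be in the hyperbolic case, specifically in ensuring that edge interactions in $\Gamma$ contribute only a bounded number of additional orbits, independently of the tuple length $k$. This requires showing that the AE-type of a tuple encodes enough combinatorial information about its Bass--Serre placement so that, once the one-ended factors are controlled by Theorem \ref{AE}, no further orbit-splitting occurs. On the crystallographic side the subtlety is of a different nature: one must rule out hidden profinite type-invariants of the sort that create non-homogeneity in Theorem \ref{counterexample_theorem}, and it is precisely the irreducibility of $\rho$ that prevents such invariants from arising.
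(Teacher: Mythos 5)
Your proposal has genuine gaps on both sides. On the hyperbolic side, the step ``each one-ended vertex group is itself torsion-generated, hence $\mathrm{AE}$-homogeneous by Theorem \ref{AE}'' is doubly problematic: a one-ended vertex group of a Stallings splitting of a torsion-generated group need not be torsion-generated (torsion-generation of $G$ is used in the paper only to constrain the QH vertex groups, via Lemma 5.9 of \cite{AP24}), and, more importantly, $\mathrm{AE}$-homogeneity of the vertex groups is of no direct use, because an automorphism of a vertex group of an amalgam over a \emph{finite} edge group does not extend to $G$ --- this is exactly the phenomenon behind the non-homogeneous example of Theorem \ref{EAE_intro}, so any argument reducing to homogeneity of the pieces plus the virtually free ``skeleton'' cannot work in general. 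What the paper actually does is produce, from equality of $\mathrm{AE}$-types and via first-order formulas encoding the shortening argument (Theorem \ref{shortening}), the relative co-Hopf property and preretractions, two \emph{global} endomorphisms $\varphi,\varphi'$ of $G$ with $\varphi(u)=u'$ and $\varphi'(u')=u$ that restrict to isomorphisms between the vertex groups of the relative Stallings splittings (Proposition \ref{main_theorem1}, Corollary \ref{main_theorem1.1}); the uniform orbit bound then comes from Proposition 8.15 of \cite{AP24}, which converts such a pair into an automorphism agreeing with $\varphi\circ(\varphi'\circ\varphi)^m$ on vertex groups up to conjugacy. Your plan black-boxes precisely this part (``the type controls the combinatorial shape of the normal form''), which is the entire content of the proof.

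On the crystallographic side, the paper's proof splits according to whether $\langle u\rangle$ is infinite or finite, and your sketch misses this dichotomy. When $\langle u\rangle$ is infinite, the paper proves the stronger statement that $u$ is type-determined (Theorem \ref{main_crystallo0}): one extracts from the type an endomorphism $\varphi$ with $\varphi(u)=u'$ preserving $H$ and the conjugacy classes of finite subgroups (Lemma \ref{key_lemma_1} together with Fact \ref{definable}), iterates it until it fixes $G_0$ pointwise, and then uses irreducibility concretely --- the $G_0$-orbit of a nontrivial fixed vector of $H$ spans $\mathbb{Q}^n$, forcing the iterate to be the identity on a finite-index subgroup of $H$ and hence on all of $H$ --- to conclude surjectivity and, by Hopficity, that $\varphi$ is an automorphism. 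Your appeal to Schur's lemma and to $N_{\mathrm{GL}_n(\mathbb{Z})}(\rho(G_0))$ does not substitute for this: ``determined up to the $\rho$-compatible subgroup of $\mathrm{GL}_n(\mathbb{Z})$'' bounds nothing, since that subgroup is infinite and need not be realized by automorphisms of $G$, and you never explain how to produce an automorphism of $G$ realizing the identification. When $\langle u\rangle$ is finite the translation part is trivial and your argument says nothing; the uniform bound there comes from a pigeonhole count using the finitely many conjugacy classes of finite subgroups and $\lvert\mathrm{Aut}(U)\rvert$ (Corollary \ref{coro_almost_homogeneous}). Only your derivation of the ``in particular'' clause is correct as stated.
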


\smallskip

The proof of Theorem \ref{counterexample_theorem} relies on the following fundamental result of Oger (see \cite{Oger88}): every finitely generated abelian-by-finite group $G$ is an elementary submodel of its profinite completion $\widehat{G}$. At this point, it is appropriate to make a small digression on another result of Oger on abelian-by-finite groups proved in \cite{Oger88}. Recall that a finitely generated group $G$ is said to be \emph{first-order rigid} if every finitely generated group $G'$ that is elementarily equivalent to $G$ is isomorphic to $G$, and that a finitely generated residually finite group $G$ is said to be \emph{profinitely rigid} if every finitely generated residually finite group $G'$ such that $\widehat{G}\simeq \widehat{G}'$ is isomorphic to $G$. Oger proved that two finitely generated abelian-by-finite groups are elementarily equivalent if and only if they have the same finite quotients, and thus that a finitely generated abelian-by-finite group $G$ is first-order rigid if and only if it is profinitely rigid. The crucial connection between these two notions of rigidity led to a proof of profinite rigidity of affine Coxeter groups via model theory due to the second named author of this paper and Sklinos, thus solving an open problem of M\"oller and Varghese posed in \cite{MV}. This problem was also solved in \cite{CHMV} by means of purely group-theoretic arguments. In the same spirit as Oger's work \cite{Oger88}, we will prove that the question of homogeneity for finitely generated abelian-by-finite groups can actually be phrased in profinite terms. This led us to the rediscovery of a notion which, as pointed out to us by an anonymous referee, has attracted much attention in the profinite group theory literature, that is the notion of {\em profinite rigidity of words}, which we recall in the next definition. 

\begin{de}Let $G$ be a finitely generated residually finite group. We say that $u \in G^n$ is profinitely rigid if whenever $v \in G^n$ is such that there is an automorphism of $\widehat{G}$ that sends $u$ to $v$, then there exists an automorphism of $G$ that sends $u$ to $v$.
\end{de}

If we ask profinite rigidity for any tuple of any length, we then arrive at what we call {\em profinite homogeneity} (the term is justified in the second paragraph after the definition and also in Theorem \ref{prof_homogeneity}).

\begin{de}\label{def_prof_hom} Let $G$ be a finitely generated residually finite group. We say that $G$ is \emph{profinitely homogeneous} if every tuple of every length in $G$ is profinitely rigid. 
\end{de}

For non-abelian free groups, the question of profinite homogeneity was originally asked by Lubotzky, and phrased using the terminology of measures induced by free group elements on finite groups via word maps. This is often studied word-by-word (note that the notion of a profinitely rigid word can be compared with the notion of being type-determined in the model theoretic context, see below). The known cases are words $w\in F$ such that the one-relator group $F/\langle\langle w\rangle\rangle$ is a free product of a free group and a surface group (one of which may be trivial), and powers of such $w$ (see \cite{PuderParzanchevski2015} for primitive elements, \cite{HananyMeiriPuder2020} for commutators of primitives, powers of profinitely rigid words, \cite{Wilton2021} for surface words and \cite{AF25} for the rest of the cases). In the context of groups which are not free, Theorem~\ref{prof_homogeneity} below feeds into an active field of research and improves a result of \cite{Singh2024}, which proves the same in the more restrictive setting of free abelian groups.

A word of explanation concerning Definition \ref{def_prof_hom} is in order. Although we stated our notion of homogeneity at the beginning of the introduction using types, this definition could alternatively be given in the following terms: a structure $M$ is homogeneous if and only if for every integer $n$ and $u,v \in M^n$, if there exists an automorphism of a monster model $\mathfrak{M}$ of $M$ (a sufficiently saturated model of $\mathrm{Th}(M)$) that sends $u$ to $v$, then there exists an automorphism of $M$ that sends $u$ to $v$. Our notion of profinite homogeneity is thus naturally inspired by this model-theoretic fact and its introduction is justified by the following theorem, which can be considered as the analogue of Oger's result on first-order rigidity of abelian-by-finite groups in the context of model theoretic homogeneity. Recall that a tuple $u$ of elements of a group $G$ is said to be \emph{type-determined} if, for every tuple $v$ such that $\mathrm{tp}(v)=\mathrm{tp}(u)$, the tuples $v$ and $u$ are automorphic in $G$.
	
\begin{te}\label{prof_homogeneity} A finitely generated abelian-by-finite group $G$ is homogeneous if and only if it is profinitely homogeneous, that is, every tuple of words from $G$ is profinitely rigid. More precisely, for $u \in G^n$, the tuple $u$ is type-determined \mbox{if and only if it is profinitely rigid.}
\end{te}	

As a corollary of Theorems \ref{theorem1}, \ref{counterexample_theorem} and \ref{prof_homogeneity}, we immediately obtain the following results.

\begin{co}
Affine Coxeter groups are profinitely homogeneous, i.e., in an affine Coxeter group any tuple of words (of any length) is profinitely rigid. 
\end{co}

\begin{co}Let $G_1$ and $G_2$ be non-isomorphic split crystallographic groups such that $\widehat{G}_1\simeq \widehat{G}_2$ (meaning that $G_1$ and $G_2$ have the same set of finite quotients), then $G =G_1 \times G_2$ is not profinitely homogeneous. That is, there is $u \in G^{< \omega}$ which is not profinitely rigid.
\end{co}

To the best of our knowledge this is the first example of non-profinite rigidity of (tuples of) words in abelian-by-finite groups.

	

Let us now discuss the irreducibility assumption in Theorem \ref{almost_h}. Notice that the group $G_1\times G_2$ that appears in Theorem \ref{counterexample_theorem} is obviously not irreducible, leading to the following open question.

\begin{qu} Are irreducible (split) crystallographic groups homogeneous?
\end{qu}

We do not know the answer to this question (except for irreducible affine Coxeter groups, for which the answer is positive by Theorem \ref{theorem1}), but our next result shows that the failure of homogeneity in a putative non-homogeneous irreducible split crystallographic group cannot be caused by the translation subgroup. 

\begin{te}\label{type_det1} Let $G$ be a split irreducible  crystallographic group. Then tuples from the translation subgroup are type-determined. More generally, if $u=(u_1,\ldots,u_n)\in G^n$ is such that the subgroup of $G$ generated by $\lbrace u_1,\ldots,u_n\rbrace$ is infinite, then $u$ is type-determined; equivalently, the tuple $u=(u_1,\ldots,u_n)\in G^n$ is profinitely rigid (cf. \ref{prof_homogeneity}).
\end{te}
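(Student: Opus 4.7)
The plan is to combine the $0$-definability of the translation subgroup $T$ with Oger's embedding $G\preceq \widehat{G}$ and a strong-approximation argument that uses the irreducibility of the representation $\rho\colon G_0\to \mathrm{GL}_n(\mathbb{Z})$ in an essential way.

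First I would show that $T$ is $0$-definable in $G$: it coincides with the set of elements whose $G$-conjugacy class has cardinality at most $|G_0|$. For $t\in T$, the $G$-conjugacy class is the $G_0$-orbit, hence has size at most $|G_0|$. For $g=ts_0\notin T$ with $s_0\neq 1$, conjugation already by translations $t'\in T$ produces the coset $g\cdot (1-s_0)(T)$ of conjugates; since $1-s_0$ has positive-rank image on $T$ (its kernel in $\mathbb{Q}^n$ is the $s_0$-fixed subspace, which is proper by faithfulness of $\rho$), this coset is infinite. Thus $T$ is defined by a formula $\theta(x)$ bounding the size of the conjugacy class by $|G_0|$.

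For translation tuples, let $u,v\in T^k$ share their type in $G$. By Oger's theorem $G\preceq\widehat{G}=\widehat{T}\rtimes G_0$, so $u$ and $v$ also share their type in $\widehat{G}$. Modulo inner automorphisms, the action of $\mathrm{Aut}(G)$ on $T^k$ (resp.\ $\mathrm{Aut}(\widehat{G})$ on $\widehat{T}^k$) factors through $N:=N_{\mathrm{GL}_n(\mathbb{Z})}(\rho(G_0))$ (resp.\ $\widehat{N}:=N_{\mathrm{GL}_n(\widehat{\mathbb{Z}})}(\rho(G_0))$). Arguments parallel to those of Theorem~\ref{prof_homogeneity} produce an element of $\widehat{N}$ sending $u$ to $v$, and the irreducibility hypothesis then powers a descent from $\widehat{N}$ to $N$: by Schur's lemma the commutant $C:=C_{M_n(\mathbb{Q})}(\rho(G_0))$ is a division $\mathbb{Q}$-algebra, and the short exact sequence
\[ 1\to C^\times\to N_{\mathrm{GL}_n(\mathbb{Q})}(\rho(G_0))\to \mathrm{Aut}(\rho(G_0))\to 1 \]
combined with strong approximation for the unit group $C^\times$ converts a profinite solution into an integral one, yielding the desired automorphism of $G$.

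Finally, to handle a general tuple $u=(u_1,\dots,u_n)\in G^n$ with $\langle u\rangle$ infinite, I would decompose $u_i=t_ic_i$ with $t_i\in T$ and $c_i\in G_0$. The definability of $T$ combined with the finiteness of $G/T\cong G_0$ forces any same-type $v$ to project to a tuple $(c_i')_i$ in the same $\mathrm{Aut}(G_0)$-orbit as $(c_i)_i$. After composing with an automorphism of $G$ realizing this finite-quotient matching and a suitable inner-automorphism adjustment, the residual problem reduces to a translation-tuple comparison, since the hypothesis $\langle u\rangle$ infinite forces $\langle u\rangle\cap T$ to be infinite, yielding translation sub-tuples generating infinite subgroups of $T$; the translation case then applies and the two pieces splice together into the desired $\sigma\in\mathrm{Aut}(G)$. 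The principal obstacle is the translation case itself: the descent from $\widehat{N}$-equivalence to $N$-equivalence is where irreducibility enters decisively, via Schur plus strong approximation, and its necessity is reflected by the reducible counterexamples underlying Theorem~\ref{counterexample_theorem}.
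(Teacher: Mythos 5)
Your overall architecture differs fundamentally from the paper's, and the central step of your argument has a genuine gap. The paper (Theorem \ref{main_crystallo0}) never passes to the profinite completion: it uses the definable-presentation trick (Lemma \ref{key_lemma_1}, together with the universal definability of $H$ from Fact \ref{definable}) to extract endomorphisms $\varphi,\varphi'$ of $G$ with $\varphi(u)=u'$ and $\varphi'(u')=u$ that preserve $H$ and separate conjugacy classes of finite subgroups; a suitable power of $\varphi'\circ\varphi$, corrected by an inner automorphism coming from $H$, restricts to the identity on $G_0$ and fixes a nontrivial translation $y=x^{|G_0|}$ (this is where the hypothesis that $\langle u\rangle$ is infinite is used). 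Irreducibility enters only to guarantee that the $G_0$-orbit of $y$ spans $\mathbb{Q}^n$, so this endomorphism fixes a finite-index subgroup of $H$, hence all of $H$ by torsion-freeness, hence is the identity; this forces $\varphi'$ to be surjective, and Hopficity finishes.

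The gap in your route is the descent from $\widehat{N}$ to $N$. Producing an automorphism of $\widehat{G}$ carrying $u$ to $v$ (via Oger's theorem and an argument as in Proposition \ref{the_continous_auto_prop}) is fine, but the claim that ``strong approximation for the unit group $C^{\times}$'' converts this profinite solution into an integral one is unsupported and, as stated, false: $\mathrm{GL}_1$ of a division algebra does not satisfy strong approximation (already for $C=\mathbb{Q}$, which occurs whenever $\rho$ is absolutely irreducible, the relevant unit group is $\lbrace\pm 1\rbrace$ while $\widehat{\mathbb{Z}}^{\times}$ is uncountable). What you are trying to prove at this point is exactly profinite homogeneity of $G$ on translation tuples, i.e.\ the nontrivial content of the theorem, and it is precisely this kind of local-global statement whose failure in the reducible case underlies Theorem \ref{counterexample_theorem}; it cannot be dispatched by a one-line appeal to approximation. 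Two smaller points: your definability of $T$ via conjugacy-class size is correct but unnecessary (the paper uses the universal formula of Fact \ref{definable}), and the final ``splicing'' of the $G_0$-part with the translation part needs care, since the automorphism produced for the translation sub-tuple must be compatible with the matching already chosen on $G_0$ --- the paper sidesteps this by working with a single endomorphism throughout.
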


In Section~\ref{another_count_sec} we will give an example showing that the assumption of irreducibility is necessary in \ref{type_det1}. Note also that the group $G_1\times G_2$ that appears in Theorem \ref{counterexample_theorem} is (by construction) non profinitely rigid, which leads to the following open question. 
	
\begin{qu} Are profinitely rigid crystallographic groups homogeneous?
\end{qu}

	

 This concludes the crystallographic part of the introduction. We now turn to homogeneity results in hyperbolic groups. First, it is worth noting that finitely presented groups enjoying a strong rigidity property called the strong co-Hopf property are homogeneous (see Definition 1.4 and Lemma 3.5(ii) in \cite{OH11}); prominent examples of such groups are $\mathrm{SL}_n(\mathbb{Z})$ for $n\geq 3$ (and many more higher-rank lattices as a consequence of Margulis superrigidity), $\mathrm{Out}(F_n)$, $\mathrm{Aut}(F_n)$, $\mathrm{Mod}(\Sigma_g)$ for $n,g$ not too small (see for instance \cite{And20} for details), and rigid hyperbolic groups (i.e.\ hyperbolic groups that do not split non-trivially as an HNN extension or as an amalgamated product over a finite or virtually cyclic group) by the works of Sela \cite{Sel97,Sel09} and Paulin \cite{Pau97} and generalizations to hyperbolic groups with torsion (see \cite{RW14,Moi13}). For instance, the fundamental group of a closed hyperbolic $n$-manifold where $n\geq 3$ is homogeneous.

\smallskip


Hyperbolic groups that admit non-trivial splittings are much more complicated to deal with. As already mentioned, Ould Houcine and independently Perin and Sklinos proved in \cite{OH11,PS12} that finitely generated free groups are homogeneous, using tools developed by Sela and others (notably the theory of JSJ decomposition of groups and the machinery developed to solve the famous Tarski problem on the elementary equivalence of non-abelian free groups). Note that the proof of the homogeneity of $F_2$ goes back to the work of Nies \cite{Nie03}, but this case is much easier than the general case of free groups and it can be treated by means of elementary techniques. It was also proved in \cite{PS12} that the fundamental group of the orientable closed hyperbolic surface of genus $g \geq 3$ is not homogeneous, and a complete characterization of homogeneous torsion-free hyperbolic groups was later given in \cite{BP19}.

\smallskip

In the presence of torsion, new phenomena appear, and there is strong evidence that finitely generated virtually free groups are not homogeneous in general (see \cite{And18b,And21} for more details on this problem, and see also Theorem \ref{EAE} below). The main difference between the torsion-free case and the general case lies in the following fact: when a group splits as a free product $A\ast B$, any automorphism of $A$ extends (in the obvious way) to an automorphism of the whole group, whereas this is not the case if the free product is replaced with an amalgam $A\ast_C B$ with $C$ finite. At the moment, a characterization of homogeneous hyperbolic groups seems to be out of reach. However, we will prove that many torsion-generated hyperbolic groups are homogeneous (see Theorem \ref{AE} below).

\smallskip

The only known results concerning homogeneity of Gromov hyperbolic Coxeter groups are the following ones, proved by the second named author and Sklinos in \cite{PS23}: the universal Coxeter group of rank $n$ (that is the free product of $n$ copies of $\mathbb{Z}/2\mathbb{Z}$) is homogeneous (see \cite[Theorems 1.4 and 4.8]{PS23}), and one-ended hyperbolic right-angled Coxeter groups are homogeneous (see \cite[Theorem 1.5 and Proposition 4.9]{PS23}). Our main theorem is a broad generalization of these results (see \ref{AEdef} for the definition of $\mathrm{AE}$-homogeneity).


\begin{te}\label{AE}
Let $G$ be a torsion-generated hyperbolic group. Suppose that the following condition holds: for every edge group $C$ of a reduced Stallings splitting of $G$ (see Definition \ref{Stallings}), the image of the natural map $N_G(C)\rightarrow \mathrm{Aut}(C)$ is equal to $\mathrm{Inn}(C)$. Then $G$ is $\mathrm{AE}$-homogeneous (and so homogeneous). In particular, the following groups are $\mathrm{AE}$-homogeneous:
\begin{enumerate}[(1)]
    \item[$\bullet$] hyperbolic even Coxeter groups;
    \item[$\bullet$] torsion-generated hyperbolic one-ended groups.
\end{enumerate}
\end{te}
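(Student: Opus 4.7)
The plan is to adapt the shortening machinery of Sela, Perin--Sklinos \cite{PS12}, and its extensions to groups with torsion \cite{RW14,Moi13,And18b,And21}, to the torsion-generated hyperbolic setting, using the normalizer hypothesis to bridge between the Stallings decomposition and the one-ended pieces. First I would analyze the reduced Stallings splitting $\mathcal{S}$ of $G$, whose vertex groups are either finite or one-ended hyperbolic, and are themselves torsion-generated (up to conjugating the torsion generators of $G$ into vertex stabilizers, which is possible since every finite-order element is elliptic in $\mathcal{S}$). The hypothesis $N_G(C) \to \mathrm{Inn}(C)$ on each edge group is exactly the condition under which an automorphism of $G$ can be assembled from automorphisms of the one-ended vertex groups preserving the incident edge groups up to conjugacy, together with a combinatorial automorphism of the underlying graph of groups. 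This reduces $\mathrm{AE}$-homogeneity of $G$ to a corresponding statement for each one-ended torsion-generated vertex group (plus a finite rigidity argument on $\mathcal{S}$).

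For a one-ended torsion-generated hyperbolic group $H$ and tuples $u,v \in H^n$ with the same $\mathrm{AE}$-type, a Merzlyakov-style argument (exploiting the fact that an $\exists$-formula satisfied by $u$ can be extended, by a $\forall\exists$-formula equally satisfied by $v$, to a formula witnessing the map $u \mapsto v$) produces homomorphisms $\phi \colon H \to H$ with $\phi(u)=v$ and $\psi \colon H \to H$ with $\psi(v)=u$. Using the canonical JSJ decomposition of $H$ over virtually cyclic subgroups (in its torsion-aware form), I would apply the shortening argument of \cite{RW14,Moi13} to precompose $\phi$ with suitable modular automorphisms; passage to a Gromov--Hausdorff limit of the shortened sequence, together with the acylindrical accessibility of hyperbolic groups with torsion, yields an injective homomorphism $H \to H$ sending $u$ to $v$. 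The co-Hopf property of one-ended hyperbolic groups (Sela in the torsion-free case, extended in \cite{Moi13}) then upgrades injectivity to surjectivity and provides the desired automorphism.

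Finally, the two advertised applications are settled as follows. A one-ended torsion-generated hyperbolic group has trivial Stallings splitting, so the normalizer hypothesis is vacuous. For a hyperbolic even Coxeter group, the edge groups of the reduced Stallings splitting are finite parabolic subgroups, and hence themselves even Coxeter groups; a direct inspection using the even presentation shows that conjugation in $G$ induces only inner automorphisms on such subgroups, because the diagram automorphisms that could arise from non-trivial normalizer action (such as swaps of generators in a dihedral subgroup) are forbidden by the parity of the exponents $m_{ij}$.

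The main obstacle, I expect, will be the shortening step: one must ensure that the limiting short morphism is compatible with the torsion-generated structure (torsion elements must map to torsion elements of controlled order, and cannot be collapsed), and that the modular group used is both large enough to achieve shortening and constrained enough that the limit distinguishes $u$ from its non-automorphic companions. The normalizer hypothesis plays a double role here: globally, it makes the Stallings-level modular group behave like an inner piece, and locally it ensures that the one-ended JSJ pieces are glued in a rigid enough way for the shortening limit to be meaningful. Once this is in place, the upgrade from $\mathrm{AE}$-type equality to automorphism is essentially formal, and is moreover lighter than the full type case handled in \cite{PS12,BP19} because the quantifier depth to track is bounded.
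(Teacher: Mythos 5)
Your outline contains a genuine gap at its very first step: you propose to work with the (absolute) reduced Stallings splitting $\mathcal{S}$ of $G$ and to ``reduce $\mathrm{AE}$-homogeneity of $G$ to a corresponding statement for each one-ended torsion-generated vertex group''. This reduction fails, because a tuple $u$ witnessing a potential failure of homogeneity need not be elliptic in $\mathcal{S}$: the subgroup $\langle u\rangle$ can fail to be conjugate into any vertex group (already in the universal Coxeter group $\mathbb{Z}/2\ast\cdots\ast\mathbb{Z}/2$, whose absolute Stallings splitting has only finite vertex groups, the element $s_1s_2$ is hyperbolic in the Bass--Serre tree). Homogeneity of a group with infinitely many ends is emphatically not a consequence of homogeneity of its Stallings vertex groups -- this is exactly why the homogeneity of free groups is a hard theorem rather than a corollary of the homogeneity of $\mathbb{Z}$. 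The paper's proof instead uses reduced Stallings splittings of $G$ \emph{relative to} $\langle u\rangle$ and $\langle u'\rangle$ (two different splittings, each depending on the tuple), and the substantive work (Proposition~\ref{main_theorem1} and Corollary~\ref{main_theorem1.1}) is to produce endomorphisms $\varphi,\varphi'$ of the whole group $G$ with $\varphi(u)=u'$, $\varphi'(u')=u$ that match up the vertex groups of these two relative splittings isomorphically and bijectively on conjugacy classes; this uses the relative shortening argument, the relative co-Hopf property, and a preretraction/centered-splitting analysis to control the QH factors. Only then does the normalizer hypothesis enter, via \cite[Proposition~8.15]{AP24}, to assemble $\varphi\circ(\varphi'\circ\varphi)^m$ into a genuine automorphism sending $u$ to $u'$. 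Your Merzlyakov/shortening/co-Hopf sketch for the one-ended pieces is in the right spirit, but it is aimed at the wrong objects.

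A secondary weakness is the even Coxeter application. Saying that diagram automorphisms of the edge groups ``are forbidden by the parity of the exponents'' is not an argument; the actual mechanism is that evenness makes the map $\rho:G\to C$ sending $s\mapsto s$ for $s\in T$ and $s\mapsto 1$ otherwise a well-defined retraction onto the special finite edge group $C=\langle T\rangle$ (Bahls), whence for $g\in N_G(C)$ conjugation by $g$ on $C$ agrees with conjugation by $\rho(g)\in C$ and is therefore inner in $C$. Also, your parenthetical claim that the Stallings vertex groups are themselves torsion-generated does not follow from $G$ being torsion-generated and is not needed in the paper's argument.
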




Theorem \ref{AE} shows that if the normalizer of each edge group in a Stallings splitting of a torsion-generated hyperbolic group is not too complicated, then the group is homogeneous. But this result does not remain true when the assumption on edge groups is removed: in Theorem \ref{EAE_intro} below, the lack of homogeneity of $G$ comes from the fact that the edge group $C$ has (in some sense) complicated normalizer in $G$. 

\begin{te}\label{EAE_intro}There exists a hyperbolic Coxeter group that is not $\mathrm{EAE}$-homogeneous. More precisely, we construct such a group of the form $A\ast_C B$ where $A,B$ are finite Coxeter groups and $C$ is a special subgroup of $A,B$. In particular, this group is virtually free.
\end{te}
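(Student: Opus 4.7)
The plan is to construct explicitly a hyperbolic Coxeter group $G = A \ast_C B$ in which $A$ and $B$ are finite Coxeter groups and $C$ is a common special subgroup, together with two tuples $\bar u, \bar v \in G^n$ satisfying the same $\exists\forall\exists$-formulas but lying in distinct $\mathrm{Aut}(G)$-orbits. Taking the union of the Coxeter presentations of $A$ and $B$, identifying the generators of $C$, produces a Coxeter presentation of $G$; since $G$ splits as an amalgam of two finite groups over a finite subgroup it is virtually free, and as such it is also hyperbolic. The tuples $\bar u, \bar v$ will be chosen inside $C$ and related by an automorphism $\alpha \in \mathrm{Aut}(C)$ that is realised by conjugation through both $N_A(C)$ and $N_B(C)$ but whose two local realisations fail to be compatible with the amalgam structure.

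For the non-automorphism part, one uses the Bass--Serre theory of the splitting. Any $\sigma \in \mathrm{Aut}(G)$ acts $\sigma$-equivariantly on the Bass--Serre tree $T$, so up to composition with an inner automorphism it either stabilises the vertex groups $A$ and $B$ or swaps them (when they are appropriately interchangeable), and in either case it preserves a conjugate of $C$. Consequently the restriction $\sigma|_C$ lies in the subgroup $\Gamma \leq \mathrm{Aut}(C)$ consisting of those automorphisms that extend simultaneously to both $A$ and $B$ compatibly with the inclusions of $C$. By choosing $A$, $B$ and the two embeddings of $C$ in a sufficiently asymmetric fashion---for instance so that the images of the maps $N_A(C) \to \mathrm{Aut}(C)$ and $N_B(C) \to \mathrm{Aut}(C)$ intersect in a strictly smaller subgroup than either---one produces a non-trivial $\alpha \in \mathrm{Aut}(C)$ realised internally by each of $A$ and $B$ yet not lying in $\Gamma$. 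Setting $\bar v = \alpha(\bar u)$ for a tuple $\bar u \in C^n$ on which $\alpha$ acts non-trivially then yields two tuples in distinct $\mathrm{Aut}(G)$-orbits.

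The main obstacle is to verify that $\bar u$ and $\bar v$ have the same $\exists\forall\exists$-type in $G$, and the plan is an Ehrenfeucht--Fra\"iss\'e-style back-and-forth of depth three between $(G, \bar u)$ and $(G, \bar v)$. Given an existential witness $\bar p$ on the $\bar u$-side, write each $p_i$ in Bass--Serre normal form and apply the piecewise translation that replaces every $A$-syllable by its conjugate under the element $a \in N_A(C)$ inducing $\alpha$ on $C$, and every $B$-syllable by its conjugate under the corresponding $b \in N_B(C)$; the resulting $\bar p'$ is a candidate existential witness on the $\bar v$-side. For the universal move, the reverse translation pulls back any $\bar q$ on the $\bar v$-side to a tuple $\bar q'$ on the $\bar u$-side with matching local geometry at the relevant depth, and the innermost existential step closes the game by a further piecewise conjugation. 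The crux, and the delicate point the proof must really grapple with, is that this piecewise translation is \emph{not} a group homomorphism---if it were, $\alpha$ would extend to an automorphism of $G$, contradicting the previous paragraph. One must therefore show that the defect in the translation is invisible to formulas of quantifier rank three: intuitively, any certificate of that defect would require a universal quantifier over a correcting element followed by an existential counter-correction, one alternation more than $\exists\forall\exists$ can express. The explicit construction of $A$, $B$, $C$ and $\alpha$ must be calibrated so that the obstruction genuinely lives at quantifier rank four rather than lower, and this fine-tuning is the subtle part of the argument.
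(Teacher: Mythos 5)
Your overall frame (an amalgam $A\ast_C B$ of finite Coxeter groups over a special subgroup, two tuples related by a ``local'' automorphism that does not globalise) matches the paper's, but the proposal has two genuine gaps, one of which is fatal to the construction as you state it. You place the tuples $\bar u,\bar v$ inside $C$ and take $\bar v=\alpha(\bar u)$ where $\alpha$ is \emph{realised by conjugation through $N_A(C)$}. But if $\alpha=\mathrm{ad}(a)|_C$ for some $a\in N_A(C)\leq G$, then $\bar v=a\bar u a^{-1}$, so $\bar u$ and $\bar v$ are conjugate in $G$ and hence automorphic via an inner automorphism --- there is nothing to prove and no counterexample. The paper avoids exactly this trap: its elements $x,y$ lie in $A=A_1\times A_2$ (with $x\in A_1$, $y\in A_2$, \emph{not} in $C$), and the automorphism of $A$ swapping them is the outer automorphism exchanging the two direct factors; only its \emph{restriction to $C$} is realised by conjugation (by $u=bxyb$), which is what makes the endomorphism $\bar\sigma$ of $G$ well-defined without making $x$ and $y$ conjugate.

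The second gap is the verification that the two tuples have the same $\mathrm{EAE}$-type. Your Ehrenfeucht--Fra\"iss\'e sketch is not an argument: an $\mathrm{EAE}$-formula has three \emph{blocks} of quantifiers of unbounded length, not quantifier rank three, so a depth-three game does not suffice; moreover your ``piecewise translation'' of Bass--Serre normal forms is admittedly not a homomorphism, and you give no mechanism for why its defect is invisible to quantifier-free formulas evaluated on products of witnesses --- you explicitly flag this as ``the subtle part'' and then leave it open. The paper resolves this by an entirely different device: it forms the HNN extension $G'=\langle G,t\mid tct^{-1}=ucu^{-1},\ c\in C\rangle$, checks the hypotheses of a theorem from \cite{And19a} (that $C$ is the maximal finite subgroup normalised by $N_G(C)$ and $N_G(C)$ is non-elementary) guaranteeing that $G\hookrightarrow G'$ is an $\mathrm{EAE}$-embedding, and then shows that the endomorphism $\bar\sigma$ of $G$ promotes to a genuine automorphism $\theta$ of $G'$ (sending $B$ to $tBt^{-1}$ and fixing $t$, after verifying that $\theta(u)$ and $u$ act identically on $C$). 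Equality of types in $G'$ then pulls back to equality of $\mathrm{EAE}$-types in $G$. Without that external theorem, or a worked-out substitute for it, your proof does not close.
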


The proof of Theorem \ref{EAE_intro} goes as follows: we construct an $\mathrm{EAE}$-extension $G'$ of $G$ (see Definition \ref{EAE_def}) and two elements $x,y\in G$ that are automorphic in $G'$ but not in $G$. These elements $x,y$ being automorphic in $G'$, they have the same type in $G'$, and so they have the same $\mathrm{EAE}$-type in $G$ (as $G'$ is an $\mathrm{EAE}$-extension of $G$). Hence, $G$ is not $\mathrm{EAE}$-homogeneous.

\smallskip

This result strongly suggests that not all hyperbolic Coxeter groups are homogeneous, as Sela proved quantifier elimination down to Boolean combinations of $\mathrm{AE}$-formulas in torsion-free hyperbolic groups. However, the analogue of this quantifier elimination result in the presence of torsion remains an open problem.

\smallskip

In Section \ref{direct_products} we prove that, under certain conditions, homogeneity behaves well with respect to direct products. However, the following example shows that the direct product of two non-elementary homogeneous hyperbolic groups need not be homogeneous. 

\begin{ex}Consider $G=F(a,b)=\langle a,b\rangle$ and $G'=F(a',b')=\langle a',b'\rangle$ two free groups of rank 2. The elements $a$ and $a'$ have the same type in $F(a,b)\times F(a',b')$ (take the automorphism swapping $G$ and $G'$ in the obvious way). The free group $F(a,b,c)$ of rank three is an elementary extension of $F(a,b)$ (see \cite{Sel06,KM06}), so $F(a,b,c)\times F(a',b')$ is an elementary extension of $F(a,b)\times F(a',b')$, and thus $a$ and $a'$ still have the same type in $F(a,b,c)\times F(a',b')$. But any automorphism of $F(a,b,c)\times F(a',b')$ maps each factor to itself, therefore there is no automorphism mapping $a$ to $a'$. Hence, $F(a,b,c)\times F(a',b')$ is not homogeneous. \end{ex}

The following result shows that this construction, which relies crucially on the fact that $F_3$ is not strictly minimal (which means that it contains a proper elementarily embedded subgroup), cannot be extended to direct product of torsion-generated non virtually cyclic hyperbolic groups.

\begin{te}\label{strictly_minimal_intro}Every torsion-generated hyperbolic group $G$ is strictly minimal. In fact, $G$ has no proper $\mathrm{AE}$-embedded subgroup.\end{te}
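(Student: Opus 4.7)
Suppose for contradiction that $H$ is a proper $\mathrm{AE}$-embedded subgroup of the torsion-generated hyperbolic group $G$. My plan is to combine the structural characterization of $\mathrm{AE}$-embedded subgroups of hyperbolic groups with an easy abelianization argument.

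First, I would invoke the hyperbolic tower characterization of $\mathrm{AE}$-embedded subgroups (developed by Sela in the torsion-free case and extended to hyperbolic groups with torsion by Reinfeldt--Weidmann and others cited earlier in the paper) to write $G$ as the top of a finite hyperbolic tower $H = G_0 < G_1 < \cdots < G_k = G$ with $k \geq 1$. At each step, $G_{i+1}$ is obtained from $G_i$ by either \emph{(a)} attaching a non-trivial free factor $G_{i+1} = G_i * F$, or \emph{(b)} realizing $G_{i+1}$ as a graph of groups with a new surface-type vertex whose vertex group is the fundamental group of a compact hyperbolic surface $\Sigma$ (with $\chi(\Sigma) < 0$), the remaining vertex groups coming from $G_i$, and the attaching subgroups being the boundary subgroups of $\pi_1(\Sigma)$. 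In both cases the inclusion $G_i \hookrightarrow G_{i+1}$ admits a retraction.

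Next, I would track the first rational Betti number $b_1(\Gamma) := \dim_{\mathbb{Q}} H_1(\Gamma,\mathbb{Q})$ along the tower. Because of the retraction, the induced map $H_1(G_i, \mathbb{Q}) \to H_1(G_{i+1}, \mathbb{Q})$ is a split injection, so $b_1(G_{i+1}) \geq b_1(G_i)$ at each step. In case \emph{(a)}, the difference is precisely $\mathrm{rk}(F) \geq 1$. In case \emph{(b)}, a Mayer--Vietoris computation shows that the new contribution to $b_1$ is strictly positive, since $\chi(\Sigma) < 0$ forces $\pi_1(\Sigma)$ to carry rational homology not entirely killed by the boundary identifications. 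Hence $b_1(G) > b_1(H) \geq 0$.

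On the other hand, since $G$ is generated by finitely many torsion elements, the abelianization $G^{ab}$ is a finitely generated abelian group generated by torsion elements, and is therefore finite; in particular $b_1(G) = 0$, contradicting the previous paragraph. This shows $H = G$, proving that $G$ has no proper $\mathrm{AE}$-embedded subgroup, and a fortiori that $G$ is strictly minimal. The main obstacle is invoking the tower characterization cleanly in the presence of torsion (one has to be careful with non-orientable surface vertices, whose homology carries $2$-torsion, but a purely rational argument bypasses this); once the structural result is in hand, the abelianization computation is routine.
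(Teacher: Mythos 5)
There are two genuine gaps in your proposal, and the second is fatal even if the first were repaired. First, the hyperbolic tower characterization of $\mathrm{AE}$-embedded subgroups that you invoke is only available for \emph{torsion-free} hyperbolic groups (Sela, Perin); no such characterization is currently known for hyperbolic groups with torsion. The paper is explicit about this point (the quantifier-elimination machinery underlying towers ``remains an open problem'' in the presence of torsion), and its own proof deliberately avoids towers: it first shows that every finite subgroup of $G$ is conjugate into $H$ (a purely $\mathrm{AE}$ consequence), then proves that $H$ is a one-ended factor via preretractions of centered splittings (Lemma \ref{injective2} and the relative versions of Lemmas 3.5 and 3.7 of \cite{And21b}), and finally rules out any further structure using the quasiprototype/quasicore technology of \cite{And18}.

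Second, and more fundamentally, your Betti-number argument breaks down precisely in the setting at hand. In the presence of torsion the surface-type floors are built not from hyperbolic surfaces but from finite-by-orbifold groups, and since $G$ is torsion-generated the relevant orbifolds are orientable of genus $0$ with cone points (this is \cite[Lemma 5.9]{AP24}, which the paper's proof uses). Such orbifold groups --- triangle groups, for instance --- have finite abelianization, so an orbifold floor contributes nothing to $H_1(\cdot,\mathbb{Q})$; likewise, the analogues of free factors available in the torsion setting are free products of finite groups, which also have vanishing first rational Betti number. Hence $b_1(G)=b_1(H)=0$ is perfectly consistent with $H\subsetneq G$, and no contradiction arises. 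Your computation is correct for genuine surface floors and nontrivial free factors, which is why this abelianization trick has teeth in the torsion-free world; it is exactly the cone points that kill it here. The paper's substitute for your homological obstruction is combinatorial: every torsion element of $G$ is a product of conjugates of elements of $H$ (since every finite subgroup is conjugate into $H$), whereas a conical element of a putative central orbifold vertex lying outside the edge group attached to the vertex containing $H$ cannot be written as such a product --- that is the contradiction that forces the relative Stallings splitting to be trivial and $G=H$.
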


\begin{rk}
We refer the reader to Subsection 11.2 of \cite{GLS} for a characterization of strict minimality in torsion-free hyperbolic groups. Note that no such characterization exists (yet) for hyperbolic groups with torsion.
\end{rk}

This result leads to the following questions.

\begin{qu}Let $G_1$ and $G_2$ be homogeneous torsion-generated hyperbolic groups. Is $G_1\times G_2$ homogeneous?
\end{qu}

\begin{qu}Let $G_1$ and $G_2$ be homogeneous strictly minimal hyperbolic groups. Is $G_1\times G_2$ homogeneous?
\end{qu}

In Section \ref{direct_products}, we give a very partial answer to the second question.

\subsection{Acknowledgments} We warmly acknowledge the anonymous referees for their numerous and valuable remarks and suggestions. We also thank Davide Carolillo for pointing out a gap in a previous version of the proof of homogeneity of affine Coxeter groups.

\subsection{Notation}\label{section_notation}

Throughout the paper we adopt the following notations: for $g \in G$, $\mathrm{ad}(g)$ denotes the inner automorphism $h\in G\mapsto g h g^{-1}$. Sometimes, we also use the notation $h^g$ to denote the conjugate $g h g^{-1}$ of $h$. If a group $G$ acts on a set $X$, we denote by $G\cdot x$ the orbit of $x\in X$ under the action of $G$.

\section{Homogeneity and rigidity in split crystallographic groups}\label{homogeneity_crystallographic}

\subsection{Preliminaries on crystallographic groups}\label{prelim_crystallo}

We recall that an action of a group $G$ by homeomorphisms on a topological space $X$ is said to be \emph{properly discontinuous} if for every compact subset $K \subset X$, there are only finitely many $g\in G$ such that $g\cdot K \cap K \neq \varnothing$. A \emph{crystallographic group} of dimension $n\geq 1$ is a properly discontinuous (equivalently, discrete) and cocompact subgroup of $\mathrm{Isom}(\mathbb{R}^n)$, the group of isometries of the Euclidean space $\mathbb{R}^n$. Note that $\mathrm{Isom}(\mathbb{R}^n)$ is isomorphic to a semidirect product $\mathbb{R}^n\rtimes \mathrm{O}_n(\mathbb{R})$. If $G$ is a crystallographic group, the normal subgroup $H=G\cap \mathbb{R}^n$ is called the \emph{translation subgroup} of $G$. By the First Bieberbach Theorem, this subgroup $H$ is isomorphic to $\mathbb{Z}^n$ and is of finite index in $G$. The finite quotient $G/H$ is called the \emph{point group} of $G$, denoted by $G_0$. Moreover, $H$ is maximal abelian in $G$. Conversely, Zassenhaus proved in 1948 that a group $G$ is isomorphic to a crystallographic group of dimension $n\geq 1$ if it has a normal subgroup $H$ which is isomorphic to $\mathbb{Z}^n$, of finite index and maximal abelian.




This description gives rise to a short exact sequence $1\rightarrow H\rightarrow G \overset{p}{\rightarrow} G_0\rightarrow 1$. This sequence is not split in general (in fact, some crystallographic groups are torsion-free). However, there is still a natural action of $G_0$ on $H$ induced by the action of $G$ on $H$ by conjugation; more precisely, the action of $G_0$ on $H$ is defined for $g\in G_0$ and $h\in H$ by $g\cdot h=g'hg'^{-1}$ where $g'$ is any preimage of $g$ by $p$. This action is faithful since $H$ is maximal abelian in $G$. This gives rise to an injective morphism $\rho:G_0\rightarrow \mathrm{GL}_n(\mathbb{Z})$, called the {\em integral holonomy representation}. Conversely, if a group $G$ has a normal subgroup $H\simeq \mathbb{Z}^n$ of finite index such that the natural action of $G_0=G/H$ on $H$ is faithful, then clearly $H$ is maximal abelian, and thus $G$ is isomorphic to a crystallographic group of dimension $n\geq 1$.

\begin{de}\label{def_irreducible}We say that $G$ is \emph{irreducible} if $\rho$, viewed as a representation $G_0\rightarrow\mathrm{GL}_n(\mathbb{Q})$, is irreducible (meaning that the only linear subspaces of $\mathbb{Q}^n$ that are stable under the action of $\rho(G_0)$ are $\mathbb{Q}^n$ and $\lbrace 0\rbrace$).\end{de}





When the short exact sequence $1\rightarrow H\rightarrow G \overset{p}{\rightarrow} G_0\rightarrow 1$ is split, then $G$ is called a \emph{split crystallographic group}. Equivalently, according to the previous paragraph, a split crystallographic group is a group $G$ isomorphic to a semidirect product of the form $\mathbb{Z}^n \rtimes_{\rho} G_0$, where the morphism $\rho:G_0 \rightarrow \mathrm{GL}_n(\mathbb{Z})$ is injective. 

We will need the following easy lemmas.

\begin{lemme}\label{finitely_many}
Let $G$ be a crystallographic group. Then $G$ has only finitely many conjugacy classes of finite subgroups.
\end{lemme}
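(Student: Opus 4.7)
The plan is to exploit the short exact sequence $1\to H\to G\xrightarrow{p}G_0\to 1$ together with the fact that $H\cong\mathbb{Z}^n$ is torsion-free. First I would observe that any finite subgroup $F\le G$ has trivial intersection with $H$, so $p$ restricts to an injective homomorphism $F\hookrightarrow G_0$; in particular $p(F)$ is one of the finitely many subgroups of the finite group $G_0$. It therefore suffices to show that, for each fixed $F_0\le G_0$, there are only finitely many $G$-conjugacy classes of finite subgroups $F\le G$ with $p(F)=F_0$.

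Now fix such an $F_0$ and set $E:=p^{-1}(F_0)$, so that we have a short exact sequence $1\to H\to E\to F_0\to 1$. The finite subgroups $F\le E$ with $p(F)=F_0$ are exactly the images of group-theoretic sections $s:F_0\to E$ of $p$, that is, the splittings of the extension. By the standard low-degree cohomological classification of extensions, if a splitting exists at all then the set of splittings modulo conjugation by $H$ is in bijection with $H^1(F_0,H)$. Since $F_0$ is finite and $H\cong\mathbb{Z}^n$ is a finitely generated $\mathbb{Z}[F_0]$-module, $H^1(F_0,H)$ is both finitely generated as an abelian group and annihilated by $|F_0|$, hence finite.

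Combining these two steps gives the result: there are finitely many choices for $F_0\le G_0$, and for each such $F_0$ only finitely many $H$-conjugacy classes of finite subgroups of $G$ projecting isomorphically onto $F_0$; a fortiori there are only finitely many $G$-conjugacy classes of finite subgroups of $G$ in total.

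The argument is essentially routine once one invokes the cohomological parametrization of splittings, and I do not expect any substantive obstacle. The only point that requires care is the finiteness of $H^1(F_0,H)$, which follows from the classical observation that in positive degree the cohomology of a finite group with finitely generated abelian coefficients is annihilated by the order of the group and finitely generated, hence finite. An alternative, more geometric route would be to use Bieberbach's realization of $G$ as a cocompact discrete subgroup of $\mathrm{Isom}(\mathbb{R}^n)$, together with the fact that every finite subgroup fixes its centroid, to reduce to the finitely many stabilizers met inside a compact fundamental domain; but the cohomological argument above seems both shorter and better suited to the algebraic setting the paper is operating in.
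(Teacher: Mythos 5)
Your proof is correct, but it takes a genuinely different route from the paper's. The paper argues geometrically: every finite subgroup fixes the barycenter of an orbit, cocompactness lets one conjugate that fixed point into a fixed compact set $K$, and proper discontinuity ensures only finitely many subgroups fix a point of $K$ --- this is precisely the ``alternative, more geometric route'' you sketch in your last sentence. Your main argument is instead algebraic: since $H\cong\mathbb{Z}^n$ is torsion-free, every finite subgroup injects into $G_0$ under $p$, and for each of the finitely many subgroups $F_0\leq G_0$ the finite subgroups of $p^{-1}(F_0)$ projecting isomorphically onto $F_0$ are the complements of $H$, classified up to $H$-conjugacy by the finite group $H^1(F_0,H)$. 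All the steps check out (the key points --- triviality of $F\cap H$, the torsor structure of splittings over $H^1$, and finiteness of $H^1(F_0,H)$ as a finitely generated abelian group annihilated by $\lvert F_0\rvert$ --- are each standard and correctly invoked, and passing from $H$-conjugacy to $G$-conjugacy only decreases the count). Your version has the advantage of applying verbatim to any extension of a finite group by a finitely generated torsion-free abelian group, with no geometric input beyond Bieberbach's first theorem; the paper's version is shorter given that it has already set up the isometric action and reuses the fixed-point observation in the proof of Lemma \ref{no_normal_finite_subgroup}.
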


\begin{proof}
Every finite subgroup $F$ of $G$ has a fixed point $p\in X=\mathbb{R}^n$; indeed, if $x$ is any point in $X=\mathbb{R}^n$, the point $p=\frac{1}{\vert F\vert}\sum_{g\in F}g\cdot x$ (that is the barycenter of $F\cdot x$) is fixed by $F$. By cocompacity of the action, there exists a compact $K$ in $X$ such that any point in $X$ has a $G$-translate in $K$, so there exists $g\in G$ such that $g\cdot p$ is in $K$, and this point is fixed by $gFg^{-1}$, which shows that every finite subgroup of $G$ has a conjugate that fixes a point of the compact $K$. Finally, since the action is properly discontinuous, the set $\lbrace g\in G \ \vert \ g\cdot K \cap K \neq \varnothing \rbrace$ is finite, which shows that there is only a finite number of subgroups (necessarily finite) of $G$ that fix a point of $K$.\end{proof}

\begin{lemme}\label{no_normal_finite_subgroup}Let $G$ be a crystallographic group. Then $G$ does not have any non-trivial finite normal subgroup.\end{lemme}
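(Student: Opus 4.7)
My plan is to use the characterization recalled just above the lemma: the translation subgroup $H\cong\mathbb{Z}^n$ is normal in $G$, of finite index, and, crucially, \emph{maximal abelian}. Given a finite normal subgroup $N\trianglelefteq G$, the first step is to observe that $N\cap H$ is a finite subgroup of $H\cong\mathbb{Z}^n$, hence trivial (as $\mathbb{Z}^n$ is torsion-free). So $N$ meets $H$ only in the identity.

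The heart of the argument is then to show that $N$ centralizes $H$. For $h\in H$ and $g\in N$, I would compute the commutator $[h,g]=hgh^{-1}g^{-1}$ in two ways. Reading it as $(hgh^{-1})g^{-1}$, normality of $N$ yields $[h,g]\in N$; reading it instead as $h(gh^{-1}g^{-1})$, normality of $H$ yields $[h,g]\in H$. Combined with $N\cap H=\{1\}$ from the previous step, this forces $[h,g]=1$. Hence every element of $N$ commutes with every element of $H$, i.e.\ $N\subseteq C_G(H)$.

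To finish, I would invoke maximal abelianness: any $g\in C_G(H)$ generates together with $H$ an abelian subgroup $\langle H,g\rangle$ containing $H$, so by maximality $\langle H,g\rangle=H$ and $g\in H$. Therefore $C_G(H)=H$, and together with the previous paragraph this gives $N\subseteq H$. Combined once more with $N\cap H=\{1\}$, this forces $N=\{1\}$, as required.

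I do not foresee a real obstacle; the argument is elementary and relies only on standard manipulations. The one step that carries the weight is the double reading of $[h,g]$, which is the efficient way to exploit simultaneously the normality of $N$, the normality and torsion-freeness of $H$, and ultimately the maximal abelian property of $H$, which is the non-trivial algebraic input inherited from Bieberbach's first theorem (equivalently, from Zassenhaus's characterization recalled in the preliminaries).
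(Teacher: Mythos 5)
Your proof is correct, but it takes a genuinely different route from the paper. The paper argues geometrically: a finite subgroup $F$ of $\mathrm{Isom}(\mathbb{R}^n)$ fixes the barycenter $p$ of an orbit, normality gives $F = t_iFt_i^{-1}$ for translations $t_1,\ldots,t_n$ generating a rank-$n$ lattice, so $F$ fixes the $n+1$ affinely independent points $p, t_1(p),\ldots,t_n(p)$ and is therefore trivial. You instead argue purely algebraically: $N\cap H=\{1\}$ by torsion-freeness of $H$, the double reading of the commutator $[h,g]$ places it in $N\cap H$ so that $N\subseteq C_G(H)$, and maximal abelianness of $H$ gives $C_G(H)=H$, whence $N\subseteq H$ and $N=\{1\}$. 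Each step checks out, and all the inputs you use ($H$ normal, torsion-free, maximal abelian) are explicitly recorded in the paper's preliminaries. What your version buys is generality and independence from the geometric realization: it proves that \emph{any} group possessing a torsion-free maximal abelian normal subgroup has no non-trivial finite normal subgroup, so it meshes directly with the abstract Zassenhaus characterization rather than with the concrete action on $\mathbb{R}^n$. The paper's version buys a reusable geometric fact (every finite subgroup fixes a point), which it exploits again in the proof of Lemma \ref{finitely_many}.
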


\begin{proof}Suppose that $G$ is a crystallographic group of dimension $n\geq 1$. Let $F$ be a normal finite subgroup of $G$, and let $p\in \mathbb{R}^n$ be a point fixed by $F$ (cf. the beginning of the proof of \ref{finitely_many}). By the First Bieberbach Theorem, $G$ contains $n$ translations $t_1,\ldots,t_n$ generating an abelian group of rank $n$. It follows that the points $p,t_1(p),\ldots,t_n(p)$ are $n+1$ points not lying in any affine hyperplane of $\mathbb{R}^n$. But $F=t_iFt_i^{-1}$ fixes $t_i(p)$, so $F$ is trivial.\end{proof}


\begin{co}
A finitely generated virtually abelian group is a crystallographic group if and only if it does not have any non-trivial finite normal subgroup.
\end{co}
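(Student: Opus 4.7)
The $(\Rightarrow)$ direction is precisely Lemma \ref{no_normal_finite_subgroup}. For the converse I would reduce to Zassenhaus's criterion recalled earlier in the subsection: it suffices to exhibit a normal, finite-index, maximal abelian subgroup $H \cong \mathbb{Z}^n$ of $G$ (with $n \geq 1$). Note first that if $G$ were finite, then $G$ itself would be a finite normal subgroup of $G$, so the hypothesis forces $G$ to be trivial; this degenerate case is handled separately (or excluded depending on the dimension-zero convention).

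Assuming $G$ is infinite, I would first extract a normal, finite-index, free abelian subgroup. Pick any finite-index abelian $A_0 \leq G$ (exists by virtual abelianness) and replace it by its normal core $A := \bigcap_{g\in G} g A_0 g^{-1}$, still abelian, normal, and of finite index. Since $[G:A]<\infty$ and $G$ is finitely generated, $A$ is finitely generated abelian, so $A \cong \mathbb{Z}^n \oplus T$ with $T$ finite. The torsion subgroup $T$ is characteristic in $A$, hence normal in $G$, so by hypothesis $T=1$ and $A \cong \mathbb{Z}^n$ with $n\geq 1$.

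The crucial step is the upgrade to a \emph{maximal} abelian normal subgroup, and this is where I expect the main obstacle: enlarging $A$ to a maximal abelian subgroup containing it can destroy normality, whereas passing to $Z(C_G(A))$ typically fails to be maximal abelian. The trick is to take $K := C_G(A)$ directly. It is normal in $G$ (centralizer of a normal subgroup), contains $A$, and has finite index since $G/K$ embeds into $G/A$. By construction $A \subseteq Z(K)$, and $K/A$ is finite, so $K/Z(K)$ is a quotient of $K/A$ and hence finite; Schur's theorem then gives that $[K,K]$ is finite. But $[K,K]$ is characteristic in $K$ and $K$ is normal in $G$, so $[K,K]$ is a finite normal subgroup of $G$, hence trivial by hypothesis. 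Thus $K$ is abelian, and another application of the no-finite-normal-subgroup assumption to its torsion part yields $K \cong \mathbb{Z}^n$ (rank preserved by the finite-index inclusion $A \hookrightarrow K$).

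Finally, $K$ is maximal abelian in $G$: any element centralizing $K$ centralizes $A \subseteq K$, and so lies in $C_G(A) = K$. Zassenhaus's theorem then identifies $G$ as a crystallographic group of dimension $n$, completing the proof. The whole argument hinges on the observation that, under the no-finite-normal-subgroup hypothesis, Schur's theorem upgrades \emph{central-by-finite} to \emph{abelian}, simultaneously securing normality and maximal-abelianness of the candidate $H = C_G(A)$.
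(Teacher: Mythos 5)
Your proof is correct, and it reaches the same destination as the paper's --- both arguments reduce to Zassenhaus's criterion by exhibiting a normal, finite-index, maximal abelian subgroup isomorphic to $\mathbb{Z}^n$ --- but the key step is handled differently. The paper takes a finite-index abelian normal subgroup $H$ with $[G:H]$ minimal, kills its torsion using the hypothesis (as you do), and then asserts that minimality of the index forces $H$ to be maximal abelian. You instead take an arbitrary normal finite-index copy of $\mathbb{Z}^n$ (obtained via the normal core) and pass to its centralizer $K=C_G(A)$, using Schur's theorem together with the no-finite-normal-subgroup hypothesis to show that $K$ is abelian, and the identity $C_G(K)=K$ to get maximality. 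Your route is longer but makes the maximal-abelian verification completely explicit; in the paper's version one still has to see why the failure of maximality for $H$ would produce an abelian \emph{normal} subgroup of strictly smaller index, and the natural way to supply that is essentially your centralizer/Schur observation, so your write-up can be viewed as filling in the step the paper leaves implicit. One small slip: $G/K$ does not embed into $G/A$ --- since $A\leq K$, the quotient $G/K$ is an \emph{image} of $G/A$ --- but finiteness of $[G:K]$ is immediate from $A\leq K$ in any case, so nothing is affected.
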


\begin{proof}
By Lemma \ref{no_normal_finite_subgroup}, a crystallographic group does not have any non-trivial finite normal subgroup. Conversely, let $G$ be a finitely generated virtually abelian group without a non-trivial finite normal subgroup. Let $H$ be a finite-index abelian normal subgroup of $G$ with $[G:H]$ minimal. This subgroup is torsion-free since $G$ has no non-trivial finite normal subgroup, and it is finitely generated since $[G:H]$ is finite, thus $H\simeq \mathbb{Z}^n$ for some $n\geq 1$. Moreover, $H$ is maximal abelian since $[G:H]$ is minimal. Therefore, $G$ is isomorphic to a crystallographic group of dimension $n$ by Zassenhaus' Theorem recalled above.\end{proof}

\subsection{Type-determinacy and almost-homogeneity in crystallographic groups}\label{type-determinacy}

	Let us recall that the notions of $\mathrm{AE}$-type, $\mathrm{AE}$-homogeneity, etc. were defined in the introduction (see Definition \ref{AEdef}).

\begin{de}\label{type-det}Let $G$ be a group, and let $u$ be a tuple of elements of $G$. We say that $u$ is $\mathrm{AE}$-determined if any tuple that has the same $\mathrm{AE}$-type as $u$ is in the $\mathrm{Aut}(G)$-orbit of $u$.
\end{de}

We will prove the following result (for the definition of an irreducible crystallographic group, see Definition \ref{def_irreducible}). 

\begin{te}\label{main_crystallo0}
Let $G$ be an irreducible split crystallographic group. Let $u=(u_1,\ldots,u_{\ell})$ be a tuple of elements of $G$. If the subgroup $U$ of $G$ generated by $\lbrace u_1,\ldots,u_{\ell}\rbrace$ is infinite, then $u$ is $\mathrm{AE}$-determined.
\end{te}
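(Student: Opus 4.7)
Since the AE-type subsumes the quantifier-free type, the assignment $u_i \mapsto v_i$ extends to a group isomorphism $\alpha \colon U \to V$, where $U = \langle u_1, \dots, u_\ell\rangle$ and $V = \langle v_1, \dots, v_\ell\rangle$. I would first verify that the translation subgroup $L := U \cap H$ is characteristic in $U$, so that $\alpha(L) = M := V \cap H$; this should follow from the fact that in the virtually abelian group $U$ sitting inside $G$, the translation lattice admits an intrinsic description (for instance, as the Fitting subgroup, using Lemma \ref{no_normal_finite_subgroup} and the faithfulness of the action of $G_0$ on $H$). Since $U$ is infinite and $[U:L]$ divides $|G_0|$, the sublattice $L \leq H$ is non-trivial. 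By irreducibility of the $\mathbb{Q}[G_0]$-module $H \otimes \mathbb{Q}$, the $\mathbb{Z}[G_0]$-submodule $\widetilde{L}$ of $H$ generated by $L$ has $\mathbb{Q}$-span equal to all of $\mathbb{Q}^n$ (being $G_0$-invariant and non-zero) and hence has finite index in $H$; similarly for $\widetilde{M}$ on the $v$-side.

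Next, I would use the existential content of the AE-type to match the conjugation action of $G_0$ across the two sides: for each $g \in G_0$, the existential formula asserting the existence of a conjugator of order $|g|$ realizing the prescribed action on the $u_i$'s is satisfied by $u$ if and only if it is satisfied by $v$. Combining these witnesses with the partial isomorphism $\alpha|_L$, one assembles a $G_0$-equivariant $\mathbb{Q}$-linear isomorphism $\phi \colon H \otimes \mathbb{Q} \to H \otimes \mathbb{Q}$ carrying $L$ to $M$. The universal content of the AE-type then forbids denominators—typically by ruling out equations of the form $kx = w(u)$ for unintended $k$ and $w$—forcing $\phi$ to restrict to an element of $\mathrm{Aut}(H)$. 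This $\phi$, being $G_0$-equivariant, lifts to an automorphism $\Phi$ of $G = H \rtimes_\rho G_0$ acting as the identity on the $G_0$-factor, and a final inner-automorphism adjustment produces $\Phi(u) = v$.

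\textbf{Main obstacle.} The hardest step is simultaneously extracting from the AE-type both the $G_0$-equivariance (via existential witnesses) and the integrality (via universal constraints) of the extension of $\alpha|_L$. Irreducibility is indispensable here: without it, the $G_0$-orbit of $L$ could sit inside a proper $G_0$-invariant sublattice, and the integral extension would genuinely fail—this is precisely the mechanism behind the counterexample in Theorem \ref{counterexample_theorem}. I expect the argument to exploit the division-algebra structure of $\mathrm{End}_{\mathbb{Q}[G_0]}(\mathbb{Q}^n)$ guaranteed by Schur's lemma, making the $G_0$-equivariant extension essentially unique up to scaling and hence the integrality constraint tractable.
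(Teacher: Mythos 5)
Your plan takes a genuinely different route from the paper's, and it has a gap at exactly the point you identify as the main obstacle. Your strategy is to start from the partial isomorphism $\alpha\colon U\to V$ and extend it outward: first to the $\mathbb{Z}[G_0]$-lattice $\widetilde{L}$ generated by $L=U\cap H$, then (rationally) to all of $H\otimes\mathbb{Q}$, and finally to hope that universal constraints force integrality, i.e.\ $\phi(H)=H$. This last step is not a technicality that Schur's lemma resolves: Schur's lemma only tells you that $\mathrm{End}_{\mathbb{Q}[G_0]}(\mathbb{Q}^n)$ is a division algebra, and a $G_0$-equivariant isomorphism between two finite-index sublattices $\widetilde{L},\widetilde{M}\leq H$ need not extend to an automorphism of $H$ -- the obstruction lives in the ideal class structure of orders in that division algebra, which is precisely the arithmetic behind the non-isomorphic groups $\mathbb{Z}^{p-1}\rtimes\mathbb{Z}/p\mathbb{Z}$ with $\mathbb{Q}(\zeta_p)$ of class number $>1$ used for Theorem \ref{counterexample_theorem}. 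Irreducibility does not remove this phenomenon (those examples are irreducible), so "the AE-type forbids denominators" would need an actual argument, and it is not clear one exists along these lines. There are two further problems: the Fitting subgroup of $U$ need not be $U\cap H$ (take $U$ infinite cyclic generated by an element outside $H$; then $U$ is its own Fitting subgroup), so you should instead use that $H$ is universally definable (Fact \ref{definable}) to see that $\alpha(U\cap H)=V\cap H$; and at the very end, $\Phi(u)=v$ is not automatic, because writing $u_i=h_ik_i$ with $h_i\in H$, $k_i\in G_0$, the elements $h_i$ need not lie in $U\cap H$, which is the only place where you control $\phi$ via $\alpha$.

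The paper's proof sidesteps the integrality issue entirely by never working with a partial map. From the equality of (EA-)types and the finite presentation of $G$ it produces, via Lemma \ref{key_lemma_1} and the universal definability of $H$, \emph{globally defined} endomorphisms $\varphi,\varphi'$ of $G$ with $\varphi(u)=u'$, $\varphi'(u')=u$, preserving $H$ and permuting the (finitely many) conjugacy classes of finite subgroups. A suitable power of $\theta=\varphi'\circ\varphi$, corrected by an inner automorphism from $H$, restricts to the identity on $G_0$ and fixes the nontrivial element $y=x^{|G_0|}\in U\cap H$ (where $x\in U$ has infinite order). Irreducibility is then used only to say that the $G_0$-orbit of $y$ spans $\mathbb{Q}^n$, so this endomorphism is the identity on a finite-index subgroup of $H$ and hence, by torsion-freeness of $H$, on all of $H$; consequently $\varphi'$ is surjective and, since $G$ is Hopfian, an automorphism carrying $u'$ to $u$. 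Since the map being analysed is already an endomorphism of $G$ restricted to $H$, the question of extending a lattice map integrally never arises. If you want to rescue your approach, the missing ingredient is essentially this: replace the locally assembled $\phi$ by the restriction to $H$ of an endomorphism of $G$ obtained directly from the type, as in Lemma \ref{key_lemma_1}.
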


\begin{rk}
In Subsection~\ref{counter}, we will give a counterexample showing that the result is false if one removes the assumption that $G$ is irreducible. Moreover, in Section \ref{homogeneity_Coxeter}, we will see that this result is not true if the subgroup $U$ is finite.
\end{rk}

Let us introduce the following definition, which will be used in various places throughout the paper (compare with \cite[Definition 5.3]{And21}).

\begin{de}\label{class-permuting}Let $G$ be a group and let $\varphi$ be an endomorphism of $G$. We say that $\varphi$ is \emph{class-permuting} if the following condition holds: for any two non-conjugate finite subgroups $H,H'$ of $G$, $\varphi(H)$ and $\varphi(H')$ are non-conjugate.\end{de}

\begin{rk}
The terminology comes from the following easy fact: if $G$ has only finitely many conjugacy classes of finite subgroups, then a class-permuting morphism $\varphi$ induces a permutation of the conjugacy classes of finite subgroups.
\end{rk}

The following key lemma will be used in the proof of Theorem \ref{main_crystallo0}.

\begin{lemme}\label{key_lemma_1}
Let $G=\langle s_1,\ldots,s_n\ \vert \ r_1,\ldots,r_k\rangle$ be a finitely presented group, let $H$ be a definable finitely generated subgroup of $G$ (without parameters) and let $u,u'$ be two finite tuples of elements of $G$. Suppose that $G$ has only finitely many conjugacy classes of finite subgroups. If $u$ and $u'$ have the same type in $G$, then there is an endomorphism $\varphi$ of $G$ such that the following conditions hold:
\begin{enumerate}[(1)]
    \item $\varphi(u)=u'$;
    \item $\varphi(H)\subset H$;
    \item $\varphi$ is class-permuting (see Definition \ref{class-permuting}).
\end{enumerate}
Moreover, if $H$ is definable by a universal formula, then it is sufficient to assume that $u$ and $u'$ have the same $\mathrm{EA}$-type in $G$ to conclude that there exists such an endomorphism $\varphi$ of $G$.
\end{lemme}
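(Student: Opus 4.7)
The plan is to encode the existence of an endomorphism $\varphi$ having the three required properties as a single first-order formula $\phi(x_1,\ldots,x_\ell)$ in which $x$ corresponds to the tuple $u$, to verify $G\models \phi(u)$ by taking $\varphi=\mathrm{id}_G$ as a trivial witness, and then to transfer $G\models \phi(u')$ via equality of types.

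To set this up, I write each $u_i$ as a word $u_i=w_i(s_1,\ldots,s_n)$ in the generators of $G$, fix a finite generating set $h_1=v_1(s),\ldots,h_m=v_m(s)$ of $H$, let $\psi(y)$ be a parameter-free formula defining $H$ in $G$, and use the hypothesis of finitely many conjugacy classes of finite subgroups to pick representatives $F_1,\ldots,F_p$ (each generated by finitely many words in the $s_i$'s). Since an endomorphism of the finitely presented group $G$ is determined by the image of the generators, subject only to preservation of the defining relations, I would consider
\[
\phi(x) \;:=\; \exists t_1\cdots\exists t_n\ \bigl[\Theta_1(t)\wedge \Theta_2(t,x)\wedge \Theta_3(t)\wedge \Theta_4(t)\bigr],
\]
where $\Theta_1(t):=\bigwedge_j r_j(t)=1$ ensures that $s_i\mapsto t_i$ extends to an endomorphism $\varphi$ of $G$; $\Theta_2(t,x):=\bigwedge_i w_i(t)=x_i$ encodes property (1); $\Theta_3(t):=\bigwedge_j \psi(v_j(t))$ encodes property (2) (because $H=\langle h_1,\ldots,h_m\rangle$ and $\varphi(h_j)=v_j(t)$); and $\Theta_4(t):=\bigwedge_{i\neq j}\forall z\ \neg\mathrm{Conj}_{ij}(z,t)$, with $\mathrm{Conj}_{ij}(z,t)$ the quantifier-free formula (in $z,t$) asserting that conjugation by $z$ carries the finite subset $\varphi(F_i)$ bijectively onto $\varphi(F_j)$, obtained by writing the elements of each $F_i$ as words in the $s_j$'s and evaluating at $t$. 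Since every finite subgroup of $G$ is conjugate to some $F_i$ and $\varphi$ respects conjugation, $\Theta_4$ captures property (3).

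For the verification, setting $t_i:=s_i$ realises $\varphi=\mathrm{id}_G$, under which all four conjuncts hold tautologically, so $G\models \phi(u)$. Equality of types then yields $G\models \phi(u')$, and the witnesses $t_i$ define the desired endomorphism $\varphi:G\to G$ with $\varphi(u)=u'$, $\varphi(H)\subset H$ and property (3). For the moreover statement, if $\psi(y)=\forall y'\,\chi(y,y')$ with $\chi$ quantifier-free, then $\Theta_3$ and $\Theta_4$ are both universal in $t$ while $\Theta_1,\Theta_2$ are quantifier-free, so after combining the universal blocks and pulling them past the quantifier-free part one finds that $\phi$ has the shape $\exists t\ \forall(y',z)\,(\mathrm{QF})$; hence $\phi$ is an $\mathrm{EA}$-formula and equality of $\mathrm{EA}$-types between $u$ and $u'$ suffices to transfer.

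The main point of care --- more a matter of bookkeeping than a genuine obstacle --- is the verification that $\Theta_4$ (and, for the moreover statement, $\Theta_3$) can be unpacked as an honest first-order formula of the claimed quantifier complexity, using only finitely many word equations among the generators of each $F_i$, so that $\phi$ really is $\mathrm{EA}$ when $\psi$ is universal; once this is set up, the whole argument is a clean type-transfer trick using the identity as a canonical witness.
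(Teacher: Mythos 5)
Your proposal is correct and follows essentially the same route as the paper's proof: encode the existence of the endomorphism as a single $\exists\forall$ formula over the images of the generators, witness it with the identity, and transfer by equality of (EA-)types. The only cosmetic difference is that you phrase the non-conjugacy clause as the negation of a bijective-conjugation predicate while the paper writes it directly as a $\bigvee\bigwedge$ of inequations, which amounts to the same quantifier-free condition.
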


\begin{proof}
The map $\varphi\in \mathrm{End}(G)\mapsto (\varphi(s_1),\ldots,\varphi(s_n))\in G^n$ induces a bijection between $\mathrm{End}(G)$ and $E=\lbrace (x_1,\ldots,x_n)\in G^n \ \vert \ r_i(x_1,\ldots,x_n)=1 \text{ for } 1\leq i\leq k\rbrace$ (its inverse is $(x_1,\ldots,x_n)\in E\mapsto (f : x_i\mapsto s_i)$). Write $u$ as a word $w(s_1,\ldots,s_n)$ in the generators of $G$, and let $\theta(y)$ be a first-order formula such that $H=\lbrace g\in G \ \vert \ G\models \theta(g)\rbrace$. Let $h_1(s_1,\ldots,s_n),\ldots,h_m(s_1,\ldots,s_n)$ be a generating set for $H$. Let $F_1,\ldots,F_k$ be representatives of the conjugacy classes of finite subgroups of $G$, and write the elements of $F_i$ as words $f_{i,1}(s_1,\ldots,s_n),\ldots,f_{i,\vert F_i\vert}(s_1,\ldots,s_n)$. Now consider the following condition.
\begin{enumerate}[$(\star)$]
\item the identity of $G$ is a morphism, maps $u$ to $u$, satisfies $\mathrm{id}(H)\subset H$ and maps $F_i$ and $F_j$ to non-conjugate subgroups if $i\neq j$.
\end{enumerate}
Then condition $(\star)$ can be expressed in first-order logic via the following formula (where $z$ denotes a tuple of variables of the same arity as $u$ and $x$ denotes a $n$-tuple of variables):
\begin{align*}
\gamma(z): \exists x \forall g \ (z=w(x&)) \ \bigwedge_{i=1}^k(r_i(x)=1) \bigwedge_{i=1}^m\theta(h_i(x))  \\
           &\bigwedge\limits_{\substack{i,j=1 \\ i \neq j \\ \vert F_i\vert=\vert F_j\vert}}^{m}\bigvee_{k=1}^{\vert F_i\vert}\bigwedge_{\ell=1}^{\vert F_i\vert}(gf_{i,k}(x)g^{-1}\neq f_{j,\ell}(x)) \\
\end{align*}
Note that $G\models \gamma(u)$ because one can take $x=(s_1,\ldots,s_n)$. Since $u$ and $u'$ have the same type, we have $G\models \gamma(u')$, which provides a tuple $(g_1,\ldots,g_n)\in G^n$ such that the map $s_i\mapsto g_i$ for $1\leq i\leq n$ extends to an endomorphism $\varphi$ of $G$ mapping $u$ to $u'$, such that $\varphi(H)\subset H$ and such that $\varphi(F_i)$ and $\varphi(F_j)$ are non-conjugate for $i\neq j$.

\smallskip \noindent
Last, note that if $H$ is definable by a universal formula, then $\gamma(z)$ is an $\mathrm{EA}$-formula.\end{proof}




We will also need the following result.

\begin{fact}[{\cite[Proposition 3.4]{PS23}}]\label{definable}
Let $G=H\rtimes G_0$ be a split crystallographic group. The subgroup $H$ is definable in $G$ without parameters. More precisely, if $G_0$ has order $m$, $H$ is definable by the following universal formula: $\chi(x): \forall y \ ([x, y^m] = 1).$
\end{fact}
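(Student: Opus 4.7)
The plan is to establish the two inclusions separately. For the easy direction, I would observe that the projection $\pi: G \to G_0$ kills $m$-th powers: for any $y \in G$, we have $\pi(y^m) = \pi(y)^m = 1$ since $|G_0| = m$, so $y^m \in \ker \pi = H$. Since $H$ is abelian, any $x \in H$ then commutes with $y^m$ for every $y$, so $G \models \chi(x)$.

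For the converse, I would suppose that $x \in G$ satisfies $\chi$, and first deduce that $x$ centralizes the subgroup $mH := \{t^m : t \in H\}$ of $H$ by specializing $y = t \in H$ in the defining formula. The goal is to promote this to $x$ centralizing all of $H$, for then maximal abelianness of $H$ in $G$ (which is part of Bieberbach's theorem, recorded in Subsection~\ref{prelim_crystallo}) forces $x \in H$. To promote it, fix any $t \in H$. Since $H$ is normal in $G$ (it is the translation subgroup), $xtx^{-1} \in H$, and by assumption
\[ (xtx^{-1})^m = x t^m x^{-1} = t^m. \]
In $H \cong \mathbb{Z}^n$, the equation $a^m = b^m$ has the unique solution $a=b$ (torsion-freeness), so $xtx^{-1} = t$. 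Thus $x$ centralizes $H$, and we conclude $x \in H$.

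The argument is short, and the only real subtlety is the passage from centralizing $mH$ to centralizing $H$. I expect that to be the main obstacle for a reader: one must combine three ingredients at that step---that $H$ is normal in $G$ (so conjugates of elements of $H$ stay in $H$), that $H$ is torsion-free abelian (so $m$-th roots are unique), and finally that $H$ is maximal abelian (to close the argument). Note that the proof as sketched above in fact works for every crystallographic group, not only split ones, since it only uses that $H \triangleleft G$, $H \cong \mathbb{Z}^n$, $[G:H] = m$ and $H$ is maximal abelian.
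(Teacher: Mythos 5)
Your argument is correct: the forward inclusion via $y^m\in\ker(G\to G_0)=H$, and the converse via centralizing $mH$, normality of $H$, uniqueness of $m$-th roots in the torsion-free abelian group $H\simeq\mathbb{Z}^n$, and maximal abelianness of $H$ are all sound, and your observation that the proof works for arbitrary (not necessarily split) crystallographic groups is also right. The paper does not reprove this statement but merely cites it as \cite[Proposition 3.4]{PS23}, so there is no in-paper proof to compare against; your argument is the natural one and matches the one in that reference in spirit.
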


Theorem \ref{main_crystallo0} will follow easily from the follow proposition. We recall that, as defined in Subsection~\ref{section_notation}, for $g \in G$, $\mathrm{ad}(g)$ denotes the inner automorphism induced by $g$ on $G$.

\begin{prop}\label{15janvier2026}
Let $G=H\rtimes G_0$ be an irreducible split crystallographic group. Let $\ell\geq 1$ and $u=(u_1,\ldots,u_{\ell})$ be a tuple of elements of $G$. Suppose that the subgroup $U$ of $G$ generated by $\lbrace u_1,\ldots,u_{\ell}\rbrace$ is infinite and that there exists a class-permuting endomorphism $\theta$ of $G$ such that $\theta(u)=u$ and $\theta(H)\subset H$. Then $\theta$ is an automorphism of $G$.
\end{prop}

\begin{proof}
By Lemma \ref{finitely_many}, $G$ has only finitely many conjugacy classes of finite subgroups. Let $F_1,\ldots,F_m$ be a collection of representatives of these conjugacy classes. Let $\mathcal{C}=\lbrace [F_1],\ldots,[F_k]\rbrace$, where $[F_i]$ denotes the conjugacy class of $F_i$ in $G$. The group $\langle \theta\rangle $ acts on $\mathcal{C}$. Since this set $\mathcal{C}$ is finite, there is an integer $N\geq 1$ such that $\theta^N$ acts trivially on $\mathcal{C}$. Hence, $\theta^N([G_0])=[G_0]$ and thus there is an element $g\in G$ such that $\theta^N(G_0)=gG_0g^{-1}$. Write $g=hg_0$ for some $h\in H$ and $g_0\in G_0$. The endomorphism $\theta'=\mathrm{ad}(h^{-1})\circ \theta^N$ satisfies $\theta'(H)\subset H$ and $\theta'(G_0)=G_0$. As $G_0$ has finite automorphism group, there is an integer $M\geq 1$ such that $\theta'^M$ induces the identity of $G_0$. Define $f=\theta'^M=\mathrm{ad}(h')\circ \theta^{MN}$ for some $h'\in H$.

\smallskip \noindent
We will prove that $f$ is in fact the identity of $G$ (not only of $G_0$). But $f=\mathrm{ad}(h^{-1})\circ \theta^N$, which shows that $\theta$ is an automorphism.


\smallskip \noindent
It remains to prove that $f$ is the identity of $G$. Since $U$ is infinite (by assumption), it contains an element $x$ of infinite order (indeed, as well known, finitely generated linear periodic groups are finite), hence $y:=x^{\vert G_0\vert}$ is a non-trivial element of $H$. But $\theta(u)=u$, so $\theta_{\vert U}=\mathrm{id}_{ U}$ and $f_{\vert U\cap H}=\mathrm{id}_{U\cap H}$ (since $f=\mathrm{ad}(h')\circ \theta^{MN}$ with $h'\in H$), and therefore $f(y)=y$.

\smallskip \noindent
By assumption, $G$ is irreducible, which means that the linear representation $\rho : G_0\rightarrow \mathrm{GL}_n(\mathbb{Z})\subset \mathrm{GL}_n(\mathbb{Q})$ is irreducible (see Definition \ref{def_irreducible}), where $\rho$ denotes the action of $G_0$ on $H$ in the semidirect product $G=H\rtimes G_0$.  Let $V$ be the linear subspace of $\mathbb{Q}^n$ spanned by the finite set $G_0\cdot y$. By irreducibility and the fact that $y$ is non-trivial, $V$ must coincide with $\mathbb{Q}^n$. It follows that $G_0\cdot y$ contains $n$ vectors $h_1,\ldots, h_n\in H=\mathbb{Z}^n$ that are linearly independent over $\mathbb{Q}$. Let $H'$ be the subgroup of $H$ generated by $\lbrace h_1,\ldots, h_n\rbrace$ and let $A\in \mathrm{M}_n(\mathbb{Z})$ be the matrix whose columns are the vectors $h_1,\ldots,h_n$ written in the canonical basis $e_1=(1,0,\ldots,0),\ldots,e_n=(0,\ldots,0,1)$ of $H=\mathbb{Z}^n$. By the inverse of matrix formula, there is a matrix $B\in \mathrm{M}_n(\mathbb{Z})$ (namely $B=A^\intercal$) such that $AB=dI_n$, with $d=\mathrm{det}(A)\neq 0$ since $h_1,\ldots, h_n$ are linearly independent over $\mathbb{Q}$. It follows that $dH\subset H'$ (using additive notation).



\smallskip \noindent
For each $1\leq i\leq n$, as $h_i$ belongs to $G_0\cdot y$, one can write $h_i=g_i(y)$ for some $g_i\in G_0$. Using the fact that $G_0$ acts on $H$ by conjugation in the semidirect product $H\rtimes G_0$, let us write $h_i=g_iyg_i^{-1}$. As we have proved in the previous paragraphs that $f(y)=y$ and that the restriction of $f$ to $G_0$ is the identity, we have $f(h_i)=f(g_iyg_i^{-1})=f(g_i)f(y)f(g_i)^{-1}=g_iyg_i^{-1}=h_i$ for each $1\leq i\leq n$. Hence $f$ coincides with the identity on $H'$.

\smallskip \noindent
Now, recall that $dH\subset H'=\langle h_1,\ldots,h_n\rangle$ with $d\in\mathbb{Z}^{\ast}$. Therefore, for every integer $1\leq i\leq n$, the element $de_i$ belongs to $H'$. But we have just proved that $f$ is the identity on $H'$, so we have $f(de_i)=de_i$, hence $df(e_i)=de_i$ and $f(e_i)=e_i$. Conclusion: $f_{\vert H}$ is the identity of $H$, and so $f$ is the identity of $G$.\end{proof}

We are now ready to prove the main theorem.

\begin{proof}[Proof of Theorem \ref{main_crystallo0}]
Let $G=H\rtimes G_0$ be an irreducible split crystallographic group. Let $\ell\geq 1$ be an integer and let $u=(u_1,\ldots,u_{\ell})\in G^{\ell}$ and $u'=(u'_1,\ldots,u'_{\ell})\in G^{\ell}$ be two $\ell$-tuples. Suppose that the subgroup $U$ of $G$ generated by $\lbrace u_1,\ldots,u_{\ell}\rbrace$ is infinite and that $u$ and $u'$ have the same EA-type in $G$. By Lemma \ref{finitely_many}, $G$ has only finitely many conjugacy classes of finite subgroups. By Lemma \ref{key_lemma_1} and Fact \ref{definable}, there exist two class-permuting endomorphisms $\varphi,\varphi'$ of $G$ such that $\varphi(u)=u'$, $\varphi'(u')=u$, $\varphi(H)\subset H$ and $\varphi'(H)\subset H$. Hence, $\theta=\varphi'\circ\varphi$ satisfies the assumptions of Proposition \ref{15janvier2026}. This proposition tells us that $\theta$ is an automorphism of $G$. But $\theta=\varphi'\circ \varphi$, so $\varphi'$ must be surjective, and thus $\varphi'$ must be an automorphism of $G$ (indeed $G$ is Hopfian, as it embeds in $\mathrm{GL}_{n+1}(\mathbb{Z})$). Moreover, $\varphi'(u')=u$, which shows that $u$ and $u'$ are automorphic in $G$.\end{proof}

The following definition was introduced by the first-named author in \cite{And18b}, where it was proved that virtually free groups are uniformly almost homogeneous (in fact, uniformly almost $\mathrm{AE}$-homogeneous).

\begin{de}
A group $G$ is almost homogeneous if for any $k\geq 1$ and $u\in G^k$, there exists an integer $n\geq 1$ such that the set of $k$-tuples having the same type as $u$ is the union of $\leq n$ orbits under the action of $\mathrm{Aut}(G)$, and $G$ is uniformly almost homogeneous if $n$ can be chosen independently from $u$ and $k$. Note that $G$ is homogeneous if and only if one can take $n= 1$ in this definition.\end{de}

We will prove the following corollary of Theorem \ref{main_crystallo0}.

\begin{co}\label{coro_almost_homogeneous}
Irreducible split crystallographic groups are uniformly almost homogeneous (in fact, uniformly almost $\mathrm{AE}$-homogeneous).
\end{co}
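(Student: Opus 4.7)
My plan is to split into two cases according to whether the subgroup $U = \langle u_1, \dots, u_k\rangle$ generated by the tuple $u \in G^k$ is infinite or finite. The infinite case is dispatched immediately by Theorem~\ref{main_crystallo0}: such a tuple is AE-determined, so within its AE-type there is exactly one $\mathrm{Aut}(G)$-orbit, contributing at most $1$ to the count. The substantive work is in the finite case, where the goal is to bound the number of orbits by a quantity that is independent of both $u$ and $k$.

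In the finite case, the first step is to observe that ``$\langle u\rangle$ is a finite group of order $N$ with a specified Cayley table'' is a quantifier-free property of $u$: choose words $w_1,\dots,w_N$ enumerating $U = \langle u\rangle$ and form the finite conjunction expressing that the $w_i(u)$ are pairwise distinct, closed under multiplication according to the Cayley table of $U$, and that each generator $u_j$ appears among them. Since every QF-formula is trivially an AE-formula, this condition is preserved by AE-type equality. Hence for every $u'$ with the same AE-type as $u$, the subgroup $\langle u'\rangle$ is finite of the same order as $U$, and the assignment $u_j \mapsto u'_j$ extends uniquely to an isomorphism $\phi_{u'}\colon U \to \langle u'\rangle$.

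The second step is a counting argument based on Lemma~\ref{finitely_many}: let $F_1,\dots,F_m$ be representatives of the finitely many conjugacy classes of finite subgroups of $G$. Every $\langle u'\rangle$ as above is conjugate to some $F_i$, and after composing with an inner automorphism of $G$ (which lies in $\mathrm{Aut}(G)$) we may assume $\langle u'\rangle = F_i$. The tuples $u'$ in the AE-type of $u$ with $\langle u'\rangle = F_i$ then correspond to a subset of the set of isomorphisms $U \to F_i$, whose cardinality is at most $|\mathrm{Aut}(F_i)|$; two such tuples are in the same $\mathrm{Aut}(G)$-orbit precisely when their associated isomorphisms differ by composition with an automorphism of $F_i$ that is the restriction of some automorphism of $G$. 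Summing over $i$, the number of $\mathrm{Aut}(G)$-orbits within the AE-type of $u$ is at most $m \cdot \max_{1 \leq i \leq m} |\mathrm{Aut}(F_i)|$.

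The crucial point, and the main obstacle, is verifying that this bound is genuinely independent of $k$: it depends only on the (finite) conjugacy class of $\langle u\rangle$ and on a finite choice of marked isomorphism onto a fixed representative, neither of which depends on the length of the tuple. Setting $n := \max\{1,\, m \cdot \max_i |\mathrm{Aut}(F_i)|\}$, which depends only on $G$, thus yields the required uniform bound and proves that $G$ is uniformly almost AE-homogeneous.
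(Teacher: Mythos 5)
Your proof is correct. The overall architecture matches the paper's: split on whether $U=\langle u\rangle$ is infinite or finite, dispatch the infinite case by Theorem~\ref{main_crystallo0}, and in the finite case exploit Lemma~\ref{finitely_many} (finitely many conjugacy classes of finite subgroups) to obtain a bound of the shape $m\cdot\max_i\lvert\mathrm{Aut}(F_i)\rvert$. Where you diverge is in how the finite case is executed. The paper invokes Lemma~\ref{key_lemma_2}: from equality of existential types it extracts, for each tuple $v$ in the type of $u$, an endomorphism $\varphi$ of the whole group $G$ with $\varphi(u)=v$ that is injective on $U$, and then runs a pigeonhole argument on $nm+1$ tuples (with $n=\lvert\mathrm{Aut}(U)\rvert$) to force two of them to coincide after conjugation. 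You instead observe that the entire Cayley table of $U$, written in words in the $u_j$, is a quantifier-free (hence AE) condition, so any $u'$ with the same AE-type generates an isomorphic copy of $U$ via $u_j\mapsto u'_j$; after conjugating $\langle u'\rangle$ onto a fixed representative $F_i$, the tuple $u'$ is completely determined by the resulting isomorphism $U\to F_i$, of which there are at most $\lvert\mathrm{Aut}(F_i)\rvert$, and the count follows with no pigeonhole and no endomorphism of $G$. Your route is more elementary (it never leaves the finite subgroup $U$, whereas the paper's Lemma~\ref{key_lemma_2} needs the finite presentation of $G$ and produces a global endomorphism of which only the restriction to $U$ is used) and arrives at the same uniform bound; the paper's route has the advantage of reusing machinery (Lemma~\ref{key_lemma_2} is a variant of Lemma~\ref{key_lemma_1}, which is needed elsewhere anyway). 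Both correctly confine the use of irreducibility to the infinite case.
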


We need the following easy lemma, whose proof is very similar to that of Lemma \ref{key_lemma_1}.

\begin{lemme}\label{key_lemma_2}
Let $G=\langle s_1,\ldots,s_n\ \vert \ r_1,\ldots,r_k\rangle$ be a finitely presented group, and let $u,u'$ be two finite tuples of elements of $G$. Let $F$ be a finite subset of $G$. If $u$ and $u'$ have the same existential type in $G$, then there is an endomorphism $\varphi$ of $G$ such that $\varphi(u)=u'$ and $\varphi$ is injective on $F$.
\end{lemme}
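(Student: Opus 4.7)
The plan is to imitate the strategy used in the proof of Lemma \ref{key_lemma_1}, but this time encoding the desired endomorphism by a purely existential formula. The main idea is that an endomorphism of $G$ is uniquely determined by the images of the generators $s_1,\ldots,s_n$, with the only constraint that these images satisfy the defining relations $r_1,\ldots,r_k$. Thus the existence of an endomorphism with prescribed properties translates into an existential sentence, provided that the prescribed properties themselves are expressible by quantifier-free formulas.

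First I would write the tuple $u$ as a word $w(s_1,\ldots,s_n)$ in the generators, and pick words $f_1(s_1,\ldots,s_n),\ldots,f_p(s_1,\ldots,s_n)$ such that $F=\{f_1,\ldots,f_p\}\subseteq G$. Let $I\subseteq \{1,\ldots,p\}^2$ be the set of pairs $(i,j)$ with $i\neq j$ such that $f_i\neq f_j$ in $G$ (this set is determined by $G$, not by the particular choice of witnesses). Then the key step is to introduce the existential formula
\begin{equation*}
\gamma(z)\ :\ \exists x\ \Bigl[\,(z=w(x))\wedge \bigwedge_{i=1}^{k}\bigl(r_i(x)=1\bigr)\wedge \bigwedge_{(i,j)\in I}\bigl(f_i(x)\neq f_j(x)\bigr)\Bigr],
\end{equation*}
where $x=(x_1,\ldots,x_n)$ is a tuple of variables. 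This is an existential formula (note that the inequalities $f_i(x)\neq f_j(x)$ are quantifier-free, and the whole body of $\gamma$ is quantifier-free).

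Next I would check that $G\models \gamma(u)$: this is witnessed by the tuple $x=(s_1,\ldots,s_n)$ itself, since $w(s)=u$, the relations $r_i(s)$ trivially hold in $G$, and by the very choice of $I$ we have $f_i(s)\neq f_j(s)$ for $(i,j)\in I$. (In natural language, the identity of $G$ is an endomorphism mapping $u$ to $u$ and injective on $F$.) Because $u$ and $u'$ have the same existential type in $G$, it follows that $G\models \gamma(u')$, which produces a tuple $(g_1,\ldots,g_n)\in G^n$ satisfying the three clauses of $\gamma$. The second clause ensures that the assignment $s_i\mapsto g_i$ extends to a (well-defined) endomorphism $\varphi$ of $G$; the first clause gives $\varphi(u)=w(g)=u'$; and the third clause gives $\varphi(f_i)\neq \varphi(f_j)$ whenever $(i,j)\in I$, that is, whenever $f_i\neq f_j$ in $F$. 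Hence $\varphi$ is injective on $F$, which is exactly the conclusion.

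There is essentially no serious obstacle here: the argument is a direct adaptation of the formula-building trick in Lemma \ref{key_lemma_1}, simplified by the fact that we only need to enforce finitely many disequalities rather than additional universally quantified conditions about a definable subgroup and conjugacy classes of finite subgroups. The only mild subtlety to be careful about is the correct bookkeeping of the set $I$: one must encode the disequalities according to whether $f_i$ and $f_j$ are genuinely distinct in $G$, so that $\gamma(u)$ is indeed satisfied by the identity witness $x=(s_1,\ldots,s_n)$.
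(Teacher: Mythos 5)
Your proof is correct and is precisely the argument the paper intends: the paper omits the proof of this lemma, stating only that it is ``very similar to that of Lemma \ref{key_lemma_1}'', and your existential formula encoding the relations, the condition $z=w(x)$, and the finitely many disequalities among the elements of $F$ is exactly that adaptation. The bookkeeping via the set $I$ correctly ensures the identity witnesses $\gamma(u)$ and that the resulting endomorphism is injective on $F$.
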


\begin{proof}[Proof of Corollary \ref{coro_almost_homogeneous}]
Let $G$ be an irreducible split crystallographic group. Let $\ell\geq 1$ and $u=(u_1,\ldots,u_{\ell})\in G^{\ell}$. According to Theorem \ref{main_crystallo0}, if the subgroup $U$ of $G$ generated by $\lbrace u_1,\ldots,u_{\ell}\rbrace$ is infinite, then $u$ is $\mathrm{AE}$-determined. So, let us assume that $U$ is finite. Let $m\geq 1$ denote the number of conjugacy classes of finite subgroups of $G$ and let $n=\vert \mathrm{Aut}(U)\vert$. Define $N=nm+1$. Let $v_1,v_2\ldots,v_N$ be tuples such that $\mathrm{tp}_{\exists}(v_k)=\mathrm{tp}_{\exists}(u)$ for every $1\leq k\leq N$. Therefore, for every $1\leq k\leq N$, the subgroup $V_k$ of $G$ generated by the components of $v_k$ is finite and isomorphic to $U$. Moreover, according to the (strong) pigeonhole principle, there are at least $n+1$ subgroups in the collection $\lbrace V_1,\ldots,V_N\rbrace$ that belong to the same conjugacy class. Hence, after renumbering $v_1,\ldots,v_N$ and replacing $v_1,\ldots,v_N$ with conjugates if necessary, we can assume that $V_1=\ldots = V_{n+1}$. By Lemma~\ref{key_lemma_2}, for every $1\leq k\leq n+1$, there exists a morphism $\varphi_k:G\rightarrow G$ such that $\varphi_k(u)=v_k$ and $\varphi$ is injective on $U$. Thus the restriction of $\varphi_k$ to $U$ is an isomorphism between $U$ and $V_k$ that maps $u$ to $v_k$. Hence, since $\mathrm{Isom}(U,V_k)$ is finite of order $n$ and $V_1=\ldots = V_{n+1}$, there are $2\leq k < t < m+2$ such that ${\varphi_k}_{\vert U}={\varphi_{t}}_{\vert U}$, therefore $u_k=u_{t}$.\end{proof}

\subsection{Non-homogeneous crystallographic groups}\label{counter}

In this section, we prove Theorem~\ref{counterexample_theorem}. More precisely, we prove the following result.

\begin{te}\label{counterexample_theorem2}Let $G_1=H_1\rtimes K_1$ and $G_2=H_2\rtimes K_2$ be non-isomorphic split crystallographic groups such that $\widehat{G}_1\simeq \widehat{G}_2$, then $G_1 \times G_2$ is not homogeneous. More precisely, writing $K_1=\lbrace k_1,\ldots,k_n\rbrace$, the tuple $(k_1,\ldots,k_n)$ is not type-determined (see Def.~\ref{type-det}).\end{te}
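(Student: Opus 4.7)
The plan is to exploit Oger's theorem, which gives $G := G_1 \times G_2 \preceq \widehat{G} := \widehat{G}_1 \times \widehat{G}_2$, together with an isomorphism $\alpha : \widehat{G}_1 \to \widehat{G}_2$, to construct a tuple $v \in G^n$ sharing the type of $u := (k_1, \ldots, k_n)$ in $\widehat{G}$ (hence in $G$), but lying outside the $\mathrm{Aut}(G)$-orbit of $u$.

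First, I would construct $v$. By the classical fact that every finite subgroup of the profinite completion of a finitely generated virtually polycyclic group is conjugate in the completion to a finite subgroup of the original group, I may modify $\alpha$ by an inner automorphism of $\widehat{G}_2$ to arrange $\alpha(K_1) \subseteq G_2$. Since $|K_1| = |K_2| = n$ (as $K_i \simeq \widehat{G}_i / \widehat{H}_i$ and $\widehat{G}_1 \simeq \widehat{G}_2$), and finite subgroups of $G_2$ of maximal order are conjugate in $G_2$ to $K_2$, a further adjustment yields $\alpha(K_1) = K_2$. Setting $v := ((1, \alpha(k_1)), \ldots, (1, \alpha(k_n)))$, the swap involution $\tau(g_1, g_2) := (\alpha^{-1}(g_2), \alpha(g_1))$ is an automorphism of $\widehat{G}$ with $\tau(u) = v$, so $u$ and $v$ share a type in $\widehat{G}$ and, by Oger, also in $G$.

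Next, I would show $u$ and $v$ are not in the same $\mathrm{Aut}(G)$-orbit. Suppose for contradiction that $\sigma \in \mathrm{Aut}(G)$ sends $u$ to $v$. Then $\sigma$ yields an isomorphism of centralizers $C_{G_1}(K_1) \times G_2 \simeq G_1 \times C_{G_2}(K_2)$. Using $C_{G_i}(K_i) = H_i^{K_i} \times Z(K_i)$ and the consequences of $\widehat{G}_1 \simeq \widehat{G}_2$ (namely $H_1^{K_1} \simeq H_2^{K_2} \simeq \mathbb{Z}^m$ for some $m$ and $K_1 \simeq K_2 =: K$), this rewrites as $\mathbb{Z}^m \times Z(K) \times G_2 \simeq G_1 \times \mathbb{Z}^m \times Z(K)$. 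Applying Bieberbach's rigidity and decomposing the translation subgroups into $K$-isotypic components over $\mathbb{Q}$, the trivial $K$-parts on both sides match automatically, and the non-trivial $K$-parts must be $\mathbb{Z}[K]$-isomorphic up to an automorphism of $K$. This yields $H_1 \simeq H_2$ as $\mathbb{Z}[K]$-modules (up to twist), hence $G_1 \simeq G_2$ by Bieberbach, contradicting the hypothesis.

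The main obstacle is this last deduction: extracting from the centralizer isomorphism $\mathbb{Z}^m \times Z(K) \times G_2 \simeq G_1 \times \mathbb{Z}^m \times Z(K)$ a genuine $\mathbb{Z}[K]$-isomorphism between the non-trivial $K$-representations of $H_1$ and $H_2$ requires a careful integral representation-theoretic analysis, since the corresponding rational-level isomorphism is automatic from $\widehat{G}_1 \simeq \widehat{G}_2$ and it is precisely the integral structure that distinguishes $G_1$ from $G_2$.
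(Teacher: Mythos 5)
Your first half --- realizing $(k_1,\ldots,k_n)$ and $((1,\alpha(k_1)),\ldots,(1,\alpha(k_n)))$ as automorphic tuples in $\widehat{G}_1\times\widehat{G}_2$ via the swap $(g_1,g_2)\mapsto(\alpha^{-1}(g_2),\alpha(g_1))$ and then pulling the resulting type equality down to $G_1\times G_2$ by Oger's theorem --- is exactly the paper's argument. One small inaccuracy there: maximal-order finite subgroups of $G_2=H_2\rtimes K_2$ are precisely the complements of $H_2$, which are classified up to conjugacy by $H^1(K_2,H_2)$ and need not all be conjugate to $K_2$. The paper sidesteps this with Lemma \ref{WLOG}, which produces an automorphism of $\widehat{H}_2\rtimes K_2$ that is the identity on $\widehat{H}_2$ and carries $\alpha(K_1)$ onto $K_2$; alternatively you could simply work with the complement $\alpha(K_1)$ in place of $K_2$ throughout. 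This is cosmetic.

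The genuine gap is in the second half, and you have named it yourself. From $\sigma(u)=v$ you correctly obtain $C_{G_1}(K_1)\times G_2\simeq G_1\times C_{G_2}(K_2)$, i.e.\ $\mathbb{Z}^m\times Z(K)\times G_2\simeq G_1\times\mathbb{Z}^m\times Z(K)$, but the passage from this to $G_1\simeq G_2$ is a direct-product cancellation problem that you do not solve and that cannot be waved away: cancellation of $\mathbb{Z}$ factors is known to fail for finitely generated virtually abelian groups, and the non-cancellation phenomena are of exactly the same arithmetic nature (non-trivial class groups, locally free non-free modules) as the phenomena producing the pairs $G_1\not\simeq G_2$ with $\widehat{G}_1\simeq\widehat{G}_2$ in the first place; so ``a careful integral representation-theoretic analysis'' is not a safe placeholder. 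The paper avoids this entirely by assuming (as its proof does, though the theorem statement omits it) that the holonomy actions are \emph{irreducible}, and arguing elementarily at the level of elements: irreducibility forces $H_1^{K_1}=0$, so for $0\neq u\in T_A$ there is $k_1\in K_1$ with $u^{k_1}\neq u$, while $\pi(k_1,e)=(e,f(k_1))$ acts trivially on $T_A$; this kills the $T_A$-component of $\pi(u)$, whence $\pi(T_A)\subseteq T_B$, symmetrically $\pi(T_B)\subseteq T_A$, and $\pi$ induces an isomorphism $G_1\simeq G_2$, a contradiction. If you import that same irreducibility hypothesis, your centralizer route closes up too, and more cheaply than you anticipate: $H_i^{K_i}=0$ removes the $\mathbb{Z}^m$ factors, the isomorphism becomes $Z(K_1)\times G_2\simeq G_1\times Z(K_2)$, and quotienting both sides by their maximal finite normal subgroups (which are $Z(K_1)\times 1$ and $1\times Z(K_2)$ by Lemma \ref{no_normal_finite_subgroup}) gives $G_1\simeq G_2$ directly. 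As written, however, without irreducibility the final deduction is unproved.
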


For every integer $n$ such that the class number of the cyclotomic field $\mathbb{Q}(\zeta_n)$ is strictly greater than 1 (this is true for every $n\geq 85$), there exist split crystallographic groups $G_1,G_2$ of dimension $\phi(n)$ such that $G_1\not\simeq G_2$ but $\widehat{G}_1\simeq \widehat{G}_2$ (see \cite{bri}), where $\phi(n)$ is Euler's totient function. For example, for any $p\geq 23$, there are such groups of the form $\mathbb{Z}^{p-1} \rtimes\mathbb{Z}/p\mathbb{Z}$.

The failure of homogeneity in Theorem \ref{counterexample_theorem2} comes from the point group $K_1\times K_2$ of $G_1 \times G_2$, but we will give another example showing that elements of the translation subgroup are not type-determined in general (this second example should be compared with Theorem~\ref{main_crystallo0}, stating that elements from the translation subgroup are type-determined provided that $G$ is irreducible).

\subsubsection{First counterexample (proof of Theorem \ref{counterexample_theorem2})}\label{count_sec}


We recall that if $H$ and $K$ are finitely generated residually finite groups and $\phi : K\rightarrow \mathrm{Aut}(H)$ is a morphism, the inclusions $H\subset \widehat{H}$ and $K\subset \widehat{K}$ induce an isomorphism $\widehat{H\rtimes_{\phi}K}\simeq \widehat{H}\rtimes_{\widehat{\phi}}\widehat{K}$ where $\widehat{\phi}$ denotes the composition of $\widehat{K}\rightarrow\widehat{\mathrm{Aut}(H)}$ and $\widehat{\mathrm{Aut}(H)}\rightarrow\mathrm{Aut}(\widehat{H})$ (see for instance \cite[Proposition~2.6]{GZ11}).

\smallskip \noindent
Let $A = \mathbb{Z}^n \rtimes_\alpha K$ and $B = \mathbb{Z}^n \rtimes_\beta K$ be irreducible split crystallographic groups such that, letting $T = \mathbb{Z}^n$ (the translation subgroup), we have the following:
\begin{enumerate}[(1)]
	\item $T \rtimes_\alpha K \not\cong T \rtimes_\beta K$;
	\item $\widehat{T} \rtimes_\alpha K \cong \widehat{T} \rtimes_\beta K$.
	\end{enumerate}
Now, let $G = A \times B \cong (T_A \oplus T_B) \rtimes_\gamma (K_1 \times K_2)$ with $T_A = T_B = T$, $K_1 = K_2 = K$
 and:
$$\gamma(k_1, k_2)((t_1, t_2)) = (\alpha(k_1)(t_1), \beta(k_2)(t_2)).$$
Clearly this is a split crystallographic group, as $T_1 \oplus T_2$ has the additive structure of a finitely generated torsion-free abelian group and the action $\gamma$ is faithful. 

\smallskip \noindent
Now, fix an isomorphism:
\[f: \widehat{T} \rtimes_\alpha K_1 \cong \widehat{T} \rtimes_\beta K_2.\]Then, since $f$ is an isomorphism, the group $\widehat{T} \rtimes_\beta K_2$ admits an internal semi-direct product decomposition of the form $\widehat{T} \rtimes f(K_1)$. We will need the following lemma.

\begin{lemme}\label{WLOG}Let $G$ be a group. Suppose that $G$ splits as a semidirect product in two different ways: $G=H\rtimes K$ and $G=H\rtimes K'$ with $H$ abelian. Then the map $\varphi : G\rightarrow G$ that is the identity on $H$ and that maps every $k\in K$ to the unique $k'\in K'$ such that $k=hk'$ with $(h,k')\in H\times K'$ is an automorphism of $G$.
\end{lemme}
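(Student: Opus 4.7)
The plan is to exploit the abelianness of $H$ in an essential way, through the following two observations. First, since $H$ is normal in $G$ and both $K,K'$ are complements of $H$, the quotient map $\pi:G\to G/H$ restricts to group isomorphisms $\pi_K:K\to G/H$ and $\pi_{K'}:K'\to G/H$. Composing these, we obtain a group isomorphism $\psi:K\to K'$, $\psi=\pi_{K'}^{-1}\circ \pi_K$, which is precisely the map sending $k\in K$ to the unique $k'\in K'$ in the same $H$-coset as $k$. In particular, the set-theoretic map $\varphi$ of the statement is just the composition of the bijections $G=H\rtimes K \leftrightarrow H\times K \xrightarrow{(\mathrm{id},\psi)} H\times K' \leftrightarrow H\rtimes K'=G$, which is automatically bijective; the only content left to verify is multiplicativity.

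The key step is the following claim, which is where the abelianness of $H$ enters: for every $k\in K$ and $h_0\in H$, conjugation by $k$ and by $\psi(k)$ agree on $H$, namely $kh_0k^{-1}=\psi(k)h_0\psi(k)^{-1}$. Indeed, writing $k=h\psi(k)$ with $h\in H$, one has
\[kh_0k^{-1}=h\bigl(\psi(k)h_0\psi(k)^{-1}\bigr)h^{-1},\]
and since $H$ is normal the inner factor $\psi(k)h_0\psi(k)^{-1}$ lies in $H$; as $H$ is abelian, $h$ commutes with it and the equality follows.

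With this in hand, checking that $\varphi$ is a homomorphism is a routine computation: for $g_i=h_ik_i\in G$ ($h_i\in H$, $k_i\in K$), normality of $H$ gives $g_1g_2=h_1(k_1h_2k_1^{-1})(k_1k_2)$, whence $\varphi(g_1g_2)=h_1(k_1h_2k_1^{-1})\psi(k_1)\psi(k_2)$ (using that $\psi$ is a homomorphism); on the other hand
\[\varphi(g_1)\varphi(g_2)=h_1\psi(k_1)h_2\psi(k_2)=h_1\bigl(\psi(k_1)h_2\psi(k_1)^{-1}\bigr)\psi(k_1)\psi(k_2),\]
and the two expressions coincide by the claim. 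Thus $\varphi$ is a bijective homomorphism, i.e.\ an automorphism of $G$.

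The only nontrivial ingredient is the claim, which fails without the abelian hypothesis; everything else is straightforward bookkeeping about semidirect product decompositions. I anticipate no serious obstacle beyond keeping the notation clean when verifying multiplicativity.
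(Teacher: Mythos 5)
Your proof is correct and follows essentially the same route as the paper's: both hinge on the observation that $k$ and its $K'$-partner $\psi(k)$ induce the same conjugation on $H$ because $H$ is abelian and normal, and then verify multiplicativity by the same direct computation. The only cosmetic difference is that you obtain the multiplicativity of $k\mapsto\psi(k)$ upfront from the quotient isomorphisms $K\to G/H\leftarrow K'$, whereas the paper checks it by hand inside the computation ($k_1k_2=(h'_1k'_1h'_2k'^{-1}_1)k'_1k'_2$); you also spell out bijectivity of $\varphi$, which the paper leaves implicit.
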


\begin{rk}
Note that this lemma is not true when $H$ is not abelian: for $n\geq 6$, the symmetric group $S_n$ can be written as $A_n\rtimes \langle (12)\rangle=A_n\rtimes \langle (12)(34)(56)\rangle$, but for $n\geq 7$ there is no automorphism of $S_n$ mapping $(12)$ to $(12)(34)(56)$.
\end{rk}

\begin{proof}
We just have to prove that $\varphi$ is a morphism. Let $g_1=h_1k_1$ and $g_2=h_2k_2$ in $H\rtimes K$. We have $g_1g_2=
h_3k_3$ with $h_3=h_1k_1h_2k^{-1}_1\in H$ and $k_3=k_1k_2\in K$. Write $k_3=h'_{3}k'_3$ with $h'_3\in H$ and $k'_3\in K'$, so that $\varphi(g_1g_2)=h_3k'_3$. Then, write $k_i=h'_ik'_i$ for $i\in\lbrace 1,2\rbrace$, with $h'_i\in H$ and $k'_i\in K'$. Note that $\varphi(g_i)=h_ik'_i$. Now, $\varphi(g_1)\varphi(g_2)=h'_3k''_3$ with $h'_3=h_1k'_1h_2k'^{-1}_1\in H$ and $k''_3=k'_1k'_2\in K'$. But $k_1$ and $k'_1$ act by conjugation on $H$ in the same way (because $H$ is abelian), so $h_3=h'_3$. Then $k_3=k_1k_2=h'_1k'_1h'_2k'_2=(h'_1k'_1h'_2k'^{-1}_1)k'_1k'_2$, so $k'_3=k'_1k'_2=k''_3$. Hence $\varphi(g_1g_2)=\varphi(g_1)\varphi(g_2)$.\end{proof}


\smallskip \noindent
Hence, let $\varphi$ be the automorphism of $\widehat{T} \rtimes f(K_1)=\widehat{T} \rtimes K_2$ that is the identity on $\widehat{T}$ and that maps $f(K_1)$ to $K_2$. Replacing $f$ with $\varphi \circ f$, we can assume without loss of generality that $f(K_1) = K_2$.
Then define the following automorphism of $\widehat{G} = (\widehat{T}_A \oplus \widehat{T}_B) \rtimes_\gamma (K_1 \times K_2)$: $(\bar{t}_1, \bar{t}_2, k_1, k_2) \rightarrow (f^{-1}(\bar{t}_2), f(\bar{t}_1), f^{-1}(k_2), f(k_1))$.

\smallskip \noindent
This means that there is an automorphism of $\widehat{G}$ which shuffles the elements of $\widehat{T}_A$ within themselves and shuffles the elements of $\widehat{T}_B$ within themselves in such a way that $K_2$ acts on $\widehat{T}_A$ as $K_1$ does and $K_1$ acts on $\widehat{T}_B$ as $K_2$ does.
Hence,  since $G$ is an elementary subgroup of $\widehat{G}$ (by \cite{Oger88}), for every $k_1 \in K_1$ and $k_2 \in K_2$ we have:
$$\mathrm{tp}^G(k_1, e) = \mathrm{tp}^G(e, f(k_1))$$
$$\mathrm{tp}^G(e, k_2) = \mathrm{tp}^G(f^{-1}(k_2), e).$$
But, as proved below, there cannot be an automorphism $\pi$ of $G$ which reflects the identity of types above, as this would induce an isomorphism of $T \rtimes_\alpha K$ onto $ T \rtimes_\beta K$, contrary to our standing assumption that $T \rtimes_\alpha K$ and $ T \rtimes_\beta K$ are not isomorphic. Notice in fact that $(e, k_2)$ acts trivially on $T_A$ and similarly $(k_1, e)$ acts trivially on $T_B$, since by definition $G = A \times B = (T_A \rtimes_\alpha K) \times (T_B \rtimes_\beta K)$. In detail, let $u \in T_A$ be such that $u \neq 0_{T_A} = \bar{0}_A$. We look at where $\pi$ can map~$u$. 
\newline \underline{Case 1}. $\pi(u) = u_A$ with $u_A \in T_A$ (so necessarily $u_A \neq 0_{T_A}$).
\newline As $A$ is irreducible (i.e., the action $\alpha$ is irreducible), there exists $k_1 \in K_1$ such that $K_1 \models u^{k_1} \neq u$, but then in $G$ we have that $u^{(k_1, e)} \neq u$. But then we have that $\pi(u^{(k_1, e)}) = \pi(u)^{\pi(k_1, e)} = u_A^{(e, f(k_1))} = u_A = \pi(u)$, and this is a contradiction as $K_1 \models u^{k_1} \neq u$ and $\pi$ is an automorphism \mbox{(to see that $u_A^{(e, k_1)} = u_A$ recall that $G = A \times B$ and $u_A \in A$).}
\newline \underline{Case 2}. $\pi(u) = u_Au_B$ with $u_A \in T_A$, $u_B \in T_B$ and $u_A \neq 0_{T_A}$ and $u_B \neq 0_{T_B}$.
\newline As $A$ is irreducible (i.e., the action $\alpha$ is irreducible) there is $k_1 \in K_1$ such that $u_A^{k_1}=k_1u_A{k^{-1}_1} \neq u_A$. Let $k_2 = f(k_1)$. Observe now that $u^{(e, k_2)} = u$ as $u \in T_A$ and $G = A \times B$. But then we reach a contradiction as follows:
\[ \begin{array}{rcl}	
u^{(e, k_2)} = u
& \Leftrightarrow & \pi(u)^{\pi(e, k_2)} = \pi(u)\\
& \Leftrightarrow & (u_Au_B)^{(f^{-1}(k_2), e)} = u_Au_B\\
& \Leftrightarrow & u^{f^{-1}(k_2)}_A u_B = u_A u_B\\
& \Leftrightarrow & u^{f^{-1}(k_2)}_A = u_A \\
& \Leftrightarrow & u^{f^{-1}(f(k_1))}_A = u_A \\
& \Leftrightarrow & u_A^{k_1} = u_A.
\end{array} \]
\newline \underline{Case 3}. $\pi(u) = u_B$ with $u_B \in T_B$.
\newline This is the only case possible, as Case 1 and Case 2 are impossible, so $\pi(T_A)$ is contained in $T_B$. But the situation is symmetric in $A$ and $B$ (recall that we assume that also $B$ is irreducible, i.e., also $\beta$ is irreducible), so $\pi(T_B)$ is contained in $T_A$, and so necessarily $T_A$ is mapped onto $T_B$ and $T_B$ is mapped onto $T_A$ and so we are done, i.e., the automorphism $\pi$ actually induces an isomorphism of $A$ onto $B$, which is impossible, and so $\pi$ cannot exist.



\subsubsection{Second counterexample}\label{another_count_sec}

We modify the counterexample from Subsection~\ref{count_sec} and so we rely on the notation from there, in particular $K_1 = K = K_2$ are as there, as well as $A$, $B$, $T_A$ and $T_B$. Our aim is to show that if the split crystallographic group is not irreducible, then tuples from the translation subgroup need not be type-determined. 
	
\smallskip \noindent

	 As in Subsection~\ref{count_sec}, fix an isomorphism:
$$f: \widehat{T}_B \rtimes_\alpha K_1 \cong \widehat{T}_A \rtimes_\beta K_2$$
and assume without loss of generality that $f(K_1) = K_2$ (see Lemma \ref{WLOG} and Subsection~\ref{count_sec} for details).

 \smallskip \noindent
 Let $T^*_A$ be a free abelian group isomorphic to $T_A$ and $A' = T^*_A \oplus T_A \rtimes_{\alpha'} K_1$ be such that $K_1$ acts on $T_A$ as $\alpha$ and $K_1$ acts on the standard basis $b^*_A$ of $T^*_A$ in an irreducible way $\zeta$. Let $T^*_A = T^*_B$ and $B' = T^*_B \oplus T_B \rtimes_{\beta'} K_2$ with $K_2$ acting on $T_B$ as $\beta$ and $K_2$ acting on the standard basis $b^*_A = b^*_B$ as follows: for every $k \in K_2$ we have that $k$ acts on $b^*_B$ as $f^{-1}(k)$ acts on $b^*_A$. Notice that we then have that, for every $k \in K_1$, $f(k)$ acts on $T^*_B$ as $f^{-1}(f(k)) = k$ acts on $T^*_A$.

 \smallskip \noindent
Consider then the split crystallographic group $A' \times B'$. Now, in a similar fashion as in Section~\ref{count_sec} passing to $\widehat{A' \times B'}$ we can find an automorphism of $\widehat{A' \times B'}$ which swaps $K_1$ and $K_2$, $b^*_A$ and $b^*_B$ and $\widehat{T}_A$ and $\widehat{T}_B$. In more detail, we define: 
\[(\bar{t}^*_A, \bar{t}_A, \bar{t}^*_B, \bar{t}_B, k_A, k_B) \rightarrow (\bar{t}^*_B, f^{-1}(\bar{t}_B), \bar{t}^*_A, f(\bar{t}_A), f^{-1}(k_B), f(k_A)),\]
where this makes sense as we are requiring that $T^*_A = T^*_B$ and crucially this works because (see above): for every $k \in K_1$, $f(k)$ acts on $T^*_B$ as $k$ acts on $T^*_A$.

\smallskip \noindent
Then clearly $b^*_A$ and $b^*_B$ have the same type in $A' \times B'$ (as our automorphism swaps them), but there cannot exist an automorphism of $A' \times B'$ that swaps $b^*_A$ and $b^*_B$ as this would induce an automorphism of $K_1 \times K_2$ which swaps $K_1$ and $K_2$ (notice for example that $K_2$ acts trivially on $T^*_A$), which in turn would lead to a contradiction as in Section~\ref{count_sec}. Thus, $b^*_A, b^*_B$ are tuples from the translation subgroup of $A' \times B'$ which have the same type in $A' \times B'$ but are not automorphic in $A' \times B'$, as desired.


\subsection{Homogeneity and profinite homogeneity in finitely generated abelian-by-finite groups}

In this section, we give a characterization of homogeneity in finitely generated abelian-by-finite groups which we believe to be of independent interest (note in particular that this characterization applies to all crystallographic groups).




\begin{fact}[{\cite[Prop~0.1]{Oger1984}}]\label{oger_fact}
Let $G$ be a polycyclic-by-finite group and $n \geq 1$ an integer. There is an integer $k(n) \geq 1$ such that $G^n$ is defined in $G$ by the formula:
\[(\exists x_1 \cdots \exists x_{k(n)})(x = x_1^n \cdots x_{k(n)}^n).\]
\end{fact}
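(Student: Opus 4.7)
The assertion is a \emph{verbal width} statement: the word $w(x)=x^n$ has bounded width in any polycyclic-by-finite group $G$, so that the subgroup $G^n=\langle g^n:g\in G\rangle$ coincides with the set of products of at most $k(n)$ $n$-th powers, and is therefore defined by the displayed existential formula. The nontrivial inclusion to prove is that every element of $G^n$ can be written with at most $k(n)$ factors.

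I would proceed in two stages. First, reduce to the case of a finitely generated nilpotent group: by Mal'cev's theorem, $G$ contains a torsion-free nilpotent normal subgroup $N$ of finite index $m$. Given any product $g_1^n\cdots g_r^n$ in $G^n$, write each $g_i=t_{\sigma(i)}\nu_i$ with $t_{\sigma(i)}$ in a fixed transversal $\{t_1,\ldots,t_m\}$ of $N$ and $\nu_i\in N$. Using that conjugation by elements of $G$ permutes $n$-th powers (since $(gxg^{-1})^n=gx^ng^{-1}$) together with elementary commutator expansions, one can rewrite the product as a bounded number of factors taken from the finite set $\{t_1^n,\ldots,t_m^n\}$ multiplied by a product of $n$-th powers of elements of $N$. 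So it suffices to bound the width of $w(x)=x^n$ in $N$.

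Second, for $N$ nilpotent of class $c$, induct on $c$ using the Hall--Petresco identity
\[
(xy)^n \;=\; x^n y^n\,\tau_2^{\binom{n}{2}}\,\tau_3^{\binom{n}{3}}\cdots\tau_c^{\binom{n}{c}},\qquad \tau_i\in\gamma_i(N),
\]
where $\gamma_i(N)$ denotes the $i$-th term of the lower central series and each $\tau_i$ is a specific iterated commutator in $x,y$. Applied repeatedly to $y_1^n\cdots y_r^n$, this identity collapses the product to $y^n\cdot\delta$ with $\delta\in\gamma_2(N)$. Since $\gamma_2(N)$ is a finitely generated nilpotent group of strictly smaller class, the inductive hypothesis provides a uniform bound on the number of $n$-th powers of elements of $\gamma_2(N)$ needed to express $\delta$; lifting these to $n$-th powers in $N$ and assembling layer-wise along the lower central series gives the desired constant $k(n)$. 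The base case is that of a finitely generated abelian group, in which the $n$-th powers already form a subgroup and width equals $1$.

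The main obstacle is making the inductive step \emph{uniform} in the length $r$ of the initial product: each application of Hall--Petresco introduces new commutator corrections raised to binomial powers, and one must verify that these do not cause the width to grow with $r$. This is precisely where the polycyclic-by-finite hypothesis intervenes: each quotient $\gamma_i(N)/\gamma_{i+1}(N)$ is finitely generated abelian of bounded rank (the Hirsch length of $G$ being finite), so by the abelian base case each layer contributes only a bounded number of $n$-th powers, depending only on $n$ and the structural invariants of $G$. Summing these bounds yields $k(n)$, and the existential formula then defines $G^n$ as required.
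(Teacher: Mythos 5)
The paper itself gives no proof of this Fact; it is quoted directly from Oger's paper, so your attempt has to stand on its own. It does not: there is a genuine gap at the very first step. You reduce to the finitely generated nilpotent case by asserting that, ``by Mal'cev's theorem, $G$ contains a torsion-free nilpotent normal subgroup $N$ of finite index.'' This is false for general polycyclic-by-finite groups: such groups need not be virtually nilpotent. For instance, $\mathbb{Z}^2\rtimes_A\mathbb{Z}$ with $A=\left(\begin{smallmatrix}2&1\\1&1\end{smallmatrix}\right)$ is torsion-free polycyclic of Hirsch length $3$ but has exponential growth, hence (by Wolf's theorem) admits no nilpotent subgroup of finite index. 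What Mal'cev's theorem on soluble $\mathbb{Z}$-linear groups actually yields is that a polycyclic-by-finite group is (nilpotent-by-abelian)-by-finite, and with only that, your second stage --- the Hall--Petresco induction on nilpotency class --- has nothing to latch onto: it applies to the nilpotent piece only, and you give no mechanism for climbing back up through the abelian quotient, which is where the real difficulty for polycyclic groups lies. As written, the argument proves the statement at best for virtually nilpotent $G$, not for polycyclic-by-finite $G$.

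Two further points. First, even within the nilpotent case the absorption of the Hall--Petresco corrections is not as automatic as you suggest: the terms $\tau_i^{\binom{n}{i}}$ are $\binom{n}{i}$-th powers of iterated commutators, not $n$-th powers of elements of $\gamma_i(N)$, and $\binom{n}{i}$ is generally not divisible by $n$ (already $\binom{n}{2}$ for $n$ even), so the ``abelian base case'' of each layer $\gamma_i(N)/\gamma_{i+1}(N)$ does not directly show that each layer contributes boundedly many $n$-th powers \emph{of elements of $N$}; one must argue that these corrections lie in $N^n\cap\gamma_i(N)$ and can be rewritten as bounded products of $n$-th powers of elements of $N$, which is where the standard proofs (Hall, Stroud, Segal) do their real work. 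Second, in this paper the Fact is only ever applied to finitely generated abelian-by-finite groups, for which your scheme is essentially salvageable (the finite-index normal subgroup is abelian, where the width is $1$, and only the finite quotient needs handling); but as a proof of the Fact as stated it does not go through, and the reader should be directed to Oger's reference rather than to this argument.
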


\begin{fact}[{\cite{grune_annals_1980}}]\label{grune_fact} Let $G$ be a polycyclic-by-finite group. For every $k < \omega$ we have that $G/G^k$ is finite. Furthermore, the profinite completion $\widehat{G}$ of $G$ is isomorphic to the inverse limit of $\{G/G^{k!} : 0 < k < \omega \}$.
\end{fact}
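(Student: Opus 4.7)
I would establish the two claims in turn.

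For the first claim (finiteness of $G/G^k$), I would first reduce to the polycyclic case. If $H \triangleleft G$ is a finite-index normal polycyclic subgroup, then every $k$-th power of an element of $H$ is a $k$-th power of an element of $G$, so $H^k\subseteq G^k$, whence $[G:G^k]\leq [G:H^k]=[G:H]\cdot [H:H^k]$. It thus suffices to prove $H/H^k$ is finite for $H$ polycyclic, which I would do by induction on the length of a polycyclic series $1=H_0\triangleleft H_1\triangleleft\cdots\triangleleft H_n=H$ with cyclic quotients.

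In the inductive step, set $N=H_{n-1}$, so $H/N$ is cyclic, generated by the class of some $t\in H$. By the inductive hypothesis $N/N^k$ is finite. Since every $k$-th power in $N$ is a $k$-th power in $H$, we have $N^k\subseteq N\cap H^k$, so the second isomorphism theorem gives a surjection $N/N^k\twoheadrightarrow N/(N\cap H^k)\cong NH^k/H^k$, showing $NH^k/H^k$ is finite. The quotient $H/NH^k$ is a cyclic image of $H/N$ in which the image of $t^k$ is trivial (since $t^k\in H^k\subseteq NH^k$), so it is cyclic of order dividing $k$. The short exact sequence
\[
1\longrightarrow NH^k/H^k\longrightarrow H/H^k\longrightarrow H/NH^k\longrightarrow 1
\]
then shows that $H/H^k$ is finite.

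For the second claim, I would show that the family $\{G^{k!}\}_{k\geq 1}$ is cofinal in the directed poset of finite-index normal subgroups of $G$; combined with the first claim (which ensures each $G/G^{k!}$ is finite and each $G^{k!}$ is a finite-index characteristic subgroup), this yields immediately $\widehat{G}\cong\varprojlim_k G/G^{k!}$. For cofinality, let $N\triangleleft G$ be of finite index $m$. Then every $g\in G$ satisfies $g^m\in N$, hence $G^m\subseteq N$; and the identity $g^{m!}=(g^m)^{(m-1)!}$ shows $G^{m!}\subseteq G^m\subseteq N$, as required.

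The main subtlety lies in the inductive step of the first claim: because $G^k$ is the subgroup \emph{generated by} $k$-th powers rather than the set of $k$-th powers itself, one cannot simply reason element-by-element along the polycyclic series and must instead argue via the short exact sequence above. Everything else is essentially bookkeeping.
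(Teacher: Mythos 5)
The paper offers no proof of this statement: it is imported wholesale as a Fact with a citation to \cite{grune_annals_1980}, so there is no internal argument to compare yours against. Your self-contained proof is correct. The reduction $H^k\subseteq G^k$ for a finite-index polycyclic $H\leq G$ is sound (the generators of $H^k$ are among the generators of $G^k$), and the inductive step correctly avoids the trap you flag at the end: since $G^k$ is only \emph{generated} by $k$-th powers, the right move is exactly the one you make, namely passing to the exact sequence $1\to NH^k/H^k\to H/H^k\to H/NH^k\to 1$, with the left term finite because it is a quotient of $N/N^k$ via $N^k\subseteq N\cap H^k$, and the right term a cyclic group of order dividing $k$. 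The cofinality argument for the second claim is also correct: for $N\trianglelefteq G$ of index $m$ one has $G^{m!}\subseteq G^m\subseteq N$, and each $G^{k!}$ is characteristic of finite index by the first claim, so the inverse limit over this cofinal chain computes $\widehat{G}$. The only point worth making explicit is that the family $\{G^{k!}\}$ is nested, i.e.\ $G^{(k+1)!}\subseteq G^{k!}$ because $g^{(k+1)!}=(g^{k+1})^{k!}$; this is the reason for using $k!$ rather than $k$ and is implicit in your identity $g^{m!}=(g^m)^{(m-1)!}$, but it deserves a sentence since it is what makes $\{G/G^{k!}\}$ an inverse system in the first place.
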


In the rest of this section, for a polycyclic-by-finite group $G$ and for every $k < \omega$, the finite quotient $G/G^{k!}$ will be denoted by $G_k$ and the quotient map $G \rightarrow G_k$ will be denoted by $\pi_{k!}$.
We recall that by the fundamental work \cite{segal_annals} every automorphism of a finitely generated profinite group is continuous, and so there is no ambiguity on which automorphisms we consider when we write $\mathrm{Aut}(\widehat{G})$, for $G$ finitely generated.

\begin{prop}\label{the_continous_auto_prop}Let $G$ be a finitely generated abelian-by-finite group. If, for every $k < \omega$, $\pi_{k!}(\bar{a})$ and $\pi_{k!}(\bar{b})$ are automorphic in $G_k := G/G^{k!}$, then $\bar{a}$ and $\bar{b}$ are automorphic in $\widehat{G}$.
\end{prop}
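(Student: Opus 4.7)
The plan is to construct the required automorphism of $\widehat{G}$ as an inverse limit of compatible automorphisms of the finite quotients $G_k$, assembled via a K\"onig-style compactness argument. By Fact~\ref{grune_fact}, $\widehat{G} = \varprojlim G_k$, and the image of $\bar{a}$ (resp.\ $\bar{b}$) in $\widehat{G}$ corresponds to the coherent family $(\pi_{k!}(\bar{a}))_k$ (resp.\ $(\pi_{k!}(\bar{b}))_k$). So it suffices to produce a compatible family $(\sigma_k)_k \in \prod_k \mathrm{Aut}(G_k)$ with $\sigma_k(\pi_{k!}(\bar{a})) = \pi_{k!}(\bar{b})$ for every $k$ and with each $\sigma_{k+1}$ descending to $\sigma_k$ along the canonical projection $G_{k+1} \twoheadrightarrow G_k$.

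First I would verify that every automorphism of $G_{k+1}$ canonically descends to $G_k$. Since $(k+1)! = (k+1)\cdot k!$, every $(k+1)!$-th power is a $k!$-th power, so $G^{(k+1)!} \subseteq G^{k!}$, yielding the canonical projection $G_{k+1} \twoheadrightarrow G_k$ whose kernel is the image of $G^{k!}$ in $G_{k+1}$, i.e.\ the subgroup of $G_{k+1}$ generated by $k!$-th powers. This subgroup is characteristic in $G_{k+1}$ (as the subgroup generated by $n$-th powers always is), so every $\tau \in \mathrm{Aut}(G_{k+1})$ preserves it and descends to a well-defined $r_k(\tau) \in \mathrm{Aut}(G_k)$, giving restriction maps $r_k: \mathrm{Aut}(G_{k+1}) \to \mathrm{Aut}(G_k)$.

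Now define $A_k = \{\sigma \in \mathrm{Aut}(G_k) : \sigma(\pi_{k!}(\bar{a})) = \pi_{k!}(\bar{b})\}$. The hypothesis is exactly that each $A_k$ is nonempty, and by the preceding paragraph $r_k(A_{k+1}) \subseteq A_k$, so $(A_k, r_k)$ is an inverse system of nonempty finite sets (finite by Fact~\ref{grune_fact}). Such a system has nonempty inverse limit by the standard compactness argument (equivalently, replace each $A_k$ by the eventually stable image $\bigcap_{n \geq k} r_k \circ \cdots \circ r_{n-1}(A_n)$ to obtain a surjective inverse system of nonempty finite sets, and build a compatible sequence by downward induction). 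Any $(\sigma_k) \in \varprojlim A_k$ then assembles into the continuous automorphism $\sigma := \varprojlim \sigma_k$ of $\widehat{G}$, whose inverse is $\varprojlim \sigma_k^{-1}$ and which sends $\bar{a}$ to $\bar{b}$ by construction. The only non-formal ingredient is the characteristicness used for the descent, which is immediate; the rest is routine inverse-limit manipulation, and this is where I expect no real obstacle.
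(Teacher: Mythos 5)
Your proof is correct and takes essentially the same approach as the paper: both arguments observe that for each $k$ the set of automorphisms of $G_k$ sending $\pi_{k!}(\bar{a})$ to $\pi_{k!}(\bar{b})$ is a nonempty finite set, that these sets are compatible under the canonical restriction maps $\mathrm{Aut}(G_{k+1})\to\mathrm{Aut}(G_k)$, and then extract a coherent thread by compactness to assemble an automorphism of $\widehat{G}=\varprojlim G_k$. The only difference is the device used for the extraction step: the paper selects the coherent family $(f^*_k)_k$ via a non-principal ultrafilter on $\omega$, whereas you use the standard fact that an inverse limit of nonempty finite sets is nonempty; these are interchangeable.
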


\begin{proof} First of all, recall that, by \ref{grune_fact}, we have that $\widehat{G}$ is isomorphic to the inverse limit of the inverse system $\{G/G^{k!} : 0 < k < \omega \}$. Recall also that $\pi_{k!}(G) = G_k = G/G^{k!}$. So suppose that there are $\bar{a}, \bar{b} \in G^\ell$ such that for every $k < \omega$ we have that $\pi_{k!}(\bar{a})$ and $\pi_{k!}(\bar{b})$ are automorphic in $G_k$.
%
Observe now that if $k \leq n < \omega$, then every $f_n \in \mathrm{Aut}(G_n)$ such that $f_n(\pi_{n!}(\bar{a})) = \pi_{n!}(\bar{b})$ induces a $f_{(f_n, k)} \in \mathrm{Aut}(G_k)$ such that $f_{(f_n, k)}(\pi_{k!}(\bar{a})) = \pi_{k!}(\bar{b})$. Now, for every $n < \omega$, fix $f_n \in \mathrm{Aut}(G_n)$ such that $f_n(\pi_{n!}(\bar{a})) = \pi_{n!}(\bar{b})$ (notice that this is possible by our assumptions). Let $\mathcal{U}$ be a non-principal ultrafilter on $\omega$. Fix $k < \omega$ and let $f^1_k, ..., f^{m(k)}_k \in \mathrm{Aut}(G_k)$ be an injective enumeration of the automorphisms witnessing that $\pi_{k!}(\bar{a})$ and $\pi_{k!}(\bar{b})$ are automorphic in $G_k$ and notice that by our assumption we  have that $m(k) \geq 1$. For every $1 \leq i \leq m(k)$, let:
$$Y^i_k = \{n < \omega : k \leq n \text{ and } f_{(f_n, k)} = f^i_k\}.$$
Clearly, for $1 \leq i < j \leq m(k)$ we have that $Y^i_k \cap Y^j_k = \emptyset$. Further,  $Y^1_k, ..., Y^{m(k)}_k = \omega \setminus \{0, ..., k-1\} \in \mathcal{U}$ (as $\mathcal{U}$ is non-principal), and so, $\mathcal{U}$ being an ultrafilter, we can find $f^*_k \in \mathrm{Aut}(G_k)$ such that:
$$Y_k = \{n < \omega : k \leq n \text{ and } f_{(f_n, k)} = f^*_k\} \in \mathcal{U}.$$
Notice now that for $k_1 \leq k_2 < \omega$ we have that $f_{(f^*_{k_2}, k_1)} = f^*_{k_1}$: indeed, since $Y_{k_1}, Y_{k_2} \in \mathcal{U}$ we have that $Y_{k_1} \cap Y_{k_2} \neq \emptyset$ and so we can find $n \in Y_{k_1} \cap Y_{k_1}$. But then necessarily $n \geq k_1, k_2$ and we have that: 
$$f_{(f^*_{n}, k_1)} = f^{*}_{k_1} \text{ and } f_{(f^*_{n}, k_2)} = f^{*}_{k_2}.$$
from which it follows that:
$$f_{(f^*_{k_2}, k_1)} = f^{*}_{k_1}.$$
Hence, we have that $\prod_{k < \omega} f^*_k$ is a (continuous) automorphism of $\widehat{G}$ that sends $\bar{a}$ to $\bar{b}$, modulo the obvious embedding of $G$ into $\widehat{G}$ (recall that $G$ is residually finite and we indeed have an embedding of $G$ into $\widehat{G}$), and so we are done.
\end{proof}

We are ready to prove Theorem \ref{prof_homogeneity}, which is recalled below.

\begin{te} A finitely generated abelian-by-finite group $G$ is homogeneous if and only if it is profinitely homogeneous (cf. Definition~\ref{def_prof_hom}), that is, every tuple of words from $G$ is profinitely rigid. More precisely, for $u \in G^n$, the tuple $u$ is type-determined if and only if it is profinitely rigid.\end{te}	

\begin{proof}
Let $G$ be a finitely generated abelian-by-finite group. Suppose that $G$ is profinitely homogeneous, and let us prove that it is homogeneous. Let $\bar{a},\bar{b}$ be two tuples of elements of $G$, and suppose that they have the same type in $G$. Recall that $G^n$ is definable in $G$ without parameters for every integer $n$ (by \ref{oger_fact}), therefore the images of $\bar{a},\bar{b}$ have the same type in $G_k$ for any $k < \omega$, and thus they are automorphic in $G_k$ for any $k < \omega$ (since $G_k$ is finite, by \ref{grune_fact}). By Proposition \ref{the_continous_auto_prop}, there is an automorphism of $\widehat{G}$ mapping $\bar{a}$ to $\bar{b}$, moreover $G$ is profinitely homogeneous by assumption, hence there is an automorphism of $G$ mapping $\bar{a}$ to $\bar{b}$. Conversely, suppose that $G$ is homogeneous, and let us prove that it is profinitely homogeneous. Let $\bar{a},\bar{b}$ be two tuples of elements of $G$, and suppose that there is an automorphism of $\widehat{G}$ mapping $\bar{a}$ to $\bar{b}$. It follows that $\bar{a},\bar{b}$ have the same type in $\widehat{G}$. But $G$ is an elementary substructure of $\widehat{G}$ by \cite{Oger88}, so $\bar{a},\bar{b}$ have the same type in $G$ and thus they are automorphic in $G$.\end{proof}

\section{Homogeneity in affine Coxeter groups}\label{homogeneity_Coxeter}

Recall that a Coxeter group is a group that admits a presentation of the form \[\langle s_1,\ldots,s_n \ \vert \ (s_is_j)^{m_{ij}}=1, \ \text{for all} \ i,j \rangle,\]where $m_{ii}=1$ and $m_{ij}\in \mathbb{N}^{\ast}\cup \lbrace \infty\rbrace$ for every $1\leq i,j\leq n$ (the relation $(s_is_j)^{\infty}=1$ means that $s_is_j$ has infinite order). Note that each generator $s_i$ has order two and that $m_{ij}=2$ if and only if $s_i$ and $s_j$ commute. The Coxeter graph (or diagram) of $\langle s_1,\ldots,s_n \ \vert \ (s_is_j)^{m_{ij}}=1, \ \text{for all} \ i,j \rangle$ is the graph with $n$ vertices labelled with $s_1,\ldots,s_n$, such that there is no edge between two vertices if the corresponding generators $s_i,s_j$ commute, an edge without a label if $(s_is_j)^3=1$, and an edge labelled with $n\geq 4$ (possibly $\infty$) if $(s_is_j)^n=1$. A Coxeter group is said to be irreducible if its defining Coxeter graph is connected, spherical if it is finite and affine if it is virtually abelian and infinite. The irreducible spherical and irreducible affine Coxeter groups were classified by Coxeter \cite{Cox32,Cox34}; see also Witt \cite{Witt41}. Figure~\ref{figure_affine} gives a complete classification of irreducible affine Coxeter groups in terms of their Coxeter graphs. Note that a Coxeter group is affine if and only if all its irreducible components are finite or affine, and at least one component is affine. 






Note that in the context of Coxeter groups, the two notions of irreducibility coincide: for every irreducible affine Coxeter group $G$, there exists a finite Coxeter group $G_0$ and an irreducible representation $\rho : G_0 \rightarrow  \mathrm{GL}_n(\mathbb{Z})\subset \mathrm{GL}_n(\mathbb{Q})$ such that $G=\mathbb{Z}^n\rtimes_{\rho} G_0$ (see \ref{def_irreducible} for the definition of an irreducible representation and \cite[Chapter 6, paragraph 2]{bourbaki} for a proof of this result). In particular, $G$ is a split crystallographic group. Moreover, the Coxeter graph of $G$ is obtained from the Coxeter graph of $G_0$ by adding another vertex and one or two additional edges, as shown in Figure \ref{figure_affine}. More precisely, the following holds:
\begin{enumerate}[(1)]
    \item\label{item1} if $G$ is not isomorphic to $\tilde{A}_n$, then the Coxeter graph of $G$ is obtained from the Coxeter graph of $G_0$ by adding one vertex and one edge (with no label or labelled with 4 if $G$ is isomorphic to $\tilde{C}_n$);
    \item\label{item2} if $G$ is isomorphic to $\tilde{A}_n$, then the Coxeter graph of $G$ is obtained from the Coxeter graph of $G_0$ by adding one vertex and two edges with no label.
\end{enumerate}


\begin{center}
\begin{figure}[h]
\includegraphics[width=14cm]{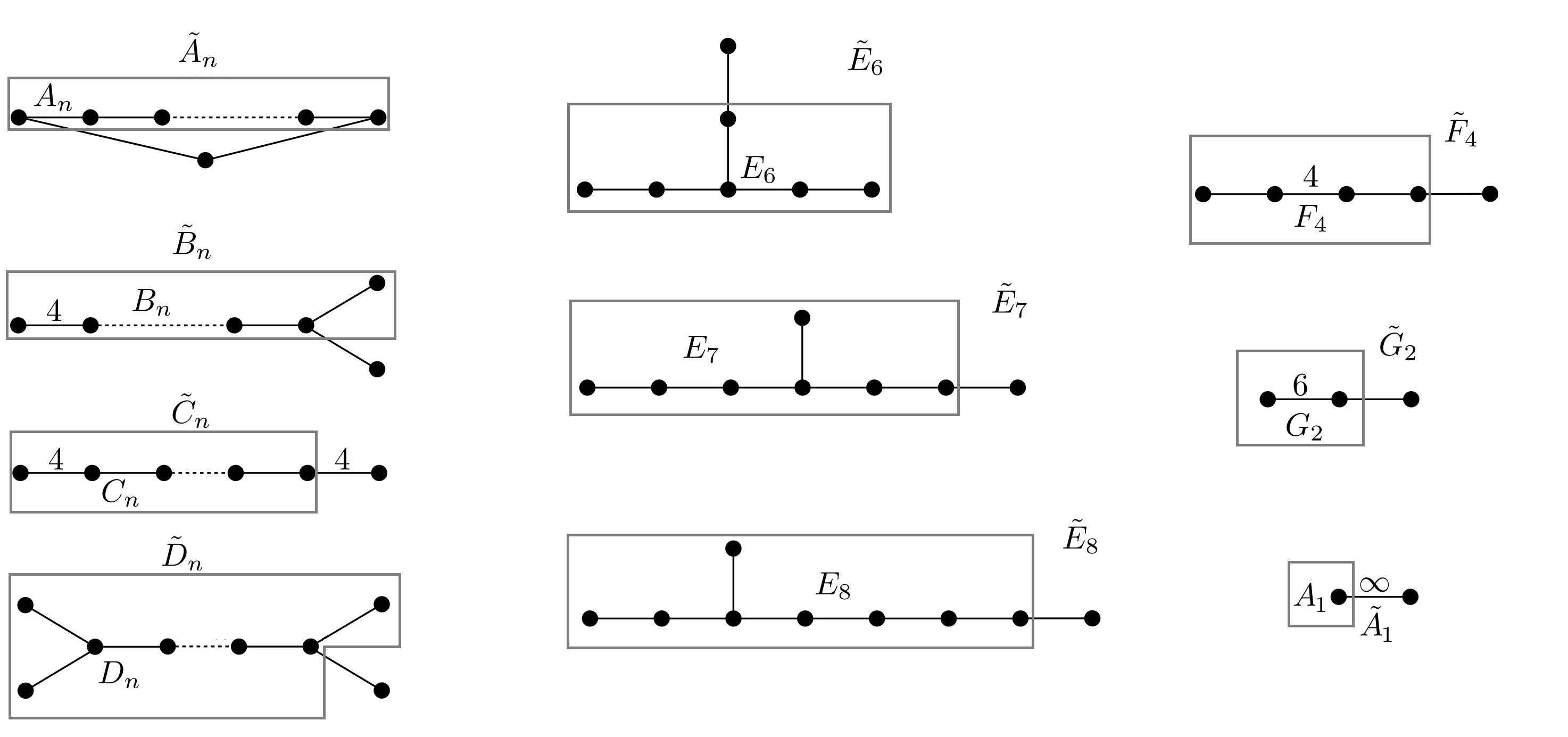}
\caption{The irreducible affine Coxeter groups and their Coxeter graphs. The grey boxes show the corresponding finite Coxeter groups. The groups $\tilde{A}_n,\tilde{B}_n,\tilde{C}_n$ on the left-hand side are defined for $n\geq 2$ and the group $\tilde{D}_n$ is defined for $n\geq 4$.}
\label{figure_affine}
\end{figure}
\end{center}

We will prove that irreducible affine Coxeter groups are AE-homogeneous. The strategy of the proof is as follows (more details are given in Subsection \ref{details3}): let $(G,S)$ be an affine Coxeter group. For simplicity, let us assume that $G$ is irreducible (see Theorem \ref{main_affine_homogeneous} for the general case). Let $u,v$ be tuples of elements of $G$, and suppose that $u$ and $v$ have the same AE-type in $G$. If the subgroup of $G$ generated by the components of $u$ or $v$ is infinite, then we can conclude using Theorem \ref{main_crystallo0} (proved in the previous section) that there exists an automorphism $\sigma$ of $G$ such that $\sigma(u)=v$. So, let us suppose that the subgroups of $G$ generated by the components of $u$ and $v$ respectively are finite. There exists a maximal finite subgroup $G_u$ of $G$ containing the components of $u$. Using the assumption that $u$ and $v$ have the same AE-type, Lemma \ref{key_lemma_1} provides a class-permuting (cf. Definition~\ref{class-permuting}) endomorphism $\varphi$ of $G$ that maps $u$ to $v$. Define $G_v=\varphi(G_u)$. We can prove that $G_v$ is a maximal finite subgroup of $G$, and by Lemma \ref{lemma2} we can assume without loss of generality that $G_u=G_v$. The endomorphism $\varphi$ is not an automorphism of $G$ in general (see Example \ref{example2} below), but $\varphi_{\vert G_u}$ is an automorphism of $G_u$, so the question becomes: does there exist an automorphism $\sigma$ of $G$ such that $\varphi_{\vert U}=\sigma_{\vert U}$? We will answer this question positively in Subsection \ref{extension_section}, which will conclude the proof of the theorem (note that, by Theorem \ref{counter}, this strategy cannot work in the larger class of crystallographic groups).


\subsection{Preliminary results}

Let $G$ be a Coxeter group, and let $S$ be a Coxeter generating set for $G$. Recall that a \emph{standard parabolic subgroup} is a subgroup generated by a subset $I$ of $S$. This subgroup is denoted by $G_I$. We say that $G_I$ is \emph{maximal} if $\vert I\vert+1=\vert S\vert$. A subgroup of $G$ is called a \emph{parabolic subgroup} if it is conjugate to a standard parabolic subgroup. A \emph{reflection} is a conjugate of an element of $S$. We say that an endomorphism $\varphi$ of $G$ is \emph{reflection-preserving} if every reflection is mapped to a reflection. When $G$ is an irreducible affine Coxeter group, every maximal standard parabolic subgroup is finite (this can be verified by inspecting Coxeter diagrams of irreducible affine Coxeter groups). Moreover, these subgroups are maximal finite subgroups (by Corollary 1.30 in William Franzsen's thesis \cite{franz}), and every maximal finite subgroup of $G$ is conjugate to a maximal standard parabolic subgroup (by Lemma 1.23 in \cite{franz}).

\subsubsection{Reflection-preserving morphisms}



Recall that an endomorphism $\varphi$ of a group $G$ is class-permuting if the following condition holds: for any two non-conjugate finite subgroups $H,H'$ of $G$, $\varphi(H)$ and $\varphi(H')$ are non-conjugate.

\begin{lemme}\label{lemma1}
Let $G$ be an affine Coxeter group of finite rank and let $\varphi$ denote an endomorphism of $G$. If $\varphi$ is class-permuting, then it is reflection-preserving.\end{lemme}



\begin{proof}Write $G=G_0\times G_1\times \cdots\times G_n$ where $G_0$ is a finite Coxeter group and each $G_i$ for $i\geq 1$ is an irreducible affine Coxeter group. Let $S_i=\lbrace s_{1,i},\ldots,s_{k_i,i}\rbrace$ be a Coxeter generating set for $G_i$ and define $S=\cup_{0\leq i\leq n}S_i$. For each $1\leq i\leq n$ and $1\leq \ell\leq k_i$, define $H_{\ell,i}$ as the subgroup of $G_i$ generated by $S_{\ell,i}=S_i\setminus \lbrace s_{\ell,i}\rbrace$. By definition of the generating set $S_{\ell,i}$, $H_{\ell,i}$ is a maximal standard parabolic subgroup of $G_i$. Moreover, one can see by inspecting the Coxeter diagrams of irreducible affine Coxeter groups that $H_{\ell,i}$ is finite, thus $H_{\ell,i}$ is a maximal finite standard parabolic subgroup of $G_i$. Therefore, for any tuple $\bar{\ell}=(\ell_1,\ldots,\ell_n)$ with $1\leq \ell_i\leq k_i$, the group $H_{\bar{\ell}}=G_0\times H_{\ell_1,1}\times \cdots\times H_{\ell_n,n}$ is a maximal finite standard parabolic subgroup of $G$. By Corollary 1.30 in \cite{franz}, since $G$ is infinite, each $H_{\bar{\ell}}$ is a maximal finite subgroup of $G$.

\smallskip \noindent Fix $1\leq i\leq n$ and $1\leq j\leq k_i$, and define $s=s_{j,i}\in S_i$ and $E=\lbrace \bar{\ell}=(\ell_1,\ldots,\ell_n) \ \vert \ \ell_i\neq j\rbrace$. Note that we can write \[\langle s\rangle=\bigcap_{\bar{\ell}\in E}H_{\bar{\ell}} \ \text{ and so } \ \varphi(\langle s\rangle)=\bigcap_{\bar{\ell}\in E}\varphi(H_{\bar{\ell}}).\]

\smallskip \noindent We claim that $\varphi(H_{\bar{\ell}})$ is a maximal finite subgroup of $G$: indeed, if $\varphi(H_{\bar{\ell}})$ is contained in a finite subgroup $H$ of $G$, then $\varphi^n(H_{\bar{\ell}})$ is contained in $\varphi^{n-1}(H)$ for every integer $n\geq 1$. But $\varphi$ induces a permutation of the conjugacy classes of the finite subgroups of $G$, so after choosing an appropriate $n$ we can assume that $\varphi^n(H_{\bar{\ell}})=gH_{\bar{\ell}}g^{-1}$ for some $g\in G$, thus $H_{\bar{\ell}}$ is contained in $g^{-1}\varphi^{n-1}(H)g$, which is a finite subgroup. By maximality of $H_{\bar{\ell}}$, we obtain $H_{\bar{\ell}}=g^{-1}\varphi^{n-1}(H)g$, so $H_{\bar{\ell}}$ and $\varphi^{n-1}(H)$ have the same order. But $\varphi$ is injective on finite subgroups, so $H_{\bar{\ell}}$ and $H$ have the same order, so $\varphi(H_{\bar{\ell}})=H$, which proves that $\varphi(H_{\bar{\ell}})$ is a maximal finite subgroup. Moreover, since $\varphi(H_{\bar{\ell}})$ is finite, it is contained in a finite parabolic subgroup of $G$ (see, for instance, Lemma 1.23 in \cite{franz}), so by maximality $\varphi(H_{\bar{\ell}})$ is a parabolic subgroup. Then, by Corollary 1.27 in \cite{franz}, the intersection of a finite number of parabolic subgroups is a parabolic subgroup, so $\cap_{\bar{\ell}\in E}\varphi(H_{\bar{\ell}})$ is a parabolic subgroup, i.e., $\varphi(\langle s\rangle)$ is a parabolic subgroup. But the only parabolic subgroups of order 2 are the ones generated by reflections, so $\varphi(s)$ is a reflection. Hence, $\varphi$ is reflection-preserving.\end{proof}

\subsubsection{Maximal parabolic finite subgroups}\label{list}

Let $(G,S)$ be a Coxeter group. Recall that an endomorphism $\varphi$ of $G$ is said to be reflection-preserving if $\varphi(S^G)\subset S^G$. We denote by $\mathrm{Aut}_r(G)$ the group of reflection-preserving automorphisms of $G$. A \emph{graph automorphism} of $G$ is an automorphism of $G$ that is induced by an automorphism of its Coxeter graph. By Proposition 1.44 in \cite{franz}, if $G$ is finite, then $\mathrm{Aut}_r(G)$ is generated by the inner automorphisms and the graph automorphisms. We will need the following lemma.


\begin{lemme}\label{lemma2}Let $(G,S)$ be an irreducible affine Coxeter group. Let $H,H'$ be two maximal finite subgroups of $G$. If $H$ and $H'$ are isomorphic, then there is an automorphism $\sigma$ of $G$ such that $\sigma(H)=H'$.
\end{lemme}

\begin{proof}Let $H,H'\subset G$ be two maximal finite subgroups of $G$. There exist two subsets $I,I'\subset S$ with $\vert I\vert +1=\vert I'\vert +1=\vert S\vert$ and two elements $g,g'\in G$ such that $gHg^{-1}=G_I$ and $g'H'g'^{-1}=G_{I'}$. Inspecting the Coxeter diagrams of irreducible affine Coxeter groups (see Figure \ref{figure_affine}), we see that there is a graph automorphism $\theta$ of $G$ such that $\theta(G_I)=G_{I'}$, so we can take $\sigma=\mathrm{ad}(g'^{-1})\circ \theta \circ \mathrm{ad}(g)$.\end{proof}





\subsection{Extending automorphisms of maximal finite subgroups}\label{extension_section}

The following result will be crucial in the proof of homogeneity of irreducible affine Coxeter groups.

\begin{te}\label{extension}
Let $(G,S)$ be an irreducible affine Coxeter group and let $H$ be a maximal finite subgroup of $G$. Let $\varphi$ be a class-permuting endomorphism of $G$ such that $\varphi(H)=H$. Then there exists an automorphism $\sigma$ of $G$ such that $\sigma_{\vert H}=\varphi_{\vert H}$.
\end{te}

\begin{ex}\label{example2}
It is worth noting that a class-permuting endomorphism $\varphi$ of an irreducible affine Coxeter group $G$ is not an automorphism of $G$ in general (even if we assume that $\varphi$ preserves the translation subgroup of $G$). This is clear, for instance, if $G$ is an infinite dihedral group. Let us give another more interesting example. The group $G=\tilde{A} _2$ admits the following presentation: $\langle s_1,s_2,s_3 \ \vert \ s_i^2=(s_is_{i+1})^3=1 \text{ for } i\in\mathbb{Z}/3\mathbb{Z} \rangle$ (it is the triangle group $\Delta(3,3,3)$). We denote by $C(x)$ the centralizer of an element $x$. One can check that there exist $g,h\in G$ such that $C(s_1)=\langle s_1\rangle\times \langle g\rangle$, $C(s_2)=\langle s_1\rangle\times \langle h\rangle$ and $C(s_3)=\langle s_1\rangle\times \langle h^{-1}g\rangle$ where $g=(s_3s_1s_2)^2$ and $h=(s_3s_2s_1)^2$ (so $g^{-1}h=(s_2s_3s_1)^2$). Define $\varphi : G\rightarrow G$ by $\varphi(s_1)=s_1$, $\varphi(s_2)=s_2$ and $\varphi(s_3)=gs_3g^{-1}$. This is a well-defined morphism because $\varphi(s_1s_3)=s_1gs_3g^{-1}=g(s_1s_3)g^{-1}$ (so this element has order 3) and $\varphi(s_2s_3)=s_2gs_3g^{-1}=hs_2h^{-1}gs_3g^{-1}=h(s_2s_3)h^{-1}$ (so this element has order 3 as well). Moreover, every finite subgroup of $G$ is conjugate to a subgroup of $\langle s_1,s_2\rangle$ or $\langle s_1,s_3\rangle$ or $\langle s_2,s_3\rangle$, and $\varphi$ coincides on these subgroups, respectively, with the identity, with $\mathrm{ad}(g)$ and with $\mathrm{ad}(h)$. But one can check that $s_3$ does not belong to the image of $\varphi$, hence $\varphi$ is not an automorphism of $G$. In fact, if one writes $G=\langle t_1,t_2\rangle\rtimes \langle s_1,s_2\rangle$ with $t_1=(s_1s_2)(s_2s_3)^2$ and $t_2=(s_2s_3)(s_3s_1)^2$ (note that $\langle t_1,t_2\rangle$ is the maximal abelian subgroup of $G$ (in other words, the translation subgroup of $G$), isomorphic to $\mathbb{Z}^2$), one can see that $\varphi$ coincides with $4\mathrm{id}$ on $\langle t_1,t_2\rangle$.\end{ex}

Before proving the Theorem \ref{extension}, whose proof is based on a (rather tedious) case-by-case analysis, we need to establish a few preliminary results. The problem we face in the proof of the theorem is that some graph automorphisms of $H$ may not extend to $G$ (as we will see below), so we need to prove that these problematic graph automorphisms of $H$ cannot be induced by a class-permuting endomorphism $\varphi$ of $G$. 

\begin{lemme}\label{RP}
In the context of the statement of Theorem \ref{extension}, $\varphi$ and $\varphi_{\vert H}$ are reflection-preserving.
\end{lemme}

\begin{proof}
By Lemma \ref{lemma1}, $\varphi$ is reflection-preserving. Let us prove that $\varphi_{\vert H}$ is reflection-preserving as well. We can assume that $H=G_I$ with $I\subset S$ and $\vert I\vert +1=\vert S\vert$. Define $\alpha=\varphi_{\vert H}$. For $s\in I$, $\varphi(s)$ is a reflection, so $\varphi(s)=gs'g^{-1}$ for some $g\in G$ and $s'\in S$. Moreover, $\varphi(s)$ belongs to $G_I$ by assumption. Then, by Corollary 1.28 in \cite{franz}, we have $G_I\cap gG_{\lbrace s'\rbrace}g^{-1}=uG_{\lbrace s''\rbrace} u^{-1}$ with $s''\in I$ and $u\in G_I$, and thus $\varphi(s)$ belongs to $I^{G_I}$, which proves that $\alpha$ is reflection-preserving.    
\end{proof}

\begin{lemme}\label{order4}
Let $(G,S)$ be a Coxeter group. Suppose that there exist three reflections $s_1,s_2,s_3\in S$ such that $s_1s_2$ is of order 4, $s_2s_3$ is of order 3, and $s_1$ is isolated in the odd graph (in other words, no edge incident to $s_1$ (viewed as a vertex) in the Coxeter graph is labeled with an odd integer). Suppose, moreover, that $ss'$ is of order 2, 3 or 4 for every distinct $s,s'\in S$. Then there is no class-permuting endomorphism $\varphi$ of $G$ such that $\varphi(s_2)=s_1$.
\end{lemme}

\begin{rk}
The lemma applies for instance to the irreducible affine Coxeter groups $\tilde{B}_n$ and $\tilde{C}_n$ for $n\geq 3$.  
\end{rk}

\begin{proof}
Suppose that there exists a class-permuting endomorphism $\varphi$ of $G$ such that $\varphi(s_2)=s_1$. By Lemma \ref{lemma1}, $\varphi$ is reflection-preserving. The subgroup $\langle s_2,s_3\rangle$, which is isomorphic to the dihedral group $\mathrm{Dih}_6$ of order $6$, is mapped to $\langle s_1, \varphi(s_3)\rangle$ by $\varphi$, and $\varphi(s_3)$ is a reflection (i.e., $\varphi(s_3)$ is conjugate to an element of $S$). It follows from Lemma 3.2 in \cite{CM13}, that $\langle s_1, \varphi(s_3)\rangle$ is contained in $g\langle s,s'\rangle g^{-1}$ for some $g\in G$ and $s,s'\in S$ (with $s\neq s'$). Note that $\langle s,s'\rangle$ can be of order 4, 6 or 8, but since $\langle s_1, \varphi(s_3)\rangle$ is of order 6 (because $\varphi$ is injective on $\langle s_2,s_3\rangle$), $\langle s,s'\rangle$ is necessarily of order 6 and we have $\langle s_1, \varphi(s_3)\rangle=g\langle s,s'\rangle g^{-1}$. The group $\langle s,s'\rangle$ contains three elements of order 2, namely $s,s',ss's$, and thus $s_1$ is conjugate to $s$ or $s'$. But, by Lemma 3.3.3 in \cite{davis}, two elements of $S$ are conjugate if and only if they are connected in the odd graph, so $s_1$ is not conjugate to $s_i$ for $i>1$, and thus $s=s_1$ or $s'=s_1$. But there is no $s_i\in S$ such that $\langle s_1,s_i\rangle\simeq \mathrm{Dih}_6$, a contradiction.   
\end{proof}

We are ready to prove Theorem \ref{extension}, which is the combination of Propositions \ref{tildeAn}, \ref{tildeBn}, \ref{tildeDn}, \ref{tildeCn} and \ref{tildesporadic}. Recall that the group of reflection-preserving automorphisms $\mathrm{Aut}_r(H)$ is generated by the inner automorphisms and the graph automorphisms (see \cite{franz}). 




\subsubsection{Irreducible affine Coxeter groups of type $\tilde{A}_n$}

\begin{prop}\label{tildeAn}
Let $G=\tilde{A}_n$. Let $H$ be a maximal finite subgroup of $G$ and let $\varphi$ be a class-permuting automorphism of $G$ such that $\varphi(H)=H$. Then there exists an automorphism $\sigma$ of $G$ such that $\sigma_{\vert H}=\varphi_{\vert H}$.
\end{prop}

\begin{proof}
Define $\alpha=\varphi_{\vert H}$. Observe that $H \simeq A_n\simeq S_{n+1}$. If $n\neq 5$, it is well known that every automorphism of $H$ is inner, so there exists $h\in H$ such that $\alpha=\mathrm{ad}(h)_{\vert H}$ and thus we can take $\sigma=\mathrm{ad}(h)$. Now, suppose that $n=5$. By Lemma \ref{RP}, $\alpha$ is reflection-preserving, so $\alpha$ is inner (because the outer automorphism of $S_6$ maps any transposition to a product of three pairwise distinct transpositions, and thus is not reflection-preserving) and we conclude as above.\end{proof}

\subsubsection{Irreducible affine Coxeter groups of type $\tilde{B}_n$ for $n\geq 3$ and $\tilde{D}_n$ for $n\geq 4$} Our goal is to prove that Proposition \ref{tildeAn} remains true for $\tilde{B}_n$ and $\tilde{D}_n$. We will need the following lemmas.

\begin{lemme}\label{D_n}
Consider the Coxeter group $D_n$ (for $n\geq 4$), with the following presentation (see Figure \ref{Dn}): $\langle s_1,\ldots,s_n \ \vert \ s_i^2= (s_is_{i+1})^3=(s_1s_3)^3=(s_1s_2)^2=1, \ \forall i\in \lbrace 2,\ldots, n-1\rbrace\rangle$. Let $g\in D_n$ be such that $\mathrm{ad}(g)$ preserves $\lbrace s_1,s_2,s_4\rbrace$. Then $\mathrm{ad}(g)$ fixes $s_4$, and fixes or swaps $s_1$ and $s_2$.\end{lemme}

\begin{rk}
For $n\geq 5$, there exists an element $g$ such that $gs_4g^{-1}=s_4$, $gs_1g^{-1}=s_2$ and $gs_2g^{-1}=s_1$. For $n=4$, such an element cannot exist since $s_1,s_2,s_4$ play symmetrical roles. 
\end{rk}

\begin{figure}[h!]
  \centering
    \begin{tikzpicture}
\draw[black, very thick] (3,0) -- (4,0);
\draw[dashed, very thick] (4,0) -- (5,0);
\draw[black, very thick] (5,0) -- (6,0);
\draw[black, very thick] (6,0) -- (7,1);
\draw[black, very thick] (6,0) -- (7,-1);
\node[text=black] at (3,0.5) {$s_n$};
\node[text=black] at (6,0.5) {$s_3$};
\node[text=black] at (5,0.5) {$s_4$};
\node[text=black] at (7,1.5) {$s_1$};
\node[text=black] at (7,-1.5) {$s_2$};
\fill[black] (3,0) circle (0.1cm);
\fill[black] (4,0) circle (0.1cm);
\fill[black] (5,0) circle (0.1cm);
\fill[black] (6,0) circle (0.1cm);
\fill[black] (7,1) circle (0.1cm);
\fill[black] (7,-1) circle (0.1cm);
\end{tikzpicture}
  \caption{Coxeter group $D_n$, with $n\geq 4$.}
  \label{Dn}
\end{figure}

\begin{proof}
The group $D_n$ can be realized as the group of bijections of $X=\{\pm e_1,\dots,\pm e_n\}$ of the form $(x_1,\ldots,x_n)\in X \mapsto \left(\varepsilon_1 x_{\sigma(1)},\ldots, \varepsilon_n x_{\sigma(n)}\right)$, with $\sigma\in S_n$ and $\varepsilon_i \in \{\pm1\}$, with the constraint $\varepsilon_1\cdots \varepsilon_n = 1$ (note that this provides a decomposition of $D_n$ as a semidirect product $(\mathbb{Z}/2\mathbb{Z})^{n-1}\rtimes S_n$ where $(\mathbb{Z}/2\mathbb{Z})^{n-1}$ consists of sign changes of coordinates with even parity and $S_n$ acts by permuting coordinates). The generators $s_1,\ldots,s_n$ are defined as follows: $s_i=(i-1  \ i)$ for $2\leq i\leq n$, and $s_1(x_1,\ldots,x_n)=(-x_2,-x_1,x_3,\ldots,x_n)$. For $2\leq i\leq n$, $s_i$ has exactly two orbits of cardinality 2 for its action on $X$, namely $\lbrace e_{i-1},e_i\rbrace$ and $\lbrace -e_{i-1},-e_i\rbrace$. Define $X_i=\lbrace \pm e_{i-1},\pm e_i\rbrace$. Note that $s_1$ also has exactly two orbits of cardinality $2$, namely $\lbrace e_{1},-e_2\rbrace$ and $\lbrace -e_1,e_2\rbrace$, and define $X_1=\lbrace \pm e_{1},\pm e_2\rbrace=X_2$. For each integer $1\leq i\leq n
$, $X_i$ is the only subset of $X$ of cardinality 4 that is fixed under the action of $s_i$ and that is the union of two orbits of cardinality 2. Now, let $g\in D_n$ be such that $\mathrm{ad}(g)$ preserves $\lbrace s_1,s_2,s_4\rbrace$. Then there exists $\sigma\in S(\lbrace 1,2,4\rbrace)$ such that $gs_ig^{-1}=s_{\sigma(i)}$ for every $i\in \lbrace 1,2,4\rbrace$, and it follows that $g(X_i)=X_{\sigma(i)}$. But let us observe that $\#(X_i\cap X_4)=2$ for every $i\in \lbrace 1,2\rbrace$, while $X_1=X_2$. Therefore, $\sigma\in \langle (1 \ 2)\rangle$.\end{proof}

\begin{lemme}\label{B or D}
Let $(G,S)$ be $\tilde{B}_n$ or $\tilde{D}_n$ for $n\geq 5$ (see Figures \ref{B} and \ref{D} for the Coxeter diagrams and a numbering of the vertices used in the proof). Let $\varphi$ be a class-permuting endomorphism of $G$. Suppose that $\varphi(\lbrace s_1,s_2,s_4\rbrace)=\lbrace s_1,s_2,s_4\rbrace$. Then $\varphi$ fixes $s_4$ and fixes or swaps $s_1$ and $s_2$.\end{lemme}

\begin{figure}[h!]
  \centering
    \begin{tikzpicture}
\draw[black, very thick] (1,0) -- (4,0);
\draw[dashed, very thick] (4,0) -- (5,0);
\draw[black, very thick] (5,0) -- (6,0);
\draw[black, very thick] (6,0) -- (7,1);
\draw[black, very thick] (6,0) -- (7,-1);
\node[text=black] at (1,-0.5) {$s_{n+1}$};
\node[text=black] at (2,-0.5) {$s_n$};
\node[text=black] at (1.5,0.5) {$4$};
\node[text=black] at (3,-0.5) {$s_{n-1}$};
\node[text=black] at (6,0.5) {$s_3$};
\node[text=black] at (5,-0.5) {$s_4$};
\node[text=black] at (7,1.5) {$s_1$};
\node[text=black] at (7,-1.5) {$s_2$};
\fill[black] (1,0) circle (0.1cm);
\fill[black] (2,0) circle (0.1cm);
\fill[black] (3,0) circle (0.1cm);
\fill[black] (4,0) circle (0.1cm);
\fill[black] (5,0) circle (0.1cm);
\fill[black] (6,0) circle (0.1cm);
\fill[black] (7,1) circle (0.1cm);
\fill[black] (7,-1) circle (0.1cm);
\end{tikzpicture}
  \caption{Affine Coxeter group $\tilde{B}_n$, with $n\geq 3$ (when $n=3$, the edge connecting $s_3$ and $s_4$ is labelled with $4$).}
  \label{B}
\end{figure}

\begin{figure}[h!]
  \centering
    \begin{tikzpicture}
\draw[black, very thick] (3,1) -- (4,0);
\draw[black, very thick] (3,-1) -- (4,0);
\draw[dashed, very thick] (4,0) -- (5,0);
\draw[black, very thick] (5,0) -- (6,0);
\draw[black, very thick] (6,0) -- (7,1);
\draw[black, very thick] (6,0) -- (7,-1);
\node[text=black] at (3,-1.5) {$s_{n+1}$};
\node[text=black] at (3,1.5) {$s_n$};
\node[text=black] at (6,0.5) {$s_3$};
\node[text=black] at (5,0.5) {$s_4$};
\node[text=black] at (7,1.5) {$s_1$};
\node[text=black] at (7,-1.5) {$s_2$};
\fill[black] (3,1) circle (0.1cm);
\fill[black] (3,-1) circle (0.1cm);
\fill[black] (4,0) circle (0.1cm);
\fill[black] (5,0) circle (0.1cm);
\fill[black] (6,0) circle (0.1cm);
\fill[black] (7,1) circle (0.1cm);
\fill[black] (7,-1) circle (0.1cm);
\end{tikzpicture}
  \caption{Affine Coxeter group $\tilde{D}_n$, with $n\geq 4$.}
  \label{D}
\end{figure}


\begin{proof}Let $I=S\setminus \lbrace s_{n+1}\rbrace$. Note that in both cases, $G_I$ is isomorphic to $D_n$. The morphism $\varphi$ being class-permuting, and $G_I$ being a maximal finite subgroup, the subgroup $\varphi(G_I)$ is a maximal finite subgroup of $G$. Therefore, by Lemma \ref{lemma2}, there exist an element $g\in G$ and a graph automorphism $\sigma$ of $G$ such that $\mathrm{ad}(g)\circ \sigma \circ \varphi(G_I)=G_I$. By Proposition 1.44 in \cite{franz}, after replacing $g$ by $g'g$ for some $g'\in G_I$ if necessary, we can assume that $\mathrm{ad}(g)\circ \sigma \circ \varphi$ is a graph automorphism of $G_I$. In particular, $\mathrm{ad}(g)\circ \sigma \circ \varphi$ fixes $s_4$, and it fixes or swaps $s_1$ and $s_2$ (note that this would not be true for $n=4$, because the group of graph automorphisms of $D_4$ permutes $\lbrace s_1,s_2,s_4\rbrace$ in any possible way). Note also that $\sigma$ fixes $s_4$, and fixes or swaps $s_1$ and $s_2$. Moreover, by assumption, $\varphi(\lbrace s_1,s_2,s_4\rbrace)=\lbrace s_1,s_2,s_4\rbrace$. It follows that $\mathrm{ad}(g)$ induces a permutation of $\lbrace s_1,s_2,s_4\rbrace$. Now, let us observe that there exists a retraction $r$ from $G$ onto its subgroup $\langle s_1,\ldots,s_n\rangle$ (which is isomorphic to $D_n$):

\begin{itemize}
    \item if $G=\tilde{B}_n$, the retraction $r$ is given by $r(s_{n+1})=1$ and $r(s_i)=s_i$ for $1\leq i\leq n$;
    \item if $G=\tilde{D}_n$, the retraction $r$ is given by $r(s_{n+1})=s_n$ and $r(s_i)=s_i$ for $1\leq i\leq n$.
\end{itemize}
Note that $\mathrm{ad}(r(g))$ and $\mathrm{ad}(g)$ induce the same permutation of $\lbrace s_1,s_2,s_4\rbrace$. Then, by Lemma~\ref{D_n} we have that $\mathrm{ad}(g)$ fixes $s_4$ and fixes or swaps $s_1$ and $s_2$. But recall that $\mathrm{ad}(g)\circ \sigma \circ \varphi$ and $\sigma$ fix $s_4$, and fix or swap $s_1$ and $s_2$, therefore $\varphi$ fixes $s_4$, and fixes or swaps $s_1$ and $s_2$.\end{proof}

We need to prove that the previous lemma remains valid for the group $\tilde{B}_4$.

\begin{lemme}\label{B4}
Let $G=\tilde{B}_4$. Let $\varphi$ be a class-permuting endomorphism of $G$. Suppose that $\varphi(\lbrace s_1,s_2,s_4\rbrace)=\lbrace s_1,s_2,s_4\rbrace$. Then $\varphi$ fixes $s_4$ and fixes or swaps $s_1$ and $s_2$.\end{lemme}

\begin{proof}Let $I=S\setminus \lbrace s_{3}=\lbrace s_1,s_2,s_4,s_5\rbrace$. The subgroup $\varphi(G_I)$ is a maximal finite subgroup of $G$. Therefore, by Lemma \ref{lemma2}, there exist an element $g\in G$ and a graph automorphism $\sigma$ of $G$ such that $\mathrm{ad}(g)\circ \sigma \circ \varphi(G_I)=G_I$. By Proposition 1.44 in \cite{franz}, after replacing $g$ by $g'g$ for some $g'\in G_I$ if necessary, we can assume that $\mathrm{ad}(g)\circ \sigma \circ \varphi$ is a graph automorphism of $G_I$. By Lemma \ref{order4}, $\mathrm{ad}(g)\circ \sigma \circ \varphi$ fixes $s_4$. Moreover, it fixes or swaps $s_1$ and $s_2$. Note also that $\sigma$ fixes $s_4$, and fixes or swaps $s_1$ and $s_2$. Moreover, by assumption, $\varphi(\lbrace s_1,s_2,s_4\rbrace)=\lbrace s_1,s_2,s_4\rbrace$. It follows that $\mathrm{ad}(g)$ induces a permutation of $\lbrace s_1,s_2,s_4\rbrace$. As in the proof of the previous lemma, by retracting $G$ onto $\langle s_1,\ldots,s_4\rangle\simeq D_4$, we conclude by means of Lemma \ref{D_n} that $\mathrm{ad}(g)$ fixes $s_4$ and fixes or swaps $s_1$ and $s_2$, and thus that $\varphi$ fixes $s_4$, and fixes or swaps $s_1$ and $s_2$.\end{proof}

\begin{prop}\label{tildeBn}
Let $G=\tilde{B}_n$ for $n\geq 3$. Let $H$ be a maximal finite subgroup of $G$ and let $\varphi$ be a class-permuting automorphism of $G$ such that $\varphi(H)=H$. Then there exists an automorphism $\sigma$ of $G$ such that $\sigma_{\vert H}=\varphi_{\vert H}$.
\end{prop}

\begin{proof}
Consider the numbering of the elements of $S$ shown in Figure \ref{B}. We can assume that $H=G_I$ with $I= S\setminus \lbrace s_i\rbrace$ for some $1\leq i\leq n+1$. We will distinguish several cases. We denote by $\alpha$ the automorphism $\varphi_{\vert H}$ of $H$. 

\noindent \textbf{Case 1}. $n+1\leq i\leq 6$ (so $n\geq 5$). 
\newline Note that $G_I$ splits into an internal direct product $G=UV$ with $U\simeq B_{n+1-i}$ and $V\simeq D_{i-1}$ (with the convention that $B_0$ is the trivial group and $B_1$ is the cyclic group of order 2). By Lemma \ref{RP}, $\alpha$ is reflection-preserving, and thus it is inner-by-graph by \cite{franz}. It follows that there exist $u\in U$ and $v\in V$ such that $\mathrm{ad}(u)\circ \alpha_{\vert U}$ and $\mathrm{ad}(v)\circ \varphi_{\vert V}$ are graph automorphisms of $U$ and $V$ respectively.

\noindent If $n+1-i\geq 3$ then every graph automorphism of $U$ is trivial, and if $n+1-i=2$ then there is a unique non-trivial graph automorphism of $U$ that swaps the two reflections $s_{n+1}$ and $s_n$, but this graph automorphism cannot be induced by $\varphi$ according to Lemma \ref{order4}. Hence, in both cases, we have $\mathrm{ad}(u)\circ \alpha_{\vert U}=\mathrm{id}_U$. 

\noindent We have $V\simeq D_k$ with $k\geq 5$, so there is a unique non-trivial graph automorphism $\psi$ of $V$ that swaps $s_1$ and $s_2$, so $\mathrm{ad}(v)\circ \alpha_{\vert V}=\mathrm{id}_V$ or $\mathrm{ad}(v)\circ \alpha_{\vert V}=\psi$.

\noindent Now, note that the graph automorphism $\psi$ of $V$ extends to a graph automorphism of $G$, still denoted by $\psi$ (this automorphism swaps $s_1$ and $s_2$ and fixes $s_i$ for $i\geq 3$). Therefore, we can take $\sigma=\mathrm{ad}((uv)^{-1})$ or $\sigma=\mathrm{ad}((uv)^{-1})\circ \psi$, and we have $\sigma_{\vert H}=\varphi_{\vert H}$.

\medskip
\noindent \textbf{Case 2}. $i=5$ (so $n\geq 4$). 
\newline We keep the same notation as in Case 1. The new difficulty here comes from the fact that the group of graph automorphisms of $V\simeq D_4$ is of order 6, but Lemmas \ref{B or D} (for $n\geq 5$) and \ref{B4} (for $n=4$) show that we still have $\mathrm{ad}(v)\circ \alpha_{\vert V}=\mathrm{id}_V$ or $\mathrm{ad}(v)\circ \alpha_{\vert V}=\psi$. Hence, we conclude as in Case 1.

\medskip
\noindent \textbf{Case 3}. $i=4$.
\newline Then $G$ splits into an internal direct product $G_I=UV$ with $U\simeq B_{n-3}$ and $V\simeq A_3$. Every automorphism of $A_3$ is inner, and every reflection-preserving automorphism of $B_{n-3}$ is inner-by-graph, and thus inner if $n-3\geq 3$. For $n-3=2$, $B_{n-3}$ has a graph automorphism of order 2 but Lemma \ref{order4} shows that $\alpha_{\vert U}$ is still inner. Hence, in any case, we conclude that $\alpha$ is inner. 

\medskip
\noindent \textbf{Case 4}. $i=3$.
\newline Then $G$ splits into an internal direct product $G_I=UVW$ with $U\simeq B_{n-2}$ and $V\simeq \mathbb{Z}/2\mathbb{Z}$ and $W\simeq \mathbb{Z}/2\mathbb{Z}$. If $n-2\geq 3$, we conclude as above that we can take $\sigma=\mathrm{ad}(g)$ or $\sigma=\mathrm{ad}(g)\circ \psi$ for some $g\in G$, where $\psi$ denotes the graph automorphism of $G$ that swaps $s_1$ and $s_2$. If $n-2=2$, the fact that $\varphi$ does not swap $s_5$ and $s_4$ is guaranteed by Lemma \ref{order4} and we reach the same conclusion. If $n-2=1$, $\alpha$ may \emph{a priori} permute the subgroups $U=\langle s_4\rangle$, $V=\langle s_1\rangle$ and $W=\langle s_2\rangle$ in any possible way, but $s_4$ is not conjugate to $s_1$ or $s_2$ (since $s_4$ is isolated in the odd graph), and $\varphi$ is reflection-preserving, so $\varphi$ must fix $s_4$ and thus $\alpha$ fixes $s_4$ and fixes or swaps $s_1$ and $s_2$, and we conclude as above.

\medskip
\noindent \textbf{Case 5}. $i=1$ or $i=2$.
\newline We have $G_I\simeq B_n$. Since $n\geq 4$ and $\alpha$ is reflection-preserving, there exists $g\in G_I$ such that $\alpha=\mathrm{ad}(g)_{\vert G_I}$, therefore we can take $\sigma=\mathrm{ad}(g)$.\end{proof}

We now turn to affine Coxeter groups of type $\tilde{D}_n$.

\begin{prop}\label{tildeDn}
Let $G=\tilde{D}_n$ for $n\geq 4$. Let $H$ be a maximal finite subgroup of $G$ and let $\varphi$ be a class-permuting automorphism of $G$ such that $\varphi(H)=H$. Then there exists an automorphism $\sigma$ of $G$ such that $\sigma_{\vert H}=\varphi_{\vert H}$.
\end{prop}

\begin{proof}
Consider the numbering of the elements of $S$ shown in Figure \ref{D}. We can assume that $H=G_I$ with $I= S\setminus \lbrace s_i\rbrace$ for some $1\leq i\leq n+1$. First, suppose that $n\geq 5$. The proof of the proposition is very similar to that of Proposition \ref{tildeBn}: when $i\notin \lbrace 5, n-3\rbrace$, every automorphism of $G_I$ that is inner-by-graph clearly extends to an automorphism of the entire group, and when $i=5$ or $i=n-3$, Lemma \ref{B or D} shows that this is still true. Then, suppose that $n=4$ and $i\neq 3$; this case is not covered by Lemma \ref{B or D}, but it is clear that every automorphism of $G_I$ that is inner-by-graph extends to an automorphism of the entire group.\end{proof}

\subsubsection{Irreducible affine Coxeter groups of type $\tilde{C}_n$}

We will need the following lemma, which is comparable to Lemma \ref{order4}.

\begin{lemme}\label{order4bis}
Let $G=\tilde{C}_2=\langle s_1,s_2,s_3 \ \vert \ s_i^2=(s_1s_2)^4=(s_2s_3)^4=(s_1s_3)^2=1\rangle$ (see Figure~\ref{C3} below). Then there is no class-permuting endomorphism $\varphi$ of $G$ such that $\varphi(s_2)=s_1$ and $\varphi(s_1)=s_2$.
\end{lemme}


\begin{figure}[h!]
  \centering
    \begin{tikzpicture}
\draw[black, very thick] (2,0) -- (4,0);
\node[text=black] at (2,-0.5) {$s_1$};
\node[text=black] at (2.5,0.5) {$4$};
\node[text=black] at (3.5,0.5) {$4$};
\node[text=black] at (3,-0.5) {$s_2$};
\node[text=black] at (4,-0.5) {$s_3$};
\fill[black] (2,0) circle (0.1cm);
\fill[black] (3,0) circle (0.1cm);
\fill[black] (4,0) circle (0.1cm);
\end{tikzpicture}
  \caption{The affine Coxeter group $\tilde{C}_2$.}
  \label{C3}
\end{figure}

\begin{proof}
Suppose that there exists an endomorphism $\varphi$ of $G$ as in the statement of the theorem. By Lemma \ref{RP}, $\varphi$ is reflection-preserving, so $\varphi(s_2)$ and $\varphi(s_3)$ are reflections. It follows from Lemma 3.2 in \cite{CM13} that $\langle \varphi(s_2), \varphi(s_3)\rangle$ is contained in $g\langle s,s'\rangle g^{-1}$ for some $g\in G$ and $s,s'\in S$. Note that $\langle s,s'\rangle$ is of order at most 8, but since $\langle \varphi(s_2), \varphi(s_3)\rangle$ is of order 8 (since $\varphi$ is injective on $\langle s_2,s_3\rangle$), $\langle s,s'\rangle$ is necessarily of order 8 and thus we have $\langle \varphi(s_2), \varphi(s_3)\rangle=g\langle s,s'\rangle g^{-1}$. Note that the group $\langle s,s'\rangle$ contains five elements of order 2, namely $s,s',ss's,s'ss', (ss')^2$.  But $(ss')^2$ belongs to the center of $\langle s,s'\rangle$, and $\varphi(s_2),\varphi(s_3)$ do not commute (again because $\varphi$ is injective on $\langle s_2,s_3\rangle$), and thus $\varphi(s_2)$ and $\varphi(s_3)$ belong to $\lbrace s,s',ss's,s'ss'\rbrace$. Then, by Lemma 3.3.3 in \cite{davis}, two elements of $S$ are conjugate if and only if they are connected in the odd graph, so $s_i$ is not conjugate to $s_j$ for $i\neq j$. Hence, since $\varphi(s_2)=s_1$, we may assume that $s=s_1$ after exchanging $s$ and $s'$ if necessary. It follows that $s'=s_2$ since $\langle s,s'\rangle$ is of order 8. Now, as $\varphi$ maps any two non-conjugate elements of finite order to non-conjugate elements, we obtain that $\varphi(s_3)$ is conjugate to $s_2$. This is a contradiction since $\varphi(s_1)=s_2$ and $s_1,s_3$ are not conjugate.\end{proof}

\begin{prop}\label{tildeCn}
Let $G=\tilde{C}_n$ for $n\geq 2$. Let $H$ be a maximal finite subgroup of $G$ and let $\varphi$ be a class-permuting automorphism of $G$ such that $\varphi(H)=H$. Then there exists an automorphism $\sigma$ of $G$ such that $\sigma_{\vert H}=\varphi_{\vert H}$.
\end{prop}

\begin{figure}[h!]
  \centering
    \begin{tikzpicture}
\draw[black, very thick] (1,0) -- (4,0);
\draw[dashed, very thick] (4,0) -- (5,0);
\draw[black, very thick] (5,0) -- (7,0);
\node[text=black] at (1,-0.5) {$s_{1}$};
\node[text=black] at (2,-0.5) {$s_2$};
\node[text=black] at (1.5,0.5) {$4$};
\node[text=black] at (6.5,0.5) {$4$};
\node[text=black] at (3,-0.5) {$s_{3}$};
\node[text=black] at (4,-0.5) {$s_{4}$};
\node[text=black] at (6,-0.5) {$s_n$};
\node[text=black] at (7,-0.5) {$s_{n+1}$};
\fill[black] (1,0) circle (0.1cm);
\fill[black] (2,0) circle (0.1cm);
\fill[black] (3,0) circle (0.1cm);
\fill[black] (4,0) circle (0.1cm);
\fill[black] (5,0) circle (0.1cm);
\fill[black] (6,0) circle (0.1cm);
\fill[black] (7,0) circle (0.1cm);
\end{tikzpicture}
  \caption{Affine Coxeter group $\tilde{C}_n$, with $n\geq 2$.}
  \label{C}
\end{figure}

\begin{proof}We can assume that $H=G_I$ with $I= S\setminus \lbrace s_i\rbrace$ for some $1\leq i\leq n+1$. Write $k=i-1$ and $\ell=n-k=n+1-i$, and note that $G_I$ splits into an internal product $G_I=UV$ with $U=\langle s_1\ldots,s_{i-1}\rangle \simeq B_{k}$ (except if $i=1$, in which case we take for $U$ the trivial group) and $V=\langle s_{i+1},\ldots,s_{n+1}\rangle\simeq B_{\ell}$ (except if $i=n+1$, in which case we take for $V$ the trivial group). By Lemma \ref{RP}, $\alpha$ is reflection-preserving, and thus it is inner-by-graph by \cite{franz}. If $k\neq \ell$ then no graph automorphism of $G_I$ permutes its direct factors $U$ and $V$, thus there exist $u\in U$ and $v\in V$ such that $\mathrm{ad}(u)\circ \varphi_{\vert U}$ and $\mathrm{ad}(v)\circ \varphi_{\vert V}$ are graph automorphisms of $U$ and $V$ respectively. Subcase 1: if $k\neq 2$ and $\ell\neq 2$ then $U$ and $V$ do not have non-trivial graph automorphisms, so we can take $\sigma=\mathrm{ad}((uv)^{-1})$. Subcase 2: if $k=2$ or $\ell=2$, then we can assume, for example, that $k=2$ and $\ell\neq 2$ (since we assume that $k\neq \ell$). Note that we have $U=\langle s_1,s_2 \ \vert \ s_i^2=(s_1s_2)^4=1\rangle$. If $s_2s_3$ if of order 3, or equivalently if $G$ has at least four vertices (i.e., $n\geq 3$) then Lemma \ref{order4} shows that $\varphi$ does not swap the two vertices $s_1$ and $s_2$ of $U$ and we can still take $\sigma=\mathrm{ad}((uv)^{-1})$. If $s_2s_3$ is of order 4, we use Lemma \ref{order4bis} to reach the same conclusion.

\smallskip \noindent Now, suppose that $k=\ell$, and let $\psi$ denote the graph automorphism of $G$ of order 2 (that fixes $s_i$ and swaps $U$ and $V$). If $\varphi$ preserves $U$ and $V$, then we conclude as above. If $\varphi$ swaps $U$ and $V$, then $\psi \circ \varphi$ preserves $U$ and $V$, hence there exist $u\in U$ and $v\in V$ such that $\mathrm{ad}(u)\circ \psi\circ \varphi_{\vert U}$ and $\mathrm{ad}(v)\circ \psi \circ \varphi_{\vert V}$ are graph automorphisms of $U$ and $V$ respectively, and we conclude that we can take $\sigma=\psi \circ \mathrm{ad}((uv)^{-1})$.\end{proof}

\subsubsection{Remaining cases}

\begin{prop}\label{tildesporadic}
Let $G$ be an affine Coxeter group of type $\tilde{E}_6,\tilde{E}_7,\tilde{E}_8, \tilde{F}_4,\tilde{G}_2,\tilde{I}_1\simeq D_{\infty}$. Let $H$ be a maximal finite subgroup of $G$ and let $\varphi$ be a class-permuting automorphism of $G$ such that $\varphi(H)=H$. Then there exists an automorphism $\sigma$ of $G$ such that $\sigma_{\vert H}=\varphi_{\vert H}$.
\end{prop}

\begin{figure}[h!]
  \centering
    \begin{tikzpicture}
\draw[black, very thick] (1,0) -- (5,0);
\draw[black, very thick] (3,0) -- (3,2);
\node[text=black] at (1,-0.5) {$s_{1}$};
\node[text=black] at (2,-0.5) {$s_2$};
\node[text=black] at (3,-0.5) {$s_3$};
\node[text=black] at (4,-0.5) {$s_4$};
\node[text=black] at (5,-0.5) {$s_5$};
\node[text=black] at (2.5,1) {$s_6$};
\node[text=black] at (2.5,2) {$s_7$};
\fill[black] (1,0) circle (0.1cm);
\fill[black] (2,0) circle (0.1cm);
\fill[black] (3,0) circle (0.1cm);
\fill[black] (4,0) circle (0.1cm);
\fill[black] (5,0) circle (0.1cm);
\fill[black] (3,2) circle (0.1cm);
\fill[black] (3,1) circle (0.1cm);
\end{tikzpicture}
  \caption{Affine Coxeter group $\tilde{E}_6$.}
  \label{E6}
\end{figure}

\begin{proof}As before, we can assume that $H=G_I$ with $I= S\setminus \lbrace s_i\rbrace$ for some $s_i\in S$. The result is obvious for $G=D_{\infty}$, and the case $G=\tilde{E}_6$ (see Figure \ref{E6}) does not present any difficulty because every graph automorphism of a direct factor of $G_I$ extends to an automorphism of the entire group $G$. The remaining four cases are more complex.

\medskip
\noindent \textbf{Case 1}. $G=\tilde{E}_7$. 
\newline Consider the numbering of the elements of $S$ shown in Figure \ref{E7}. For reasons of symmetry, we only have to deal with $I=S\setminus \lbrace s_i\rbrace$ where $i\in \lbrace 1,2,3,4,8\rbrace$. If $s_i\neq s_2$ then every graph automorphism of a direct factor of $G_I$ extends to an automorphism of the entire group $G$. Then, suppose that $s_i=s_2$. Then $G_I$ splits into an internal direct product $G_I=UV$ with $U=\langle s_3,s_4,s_5,s_6,s_7,s_8\rangle$ and $V=\langle s_1\rangle$. Since $\varphi(G_I)=G_I$, we have $\varphi(U)=U$ and $\varphi_{\vert U}$ is reflection-preserving, so $\varphi_{\vert U}$ is inner-by-graph. Hence, there is an element $u\in U$ such that $\mathrm{ad}(u)\circ \varphi_{\vert U}$ is a graph automorphism of $U$, and thus $\mathrm{ad}(u)\circ \varphi$ fixes $s_4,\ldots,s_7$ and fixes or swaps $s_3$ and $s_8$. We will prove that $\mathrm{ad}(u)\circ \varphi$ must fix $s_3$ and $s_8$, and therefore that we can take $\sigma=\mathrm{ad}(u^{-1})$.

\begin{figure}[h!]
  \centering
    \begin{tikzpicture}
\draw[black, very thick] (1,0) -- (7,0);
\draw[black, very thick] (4,0) -- (4,1);
\node[text=black] at (1,-0.5) {$s_{1}$};
\node[text=black] at (2,-0.5) {$s_2$};
\node[text=black] at (3,-0.5) {$s_3$};
\node[text=black] at (4,-0.5) {$s_4$};
\node[text=black] at (5,-0.5) {$s_5$};
\node[text=black] at (6,-0.5) {$s_6$};
\node[text=black] at (7,-0.5) {$s_7$};
\node[text=black] at (4,1.5) {$s_8$};
\fill[black] (1,0) circle (0.1cm);
\fill[black] (2,0) circle (0.1cm);
\fill[black] (3,0) circle (0.1cm);
\fill[black] (4,0) circle (0.1cm);
\fill[black] (5,0) circle (0.1cm);
\fill[black] (6,0) circle (0.1cm);
\fill[black] (7,0) circle (0.1cm);
\fill[black] (4,1) circle (0.1cm);
\end{tikzpicture}
  \caption{Affine Coxeter group $\tilde{E}_7$.}
  \label{E7}
\end{figure}


\smallskip \noindent Consider the set $J=S\setminus \lbrace s_1\rbrace$. Then $G_J$ is a maximal finite subgroup of $G$, and thus there exist an element $g\in G$ and a graph automorphism $\theta$ of $G$ such that $\mathrm{ad}(g)\circ \theta\circ \varphi$ coincides with the identity on $G_J$.  Let $r:G\rightarrow G'=\langle s_4,\ldots,s_8\rangle$ denote the retraction defined as follows: $r(s_1)=s_7$, $r(s_2)=s_6$, $r(s_3)=s_5$ and $r(s_i)=s_i$ for $i\in \lbrace 4,\ldots,8\rbrace$. We distinguish two cases. 

\smallskip \noindent First, suppose that $\theta=\mathrm{id}$. Note that $\mathrm{ad}(gu^{-1})\circ \mathrm{ad}(u)\circ  \varphi (s_5)=s_5=\mathrm{ad}(gu^{-1})(s_5)$ and that $\mathrm{ad}(gu^{-1})\circ \mathrm{ad}(u)\circ  \varphi (s_8)=s_8=\mathrm{ad}(gu^{-1})(s_k)$ with $k=8$ if $\mathrm{ad}(u)\circ \varphi$ fixes $s_3$ and $s_8$, and $k=3$ if $\mathrm{ad}(u)\circ \varphi$ swaps $s_3$ and $s_8$. Then, defining $h=r(gu^{-1})$, we obtain $s_5=\mathrm{ad}(h)(s_5)$ and $s_8=\mathrm{ad}(h)(r(s_k))$. If $k=3$ then $r(s_k)=s_5$ and thus $s_5=s_8$, a contradiction. Hence $k=8$, which proves that $\mathrm{ad}(u)\circ \varphi$ fixes $s_3$ and $s_8$.


\smallskip \noindent Then, suppose that $\theta$ is the graph automorphism of $G$ of order 2. The following equalities hold: $\mathrm{ad}(g\theta(u^{-1}))\circ \theta\circ  \mathrm{ad}(u)\circ  \varphi (s_5)=s_5=\mathrm{ad}(g\theta(u^{-1}))(s_3)$ and $\mathrm{ad}(g\theta(u^{-1}))\circ \theta \circ \mathrm{ad}(u)\circ  \varphi (s_8)=s_8=\mathrm{ad}(g\theta(u^{-1}))(s_k)$ with $k=8$ if $\mathrm{ad}(u)\circ \varphi$ fixes $s_3$ and $s_8$, and $k=5$ if $\mathrm{ad}(u)\circ \varphi$ swaps $s_3$ and $s_8$. Then, defining $h=r(g\theta(u^{-1}))$, we obtain $s_5=\mathrm{ad}(h)(s_5)$ and $s_8=\mathrm{ad}(h)(r(s_k))$. If $k=5$ then $r(s_k)=s_5$ and thus $s_5=s_8$, a contradiction. Hence $k=8$, which proves that $\mathrm{ad}(u)\circ \varphi$ fixes $s_3$ and $s_8$.


\medskip
\noindent \textbf{Case 2}. $G=\tilde{E}_8$. 
\newline Consider the numbering of the elements of $S$ shown in Figure \ref{E8}. Let us recall that $I=S\setminus \lbrace s_i\rbrace$. If $s_i$ belongs to $\lbrace s_2,s_3,s_9\rbrace$ then every automorphism of $G_I$ is inner. If $s_i$ belongs to $\lbrace s_5,s_7,s_8\rbrace$ then every reflection-preserving automorphism of $G_I$ is inner (for $s_5$, this is because the non-trivial graph automorphism of $D_5$ is inner, and for $s_7$ and $s_8$ this is because $G_I$ has no non-trivial graph automorphism). So, let us suppose that $s_i$ belongs to $\lbrace s_1,s_4,s_6\rbrace$. First, suppose that $s_i=s_1$. By assumption, $\varphi(H)=H$ and, after replacing $\varphi$ by $\mathrm{ad}(h)\circ \varphi$ for some $h\in H$, we can assume that $\varphi_{\vert H}$ is a graph automorphism of $H$. Let us prove that $\varphi_{\vert H}$ is the identity, i.e., that $\varphi$ does not swap $s_2$ and $s_9$. Suppose towards a contradiction that $\varphi(s_2)=s_9$ and $\varphi(s_9)=s_2$. Let $J=S\setminus \lbrace s_8\rbrace$ and define $K=G_J$. There exist $g\in G$ and a graph automorphism $\sigma$ of $G$ such that $\mathrm{ad}(g)\circ\sigma\circ \varphi(K)=K$. However, note that the only graph automorphism of $G$ is the identity, so $\mathrm{ad}(g)\circ \varphi(K)=K$. In addition, since every reflection-preserving automorphism of $K$ is inner, we can assume (after replacing $g$ by $kg$ for some $k\in G$) that $\mathrm{ad}(g)\circ \varphi$ is the identity of $K$. Hence, we have $\mathrm{ad}(g)\circ \varphi(s_9)=s_9=\mathrm{ad}(g)(s_2)$ and $\mathrm{ad}(g)\circ \varphi(s_2)=s_2=\mathrm{ad}(g)(s_9)$, and $\mathrm{ad}(g)\circ \varphi(s_i)=s_i=\mathrm{ad}(g)(s_i)$ for every $i\notin \lbrace 1,2,8,9\rbrace$. Then, note that $G=\mathbb{Z}^n\rtimes K$, so there is a retraction $r:G\rightarrow K$. Define $k=r(g)$. This element satisfies $ks_2k^{-1}=s_9$ and $ks_9k^{-1}=s_2$ and $ks_ik^{-1}=s_i$ for $i\in \lbrace 3,4,5,6,7\rbrace$. A computer verification using Sage shows that there is no such element $k\in K\simeq E_8$. The remaining cases $s_i=s_4$ and $s_i=s_6$ are similar: for $s_i=s_6$, we conclude using the fact that there is no $k\in K$ such that $\mathrm{ad}(k)$ swaps $s_1$ and $s_5$, $s_2$ and $s_4$, and fixes $s_3,s_9,s_7$; for $s_i=s_4$, we conclude using the fact that there is no $k\in K$ such that $\mathrm{ad}(k)$ swaps $s_2$ and $s_5$, $s_3$ and $s_6$, and $s_9$ and $s_7$ (in both cases, this can be checked using Sage).

\begin{figure}[h!]
  \centering
    \begin{tikzpicture}
\draw[black, very thick] (1,0) -- (8,0);
\draw[black, very thick] (3,0) -- (3,1);
\node[text=black] at (1,-0.5) {$s_{1}$};
\node[text=black] at (2,-0.5) {$s_2$};
\node[text=black] at (3,-0.5) {$s_3$};
\node[text=black] at (4,-0.5) {$s_4$};
\node[text=black] at (5,-0.5) {$s_5$};
\node[text=black] at (6,-0.5) {$s_6$};
\node[text=black] at (7,-0.5) {$s_7$};
\node[text=black] at (8,-0.5) {$s_8$};
\node[text=black] at (3,1.5) {$s_9$};
\fill[black] (1,0) circle (0.1cm);
\fill[black] (2,0) circle (0.1cm);
\fill[black] (3,0) circle (0.1cm);
\fill[black] (4,0) circle (0.1cm);
\fill[black] (5,0) circle (0.1cm);
\fill[black] (6,0) circle (0.1cm);
\fill[black] (7,0) circle (0.1cm);
\fill[black] (8,0) circle (0.1cm);
\fill[black] (3,1) circle (0.1cm);
\end{tikzpicture}
  \caption{Affine Coxeter group $\tilde{E}_8$.}
  \label{E8}
\end{figure}

\medskip
\noindent \textbf{Case 3}. $G=\tilde{F}_4$. 
\newline Consider the numbering of the elements of $S$ shown in Figure \ref{F4}. If $s_i=s_1$ then every reflection-preserving automorphism of $G_I$ is inner. If $s_i=s_2$ then $G_I\simeq \mathbb{Z}/2\mathbb{Z}\times S_4$, so every automorphism of $G_I$ is inner. If $s_i=s_4$ then $G_I\simeq B_2  \times \mathbb{Z}/2\mathbb{Z}$, so every reflection-preserving automorphism of $G_I$ is inner. The remaining cases are $s_i=s_3$ and $s_i=s_5$. Suppose that $s_i=s_3$ and define $H=G_I$. By assumption, we have $\varphi(H)=H$. After replacing $\varphi$ by $\mathrm{ad}(g)\circ \varphi$ for some $g\in H$ if necessary, we can assume that $\varphi$ is a graph automorphism of $H$. We need to prove that $\varphi$ does not swap the pairs $\lbrace s_1,s_2\rbrace$ and $\lbrace s_4,s_5\rbrace$. Define $K=G_J$ where $J=S\setminus \lbrace s_4\rbrace$. There exist $g\in G$ and a graph automorphism $\sigma$ of $G$ such that $\mathrm{ad}(g)\circ\sigma\circ \varphi(K)=K$. However, note that the only graph automorphism of $G$ is the identity, so $\mathrm{ad}(g)\circ \varphi(K)=K$. In addition, since every reflection-preserving automorphism of $K$ is inner, we can assume that $\mathrm{ad}(g)\circ \varphi$ is the identity of $K$. It follows that $\mathrm{ad}(g)\circ \varphi(s_5)=s_5$, and thus that $\varphi(s_5)$ is conjugate to $s_5$. Hence, we cannot have $\varphi(\langle s_4,s_5\rangle)=\langle s_1,s_2\rangle$ since every involution of $\langle s_1,s_2\rangle$ is conjugate to $s_2$, which is not conjugate to $s_5$ (indeed, $s_2$ and $s_5$ are not connected in the odd graph). Last, suppose that $s_i=s_5$. As above, we assume that $\varphi$ is a graph automorphism of $G_I$. If $\varphi(s_4)=s_1$ then $\varphi(s_5)$ is conjugate to $s_1$ (because $s_4$ and $s_5$ are conjugate) and we get a contradiction as above; thus $\varphi_{\vert G_I}$ must be the identity.


\begin{figure}[h!]
  \centering
    \begin{tikzpicture}
\draw[black, very thick] (1,0) -- (5,0);
\node[text=black] at (1,-0.5) {$s_{1}$};
\node[text=black] at (2,-0.5) {$s_2$};
\node[text=black] at (3,-0.5) {$s_3$};
\node[text=black] at (4,-0.5) {$s_4$};
\node[text=black] at (5,-0.5) {$s_5$};
\node[text=black] at (2.5,0.5) {$4$};
\fill[black] (1,0) circle (0.1cm);
\fill[black] (2,0) circle (0.1cm);
\fill[black] (3,0) circle (0.1cm);
\fill[black] (4,0) circle (0.1cm);
\fill[black] (5,0) circle (0.1cm);
\end{tikzpicture}
  \caption{Affine Coxeter group $\tilde{F}_4$.}
  \label{F4}
\end{figure}

\medskip
\noindent \textbf{Case 4}. $G=\tilde{G}_2$. 
\newline Consider the numbering of the elements of $S$ shown in Figure \ref{G2}. We distinguish several cases. If $G_I=\langle s_2,s_3\rangle$ then $G_I$ is isomorphic to $S_3$, so every automorphism of $G_I$ is inner and thus is the restriction of an automorphism of $G$. Then, suppose that $G_I=\langle s_1,s_2\rangle$. After composing $\varphi$ with $\mathrm{ad}(g)$ for some $g\in G_I$, we can assume without loss of generality that $\varphi_{\vert G_I}$ is a graph automorphism. Let us prove that $\varphi(s_1)=s_1$ and $\varphi(s_2)=s_2$. Suppose towards a contradiction that $\varphi(s_2)=s_1$. Consider $K=\langle s_2,s_3\rangle$. There exist $s,s'\in S$ and $g\in G$ such that $\varphi(K)$ is contained in $g\langle s,s'\rangle g^{-1}$. But $K$ is a maximal finite subgroup and $\varphi$ is class-permuting (see Definition \ref{class-permuting}), so $\varphi(K)=g\langle s,s'\rangle g^{-1}$, hence $\lbrace s,s'\rbrace=\lbrace s_2,s_3\rbrace$ and $\varphi(K)=\langle s_1,\varphi(s_3)\rangle=g\langle s_2,s_3\rangle g^{-1}$. But any involution of $\langle s_2,s_3\rangle$ is conjugate to $s_2$ (and $s_3$), so $s_1$ is conjugate to $s_2$, which contradicts Lemma 3.3.3 in \cite{davis} (since $s_1$ and $s_2$ are not connected in the odd graph). Last, suppose that $G_I=\langle s_1,s_3\rangle$. As in the previous case, we can assume that $\varphi_{\vert G_I}$ is a graph automorphism. If $\varphi(s_3)=s_1$ then we obtain a contradiction as above, thus $\varphi(s_1)=s_1$ and $\varphi(s_3)=s_3$. Conclusion: in all cases, there exists an automorphism $\sigma$ of $G$ such that $\sigma_{\vert G_I}=\varphi_{\vert G_I}$.

\begin{figure}[h!]
  \centering
    \begin{tikzpicture}
\draw[black, very thick] (1,0) -- (3,0);
\node[text=black] at (1,-0.5) {$s_{1}$};
\node[text=black] at (2,-0.5) {$s_2$};
\node[text=black] at (3,-0.5) {$s_3$};
\node[text=black] at (1.5,0.5) {$6$};
\fill[black] (1,0) circle (0.1cm);
\fill[black] (2,0) circle (0.1cm);
\fill[black] (3,0) circle (0.1cm);
\end{tikzpicture}
  \caption{Affine Coxeter group $\tilde{G}_2$.}
  \label{G2}
\end{figure}
\end{proof}

\subsection{Affine Coxeter groups are homogeneous}\label{details3}

First, we will prove the result for irreducible affine Coxeter groups.

\begin{te}\label{details}
Irreducible affine Coxeter groups are AE-homogeneous.    
\end{te}

\begin{proof}Let $(G,S)$ be an irreducible affine Coxeter group. Let $u,v$ be tuples of elements of $G$, and suppose that $u$ and $v$ have the same AE-type in $G$. If the subgroup of $G$ generated by the components of $u$ or $v$ is infinite, then we can conclude using Theorem~\ref{main_crystallo0} (proved in the previous section) that there exists an automorphism $\sigma$ of $G$ such that $\sigma(u)=v$. So, let us suppose that the subgroups of $G$ generated by the components of $u$ and $v$ respectively are finite. Note that $G$ has only finitely many conjugacy classes of finite subgroups (by Lemma~\ref{finitely_many}, or because every finite subgroup of $G$ is conjugate to a standard parabolic subgroup of $G$). Therefore, Lemma~\ref{key_lemma_1} applies and provides a class-permuting morphism $\varphi : G\rightarrow G$ such that $\varphi(u)=v$ (note that we only need a particular case of this lemma where we take for $H$ the trivial subgroup). Let $G_u$ be a maximal parabolic finite subgroup of $G$ containing $u$ (note that $G_u$ may not be unique). Define $G_v=\varphi(G_u)$. First, note that $G_v$ is a maximal finite subgroup: indeed, there is an integer $n\geq 1$ such that $\varphi^n(G_v)$ is conjugate to $G_u$ (since $\varphi$ is class-permuting); therefore, if $G_v$ is contained in a finite subgroup $H$ of $G$, then $G_u$ is contained in $\mathrm{ad}(g)\circ \varphi^n(H)$ for some $g\in G$ and thus, by maximality of $G_u$ for inclusion among finite subgroups of $G$, we obtain $\vert G_u\vert=\vert H\vert$, so $\vert G_v\vert=\vert H \vert$ and thus $G_v=H$. Hence, by Lemma~\ref{lemma2}, there exists an automorphism $\theta$ of $G$ such that $\theta(\varphi(G_u))=G_u$. Define $\psi=\theta \circ \varphi$. Note that this endomorphism of $G$ is class-permuting. By Theorem \ref{extension}, there exists an automorphism $\sigma$ of $G$ such that $\sigma_{\vert G_u}=\psi_{\vert G_u}$. Last, note that the automorphism $\theta^{-1}\circ \sigma\in\mathrm{Aut}(G)$ coincides with $\theta^{-1}\circ \psi=\varphi$ on $G_u$, so $\theta^{-1}\circ \sigma(u)=v$, which concludes the proof.\end{proof}


We will prove below that the previous result remains valid without the irreducibility assumption.

\begin{te}\label{main_affine_homogeneous}
Affine Coxeter groups are AE-homogeneous.    
\end{te}

\begin{proof}
Write $G=G_0\times G_1\times \cdots\times G_n$ where $G_0$ is a finite Coxeter group and each $G_i$ for $i\geq 1$ is an irreducible affine Coxeter group. Let $S_i$ be a Coxeter generating set for $G_i$ and define $S=\cup_{0\leq i\leq n}S_i$. Let $u,v$ be tuples of elements of $G$, and suppose that $u$ and $v$ have the same AE-type in $G$. Lemma~\ref{key_lemma_1} provides a class-permuting morphism $\varphi : G\rightarrow G$ such that $\varphi(u)=v$ and $\varphi(H)\subset H$, where $H$ denotes the maximal free abelian subgroup of $G$ (i.e., $H$ is the translation subgroup of $G$). By Lemma \ref{lemma1}, $\varphi$ is reflection-preserving. It follows that, for each $s\in S$, $\varphi(s)$ belongs to some $G_i$. We claim that, for every $0\leq i\leq n$, $\varphi(G_i)$ is contained in some $G_j$. To prove the claim, let us define an equivalence relation $\sim$ on $S^G$ as follows: for $s,s'\in S^G$, $s\sim s'$ if and only if $s=s'$ or there exist an integer $k\geq 1$ and $s_1,\ldots,s_k\in S^G$ such that $s_1=s$, $s_k=s'$ and $[s_i,s_{i+1}]\neq 1$ for every $1\leq i <k$. We will need the following two observations:
\begin{enumerate}[(1)]
    \item if $s\sim s'$ then $\varphi(s)\sim \varphi(s')$ because $\varphi(S^G)\subset S^G$ and $\varphi$ is injective on each finite dihedral group $\langle s_i,s_{i+1}\rangle$, so $\varphi(s_i)$ and $\varphi(s_{i+1})$ do not commute;
    \item if $s\sim s'$ then $s$ and $s'$ belong to the same $G_j$: indeed, each $s_i$ and $s_{i+1}$ belong to the same $G_j$ as they do not commute.
\end{enumerate}
Then, note that the reflections in the Coxeter generating set $S_i$ of $G_i$ are $\sim$-equivalent, so $\varphi(S_i)$ is contained in some $G_j$, so $\varphi(G_i)$ is contained in $G_j$.


\smallskip \noindent Hence, we can define a map $\theta : \lbrace 0,\ldots,n\rbrace\rightarrow \lbrace 0,\ldots,n\rbrace$ such that $\varphi(G_i)\subset G_{\theta(i)}$. 

\smallskip \noindent Let us prove that $\theta$ is a bijection. Since $\varphi$ is class-permuting, there exists an integer $N\geq 1$ such that, for each finite subgroup $H$ of $G$, $\varphi^N(H)$ is conjugate to $H$. In particular, if $H$ is contained in some $G_i$, then $\varphi^N(H)$ is contained in $G_i$ as well. Let $i,i'\in\lbrace 0,\ldots,n\rbrace$ be such that $\theta(i)=\theta(i')=j$, and let us prove that $i=i'$. Observe that $\varphi(G_i)$ and $\varphi(G_{i'})$ are contained in $G_j$. Let $H_i$ and $H_{i'}$ be non-trivial finite subgroups of $G_i$ and $G_{i'}$, respectively. The subgroups $\varphi^N(H_i)$ and $\varphi^N(H_{i'})$ are contained in $G_i$ and $G_{i'}$, respectively, and they are non-trivial as $\varphi$ is injective on finite subgroups of $G$, so $G_i$ and $G_{i'}$ are the unique direct factors of $G$ containing $\varphi^N(H_i)$ and $\varphi^N(H_{i'})$, respectively. Then, note that $\varphi^N(H_i)$ and $\varphi^N(H_{i'})$ are also contained in $\varphi^{N-1}(G_j)$. Since,  for each $k\in\lbrace 0,\ldots,n\rbrace$, $\varphi(G_k)$ is contained in $G_{\theta(k)}$, by induction $\varphi^{r}(G_k)$ is contained in $G_{\theta^r(k)}$ for every integer $r$. Therefore, $\varphi^{N-1}(G_j)$ is contained in $G_{\theta^{N-1}(j)}$, hence $\varphi^N(H_i)$ and $\varphi^N(H_{i'})$ are contained in $G_{\theta^{N-1}(j)}$. It follows that $i=\theta^{N-1}(j)=i'$, which proves that $\theta$ is injective, hence bijective. 


\smallskip \noindent Note that $\theta(0)=0$ (because $G_0$ is the unique finite direct factor in the decomposition of $G$) and that the restriction $\varphi_{\vert G_0}$ is an automorphism of $G_0$. Then, let us make the following observation: two irreducible affine Coxeter groups $W$ and $W'$ are isomorphic if and only if they have the same maximal finite subgroups up to isomorphism (recall that each maximal finite subgroup of $W$ or $W'$ can be obtained by deleting one vertex from its Coxeter diagram, so this observation can be proved by inspecting the list of Coxeter diagrams of irreducible affine Coxeter groups). Note also that $\varphi$ preserves maximality of finite subgroups (as it is class-permuting). Therefore, $G_i$ and $G_{\theta(i)}$ must be isomorphic. Therefore, there exists an automorphism $\alpha$ of $G$ that permutes the $G_i$ in such a way that $\alpha\circ \varphi(G_i)\subset G_i$ (in other words, $\alpha$ induces a permutation of $\lbrace 0,\ldots,n\rbrace$ that is the inverse of the permutation $\theta$). Define $\psi=\alpha\circ \varphi$. For simplicity of notation, suppose that $u$ and $v$ are elements of $G$, not tuples (the general proof works in the same way). Write $u=u_0u_1\cdots u_n$ with $u_i\in G_i$ and $v=v_0v_1\cdots v_n$ with $v_i\in G_{\theta(i)}$. Note that $\varphi(u_i)=v_i$ for every $0\leq i\leq n$ by uniqueness of the decomposition of $v$ in the direct product $G=G_0\times G_1\times \cdots\times G_n$. We will prove that, for each $0\leq i\leq n$, there exists an isomorphism $\sigma_i:G_i\rightarrow G_{\theta(i)}$ that coincides with $\varphi_{\vert G_i}$ (and thus $\sigma_i(u_i)=v_i$). Hence, the automorphism $\sigma$ of $G$ such that $\sigma_{\vert G_i}=\sigma_i$ for every $0\leq i\leq n$ satisfies $\sigma(u)=v$, which concludes the proof. It remains to find the $\sigma_i$. For $i=0$, we can simply take $\sigma_0=\varphi_{\vert G_0}$ since $\varphi$ induces an automorphism of $G_0$. Then, fix an integer $1\leq i\leq n$. Let us distinguish two cases.

\smallskip \noindent {\bf Case 1}. $\langle u_i\rangle$ is finite.
\newline Let $U_i$ denote the maximal finite subgroup of $G_i$ containing $u_i$ and define $V_i=\psi(U_i)$ (where $\psi=\alpha\circ \varphi$ was defined in the previous paragraph). As in the proof of Theorem~\ref{details}, $V_i$ is a maximal finite subgroup of $G_i$, so there exists an automorphism $\beta_i$ of $G_i$ such that $\beta_i(V_i)=U_i$. The morphism $\beta_i\circ \psi_{\vert G_i}$ being class-permuting, Theorem \ref{extension} tells us that there exists an automorphism $\gamma_i$ of $G_i$ such that $\gamma_i$ and $\beta_i\circ \psi_{\vert G_i}$ coincide on $G_i$, i.e., $\alpha^{-1}_{\vert G_i}\circ \beta_i^{-1}\circ \gamma_i$ and $\varphi_{\vert G_i}$ coincide (on $G_i$). We can therefore define $\sigma_i$ as follows: \[\sigma_i=\alpha^{-1}_{\vert G_i}\circ \beta_i^{-1}\circ \gamma_i.\]


\smallskip \noindent {\bf Case 2}. $\langle u_i\rangle$ is infinite.
\newline 
Note that $u$ and $v$ play symmetrical roles, so Lemma~\ref{key_lemma_1} provides a class-permuting morphism $\varphi' : G\rightarrow G$ such that $\varphi'(v)=u$ and $\varphi(H)\subset H$, where $H$ denotes the translation subgroup of $G$. Using the same arguments as those used for $\varphi$, this morphism $\varphi'$ permutes the direct factors of $G$, so there is an integer $K\geq 1$ such that $(\varphi'\circ \varphi)^K(G_i)\subset G_i$. Note also that $(\varphi'\circ \varphi)^K$ fixes $u$, so $(\varphi'\circ \varphi)^K$ fixes $u_i$. Moreover, $(\varphi'\circ \varphi)^K$ is class-permuting, and thus $(\varphi'\circ \varphi)^K_{\vert G_i}$ is class-permuting. Hence, Proposition \ref{15janvier2026} shows that $(\varphi'\circ \varphi)^K_{\vert G_i}$ is an automorphism of $G_i$. It follows that $\varphi_{\vert G_i}$ is an isomorphism between $G_i$ and $G_{\theta(i)}$, so we simply take $\sigma_i=\varphi_{\vert G_i}$.\end{proof}

\section{Homogeneity in torsion-generated hyperbolic groups}\label{homogeneity_hyperbolic}

Among infinite, non-affine Coxeter groups, a particularly interesting class is that of hyperbolic Coxeter groups in the sense of Gromov. At the intersection of irreducible affine Coxeter groups and hyperbolic Coxeter groups, we find only the infinite dihedral group. In this section, we study the homogeneity of hyperbolic Coxeter groups and, more generally, of torsion-generated hyperbolic groups (i.e., hyperbolic groups generated by torsion elements).

\subsection{Preliminaries}

\subsubsection{Relative co-Hopf property}

\begin{te}[see Theorem 2.31 in \cite{And18b}]\label{relative_co_Hopf}
Let $G$ be a hyperbolic group, and let $H\subset G$ be a subgroup of $G$. Suppose that $G$ is one-ended relative to $H$. Then there exists a finitely generated subgroup $H'\subset H$ such that $G$ is co-Hopfian relative to $H'$, meaning that every injective morphism $\varphi : G \rightarrow G$ such that $\varphi_{\vert H'}=\mathrm{id}_{H'}$ is an automorphism of $G$ (and thus $G$ is co-Hopfian relative to $H$).
\end{te}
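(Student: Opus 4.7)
The plan is to prove Theorem~\ref{relative_co_Hopf} by contradiction, using the now-classical Bestvina--Paulin construction, the Rips machine in its torsion version (Guirardel, Reinfeldt--Weidmann), and Sela's shortening argument relative to $H$, all adapted to hyperbolic groups with torsion.

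First I would suppose the conclusion fails. Writing $H$ as a directed union $H = \bigcup_n H_n$ of finitely generated subgroups, the failure of the theorem for each $H' = H_n$ yields an injective but non-surjective endomorphism $\varphi_n : G \to G$ restricting to the identity on $H_n$. View each $\varphi_n$ as an action of $G$ on its Cayley graph $X$ via $g \cdot x = \varphi_n(g)\,x$, and for each $n$ pick a basepoint $x_n \in X$ minimizing the displacement of a fixed finite generating set of $G$. Rescaling the metric by the inverse of this minimum displacement produces a sequence of pointed actions on rescaled hyperbolic spaces, which (after extracting a subsequence) converges either to a genuine endomorphism (if the unrescaled minima stay bounded) or, by the Bestvina--Paulin construction, to a non-trivial minimal isometric action of $G$ on an $\mathbb{R}$-tree $T$. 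In the bounded case, a pigeonhole plus $\mathrm{Inn}(G)$-equivariance argument extracts a single endomorphism $\varphi$ satisfying $\varphi|_{H_n} = \mathrm{id}$ for arbitrarily large $n$, which forces $\varphi|_H = \mathrm{id}$ and, combined with injectivity, leads to a contradiction with non-surjectivity. In the unbounded case, the torsion version of Bestvina--Paulin (Reinfeldt--Weidmann) gives that arc stabilizers of $T$ are extensions of finite groups by cyclic groups; and since $\varphi_n|_{H_n} = \mathrm{id}$, every element of $H$ has translation length zero in $T$. Using that $H$ is a directed union of the finitely generated $H_n$, whose orbits in the rescaled Cayley graph have uniformly bounded diameter, one concludes that $H$ itself fixes a point in $T$.

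Next I would feed $T$ into the Rips machine in its torsion form to extract a non-trivial graph-of-groups decomposition $\Lambda$ of $G$ in which $H$ is elliptic, over edge groups that are virtually cyclic. Sela's shortening argument relative to $H$, adapted to the torsion setting, then supplies a modular automorphism $\sigma$ of $\Lambda$ fixing $H$ pointwise such that $\varphi_n \circ \sigma$ has strictly smaller displacement than $\varphi_n$ for infinitely many $n$; this contradicts the minimality of the displacement that was arranged at the outset (after replacing each $\varphi_n$ by an element of its $\mathrm{Inn}(G) \cdot \mathrm{Mod}(\Lambda)$-orbit with minimum displacement), completing the proof.

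The main obstacle is executing the shortening argument in the torsion case while preserving the pointwise fixing of $H$: one must analyze the Rips decomposition of $T$ with attention to finite stabilizers, formulate a torsion-sensitive modular group of $\Lambda$ whose elements genuinely fix $H$ pointwise (and not merely the vertex group containing it), and exploit the relative one-endedness of $G$ with respect to $H$ to guarantee that $\Lambda$ carries enough such modular automorphisms for the shortening to actually strictly decrease displacement. Subsidiary technical points include rigorously deducing that $H$ (and not merely each of its elements) fixes a point in the limit tree, and handling the bounded-displacement case so that the extracted endomorphism $\varphi$ can be promoted to a genuine automorphism of $G$.
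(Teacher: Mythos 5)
First, a point of comparison: the paper does not prove this statement. It is quoted from Theorem 2.31 of \cite{And18b}, and the removal of the hypothesis that $H$ be finitely generated is attributed to the proof of Theorem 2.13 of \cite{And21b}; so there is no in-paper argument to measure your sketch against, only the external one. Your overall strategy (Bestvina--Paulin limit, Rips machine with torsion, relative shortening) is indeed the method of that external proof, but as written your bounded-displacement case has a genuine gap. If the conjugation-minimized displacements of the $\varphi_n$ stay bounded, pigeonhole only yields a single injective, non-surjective endomorphism $\varphi$ coinciding with a conjugation (by an element depending on $n_k$) on each $H_{n_k}$ --- not with the identity, since you minimized over basepoints --- and even granting $\varphi_{\vert H}=\mathrm{id}$, such a $\varphi$ is precisely the object the theorem excludes, so its existence is not a contradiction. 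The standard repair restructures the proof: first reduce to a fixed finitely generated $H'\subset H$ (a reduced Stallings/JSJ decomposition of $G$ relative to $H$ involves only finitely much of $H$, hence coincides with the one relative to some finitely generated $H'$; this is the reduction carried out in \cite{And21b}), and then, given a single injective non-surjective $\varphi$ with $\varphi_{\vert H'}=\mathrm{id}$, run the limiting argument on the sequence of powers $\varphi^k$. These are pairwise distinct modulo $\mathrm{Inn}(G)$ (if $\mathrm{ad}(g)\circ\varphi^k=\varphi^m$ with $k<m$, injectivity forces $\varphi^{m-k}$ to be inner, hence $\varphi$ surjective), so their minimized displacements diverge and only the unbounded case occurs.

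Two further points in your unbounded case need care. The displacement minimization ``arranged at the outset'' must be taken over the orbit of each map under postcomposition by $\mathrm{Inn}(G)$ and precomposition by the modular group of the canonical JSJ decomposition of $G$ relative to $H'$, which exists a priori; you cannot minimize over $\mathrm{Mod}(\Lambda)$ for a decomposition $\Lambda$ that is only produced afterwards from the limit tree, on pain of circularity. Relatedly, relative modular automorphisms restrict to $H'$ only as conjugations (compare the definition of $\mathrm{Mod}_H(G)$ in this paper), not as the identity, so the shortened maps fix $H'$ only up to conjugacy; the orbit over which one minimizes has to be set up to accommodate this. With these adjustments your outline matches the actual proof in \cite{And18b}, though the substantial technical content (superstability of the limit action, the torsion Rips machine, and the construction of enough modular automorphisms from the relative JSJ) is of course where all the work lies.
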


\begin{rk}
Note that this theorem is stated and proved in \cite{And18b} under the stronger assumption that $H$ is finitely generated (in which case one can simply take $H'=H$), but this assumption is superfluous, as explained in the proof of Theorem 2.13 in \cite{And21b}.
\end{rk}

\begin{rk}
Note in particular that every one-ended hyperbolic group is co-Hopfian (by taking for $H$ the trivial subgroup in the previous theorem). This result was proved by Sela in \cite{Sel97} for torsion-free hyperbolic groups, and by Moioli in \cite{Moi13} for hyperbolic groups possibly with torsion. 
\end{rk}

\subsubsection{Relative Stallings, JSJ, centered splittings}

\begin{de}
Let $G$ be a group and let $H$ be a subgroup of $G$. A decomposition $\Delta$ of $G$ as a graph of groups is said to be \emph{relative to} $H$ if $H$ is contained in a conjugate of one of the vertex groups of $\Delta$, or, equivalently, if $H$ is elliptic in the Bass-Serre tree of $\Delta$.
\end{de}

\begin{de}\label{Stallings}
Let $G$ be a hyperbolic group and let $H$ be a subgroup of $G$. A \emph{Stallings decomposition} of $G$ relative to $H$, denoted by $\mathbf{S}_{G,H}$, is a decomposition of $G$ as a graph of groups with finite edge groups relative to $H$ whose vertex groups do not split non-trivially over finite groups relative to $H$. Note that such a splitting is not unique in general, but the conjugacy classes of one-ended vertex groups relative to $H$ do not depend on a particular Stallings decomposition of $G$ relative to $H$. These vertex groups are called the \emph{one-ended factors} of $G$ relative to $H$.
\end{de}

We denote by $\mathcal{Z}$ the class of groups that are virtually cyclic with infinite center. Any hyperbolic group $G$ that is one-ended relative to a subgroup $H\subset G$ has a canonical splitting over $\mathcal{Z}$ relative to $H$, that is a splitting that can be constructed in a natural and uniform way (see \cite{Sel97, Bow98, GL16}). This decomposition is a powerful tool to study the group $G$ and its first-order theory. We refer to the canonical JSJ decomposition over $\mathcal{Z}$ constructed in \cite{GL16} by means of the tree of cylinders as \emph{the} $\mathcal{Z}$-JSJ decomposition of $G$ relative to $H$, denoted by $\mathbf{JSJ}_{G,H}$. We collect below some definitions and results that will be useful in our proofs.

\begin{de}\label{def_rig_vert} Let $G$ be a hyperbolic group, let $H$ be a subgroup of $G$. Suppose that $G$ is one-ended relative to $H$. Let $T$ be a splitting of $G$ over $\mathcal{Z}$ relative to $H$. A vertex $v$ of $T$ and its stabilizer $G_v$ are said to be \emph{rigid} if $G_v$ is \emph{universally elliptic}, i.e., if it is elliptic in every splitting of $G$ over $\mathcal{Z}$ relative to $H$. Otherwise, $v$ and $G_v$ are said to be \emph{flexible}.\end{de}

A compact connected hyperbolic two-dimensional orbifold $\mathcal{O}$ is by definition the quotient of a convex subset $\bar{\mathcal{O}}$ of the hyperbolic plane $\mathbb{H}^2$, with geodesic boundary, by a non-elementary subgroup $G$ of $\mathrm{Isom}^{\pm}(\mathbb{H}^2)$ acting properly discontinuously (equivalently, $G$ is discrete), cocompactly and faithfully on $\bar{\mathcal{O}}$. This group $G$ is by definition the orbifold fundamental group of $\mathcal{O}$, denoted by $\pi_1^{\mathrm{orb}}(\mathcal{O})$. Equivalently, $G$ is a Fuchsian group. If $G$ is torsion-free, then $\mathcal{O}$ is a surface. If $G$ does not contain reflections, then the singular set of $\mathcal{O}$ is a finite collection of points ($\mathcal{O}$ has no mirrors), called the \emph{conical points} of $\mathcal{O}$, and we say that $\mathcal{O}$ is a \emph{conical orbifold}. Note that if we do not assume that the action of $G$ on $\bar{\mathcal{O}}$ is faithful, then $G$ is a finite extension of an orbifold group by the kernel of this action (which is largest normal finite subgroup of $G$), which leads to the following definition.

\begin{de}\label{FBO}A group $G$ is called a \emph{finite-by-orbifold group} if it is an extension \[1\rightarrow F\rightarrow G \rightarrow \pi_1^{\mathrm{orb}}(\mathcal{O})\rightarrow 1\]where $\mathcal{O}$ is a conical compact connected hyperbolic two-dimensional orbifold, possibly with (geodesic) boundary, and $F$ is an arbitrary finite group called the \emph{fiber}. An \emph{extended boundary subgroup} of $G$ is the preimage in $G$ of a boundary subgroup of the orbifold fundamental group $\pi_1^{\mathrm{orb}}(\mathcal{O})$. We define in the same way the \emph{extended conical subgroups} of $G$. The orbifold $\mathcal{O}$ is called \emph{the underlying orbifold of $G$}.\end{de}

The following terminology was introduced by Rips and Sela in \cite{RS97}, see also \cite[Definition 5.13]{GL16}.

\begin{de}\label{QH}Let $\Delta$ be a graph of groups and let $G$ be its fundamental group. A vertex $v$ of $\Delta$, and its stabilizer $G_v$, are said to be \emph{quadratically hanging} (denoted by QH) if $G_v$ is a finite-by-orbifold group (see Definition \ref{FBO}) such that, for any edge $e$ incident to $v$ in $\Delta$, the edge group $G_e$ is contained in an extended boundary subgroup of $G_v$ or is finite (in which case $G_e$ is contained in an extended conical subgroup or in the fiber of $G_v$).\end{de}

\begin{rk}\label{empty_boundary}
Note that the underlying orbifold of $G_v$ does not necessarily have non-empty boundary or conical points.
\end{rk}

The following theorem is one of the most important results of the theory of JSJ decompositions of hyperbolic groups: it provides a description of the flexible vertices of the canonical JSJ decomposition over $\mathcal{Z}$ (relative to a subgroup $H$). The result below is stated for a hyperbolic group, but it remains true in a much broader context. We refer the reader to \cite[Chapter III]{GL16} (see in particular \cite[Theorem 6.2]{GL16}).

\begin{te}\label{flexible implique rigide}Let $G$ be a one-ended hyperbolic group relative to a subgroup $H$. If a vertex group $G_v$ of the canonical JSJ decomposition of $G$ over $\mathcal{Z}$ relative to $H$ is flexible, then $G_v$ is quadratically hanging (see Definition \ref{QH}).\end{te}

Proposition \ref{JSJ} below summarizes the properties of $\mathbf{JSJ}_{G,H}$ that will be useful, and we refer the reader to \cite{GL16} for further details.

\begin{prop}\label{JSJ}Let $G$ be a hyperbolic group and let $H$ be a subgroup of $G$. Suppose that $G$ is one-ended relative to $H$. Let $T$ be the Bass-Serre tree of $\mathbf{JSJ}_{G,H}$.
\begin{enumerate}[(1)]
\item\textbf{Bipartition.} Every edge of $T$ joins a vertex labelled with a maximal virtually cyclic group to a vertex labelled with a group which is not virtually cyclic.
\item If $v$ is a QH vertex of $T$, then for every edge $e$ incident to $v$ in $T$, the edge group $G_e$ coincides with an extended boundary subgroup of $G_v$.
\item If $v$ is a QH vertex of $T$, then for every extended boundary subgroup $B$ of $G_v$, there exists an edge $e$ incident to $v$ in $T$ such that $G_e=B$. Moreover, the edge $e$ is unique.
\item \textbf{Acylindricity.} If an element $g\in G$ of infinite order fixes a segment of length $\geq 2$ in $T$, then this segment has length exactly 2 and its midpoint has virtually cyclic stabilizer. 
\item If $G_v$ is a flexible vertex group of $T$, then it is QH.
\end{enumerate}
\end{prop}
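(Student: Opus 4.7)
The plan is to derive each of the five items directly from the explicit construction of $\mathbf{JSJ}_{G,H}$ as the tree of cylinders of a $\mathcal{Z}$-JSJ deformation space, as carried out in Guirardel--Levitt \cite{GL16}. So the ``proof'' is really an organized pointer to that reference, and the main task is to match our conventions (extended boundary subgroup, QH vertex, $\mathcal{Z}$ = virtually cyclic with infinite center) to theirs and then read off each statement. I would begin by recalling that, because $G$ is hyperbolic and one-ended relative to $H$, the deformation space of $(\mathcal{Z},H)$-trees is non-empty and contains a JSJ tree $T_0$; the admissible equivalence relation on edge stabilizers is commensurability, and $T$ is the tree of cylinders $T_0^c$ associated to this relation.

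From this setup, item (1) is immediate: the tree of cylinders is bipartite by construction, the two vertex classes being (a) cylinders, whose stabilizers are exactly the maximal virtually cyclic subgroups containing an edge group, and (b) vertices of $T_0$, whose stabilizers are not virtually cyclic (any virtually cyclic vertex group of $T_0$ would be absorbed into a cylinder). Items (2) and (3) are the content of the compatibility between JSJ and cylinders at QH vertices: in the canonical JSJ of a hyperbolic group, edges incident to a QH vertex $v$ correspond bijectively to extended boundary subgroups of $G_v$, and the cylinder operation preserves this correspondence because each boundary subgroup has a unique maximal virtually cyclic envelope in $G$. So (2) and (3) follow from the characterization of QH vertices in \cite{GL16} together with the fact that in the relative setting every edge incident to a QH vertex is conjugated into such an envelope.

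For item (4), acylindricity of $T$ is the central structural feature of the tree of cylinders for an admissible commensurability relation: if $g$ of infinite order fixes three consecutive edges, the associated edge stabilizers all commensurate $\langle g\rangle$, hence lie in the same cylinder, forcing two of those edges to share a cylinder endpoint and collapsing the segment to length $\leq 2$; and when a length-$2$ segment is fixed, its midpoint must be a cylinder, whose stabilizer is virtually cyclic. Item (5) --- the fact that flexible vertices are QH --- is the version of Sela's classification of flexible vertices that holds in JSJ theory over slender (in particular virtually cyclic) subgroups for hyperbolic groups with torsion, established in \cite{GL16} (and, in the torsion-free case, in Sela's original work).

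The only real obstacle is bookkeeping: \cite{GL16} works in substantial generality (any admissible relation, arbitrary finitely generated groups with suitable edge-group class), so one has to verify that the hypotheses of their general theorems are satisfied in our relatively hyperbolic-free setting, that commensurability on virtually cyclic subgroups of a hyperbolic group is admissible (this uses that $G$ is hyperbolic, so commensurators of infinite virtually cyclic subgroups are again virtually cyclic), and that their notion of QH matches the one in use in this paper. Once these identifications are made, properties (1)--(5) are direct consequences and no further argument is required.
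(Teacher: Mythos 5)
Your proposal matches the paper's treatment: the paper gives no proof of this proposition at all, stating only that it ``summarizes the properties of $\mathbf{JSJ}_{G,H}$ that will be useful'' and referring the reader to \cite{GL16}, exactly as you do. Your item-by-item sketch of how each property falls out of the tree-of-cylinders construction (bipartition from the cylinder/vertex dichotomy, acylindricity from commensuration of $\langle g\rangle$, flexible implies QH from the classification of flexible vertices) is a correct and somewhat more detailed version of the same citation-based argument.
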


When the hyperbolic group $G$ is not one-ended relative to $H$, we consider decompositions of $G$ relative to $H$ over the class of groups that are either finite or virtually cyclic with infinite center, denoted by $\overline{\mathcal{Z}}$. Such a decomposition of $G$ can be obtained from a reduced Stallings splitting of $G$ relative to $H$, say $\mathbf{S}_{G,H}$, by replacing each vertex $x$ such that $G_x$ is one-ended by $\mathbf{JSJ}_{G_x,H}$ (the canonical JSJ decomposition of $G_x$ over $\mathcal{Z}$ relative to $H$) if $H$ is contained in $G_x$ and by $\mathbf{JSJ}_{G_x}$ otherwise. The new edge groups are defined as follows: if $e=[x,y]$ is an edge of (the Bass-Serre tree of) $\mathbf{S}_{G,H}$, then $G_e$ is finite, so $G_e$ fixes a vertex $x'$ in $\mathbf{JSJ}_{G_x,H}$ (or $\mathbf{JSJ}_{G_x}$) and a vertex $y'$ in $\mathbf{JSJ}_{G_y,H}$ (or $\mathbf{JSJ}_{G_y}$), and the edge $e$ in $\mathbf{S}_{G,H}$ is simply replaced by the edge $e'=[x',y']$ in $\mathbf{JSJ}_{G,H}$. Note that the vertices $x'$ and $y'$ fixed by $G_e$ are not necessarily unique, and that the reduced Stallings splitting $\mathbf{S}_{G,H}$ is not unique in general, therefore, the resulting splitting of $G$ is not unique in general, but for convenience we will still use the notation $\mathbf{JSJ}_{G,H}$ to denote one such splitting.

By collapsing certain subgraphs of $\mathbf{JSJ}_{G,H}$ to a single vertex, we obtain a new graph that enjoys nice properties. This leads to the concept of a \emph{centered splitting} (see Definition \ref{centered} below). More precisely, a centered splitting of a group $G$ (relative to a subgroup $H$) is a splitting over $\overline{\mathcal{Z}}$ (the class of groups that are either finite or virtually cyclic with infinite center) that satisfies a list of nice properties inherited from the canonical JSJ decomposition of a one-ended hyperbolic group, even though the group $G$ is not assumed to be one-ended (relative to $H$) and finite edge groups are allowed. The following definition is a slight variant of Definition 3.8 in \cite{And18}. Note that we allow QH vertex groups to have empty boundary (which is not the case in \cite{And18}, see \cite{AP24} for more details). In \cite[Section 4.4.2]{AP24}, we explain in more detail how to construct a centered splitting of a hyperbolic group from a JSJ decomposition over $\overline{\mathcal{Z}}$. The construction works in exactly the same way for splittings relative to a subgroup, and we refer the reader to \cite[Section 4.4.2]{AP24} for more details.

\begin{de}\label{centered}Let $G$ be a group and let $H$ be a subgroup of $G$. Let $\Delta$ be a decomposition of $G$ as a graph of groups relative to $H$. Let $V$ be the set of vertices of (the underlying graph of) $\Delta$. We suppose that $\vert V\vert \geq 2$. The graph $\Delta$ is said to be \emph{centered} if the following conditions hold.
\begin{enumerate}[(1)]
\item \textbf{Strong bipartition.} The underlying graph is bipartite in a strong sense: there exists a vertex $v\in V$, called the \emph{central vertex}, such that every edge connects $v$ to a vertex in $V\setminus \lbrace v\rbrace$. Moreover, the vertex $v$ is QH and $H$ is not contained in a conjugate of $G_v$. 
\item For every edge $e$, the edge group $G_e$ coincides with an extended boundary or conical subgroup of $G_v$.
\item For every extended boundary or conical subgroup $H$ of $G_v$, there exists an edge $e$ such that $G_e$ is conjugate to $H$ in $G_v$. Moreover, the edge $e$ is unique.
\item \textbf{Acylindricity.} Let $K$ be a subgroup of $G$, and suppose that $K$ is not contained in the fiber of $G_v$. If $K$ fixes a segment of length $\geq 2$ in the Bass-Serre tree of the splitting, then this segment has length exactly 2 and its endpoints are translates of $v$.
\end{enumerate}
In this paper, a centered splitting of $G$ relative to $H$ will often be denoted by $\mathbf{C}_{G,H}$.
\end{de} 

\subsubsection{The relative modular group}

\begin{de}Let $G$ be a hyperbolic group, and let $H\subset G$ be a subgroup of $G$. Suppose that $G$ is one-ended relative to $H$. We denote by $\mathrm{Aut}_H(G)$ the subgroup of $\mathrm{Aut}(G)$ consisting of all automorphisms whose restriction to $H$ is the conjugacy by an element of $G$. The modular group $\mathrm{Mod}_H(G)$ of $G$ relative to $H$ is the subgroup of $\mathrm{Aut}_H(G)$ consisting of all automorphisms $\sigma$ satisfying the following conditions:
\begin{itemize}
    \item the restriction of $\sigma$ to each non-QH vertex group of $\mathbf{JSJ}_{G,H}$ is the conjugacy by an element of $G$;
    \item the restriction of $\sigma$ to each finite subgroup of $G$ is the conjugacy by an element of $G$;
    \item $\sigma$ acts trivially on the underlying graph of $\mathbf{JSJ}_{G,H}$.
\end{itemize}
When $G$ is one-ended, the modular group of $G$ relative to the trivial subgroup is simply called the modular group, denoted by $\mathbf{Mod}(G)$.
\end{de}




\begin{te}[see Theorem 2.32 in \cite{And18b}]\label{shortening}Let $\Gamma_1$ and $\Gamma_2$ be hyperbolic groups. Let $H$ be a subgroup of $\Gamma_1$ that embeds into $\Gamma_2$, and let us fix an embedding $i:H\rightarrow \Gamma_2$. Assume that $\Gamma_1$ is one-ended relative to $H$. Then there exist a finite subset $F \subset \Gamma_1\setminus\lbrace 1\rbrace$ and a finitely generated subgroup $H'\subset H$ such that, for any non-injective morphism $\varphi:\Gamma_1\rightarrow \Gamma_2$ that coincides with $i$ on $H'$ up to conjugation,
there exists a relative modular automorphism $\sigma\in\mathrm{Mod}_H(\Gamma_1)$ such that $\varphi\circ\sigma$ kills an element of $F$.
\end{te}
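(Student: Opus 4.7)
The plan is to argue by contradiction via a relative version of Sela's shortening argument, using a Bestvina--Paulin limit and the Rips machine applied to the JSJ structure of $\Gamma_1$ relative to $H$. Suppose the conclusion fails. Then for any exhaustion $(F_n)$ of $\Gamma_1 \setminus \{1\}$ by finite sets and any exhaustion $(H_n)$ of $H$ by finitely generated subgroups, one can choose a non-injective $\varphi_n : \Gamma_1 \rightarrow \Gamma_2$ that coincides with $i$ on $H_n$ up to conjugation, and such that for every $\sigma \in \mathrm{Mod}_H(\Gamma_1)$ the morphism $\varphi_n \circ \sigma$ does not kill any element of $F_n$. Fix a finite generating set $S$ of $\Gamma_1$. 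Up to replacing $\varphi_n$ by $\mathrm{ad}(g_n) \circ \varphi_n \circ \sigma_n$ with $g_n \in \Gamma_2$ and $\sigma_n \in \mathrm{Mod}_H(\Gamma_1)$ minimising $\max_{s \in S} |\varphi_n(s)|_{\Gamma_2}$, we may assume that $\varphi_n$ is a \emph{shortest} representative in its class.

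Next, apply the Bestvina--Paulin construction: rescaling the word metric on $\Gamma_2$ by $\lambda_n = \max_{s \in S} |\varphi_n(s)|_{\Gamma_2}$, extract a subsequential limit to obtain a non-trivial, minimal isometric action of $\Gamma_1$ on an $\mathbb{R}$-tree $T$. Three features are crucial. First, because $\Gamma_2$ is hyperbolic, arc stabilisers of $T$ are virtually cyclic and tripod stabilisers are finite. Second, because $\varphi_n|_{H_n}$ agrees with $i$ after conjugation in $\Gamma_2$, the translation length in the rescaled Cayley graph of each element of $H_n$ goes to $0$; since $(H_n)$ exhausts $H$, the subgroup $H$ fixes a point of $T$, so the action is relative to $H$. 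Third, each non-trivial $g \in \Gamma_1$ lies in $F_n$ eventually, so $\varphi_n(g) \neq 1$ for $n$ large; a standard argument then shows that the stable kernel of $(\varphi_n)$ is trivial, so the action of $\Gamma_1$ on $T$ is faithful.

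Now apply the Rips machine (in Guirardel's formulation, which accommodates torsion) to the $\Gamma_1$-tree $T$. Using one-endedness of $\Gamma_1$ relative to $H$, the action cannot split off free factors relative to $H$, so one obtains a transverse decomposition of $T$ into simplicial, axial and surface (interval exchange) components whose combinatorics is dominated by $\mathbf{JSJ}_{\Gamma_1, H}$: QH vertex groups of $\mathbf{JSJ}_{\Gamma_1,H}$ act on surface components (possibly with fixed point if they are elliptic), $\mathcal{Z}$-type edges correspond to axial or simplicial parts, and rigid vertex groups act elliptically. At this point one constructs, from Dehn twists along $\mathcal{Z}$-edges and homeomorphism-supported twists on QH vertices of $\mathbf{JSJ}_{\Gamma_1,H}$, a modular automorphism $\sigma \in \mathrm{Mod}_H(\Gamma_1)$ whose effect on $T$ strictly shortens the translation lengths of the generators $S$; concretely, at scale $\lambda_n$ for $n$ large, this shows that $\varphi_n \circ \sigma$ is strictly shorter than $\varphi_n$, contradicting the minimality of $\varphi_n$ in its class.

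The main technical obstacle, as usual in the shortening argument, is the last step: translating the dynamical shortening on $T$ back into a genuine inequality of word lengths for $\varphi_n \circ \sigma$. This requires the classical mixture of Rips-theoretic analysis of the limit tree with the combinatorics of $\mathbf{JSJ}_{\Gamma_1,H}$, extended to the setting with torsion (cf.\ \cite{RW14,Moi13}). The passage to the relative setting brings no genuinely new difficulty beyond verifying that $H$ fixes a point of $T$, which follows from the chosen conjugation normalisation together with the exhaustion of $H$ by $H_n$; and that the shortening automorphisms produced by the Rips machine can be chosen in $\mathrm{Mod}_H(\Gamma_1)$ rather than in the full modular group, which is built into the definition of $\mathrm{Mod}_H$ since the non-QH vertex groups of $\mathbf{JSJ}_{\Gamma_1,H}$ are kept fixed up to conjugation.
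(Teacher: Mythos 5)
The paper does not reprove this statement: it imports it verbatim from Theorem 2.32 of \cite{And18b}, together with the remark that the finite generation hypothesis on $H$ can be dropped as in the proof of Theorem 2.13 of \cite{And21b}. Your sketch is a faithful outline of exactly that proof --- the relative shortening argument via a Bestvina--Paulin limit, triviality of the stable kernel forced by the exhaustion $(F_n)$, ellipticity of $H$ in the limit tree forced by the exhaustion $(H_n)$ (this is precisely the delicate point that \cite{And21b} addresses), and the Rips-machine shortening carried out inside $\mathrm{Mod}_H(\Gamma_1)$ --- so it matches the cited argument rather than offering an alternative route.
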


\begin{rk}
Note that this theorem is stated and proved in \cite{And18b} under the stronger assumption that $H$ is finitely generated (in which case one can simply take $H'=H$), but this assumption is superfluous, as explained in the proof of Theorem 2.13 in \cite{And21b}.
\end{rk}

\subsubsection{Preretractions}



The following two definitions are inspired by Definitions 5.15 and 5.9 in \cite{Per11}, and Lemma \ref{injective_relative} also appears in \cite{Per11} in the context of torsion-free hyperbolic groups. 

\begin{de}\label{related}Let $G$ be a hyperbolic group and let $H\subset G$ be a subgroup of $G$. Two endomorphisms $\varphi, \psi$ of $G$ are said to be \emph{$\mathbf{JSJ}_{G,H}$-related} if, for every finite subgroup $K$ of $G$ or non-QH vertex group $K$ of $\mathbf{JSJ}_{G,H}$, there exists an element $g\in G$ such that $\varphi_{\vert K}=\mathrm{ad}(g)\circ \psi_{\vert K}$.\end{de}

\begin{rk}
Note in particular that there exists an element $g\in G$ such that $\varphi_{\vert H}=\mathrm{ad}(g)\circ \psi_{\vert H}$, since $H$ is contained in a non-QH vertex group of $\mathbf{JSJ}_{G,H}$
\end{rk}

\begin{de}\label{pre_JSJ}Let $G$ be a hyperbolic group and let $H\subset G$ be a subgroup of $G$. An endomorphism $\varphi$ of $G$ is called a \emph{$\mathbf{JSJ}_{G,H}$-preretraction} if it is $\mathbf{JSJ}_{G,H}$-related to $\mathrm{id}_G$, i.e.\ if it coincides with an inner automorphism on every non-QH vertex group of $\mathbf{JSJ}_{G,H}$ (and thus on $H$) and on every finite subgroup of $G$. A $\mathbf{JSJ}_{G,H}$-preretraction is said to be \emph{non-degenerate} if it sends each QH vertex group isomorphically to a conjugate of itself.\end{de}

We need a similar notion for centered splittings.

\begin{de}\label{pre_centered}Let $G$ be a group, let $H$ be a subgroup of $G$ and let $\mathbf{C}_{G,H}$ be a centered splitting of $G$ relative to $H$. An endomorphism $\varphi$ of $G$ is called a \emph{$\mathbf{C}_{G,H}$-preretraction} if it coincides with an inner automorphism on every non-central vertex group of $\mathbf{C}_{G,H}$ (and thus on every finite subgroup of $G$ and on $H$). A $\mathbf{C}_{G,H}$-preretraction is said to be \emph{non-degenerate} if it sends the central vertex group isomorphically to a conjugate of itself.\end{de}


\begin{lemme}\label{injective_relative}
Let $G$ be a hyperbolic group and let $H\subset G$ be a subgroup. Suppose that $G$ is one-ended relative to $H$. Then every non-degenerate $\mathbf{JSJ}_{G,H}$-preretraction is injective.\end{lemme}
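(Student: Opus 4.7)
The aim is to show $\ker\varphi = \{1\}$ by constructing an equivariant simplicial immersion of the Bass--Serre tree $T$ of $\mathbf{JSJ}_{G,H}$ into itself. I would proceed in three steps.

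First I would observe that $\varphi$ is injective on every vertex stabilizer of $T$. On each non-QH vertex group $G_v$, the preretraction hypothesis gives $\varphi|_{G_v} = \mathrm{ad}(g_v)$ for some $g_v \in G$, hence injective; on each QH vertex group, non-degeneracy asserts that $\varphi|_{G_v}$ is an isomorphism onto $g_v G_v g_v^{-1}$. Since edge groups sit inside vertex groups, $\varphi$ is also injective on each edge stabilizer. Consequently $N := \ker\varphi$ intersects every vertex and edge stabilizer trivially, and therefore $N$ acts freely on $T$.

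Second I would construct a $\varphi$-equivariant simplicial immersion $\tilde\varphi : T \to T$. At each vertex $v$ the image $\varphi(G_v) = g_v G_v g_v^{-1}$ is precisely the stabilizer of $g_v \cdot v$, so setting $\tilde\varphi(v) := g_v \cdot v$ on orbit representatives and extending by $\tilde\varphi(h \cdot v) := \varphi(h) \cdot \tilde\varphi(v)$ yields a well-defined $\varphi$-equivariant map at the level of vertices. For an edge $e$ incident to $v$, the image $\varphi(G_e)$ is a virtually cyclic subgroup of $\varphi(G_v)$ which, by acylindricity (Proposition \ref{JSJ}(4)), fixes exactly one edge of $T$, and this edge is necessarily incident to $\tilde\varphi(v)$; this defines $\tilde\varphi$ on edges. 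Local injectivity of $\tilde\varphi$ at $v$ follows from the injectivity of $\varphi|_{G_v}$ combined with Proposition \ref{JSJ}(3): distinct edges at $v$ correspond to distinct extended boundary subgroups of $G_v$, which map under the isomorphism $\varphi|_{G_v}$ to distinct extended boundary subgroups of $g_v G_v g_v^{-1}$, and hence to distinct edges incident to $\tilde\varphi(v)$.

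Finally, for any $n \in N$ the equivariance of $\tilde\varphi$ gives $\tilde\varphi(n \cdot x) = \varphi(n) \cdot \tilde\varphi(x) = \tilde\varphi(x)$, so $\tilde\varphi$ factors through the quotient $N\backslash T$, producing a simplicial immersion $N\backslash T \to T$. Any simplicial immersion of a connected graph into a tree induces an injection on fundamental groups (because a non-backtracking closed path in a tree must be trivial), so $\pi_1(N\backslash T) \hookrightarrow \pi_1(T) = \{1\}$. As $N$ acts freely on $T$, we have $N \simeq \pi_1(N\backslash T) = \{1\}$, and therefore $\varphi$ is injective.

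The main obstacle is the middle step: producing a well-defined simplicial map on edges and verifying local injectivity. The acylindricity of $\mathbf{JSJ}_{G,H}$ (Proposition \ref{JSJ}(4)) is what pins down $\tilde\varphi$ on each edge, and non-degeneracy of $\varphi$ (that is, the fact that $\varphi|_{G_v}$ is an isomorphism, not merely an injection, on QH vertex groups) is crucial to ensure that extended boundary subgroups of $G_v$ are sent to extended boundary subgroups of $g_v G_v g_v^{-1}$ and not to proper subgroups of them; without this one could imagine edges collapsing under $\tilde\varphi$ and the immersion argument would fail.
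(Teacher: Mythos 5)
Your argument is correct and is essentially the proof the paper relies on: the paper's own proof of Lemma \ref{injective_relative} is a one-line reduction to \cite[Proposition 7.1]{And18}, and the equivariant tree-immersion argument you describe (injectivity on vertex stabilizers, a locally injective $\varphi$-equivariant map $T\to T$, and the conclusion that the kernel acts freely on a tree while being killed by an immersion into a tree) is precisely the content of that cited proposition, carried out in the relative setting. Two small points to tighten in the middle step: acylindricity alone does not give that $\varphi(G_e)$ fixes a unique edge incident to $\tilde\varphi(v)$ (a priori it could fix a segment of length $2$), so one must also invoke Proposition \ref{JSJ}(1) and the fact that QH vertex groups are not virtually cyclic both to pin down the image edge and to see that images of adjacent vertices remain adjacent; and the fact that $\varphi(G_e)$ is a \emph{full} conjugate of $G_e$ (hence an extended boundary subgroup of $\mathrm{Stab}(\tilde\varphi(v))$, matched to a unique edge by Proposition \ref{JSJ}(3)) follows from the preretraction property --- $G_e$ lies in a non-QH, virtually cyclic vertex group, where $\varphi$ is a conjugation --- rather than from non-degeneracy, whose real role is to make $\varphi(G_v)$ a vertex stabilizer at QH vertices in the first place.
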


\begin{proof}
The non-relative version of this lemma (that is, when $H$ is the trivial subgroup) is an immediate consequence of \cite[Proposition 7.1]{And18} (this proposition applies because a hyperbolic does not contain $\mathbb{Z}^2$ and is $K$-CSA for some $K$ large enough), and the proof works in the same way when the subgroup $H$ is non-trivial.
\end{proof}

\begin{lemme}\label{injective2}
Let $G$ be a finitely torsion-generated group and let $H\subset G$ be a subgroup. Let $\mathbf{C}_{G,H}$ be a centered splitting of $G$ relative to $H$, with central vertex $v$. Then every $\mathbf{C}_{G,H}$-preretraction is non-degenerate (i.e.\ the central vertex group $G_v$ is mapped isomorphically to a conjugate of itself ).\end{lemme}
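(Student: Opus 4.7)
The plan is to combine Bass--Serre theory on the tree $T$ of the centered splitting $\mathbf{C}_{G,H}$ with the torsion-generation hypothesis on $G$. Throughout, let $v$ denote the central vertex and recall that $G_v$ is QH.

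First, I would establish that $\varphi$ sends every finite subgroup of $G$ to a $G$-conjugate of itself. By properties (2)--(3) of Definition \ref{centered}, every extended boundary or conical subgroup of $G_v$ is $G_v$-conjugate to an edge group of $\mathbf{C}_{G,H}$; since all finite subgroups of a QH vertex group are $G_v$-conjugate into boundary/conical subgroups, every finite subgroup of $G_v$ is $G_v$-conjugate to a subgroup of some edge group, and therefore $G$-conjugate to a subgroup of some non-central vertex group. Combined with the Bass--Serre fact that finite subgroups of $G$ are elliptic in $T$, we conclude that every finite subgroup of $G$ is $G$-conjugate to a subgroup of some non-central vertex group. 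Since $\varphi$ coincides with an inner automorphism on each non-central vertex group by Definition \ref{pre_centered}, it sends every finite subgroup of $G$ to a $G$-conjugate of itself.

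Next, I would analyze the action of $\varphi(G_v)$ on $T$. Fix torsion generators $t_1, \ldots, t_n$ of $G$ and write $\varphi(t_i) = g_i t_i g_i^{-1}$. Using the acylindricity property (4) of Definition \ref{centered} together with the fact that a QH $2$-orbifold group admits no non-trivial splitting over finite subgroups relative to its boundary/conical subgroups, I would argue that $\varphi(G_v)$ must stabilize a vertex $w$ of $T$. Furthermore, this vertex cannot be a translate of a non-central vertex, for otherwise $\varphi(G_v)$ would be contained in a conjugate of some non-central vertex group; but the torsion-generation of $G$, combined with the compatibility between the various conjugators $g_i$ forced by the relations among the $t_i$, prevents the non-elementary QH group $G_v$ from being so absorbed. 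Hence $w = g \cdot v$ for some $g \in G$ and $\varphi(G_v) \subset g G_v g^{-1}$.

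Finally, after post-composing $\varphi|_{G_v}$ with $\mathrm{ad}(g^{-1})$, we obtain an endomorphism $\psi : G_v \to G_v$ that restricts to an inner automorphism on every extended boundary and conical subgroup of $G_v$, since each such subgroup is $G_v$-conjugate to an edge group of $\mathbf{C}_{G,H}$ (by Definition \ref{centered}(3)), and $\varphi$ acts by an inner automorphism on each edge group (being finite and contained in a non-central vertex group). The relative co-Hopf property of $2$-orbifold fundamental groups then forces $\psi$ to be an automorphism of $G_v$, so $\varphi(G_v) = g G_v g^{-1}$ and $\varphi|_{G_v}$ is an isomorphism onto $g G_v g^{-1}$.

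The main obstacle is the middle step, namely forcing $\varphi(G_v)$ to be elliptic in $T$ and identifying its fixed vertex as a translate of $v$. This is precisely where the torsion-generation hypothesis on $G$ intervenes essentially, to rule out the scenarios where $\varphi(G_v)$ would act hyperbolically on $T$ or would be contained in a non-central vertex group, and is the delicate geometric point of the argument.
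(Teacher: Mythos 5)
The paper does not give a self-contained argument here: it simply observes that the statement is a relative version of Corollary~5.11 of \cite{AP24} and that the adaptation is immediate. Your attempt to reconstruct that proof has the right overall shape (force $\varphi(G_v)$ to be elliptic, rule out absorption into a non-central vertex group, conclude via a rigidity property of orbifold groups), but each of the three key steps has a genuine gap. The most serious is the ellipticity step. You justify it by acylindricity plus ``a QH $2$-orbifold group admits no non-trivial splitting over finite subgroups relative to its boundary/conical subgroups.'' But a centered splitting is a splitting over $\overline{\mathcal{Z}}$, so the edge stabilizers of $T$ include infinite virtually cyclic groups, and orbifold groups \emph{do} split over such subgroups relative to their boundary and conical subgroups (along essential simple closed curves); your cited fact therefore does not preclude a non-elliptic action of $G_v$ on $T$ via $\varphi$. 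What is actually needed is the pinching/non-pinching dichotomy for morphisms from (finite-by-)orbifold groups (this is the role of \cite[Proposition~2.31]{And18}, invoked elsewhere in the paper exactly for this purpose): non-pinching implies elliptic image, while the pinching case must be handled separately by refining the splitting and deriving a contradiction. You never address pinching, which is precisely the content of ``degenerate.'' Relatedly, your parenthetical justification that $\varphi$ is inner on edge groups ``being finite'' is wrong in general (edge groups need not be finite); the correct reason is simply that every edge group lies in a non-central vertex group by strong bipartition.

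The second gap is that the exclusion of the case $w$ a translate of a non-central vertex is asserted, not proved: ``the compatibility between the various conjugators $g_i$ forced by the relations among the $t_i$'' is not an argument. This is exactly where torsion-generation must do concrete work, via \cite[Lemma~5.9]{AP24} (the underlying graph is a tree and the orbifold of $G_v$ has genus $0$) and a generation argument of the kind used in the proof of Theorem~\ref{strictly_minimal_theorem}: one exhibits an element of an extended conical or boundary subgroup of $G_v$ that cannot be written as a product of conjugates of elements of the relevant non-central vertex group. Finally, your last step misapplies the relative co-Hopf property: at that point $\psi : G_v \to G_v$ is only known to be inner on boundary/conical subgroups, not injective, and a degenerate preretraction may well be non-injective on $G_v$; co-Hopfianity upgrades \emph{injective} endomorphisms to automorphisms and says nothing here. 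Injectivity of $\varphi|_{G_v}$ is again a consequence of the (missing) non-pinching analysis, not an input you can assume.
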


\begin{proof}
The lemma is a relative version of Corollary 5.11 in \cite{AP24}; adapting the proof is immediate.
\end{proof}

\subsection{Isomorphisms between vertex groups of relative Stallings splittings}

\begin{lemme}\label{non_conj_to_non_conj}
Let $G=\langle s_1,\ldots,s_n \ \vert \ \Sigma(s_1,\ldots,s_n)=1\rangle$ be a finitely presented group that has only a finite number of conjugacy classes of finite subgroups. Then there exists a universal formula $\mathrm{Finite}_G(x_1,\ldots,x_n)$ such that, for any $(g_1,\ldots,g_n)\in G^n$, we have: $G\models \mathrm{Finite}_G(g_1,\ldots,g_n)$ if and only if the map $ \lbrace s_1,\ldots,s_n \rbrace \rightarrow G : s_i\mapsto g_i$ extends to an endomorphism $\varphi$ of $G$ that is injective on the finite subgroups of $G$ and that maps any two non-conjugate finite subgroups of $G$ to non-conjugate finite subgroups.\end{lemme}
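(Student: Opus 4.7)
The plan is to write down explicitly a universal formula $\mathrm{Finite}_G(\bar x)$ that encodes three conditions simultaneously: that $\bar x$ satisfies the defining relations of $G$ (so that $s_i \mapsto g_i$ extends to an endomorphism $\varphi$), that $\varphi$ is injective on a fixed set of representatives of the conjugacy classes of finite subgroups, and that it sends any two such representatives of the same order to non-conjugate subgroups. Once these three conditions are in place, the global versions of injectivity and preservation of non-conjugacy across \emph{all} finite subgroups follow automatically by transporting representatives via conjugation.

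More concretely, let $F_1, \ldots, F_m$ be representatives of the (finitely many, by hypothesis) conjugacy classes of finite subgroups of $G$, and write each element of $F_i$ as a word $f_{i,k}(s_1, \ldots, s_n)$ for $1 \leq k \leq |F_i|$. I would then define
\[
\mathrm{Finite}_G(\bar x) : \forall g \, \Big[ \bigwedge_{r \in \Sigma} r(\bar x) = 1 \,\wedge\, \bigwedge_{i=1}^{m} \bigwedge_{1 \leq k < \ell \leq |F_i|} f_{i,k}(\bar x) \neq f_{i,\ell}(\bar x) \,\wedge\, \bigwedge_{\substack{1 \leq i \neq j \leq m \\ |F_i| = |F_j|}} \bigvee_{k=1}^{|F_i|} \bigwedge_{\ell=1}^{|F_j|} g f_{i,k}(\bar x) g^{-1} \neq f_{j,\ell}(\bar x) \Big].
\]
Since $\Sigma$ and all the index sets are finite, the bracketed body is quantifier-free in $\bar x$ and $g$ once the finite boolean combinations are expanded, so $\mathrm{Finite}_G$ is genuinely universal.

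The verification of both directions is then mostly mechanical. For the forward direction, if $G \models \mathrm{Finite}_G(\bar g)$, the first clause ensures that $s_i \mapsto g_i$ extends to an endomorphism $\varphi$; the second clause asserts that $\varphi$ is injective on each $F_i$, and since every finite subgroup is conjugate to some $F_i$ and $\varphi$ commutes with conjugation, $\varphi$ is injective on every finite subgroup of $G$; the third clause, being the universal quantification over $g$ of the non-inclusion of $g\varphi(F_i)g^{-1}$ in $\varphi(F_j)$, yields non-conjugacy of $\varphi(F_i)$ and $\varphi(F_j)$ for every $i \neq j$ with $|F_i| = |F_j|$, the case $|F_i| \neq |F_j|$ being automatic from injectivity. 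Any pair of non-conjugate finite subgroups is conjugate to a pair of distinct representatives $(F_i,F_j)$, so their $\varphi$-images remain non-conjugate. The converse is immediate by inspection. The only subtle point, and the reason the hypothesis on the finiteness of conjugacy classes of finite subgroups is essential, is precisely the reduction of the non-conjugacy requirement to a condition on the finitely many representatives $F_1, \ldots, F_m$; without this finiteness one would be forced to write an infinite conjunction, which is not a first-order formula.
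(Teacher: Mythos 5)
Your proposal is correct and follows essentially the same route as the paper: both write down the same universal formula built from a finite set of representatives of the conjugacy classes of finite subgroups, with the relations, an injectivity clause, and a universally quantified non-conjugacy clause for pairs of representatives of equal order. The only cosmetic difference is that you encode injectivity on each $F_i$ by pairwise distinctness of images, whereas the paper asserts that non-identity elements have non-trivial image; these are equivalent since the restriction of a homomorphism to a subgroup is injective iff its kernel is trivial.
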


\begin{proof}An easy but crucial observation is that there is a one-to-one correspondence between $\mathrm{Hom}(G,G)$ and the set $\lbrace (g_1,\ldots,g_n)\in G^n, \ \Sigma(g_1,\ldots,g_n)=1\rbrace$. Let $H_1,\ldots, H_k$ be non-conjugate finite subgroups of $G$ such that any finite subgroup of $G$ is conjugate to some $H_i$. For each $1\leq i\leq k$, let $o_i$ denote the order of $H_i$, set $H_i=\lbrace g_{i,1},\ldots,g_{i,o_i}\rbrace $ and write every $g_{i,j}$ as a word $w_{i,j}(s_1,\ldots,s_n)$. The universal formula $\mathrm{Finite}_G(\bar{x})$ is as follows:\[(\Sigma(\bar{x})=1) \bigwedge_{i=1}^k\bigwedge_{j=1}^{o_i}(w_{i,j}(\bar{x})\neq 1) \wedge \left( \forall g \  \bigwedge_{i=1}^k\bigwedge_{\substack{i'=1\\ i'\neq i \\ o_i=o_{i'}}}^k\bigvee_{j=1}^{o_i}\bigwedge_{j'=1}^{o_{i'}} (gw_{i,j}(\bar{x})g^{-1}\neq w_{j',i'}(\bar{x}))\right).\]\end{proof}

The following lemma is an adaptation of Lemma 5.18 in \cite{Per11}.

\begin{lemme}\label{formula_related}Let $G=\langle s_1,\ldots,s_n \rangle$ be a hyperbolic group and let $H\subset G$ be a subgroup of $G$. There exists an existential formula $\mathrm{Related}_{G,H}(x_1,\ldots,x_n,y_1,\ldots,y_n)$ such that, for any two endomorphisms $\varphi, \psi$ of $G$ defined by $\varphi(s_i)=a_i\in G$ and $\psi(s_i)=b_i\in G$ for $1\leq i\leq n$, $\varphi$ and $\psi$ are $\mathbf{JSJ}_{G,H}$-related if and only if $G\models \mathrm{Related}_{G,H}(a_1,\ldots,a_n,b_1,\ldots,b_n)$.\end{lemme}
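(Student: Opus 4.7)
The plan is to build $\mathrm{Related}_{G,H}$ as a finite conjunction of existential formulas, one for each of finitely many relevant subgroups of $G$, using the fact that the condition $\varphi_{\vert K} = \mathrm{ad}(g) \circ \psi_{\vert K}$ on a finitely generated $K$ is witnessed by a single $g$ conjugating a generating tuple.

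First I would gather the finitely many subgroups that need to be checked. Since $G$ is hyperbolic, it has only finitely many conjugacy classes of finite subgroups; let $K_1, \ldots, K_p$ be representatives. The splitting $\mathbf{JSJ}_{G,H}$ has a finite underlying graph, so it has finitely many non-QH vertex groups $K_{p+1}, \ldots, K_r$ (up to conjugacy, choosing one per vertex), each finitely generated since vertex groups in JSJ decompositions of hyperbolic groups are quasi-convex. For every $i$, fix a finite generating set $\{k_{i,1}, \ldots, k_{i,m_i}\}$ of $K_i$ and express each generator as a word $w_{i,j}(s_1,\ldots,s_n)$ in the chosen generators of $G$. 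Then define
\[
\mathrm{Related}_{G,H}(\bar{x},\bar{y}) \;:\; \bigwedge_{i=1}^{r} \exists g_i \bigwedge_{j=1}^{m_i} \bigl( w_{i,j}(\bar{x}) = g_i \cdot w_{i,j}(\bar{y}) \cdot g_i^{-1} \bigr),
\]
which is manifestly an existential formula in the free variables $\bar{x},\bar{y}$.

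For the forward implication, suppose $\varphi, \psi$ are $\mathbf{JSJ}_{G,H}$-related. For each representative $K_i$, pick an element $g_i \in G$ with $\varphi_{\vert K_i} = \mathrm{ad}(g_i) \circ \psi_{\vert K_i}$; evaluating on the generators $k_{i,j}$ of $K_i$ yields exactly the conjunction in the formula, with $a_i = \varphi(s_i)$ and $b_i = \psi(s_i)$. For the converse, suppose the formula holds with witnesses $g_i$. Since $w_{i,j}(\bar{a}) = \varphi(k_{i,j})$ and $w_{i,j}(\bar{b}) = \psi(k_{i,j})$, the equations $\varphi(k_{i,j}) = g_i \psi(k_{i,j}) g_i^{-1}$ hold for a generating set of $K_i$, hence $\varphi_{\vert K_i} = \mathrm{ad}(g_i) \circ \psi_{\vert K_i}$ on all of $K_i$. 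This already handles every non-QH vertex group of $\mathbf{JSJ}_{G,H}$, as these are exactly the $K_{p+1},\ldots,K_r$. For an arbitrary finite subgroup $K \leq G$, write $K = h K_i h^{-1}$ for some $i \leq p$ and $h \in G$; then for every $k = hk'h^{-1} \in K$,
\[
\varphi(k) = \varphi(h)\varphi(k')\varphi(h)^{-1} = \varphi(h) g_i \psi(k') g_i^{-1} \varphi(h)^{-1} = g \psi(k) g^{-1},
\]
where $g := \varphi(h) g_i \psi(h)^{-1}$, so the $\mathbf{JSJ}_{G,H}$-relatedness condition also holds on $K$.

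There is no real obstacle in this proof; the only points requiring care are (i) ensuring we have only finitely many non-QH vertex groups to quantify over (handled by the finiteness of the JSJ graph) and (ii) reducing the requirement on \emph{all} finite subgroups to finitely many conjugacy representatives, which is the short conjugation computation above. Since the witnessing element $g$ for a conjugate $hK_ih^{-1}$ depends on $\varphi(h)$ and $\psi(h)$ — not on any pre-chosen datum — it is crucial that the formula existentially quantifies $g_i$ only for the representatives; the conjugates are then handled semantically, not syntactically.
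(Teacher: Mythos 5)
Your construction is correct and is essentially the standard argument that the paper itself only cites (the proof in the paper defers to \cite{And18} for the non-relative case, where the formula is built in exactly this way: finitely many conjugacy representatives of finite subgroups plus the finitely many non-QH vertex groups, with an existentially quantified conjugator for each). The one point worth making explicit — that the condition on \emph{all} finite subgroups reduces to representatives via $g=\varphi(h)g_i\psi(h)^{-1}$ — is handled correctly in your write-up.
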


\begin{proof}See, for instance, \cite{And18} for details in the non-relative case; again, adapting the proof to the relative case is immediate.\end{proof}

Proposition \ref{main_theorem1} and Corollary \ref{main_theorem1.1} below can be compared with Proposition 6.1 and Corollary 6.3 in \cite{AP24}. The proof of Proposition \ref{main_theorem1} is strongly inspired by the proof of the homogeneity of finitely generated free groups by Perin and Sklinos in \cite{PS12}, and is also based on works of the first author on hyperbolic groups with torsion (see for instance \cite{And20}).

\begin{prop}\label{main_theorem1}
Let $G$ be a torsion-generated  hyperbolic group. Let $u,u'$ be two tuples of elements of $G$ with $\vert u\vert =\vert u'\vert $. Let $G_1,\ldots,G_n$ and $G'_1,\ldots,G'_{n'}$ be the one-ended factors of $G$ relative to $\langle u\rangle$ and $\langle u'\rangle$ respectively, with $\langle u\rangle\subset G_1$ and $\langle u'\rangle\subset G'_1$. Suppose that $u$ and $u'$ have the same $\mathrm{AE}$-type. Then $n=n'$ and there exist two endomorphisms $\varphi$ and $\varphi'$ of $G$ such that the following conditions hold:
\begin{enumerate}[(1)]
    \item $\varphi(u)=u'$ and $\varphi'(u')=u$;
    \item $\varphi$ and $\varphi'$ are injective on the finite subgroups of $G$ and map any two non-conjugate finite subgroups to non-conjugate finite subgroups;
    \item there exist two permutations $\sigma,\sigma'\in S_n$ with $\sigma(1)=\sigma'(1)=1$ such that $\varphi$ induces an isomorphism between $G_i$ and $G'_{\sigma(i)}$ and $\varphi'$ induces an isomorphism between $G'_i$ and $G_{\sigma'(i)}$.
\end{enumerate}
\end{prop}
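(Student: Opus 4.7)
The plan combines three ingredients: the universal formula $\mathrm{Finite}_G$ from Lemma~\ref{non_conj_to_non_conj}, the shortening argument of Theorem~\ref{shortening} applied to each one-ended factor, and the relative co-Hopf property of Theorem~\ref{relative_co_Hopf}. As a preliminary observation, the assumption of same AE-type implies same EA-type, because the negation of an EA-formula is AE. Fixing a generating tuple $\bar s$ of $G$ and writing $u = w(\bar s)$ for a tuple of words $w$, the EA-formula
\[
\phi(x) \equiv \exists \bar y\,\bigl(w(\bar y) = x \wedge \Sigma(\bar y) = 1 \wedge \mathrm{Finite}_G(\bar y)\bigr)
\]
is witnessed at $u$ by $\bar y = \bar s$, and thus transfers to $u'$, yielding an endomorphism $\varphi_0$ with $\varphi_0(u) = u'$ satisfying clause~(2); symmetrically one obtains $\varphi'_0$ with $\varphi'_0(u') = u$ and clause~(2).

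The main technical step is to refine $\varphi_0$ so that its restriction to each one-ended factor $G_i$ is injective. For each $i$, I would apply Theorem~\ref{shortening} to $G_i$ (viewed as a one-ended hyperbolic group, relative to $\langle u\rangle$ for $i=1$ and to the trivial subgroup otherwise), obtaining a finite set $F_i\subset G_i\setminus\{1\}$, a finitely generated subgroup $H'_i$ of the relative subgroup, and a finite generating tuple of the relevant relative modular group, with the property that any non-injective morphism $G_i\to G$ coinciding appropriately with the inclusion on $H'_i$ can, after a modular twist, be shortened to annihilate an element of $F_i$. Encoding the contrapositive as a universal sub-clause in the modular parameters $\bar\mu$ enlarges $\phi$ into an EA-formula $\Psi(x)$. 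Since $\Psi(u)$ holds via the identity, $\Psi(u')$ also holds, producing an endomorphism $\varphi$ whose restriction to each $G_i$ is injective; an analogous argument yields $\varphi'$ with restrictions injective on each $G'_j$.

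With injectivity in hand, each $\varphi(G_i)$ is a one-ended subgroup of $G$ and is therefore contained in a unique conjugate of some $G'_{\sigma(i)}$ by the Bass--Serre tree action of $\mathbf{S}_{G,\langle u'\rangle}$; this defines $\sigma\colon\{1,\dots,n\}\to\{1,\dots,n'\}$, and the identity $\varphi(u) = u'\subset G'_1$ forces $\sigma(1) = 1$. To upgrade $\sigma$ to a permutation and each $\varphi|_{G_i}$ to an isomorphism, I would examine the composition $\varphi'\circ\varphi$, which fixes $u$, is injective on each $G_i$, and sends $G_i$ into a conjugate of $G_{\sigma'\sigma(i)}$. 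For $i=1$, Theorem~\ref{relative_co_Hopf} forces $\varphi'\varphi|_{G_1}$ to be an automorphism of $G_1$, hence $\varphi|_{G_1}$ is an isomorphism onto $G'_1$. For $i>1$, finiteness of $\{1,\dots,n\}$ provides an exponent $k$ such that $(\sigma'\sigma)^k$ fixes each index in its eventual cyclic part, and classical co-Hopfness of one-ended hyperbolic groups upgrades each $(\varphi'\varphi)^k|_{G_i}$ to an automorphism; running the symmetric argument with $\varphi\circ\varphi'$ rules out transient indices, so $\sigma, \sigma'$ are mutually inverse permutations with $n = n'$ and each $\varphi|_{G_i}$ is an isomorphism onto $G'_{\sigma(i)}$, as required. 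The hard part will be the middle step: faithfully encoding the shortening conclusion inside an EA-formula requires an explicit finite parametrization of the generators of the relative modular groups $\mathrm{Mod}_{H'_i}(G_i)$ (via Lemma~\ref{formula_related} and its relative variant) and a careful arrangement of the quantifier structure so that the universal sub-clause ``no modular twist annihilates any element of $F_i$'' sits properly under the outer existential over $\bar y$.
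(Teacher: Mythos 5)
There is a genuine gap at precisely the point you yourself flag as ``the hard part''. Your plan is to write an $\mathrm{EA}$-formula $\Psi(x)$ whose existential witness is an endomorphism $\varphi_{\bar y}$ with $\varphi_{\bar y}(u)=x$ and whose universal sub-clause says that no modular twist of $\varphi_{\bar y}$ kills an element of $F_i$, and then to observe that the identity witnesses $\Psi(u)$. The obstruction is that the set of modular automorphisms of a one-ended factor is not (known to be) definable, so the universal quantifier cannot range over exactly the modular twists. The only definable substitute available --- and the one Lemma \ref{formula_related} actually provides --- is the set of endomorphisms that are $\mathbf{JSJ}$-related to $\varphi_{\bar y}$ in the sense of Definition \ref{related}, a strictly larger class. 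With that substitute, instantiating the existential witness at the identity turns the universal clause into the assertion that \emph{no $\mathbf{JSJ}_{G,\langle u\rangle}$-preretraction of $G$ kills an element of any $F_i$}, i.e.\ that every preretraction restricts to an injection on each one-ended factor. That is the crux of the proposition, not a triviality: non-degenerate preretractions are injective by Lemma \ref{injective_relative}, but degenerate preretractions can exist for general hyperbolic groups, and ruling them out requires passing to a centered splitting and invoking Lemma \ref{injective2} --- which is exactly where the torsion-generated hypothesis enters. Your proposal never uses that hypothesis in this step and never mentions preretractions or degeneracy, so the claim ``$\Psi(u)$ holds via the identity'' is unjustified as written. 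The paper runs the same transfer in the contrapositive direction: it supposes no good endomorphism exists, writes the resulting $\mathrm{AE}$-sentence at $u'$, transfers it to $u$, instantiates at the identity to manufacture a degenerate preretraction, and contradicts Lemma \ref{injective2}.

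Two further, smaller issues. First, the $\mathbf{JSJ}$-related condition imposes no constraint at all on QH vertex groups, so the QH one-ended factors cannot be controlled by the same formula; the paper treats them separately by a complexity-minimality induction, using that $(\varphi'\circ\varphi)^N$ is inner on the conical subgroups and that torsion-generation forces the relevant orbifolds to have genus $0$ with conical points. Second, condition (3) is not a formal consequence of injectivity plus co-Hopfness: your symmetric argument shows that indices lying on cycles of $\sigma'\circ\sigma$ are mapped isomorphically, but it does not by itself show that $\sigma$ is injective (two distinct factors could a priori land in conjugates of the same factor); the paper obtains the permutation by further adapting the defining formula, following Lemma 5.11 of \cite{And18}.
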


\begin{proof}The QH one-ended factors of $G$ and $G'$ must be treated separately from the non-QH one-ended factors. After renumbering $G_2,\ldots,G_n$ and $G'_2,\ldots,G'_{n'}$ if necessary, without changing $G_1$ and $G'_1$, one can assume that $G_2,\ldots,G_m$ and $G'_2,\ldots,G'_{m'}$ are the non-QH one-ended factors, with $m\leq n$ and $m'\leq n'$. The first step of the proof consists in proving that there exist two endomorphisms $\varphi$  and $\varphi'$ of $G$ that satisfy the conditions (1) and (2) above, and that satisfy the condition (3') below (which is weaker than condition (3) as it says nothing about the QH one-ended factors):

\begin{enumerate}[(3')]
    \item[(3')] there exist two permutations $\alpha,\alpha'\in S_m,S_{m'}$ with $\alpha(1)=\alpha'(1)=1$ such that $\varphi$ induces an isomorphism between $G_i$ and $G'_{\alpha(i)}$ and $\varphi'$ induces an isomorphism between $G'_i$ and $G_{\alpha'(i)}$. In particular, $m=m'$.
\end{enumerate}


\smallskip \noindent
 Since $u$ and $u'$ have the same $\mathrm{AE}$-type, the map $u\rightarrow u'$ extends to an injective morphism $i : \langle u\rangle\rightarrow G$ mapping $u$ to $u'$. In what follows, we denote by $F_1$ the subset of $G_1$ given by Theorem \ref{shortening} (with $\Gamma_1=G_1$, $\Gamma_2=G$, $H=\langle u\rangle$ and $i:H\rightarrow G$ defined above). Thus, for every non-injective morphism $\varphi: G_1\rightarrow G$ such that $\varphi(u)=u'$, there exists a modular automorphism $\sigma\in\mathrm{Mod}_{\langle u\rangle}(G_1)$ relative to $\langle u\rangle$ such that $\varphi\circ\sigma$ kills an element of $F_1$. 

\smallskip \noindent
Similarly, for every $2\leq k\leq m$, we denote by $F_k$ the subset of $G_k$ given by Theorem \ref{shortening} (with $\Gamma_1=G_k$, $\Gamma_2=G$, $H$ the trivial subgroup and $i:H\rightarrow G$ the trivial morphism), so that the following holds: for every $2\leq k\leq m$ and for every non-injective morphism $\varphi: G_k\rightarrow G$, there exists a modular automorphism $\sigma\in\mathrm{Mod}(G_k)$ such that $\varphi\circ\sigma$ kills an element of $F_k$. 

\smallskip \noindent
In what follows, we say that an endomorphism $\varphi$ of $G$ has property $(\ast)$ if $\varphi$ is injective on the finite subgroups of $G$ and maps any two non-conjugate finite subgroups of $G$ to non-conjugate finite subgroups. Suppose towards a contradiction that every endomorphism $\varphi$ of $G$ with property $(\ast)$ and such that $\varphi(u)=u'$ is non-injective on some $G_i$, with $1\leq i\leq m$. Then, for every such morphism, there exist an integer $1\leq i\leq m$ and a (relative) modular automorphism $\sigma$ of $G_i$ such that $\varphi_{\vert G_i}\circ \sigma$ kills an element of $F_i$ (defined in the previous paragraph). By definition of a modular automorphism, $\sigma$ is a conjugation on each finite subgroup of $G_i$, and thus $\sigma$ can be naturally extended to an automorphism of $G$, still denoted by $\sigma$, and the morphism $\psi=\varphi\circ\alpha$ still kills an element of $F_i$. Note that $\varphi$ and $\psi$ are $\mathbf{JSJ}_{G,\langle u\rangle}$-related in the sense of Definition \ref{related}.

\smallskip \noindent
We will now see that the result established in the previous paragraph is expressible via an $\mathrm{AE}$ formula with $u'$ as a parameter. Since $G$ is hyperbolic, it admits a finite presentation $\langle s_1,\ldots,s_n \ \vert \ \Sigma(s_1,\ldots,s_n)=1\rangle$. Write $u$ as a $\vert u\vert$-tuple of words $w(s_1,\ldots,s_n)$ in the generators of $G$, and let $\mathrm{Finite}_G(x_1,\ldots,x_n)$ and $\mathrm{Related}_{G,\langle u\rangle}(x_1,\ldots,x_n,y_1,\ldots,y_n)$ denote the formulas defined in Lemmas \ref{non_conj_to_non_conj} and \ref{formula_related} respectively. For each $1\leq i\leq m$, the set $F_i$ (defined in the previous paragraph) can be written as a collection of words $\lbrace w_{i,1}(s_1,\ldots,s_n),\ldots,w_{i,f_i}(s_1,\ldots,s_n)\rbrace$ where $f_i=\vert F_i\vert$. Now, consider the following $\mathrm{AE}$ formula, where $v$ denotes a $\vert u\vert$-tuple of variables: \[\delta(v): \forall \ \bar{x} \  \exists \ \bar{y} \ (v=w(\bar{x})) \wedge \mathrm{Finite}_G(\bar{x}) \wedge \mathrm{Related}_{G,\langle u\rangle}(\bar{x},\bar{y}) \wedge \bigvee_{i=1}^m\bigvee_{j=1}^{f_i} w_{i,j}(\bar{y})=1.\]
The previous paragraphs tell us that $G$ satisfies $\delta(u')$. Hence, as $u$ and $u'$ have the same $\mathrm{AE}$-type, $G$ satisfies $\delta(u)$ as well, which means that for every endomorphism $\varphi$ of $G$ fixing $u$, there exists an endomorphism $\psi$ of $G$ fixing $u$ (up to conjugation), such that $\varphi$ and $\psi$ are $\mathbf{JSJ}_{G,\langle u\rangle}$-related and such that $\psi$ kills an element of some $F_i$ (with $1\leq i\leq m$).



\smallskip \noindent
Now, take for $\varphi$ the identity of $G$: we get a $\mathbf{JSJ}_{G_,\langle u\rangle}$-preretraction $\psi : G \rightarrow G$ that is non-injective on some $G_i$ (with $1\leq i\leq m$). Note that $\psi(G_i)$ is contained in a conjugate of $G_i$, therefore one can suppose, after composing $\psi$ by an inner automorphism of $G$ if necessary, that $\psi_{\vert G_i}$ is a non-injective $\mathbf{JSJ}_{G_1,\langle u\rangle}$-preretraction of $G_1$ if $i=1$ or a non-injective $\mathbf{JSJ}_{G_i}$-preretraction of $G_i$ if $i\geq 2$. By Lemma \ref{injective_relative}, $\psi_{\vert G_i}$ is degenerate (recall that this means that there is a QH vertex $v$ (in $\mathbf{JSJ}_{G_1,\langle u\rangle}$ if $i=1$ or $\mathbf{JSJ}_{G_i}$ if $i\geq 2$) such that $G_v$ is not mapped isomorphically to a conjugate of itself by $\psi_{\vert G_i}$).

\smallskip \noindent
Then, let $\mathbf{C}_{G,\langle u\rangle}$ be the centered splitting of $G$ obtained from $\mathbf{JSJ}_{G,\langle u\rangle}$ and from the QH vertex $v$, whose construction is described in Subsection 4.4.2 in \cite{AP24} (the construction remains the same in the relative setting). Using the degenerate $\mathbf{JSJ}_{G_1,\langle u\rangle}$-preretraction or $\mathbf{JSJ}_{G_i}$-preretraction $\psi$, one can define a $\mathbf{C}_{G,\langle u\rangle}$-preretraction that coincides with $\psi$ on $G_v$. This $\mathbf{C}_G$-preretraction is degenerate, which contradicts Lemma \ref{injective2}.

\smallskip \noindent
Hence, there exists an endomorphism $\varphi$ of $G$ with property $(\ast)$, such that $\varphi(u)=u'$, and that is injective on the non-QH one-ended factors of $G$ relative to $\langle u\rangle$, and similarly there exists an endomorphism $\varphi'$ of $G$ with property $(\ast)$, such that $\varphi'(u')=u$, and that is injective on the non-QH one-ended factors of $G$ relative to $\langle u'\rangle$. Note that $\varphi(G_1)$ is contained in $G'_1$ as $\varphi(u)=u'$, and that $\varphi'(G'_1)$ is contained in $G_1$ as $\varphi'(u')=u$. Hence $\varphi'\circ \varphi$ is an injective endomorphism of $G_1$ fixing $u$, therefore by Theorem \ref{relative_co_Hopf} $\varphi'\circ \varphi$ is an automorphism of $G_1$ and thus $\varphi$ and $\varphi'$ induce isomorphisms between $G_1$ and $G'_1$.

\smallskip \noindent
Moreover, exactly as in the proof of \cite[Lemma 5.11]{And18}, we can adapt the argument above so that the morphisms $\varphi$ and $\varphi'$ are not only injective on $G_1,\ldots,G_m$ and $G'_1,\ldots,G'_{m'}$ respectively, but also satisfy the condition (3'): there exist two permutations $\alpha,\alpha'\in S_m,S_{m'}$ with $\alpha(1)=\alpha'(1)=1$ such that $\varphi$ induces an isomorphism between $G_i$ and $G'_{\alpha(i)}$ and $\varphi'$ induces an isomorphism between $G'_i$ and $G_{\alpha'(i)}$ (we refer the reader to Lemma 5.11 in \cite{And18} for details). In particular, $m=m'$.

\smallskip \noindent
It remains to deal with the QH one-ended factors. The rest of the proof is almost identical to the end of the proof of \cite[Proposition 6.1]{AP24}, but we include it for completeness. We will prove that $\varphi$ maps every QH one-ended factor of $G$ relative to $\langle u\rangle$ isomorphically to a QH one-ended factor of $G$ relative to $\langle u'\rangle$, and that $\varphi'\circ\varphi$ and $\varphi\circ\varphi'$ induce permutations of the conjugacy classes of QH one-ended factors of $G$ relative to $\langle u\rangle$ and $\langle u'\rangle$ respectively.

\smallskip \noindent
We denote by $\mathbf{S}_{G,\langle u\rangle}$ and $\mathbf{S}_{G,\langle u'\rangle}$ two Stallings decompositions of $G$ relative to $\langle u\rangle$ and $\langle u'\rangle$ respectively. For a conical hyperbolic orbifold $\mathcal{O}$, we denote by $\chi(\mathcal{O})$ the opposite of the orbifold Euler characteristic of $\mathcal{O}$, and if $E$ is an extension of $\pi_1^{\mathrm{orb}}(\mathcal{O})$ by a finite group $F$, we define $\chi(E)=\chi(\mathcal{O})/\vert F\vert$, called the \emph{complexity} of $G$. Let $c$ (respectively $c'$) be the smallest complexity of a QH factor of $\mathbf{S}_{G,\langle u\rangle}$ (respectively $\mathbf{S}_{G,\langle u'\rangle}$) in the sense of \cite[Definition 3.4]{AP24}. Suppose without loss of generality that $c\leq c'$ and let $v$ be a vertex of $\mathbf{S}_{G,\langle u\rangle}$ such that $G_v$ is a QH group of complexity $c$. As $G$ has only a finite number of conjugacy classes of finite subgroups, and since $\varphi'\circ\varphi$ maps two non-conjugate finite subgroups to non-conjugate subgroups, there exists an integer $N\geq 1$ such that the endomorphism $p:=(\varphi'\circ\varphi)^N$ of $G$ coincides with an inner automorphism on each finite subgroup of $G$, and thus on each conical subgroup of $G_v$.

\smallskip \noindent
Note that $G_v$ has at least one conical point (indeed, by \cite[Lemma 5.9]{AP24}, since $G$ is torsion-generated by assumption, the underlying orbifold of $G_v$ has genus $0$, and moreover its boundary is empty because $G_v$ is one-ended), thus the construction described in \cite[Subsection 4.4.2]{AP24} applies and produces a centered splitting $\mathbf{C}_{G,\langle u\rangle}$ of $G$ relative to $\langle u\rangle$. We can define a $C_{G,\langle u\rangle}$-preretraction $q$ that coincides with $p$ on $G_v$. By Lemma \ref{injective2}, $q$ is non-degenerate, which means that it maps $G_v$ isomorphically to a conjugate of $G_v$, and therefore $p$ maps $G_v$ isomorphically to a conjugate of $G_v$. In particular $p$ is non-pinching on $G_v$, and thus $\varphi$ is non-pinching on $G_v$. It follows that $\varphi(G_v)$ is contained in a conjugate of some vertex group $G'_w$ of $\mathbf{S}_{G,\langle u'\rangle}$ (by \cite[Proposition 2.31]{And18}). Clearly, this vertex group is QH, otherwise $p(G_v)$ would be contained in a non-QH vertex group of $G$ relative to $\langle u\rangle$, contradicting the fact that $p(G_v)$ is a conjugate of $G_v$. But the complexity of $G_v$ is minimal among the QH vertex groups of $G$ relative to $\langle u\rangle$ or $\langle u'\rangle$ so $\chi(G'_w)\geq \chi(G_v)$, but $\chi(G_v)\geq \chi(G'_w)$ by \cite[Lemma 3.5]{AP24}, so $\chi(G_v)=\chi(G'_w)$ and thus $\varphi$ induces an isomorphism between $G_v$ and $G'_w$ (again by \cite[Lemma 3.5]{AP24}). Then, we can repeat the same process with the smallest complexity $>c$, and so on.\end{proof}

Recall that a graph of groups $\Delta$ is said to be \emph{reduced} if, for any edge of $\Delta$ with distinct endpoints, the edge group is strictly contained in the vertex groups.

We deduce the following corollary from Proposition \ref{main_theorem1} in the exact same way as we deduced Corollary 6.3 from Proposition 6.1 in \cite{AP24}, and we refer the reader to \cite{AP24} for details. The only difference between Proposition \ref{main_theorem1} and Corollary \ref{main_theorem1.1} is that condition (2) on finite subgroups in Proposition \ref{main_theorem1} is replaced with a condition on the vertex groups of reduced Stallings splittings in Corollary \ref{main_theorem1.1}. 

\begin{co}\label{main_theorem1.1}
Let $G$ be a torsion-generated  hyperbolic group. Let $u,u'$ be two tuples of elements of $G$ with $\vert u\vert =\vert u'\vert $. Let $\mathrm{S}_{G,\langle u\rangle}$ and $\mathrm{S}_{G,\langle u'\rangle}$ be reduced Stallings splittings of $G$ relative to $\langle u\rangle$ and $\langle u'\rangle$ respectively. Suppose that $u$ and $u'$ have the same $\mathrm{AE}$-type. Then there exist two endomorphisms $\varphi,\varphi'$ of $G$ such that the following conditions holds:
\begin{enumerate}[(1)]
    \item $\varphi(u)=u'$ and $\varphi'(u')=u$;
    \item $\varphi$ maps each vertex group of $\mathrm{S}_{G,\langle u\rangle}$ isomorphically to a vertex group of $\mathrm{S}_{G,\langle u'\rangle}$ and $\varphi'$ maps each vertex group of $\mathrm{S}_{G,\langle u'\rangle}$ isomorphically to a vertex group of $\mathrm{S}_{G,\langle u\rangle}$;
    \item $\varphi$ and $\varphi'$ induce one-to-one correspondences between the conjugacy classes of vertex groups of $\mathrm{S}_{G,\langle u\rangle}$ and $\mathrm{S}_{G,\langle u'\rangle}$.
\end{enumerate}
\end{co}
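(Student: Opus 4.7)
The plan is to take the endomorphisms $\varphi,\varphi'$ supplied by Proposition~\ref{main_theorem1} and to upgrade conditions (2)--(3) of that proposition — which are phrased in terms of non-conjugate finite subgroups and one-ended factors — to the required statement about vertex groups of reduced Stallings splittings. Proposition~\ref{main_theorem1} already yields: $\varphi(u)=u'$, $\varphi'(u')=u$; both morphisms are injective on finite subgroups and send pairs of non-conjugate finite subgroups to pairs of non-conjugate finite subgroups; and there exist permutations $\sigma,\sigma'$ fixing $1$ such that $\varphi$ (resp.\ $\varphi'$) restricts to isomorphisms $G_i\simeq G'_{\sigma(i)}$ (resp.\ $G'_i\simeq G_{\sigma'(i)}$). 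What remains is the analogous bijection between conjugacy classes of finite vertex groups.

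The first step is to recall that the conjugacy classes of vertex groups of a reduced Stallings splitting $\mathrm{S}_{G,\langle u\rangle}$ split into two intrinsically defined families: the conjugacy classes of one-ended factors $G_1,\ldots,G_n$ (already handled), and the conjugacy classes of finite vertex groups, which can be characterized — independently of the particular reduced Stallings splitting chosen — as maximal finite subgroups not conjugate into any $G_i$, together with the terminal finite vertices produced by a Stallings--Dunwoody accessibility argument. Since $G$ is hyperbolic it has only finitely many conjugacy classes of finite subgroups, so $\varphi'\circ\varphi$ induces a permutation on this finite set; replacing $\varphi,\varphi'$ by suitable compositions with powers of $\varphi'\circ\varphi$ and $\varphi\circ\varphi'$ (which preserves all the properties guaranteed by Proposition~\ref{main_theorem1}), we may assume that both $\varphi'\circ\varphi$ and $\varphi\circ\varphi'$ fix every conjugacy class of finite subgroup of $G$ setwise.

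The second step is to verify that $\varphi$ sends each finite vertex group of $\mathrm{S}_{G,\langle u\rangle}$ to a conjugate of a finite vertex group of $\mathrm{S}_{G,\langle u'\rangle}$. Let $F$ be such a finite vertex group. Because $\varphi$ is injective on finite subgroups, $\varphi(F)$ is a finite subgroup of the same order; because $\varphi$ restricts to an isomorphism $G_i\to G'_{\sigma(i)}$, the image $\varphi(F)$ cannot be conjugate into any $G'_j$ (otherwise $\varphi'\varphi(F)$ would be conjugate into some $G_i$, contradicting maximality of $F$ together with the preservation of non-conjugacy). Preservation of non-conjugacy also forces $\varphi(F)$ to be maximal among finite subgroups not conjugate into any $G'_j$, hence to be conjugate to a finite vertex group of $\mathrm{S}_{G,\langle u'\rangle}$. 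Symmetry with $\varphi'$ and composition with the bijection on one-ended factors yield the one-to-one correspondence of condition (3) and the vertex-group isomorphisms of condition (2).

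The main obstacle, and the step where care is needed, is pinning down the intrinsic characterization of conjugacy classes of finite vertex groups in a (non-unique) reduced Stallings splitting and verifying that this characterization is manifestly preserved by any endomorphism that respects non-conjugacy of finite subgroups and maps one-ended factors isomorphically onto one-ended factors. This is exactly the content of the analogous argument in the proof of \cite[Corollary~6.3]{AP24}; the relative version is formally identical, since the ellipticity of $\langle u\rangle$ in $G_1$ does not interfere with the accessibility arguments about finite subgroups that underlie the characterization.
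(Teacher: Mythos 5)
Your proposal follows the same route as the paper: the paper deduces Corollary \ref{main_theorem1.1} from Proposition \ref{main_theorem1} by exactly the argument of \cite[Corollary~6.3]{AP24} (upgrading the condition on non-conjugate finite subgroups to one on finite vertex groups of reduced Stallings splittings), which is precisely what you reconstruct. Your fleshed-out version, including the replacement of $\varphi,\varphi'$ by compositions with powers of $\varphi'\circ\varphi$ and the intrinsic characterization of finite vertex groups, is consistent with that reference and with the properties guaranteed by Proposition \ref{main_theorem1}.
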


\subsection{Homogeneity in torsion-generated hyperbolic groups}

\begin{te}\label{main_theorem}
Let $G$ be a torsion-generated hyperbolic group. Suppose that the following condition holds: for every edge group $F$ of a reduced Stallings splitting of $G$, the image of the natural map $N_G(F)\rightarrow \mathrm{Aut}(F)$ is equal to $\mathrm{Inn}(F)$. Then $G$ is $\mathrm{AE}$-homogeneous.
\end{te}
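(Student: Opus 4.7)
The strategy is to apply Corollary \ref{main_theorem1.1} to obtain endomorphisms $\varphi,\varphi'$ of $G$ and then to promote $\varphi$ to an automorphism of $G$ by combining Bass--Serre gluing with the normalizer hypothesis. Let $u,u'\in G^n$ have the same $\mathrm{AE}$-type. Corollary \ref{main_theorem1.1} provides reduced Stallings splittings $\mathbf{S}_u=\mathbf{S}_{G,\langle u\rangle}$ and $\mathbf{S}_{u'}=\mathbf{S}_{G,\langle u'\rangle}$, and endomorphisms $\varphi,\varphi':G\to G$ such that $\varphi(u)=u'$, $\varphi'(u')=u$, each of $\varphi,\varphi'$ maps the vertex groups of one splitting isomorphically onto vertex groups of the other, and they induce inverse bijections between the conjugacy classes of vertex groups. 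The plan is to show that $\varphi$, after composition with appropriate inner automorphisms of $G$, realizes an isomorphism of graphs of groups $\mathbf{S}_u\to\mathbf{S}_{u'}$, which by Bass--Serre theory will yield an automorphism of $G$ sending $u$ to $u'$.

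The construction proceeds inductively along the Bass--Serre tree of $\mathbf{S}_u$. After post-composing $\varphi$ with an inner automorphism of $G$, we may arrange that $\varphi(G_1)=G'_1$ as subgroups of $G$, where $G_1$ and $G'_1$ are the vertex groups containing $\langle u\rangle$ and $\langle u'\rangle$ in $\mathbf{S}_u$ and $\mathbf{S}_{u'}$ respectively. Fix a spanning tree $T$ of the underlying graph of $\mathbf{S}_u$ rooted at the vertex corresponding to $G_1$ and process the remaining vertices in order of increasing distance from the root. At each step, for a new vertex $w$ adjacent to an already-processed vertex $v$ through a tree-edge $e$ with finite edge group $F_e$, the goal is to find an element $h_w\in C_G(\varphi(F_e))$ such that $\mathrm{ad}(h_w)\circ\varphi|_{G_w}$ has image equal to the specific vertex group $G'_{w'}$ of $\mathbf{S}_{u'}$ sharing with $\varphi(G_v)=G'_{v'}$ the edge $e'$ corresponding to $e$. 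Existence of such $h_w$ follows from the fact that $\varphi(G_w)$ is a conjugate of $G'_{w'}$ and both contain $\varphi(F_e)$, so a conjugator can be chosen to fix $\varphi(F_e)$ pointwise.

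The main obstacle is handling the non-tree edges. For each non-tree edge $f$ of $\mathbf{S}_u$ with finite edge group $F_f$, the Bass--Serre presentation of $G$ involves a twist element $t_f\in G$ encoding the relation between the two embeddings of $F_f$ into the adjacent vertex groups. Once the tree-induction is complete, one must verify that the modified $\varphi$ maps $t_f$ to an element that plays the analogous role for the corresponding non-tree edge $f'$ of $\mathbf{S}_{u'}$, up to the ambiguity captured by the action of $N_G(F_{f'})/F_{f'}$ on $F_{f'}$. The normalizer hypothesis is precisely the statement that this quotient acts trivially, i.e.\ every automorphism of $F_{f'}$ arising from $G$-conjugation is already inner in $F_{f'}$; hence any residual discrepancy between $\varphi(t_f)$ and the corresponding twist in $\mathbf{S}_{u'}$ can be absorbed by a further modification by an element of $F_{f'}$ itself. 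With this, the modified map becomes a genuine graph-of-groups isomorphism $\mathbf{S}_u\to\mathbf{S}_{u'}$, inducing an automorphism of $G$ sending $u$ to $u'$, which establishes $\mathrm{AE}$-homogeneity.

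For the two explicit classes listed in the statement, the hypothesis is either vacuous or follows from a separate structural analysis. Torsion-generated hyperbolic one-ended groups admit only the trivial reduced Stallings splitting (one vertex, no edges), so the edge-group condition is vacuously satisfied and the conclusion reduces immediately to Corollary \ref{main_theorem1.1}. For hyperbolic even Coxeter groups, every finite subgroup is conjugate to a finite standard parabolic subgroup, and a direct analysis of normalizers of such subgroups in the even case shows that the induced action on the subgroup is by inner automorphisms; this verifies the hypothesis independently of the main Bass--Serre argument above.
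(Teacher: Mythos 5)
Your first step (invoking Corollary \ref{main_theorem1.1} to obtain $\varphi,\varphi'$) matches the paper, but from there the routes diverge and yours has a genuine gap. The paper does not attempt a one-pass gluing of $\varphi$: it passes to $\varphi\circ(\varphi'\circ\varphi)^m$ for a suitable power $m\geq 0$ and invokes \cite[Proposition 8.15]{AP24}, which produces an automorphism $\psi$ agreeing with that iterate up to conjugation on each vertex group of $\mathrm{S}_{G,\langle u\rangle}$. The iteration is not cosmetic. A single $\varphi$ only matches vertex groups up to conjugacy and gives no control over how the finitely many edges adjacent to a vertex $v$, and their edge groups, correspond to those adjacent to the image vertex; taking a power of the self-map $\varphi'\circ\varphi$ forces its action on this finite combinatorial data (conjugacy classes of finite subgroups, adjacent edges, induced outer automorphisms of finite edge groups) to become trivial, and only then can a graph-of-groups isomorphism be assembled.

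The concrete failure point in your induction is the sentence ``a conjugator can be chosen to fix $\varphi(F_e)$ pointwise.'' If $\varphi(G_w)=gG'_{w'}g^{-1}$ and both groups contain $K=\varphi(F_e)$, all you get is that $K$ and $g^{-1}Kg$ are two finite subgroups of $G'_{w'}$; they need not be conjugate \emph{in} $G'_{w'}$, and even after arranging that, the corrected conjugator only \emph{normalizes} $K$ --- it acts on $K$ by an automorphism induced by $G$-conjugation, which is exactly the obstruction the normalizer hypothesis exists to kill. You defer that hypothesis entirely to the non-tree edges, but it is needed at every tree edge as well, to upgrade ``normalizes'' to ``centralizes after correcting by an element of $K$.'' Relatedly, you assert that $G'_{w'}$, the vertex group across ``the edge $e'$ corresponding to $e$,'' contains $\varphi(F_e)$; but Corollary \ref{main_theorem1.1} gives a bijection only on conjugacy classes of \emph{vertex} groups, not of edges, so there is no canonical $e'$ to begin with, and establishing that the local edge data at $v'$ matches that at $v$ under $\varphi$ is precisely the missing content that the paper obtains from the iteration together with \cite[Proposition 8.15]{AP24}. (Your treatment of the two special cases is fine in substance: the one-ended case is vacuous, and for even Coxeter groups the paper verifies the hypothesis via the retraction of $G$ onto the special subgroup $F$ killing the generators outside it.)
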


Recall that a Coxeter presentation $\langle s_1,\ldots,s_n \ \vert \ (s_is_j)^{m_{ij}}=1, \ \text{for all} \ i,j \rangle$ is even if $m_{ij}$ is \emph{even} or $\infty$ for every $1\leq i,j\leq n$, in which case it can be proved that this presentation is the only Coxeter presentation for the Coxeter group it defines, and therefore it makes sense to say that $G$ is an even Coxeter group. Hyperbolic Coxeter groups form an important subclass of Coxeter groups. Moussong proved in his PhD thesis that a Coxeter group is hyperbolic if and only if it does not contain $\mathbb{Z}^2$, if and only if there is no pair of disjoint subsets $T_1,T_2\subseteq T$ such that $\langle T_1\rangle$ and $\langle T_2\rangle$ commute and are infinite, and there is no subset $T\subseteq S$ such that $(\langle T\rangle,T)$ is an affine Coxeter system of rank $\geq 3$ (note that affine Coxeter systems have been completely classified, and that affine Coxeter groups are virtually abelian).


\begin{co}\label{corollary}
The following groups are $\mathrm{AE}$-homogeneous:
\begin{enumerate}[(1)]
    \item[$\bullet$]  hyperbolic even Coxeter groups are homogeneous;
    \item[$\bullet$] torsion-generated hyperbolic one-ended groups.
\end{enumerate}
\end{co}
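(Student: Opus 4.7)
The plan is to apply Theorem \ref{main_theorem}, which guarantees AE-homogeneity of a torsion-generated hyperbolic group provided that, for every edge group $F$ of a reduced Stallings splitting, the image of the natural map $N_G(F)\to\mathrm{Aut}(F)$ equals $\mathrm{Inn}(F)$.

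For case (2), the reduction is immediate: a one-ended group admits, by definition, no non-trivial splitting over a finite group, so a reduced Stallings splitting of $G$ consists of a single vertex with no edges. The condition on edge groups is therefore vacuously satisfied, and since $G$ is torsion-generated and hyperbolic by assumption, Theorem \ref{main_theorem} applies directly.

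For case (1), I first observe that every Coxeter group is by definition generated by its Coxeter involutions, so the torsion-generation hypothesis is automatic. It then remains to verify the normalizer condition on edge groups of a reduced Stallings splitting $\Delta$ of a hyperbolic even Coxeter group $G$. The plan is to proceed in three steps. First, use the theory of visual splittings of Coxeter groups (work of Mihalik--Tschantz and related) to identify the finite edge groups of $\Delta$, up to conjugacy, with finite special subgroups $G_I=\langle s_i:i\in I\rangle$ for suitable $I$ separating the Coxeter diagram. Second, invoke the standard description of the normalizer of a finite special subgroup in a Coxeter group, due to Deodhar, Howlett and Krammer, which writes $N_G(G_I)=G_I\cdot C_I$, where $C_I$ acts on $G_I$ via automorphisms induced by automorphisms of the Coxeter diagram of $G_I$. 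Third, exploit the even condition on $G$ to show that every such induced automorphism of $G_I$ is already inner in $G_I$.

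The main obstacle is the last step. The finite special subgroups $G_I$ of an even Coxeter group are direct products of copies of $\mathbb{Z}/2\mathbb{Z}$ and of dihedral groups $D_{2k}$ (coming from pairs of generators with even $m_{ij}$), and for such groups the automorphism group genuinely admits outer elements; what must be shown is that no outer diagram automorphism of $G_I$ is realized by conjugation inside $G$. The intuition is that the even condition rigidifies the relations between Coxeter generators enough to force any conjugation that permutes the reflections in $G_I$ to be consistent with an inner automorphism of $G_I$ itself. Checking this either amounts to a direct combinatorial analysis of how $N_G(G_I)$ acts on the reflection set of $G_I$ and using the evenness of all labels $m_{ij}$ to absorb any would-be outer component, or alternatively to a case-by-case check on the irreducible finite even Coxeter groups appearing as factors of $G_I$.
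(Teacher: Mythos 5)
Your treatment of the second bullet matches the paper exactly: a reduced Stallings splitting of a one-ended group is a single vertex, the edge-group condition is vacuous, and Theorem \ref{main_theorem} applies. For the first bullet you also correctly identify the reduction (edge groups of a reduced Stallings splitting of a Coxeter group are finite special subgroups $F=\langle T\rangle$, and one must show that $N_G(F)$ acts on $F$ by inner automorphisms), but you then stop at precisely the step that carries the content of the corollary. Your step three is left as an ``obstacle'' with two suggested strategies (a combinatorial analysis of the action on reflections, or a case-by-case check on finite even factors), neither of which is carried out; as it stands this is a plan, not a proof. Note also that your intermediate step two (the Deodhar--Howlett--Krammer description of $N_G(G_I)$ as $G_I\cdot C_I$ with $C_I$ acting by diagram automorphisms) is heavier machinery than is needed and would still leave you having to rule out the outer diagram automorphisms it produces --- for instance, for $F=\langle s,t\rangle\simeq(\mathbb{Z}/2\mathbb{Z})^2$ the swap of $s$ and $t$ is outer, and nothing in your sketch excludes it.

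The paper closes this gap with a one-line argument due to Bahls that uses evenness in an essential and much more direct way: since every defining relation of $G$ has the form $(ss')^m=1$ with $m$ even, the map sending $s\mapsto s$ for $s\in T$ and $s\mapsto 1$ otherwise is a well-defined retraction $\rho:G\to F$. Then for $g\in N_G(F)$ and $h\in F$ one applies $\rho$ to the element $ghg^{-1}\in F$ to get $ghg^{-1}=\rho(g)h\rho(g)^{-1}$ with $\rho(g)\in F$, so conjugation by $g$ is already inner in $F$. (In particular this immediately kills the $(\mathbb{Z}/2\mathbb{Z})^2$ example above: $F$ abelian forces $gsg^{-1}=s$.) If you want to salvage your route, replacing your step three by this retraction argument is the cleanest way to do it; without something of this kind, the first bullet is not established.
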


In the next section \ref{example}, we will give an example of a hyperbolic Coxeter group that is not $\mathrm{AE}$-homogeneous (and in fact not $\mathrm{EAE}$-homogeneous), and we conjecture that this example is not homogeneous in the absolute sense.


Before proving Theorem \ref{main_theorem}, let us deduce Corollary \ref{corollary} from Theorem \ref{main_theorem}. We deduce this corollary in exactly the same way as we deduced Corollary 8.2 from Theorem 8.1 in \cite{AP24}, but we still include a proof of the corollary below for the convenience of the reader.

\begin{proof}[Proof of Corollary \ref{corollary}]
The second point is an immediate consequence of Theorem \ref{main_theorem} since a reduced Stallings splitting of a one-ended group is simply a point, so the condition on the edge groups is empty. 

\smallskip \noindent
Let us prove the first point. By \cite[Proposition 8.8.2]{davis}, an edge group $F$ in a Stallings splitting of a Coxeter group $G=\langle S\rangle$ is a special finite subgroup, which means that there exists a subset $T\subseteq S$ such that $F=\langle T\rangle$. As observed by Bahls in \cite[Proposition 5.1]{Bahls}, one can define a retraction $\rho : G\rightarrow F$ by $\rho(s)=s$ if $s\in T$ and $\rho(s)=1$ otherwise (this morphism is well-defined because every defining relation in $G$ is of the form $(ss')^m=1$ with $s,s'\in S$ and $m$ even). Now, for $g\in N_G(F)$ and for every $h\in F$, we have $ghg^{-1}=\rho(g)h\rho(g)^{-1}$, which shows that the image of the natural map $N_G(F)\rightarrow \mathrm{Aut}(F)$ is contained in $\mathrm{Inn}(F)$. Hence Theorem \ref{main_theorem} applies.
\end{proof}


\begin{proof}[Proof of Theorem \ref{main_theorem}]Let $u,u'$ be two tuples of elements of $G$, and suppose that $u,u'$ have the $\mathrm{AE}$-same type. Let $\mathrm{S}_{G,\langle u\rangle}$ and $\mathrm{S}_{G,\langle u'\rangle}$ be reduced Stallings splittings of $G$ relative to $\langle u\rangle$ and $\langle u'\rangle$ respectively. Let $\varphi,\varphi'$ denote the endomorphisms of $G$ given by Corollary \ref{main_theorem1.1}. By \cite[Proposition 8.15]{AP24}, there exist an automorphism $\psi$ of $G$ and an integer $m\geq 0$ such that $\psi$ coincides with $\varphi\circ (\varphi'\circ \varphi)^m$ up to conjugation on each vertex group of $\mathrm{S}_{G,\langle u\rangle}$. Since $\varphi(u)=u'$ and $\varphi'(u')=u$, we have $\psi(u)=u'$. Hence, $G$ is $\mathrm{AE}$-homogeneous.\end{proof}

\subsection{Torsion-generated hyperbolic groups are strictly minimal}\label{strictly_minimal}

\begin{de}\label{strictly_minimal_def}A group is said to be strictly minimal if it has no proper elementarily embedded subgroup.\end{de}

\begin{te}\label{strictly_minimal_theorem}Every torsion-generated hyperbolic group $G$ is strictly minimal. In fact, $G$ has no proper $\mathrm{AE}$-embedded subgroup.\end{te}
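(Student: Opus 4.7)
The plan is to argue by contradiction: assume $H \subsetneq G$ is a proper $\mathrm{AE}$-embedded subgroup, and produce an endomorphism $\psi$ of $G$ with $\psi(G) \subseteq H$ some power of which is an automorphism of $G$, which yields the contradiction $G = \psi^k(G) \subseteq H$. The idea is to imitate the proof scheme of Proposition \ref{main_theorem1}, but in a parameter-free fashion, using that $\mathrm{AE}$-embeddings preserve $\mathrm{AE}$-sentences (and hence also $\mathrm{EA}$-sentences, by negation) and universal formulas with parameters in $H$.

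Fix a finite presentation $\langle s_1,\ldots,s_n \mid \Sigma\rangle$ of $G$, a reduced Stallings splitting $\mathrm{S}_G$ with one-ended factors $G_1,\ldots,G_m$, and, for each $i$, the finite shortening set $F_i \subset G_i \setminus\{1\}$ given by Theorem \ref{shortening} (applied with $\Gamma_1=G_i$, $\Gamma_2=G$ and trivial relative subgroup); set $F = \bigcup_i F_i$. Let $\mathrm{Rel}(\bar{x},\bar{y})$ denote the existential formula (built along the lines of Lemma \ref{formula_related}, with the Stallings splitting in place of the JSJ) expressing that $\varphi_{\bar{x}}$ and $\varphi_{\bar{y}}$ are $\mathrm{S}_G$-related, i.e.\ coincide up to inner conjugation on each vertex group of $\mathrm{S}_G$. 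Consider the $\mathrm{AE}$-sentence
\[
\delta:\ \forall \bar{y}\Big[\Sigma(\bar{y})=1 \to \exists \bar{z}\big(\Sigma(\bar{z})=1 \wedge \mathrm{Rel}(\bar{y},\bar{z}) \wedge \bigvee_{f\in F} f(\bar{z})=1\big)\Big].
\]
Then $G \not\models \delta$: instantiating at $\bar{y}=\bar{s}$, the identity $\mathrm{id}_G$ is $\mathrm{S}_G$-related only to its inner conjugates, and inner conjugations kill no non-trivial element of $F$. The sentence $\neg \delta$ is $\mathrm{EA}$, so $\mathrm{AE}$-preservation forces $H \not\models \delta$, producing $\bar{y}_0 \in H^n$ with $\Sigma(\bar{y}_0)=1$ such that the universal formula $\eta(\bar{y}):=\forall \bar{z}[\Sigma(\bar{z})=1 \to \neg \mathrm{Rel}(\bar{y},\bar{z}) \vee \bigwedge_{f\in F} f(\bar{z})\neq 1]$ holds at $\bar{y}_0$ in $H$. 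Since $\eta$ is universal and $\bar{y}_0 \in H$, preservation of universal formulas with parameters in $H$ transfers this to $G$: $G\models \eta(\bar{y}_0)$.

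Set $\psi:=\varphi_{\bar{y}_0}$, so $\psi(G)\subseteq \langle \bar{y}_0\rangle\subseteq H$. If some restriction $\psi|_{G_i}$ were non-injective, Theorem \ref{shortening} would give $\sigma_i \in \mathrm{Mod}(G_i)$ with $\psi \circ \sigma_i$ killing an element of $F_i$; since $\sigma_i$ fixes each finite (and hence each edge) subgroup of $G_i$, it extends to an automorphism $\sigma$ of $G$ acting by inner conjugation on every non-QH vertex group of $\mathrm{S}_G$, making $\psi\circ\sigma$ an $\mathrm{S}_G$-related endomorphism killing an element of $F$, contrary to $G\models \eta(\bar{y}_0)$. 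Hence $\psi$ is injective on every one-ended factor. Mimicking the analysis of Proposition \ref{main_theorem1} and Corollary \ref{main_theorem1.1} (with $u=u'=\bar{y}_0$), the image $\psi(G_i)$ sits in a conjugate of some one-ended factor and, via complexity considerations together with Lemma \ref{injective2} and the finiteness of conjugacy classes of finite subgroups, $\psi$ induces isomorphisms between the conjugacy classes of vertex groups of $\mathrm{S}_G$; applying Proposition 8.15 of \cite{AP24} as in the proof of Theorem \ref{main_theorem} then gives that some power $\psi^k$ is an automorphism of $G$, contradicting $\psi^k(G)\subseteq H\subsetneq G$. The main obstacle is precisely this last upgrade: unlike in Corollary \ref{main_theorem1.1}, where a second morphism $\varphi'$ running in the opposite direction is available, here we dispose of only one endomorphism $\psi$, and the control on the iterates $\psi^k$ has to be extracted solely from the finiteness of conjugacy classes of finite subgroups and of one-ended factors of $\mathrm{S}_G$.
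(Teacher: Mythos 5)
Your reduction to the transfer of the sentence $\delta$ and the shortening argument is carried out sensibly up to the point where you obtain an endomorphism $\psi$ of $G$ with $\psi(G)\subseteq H$ that is injective on each one-ended factor and (after the QH analysis) maps each vertex group of $\mathrm{S}_G$ isomorphically onto a conjugate of a vertex group, permuting the conjugacy classes. But the step you yourself flag as ``the main obstacle'' is a genuine gap, and it cannot be closed as stated. Proposition 8.15 of \cite{AP24} is invoked in the proof of Theorem \ref{main_theorem} only under the hypothesis that for every edge group $F$ of a reduced Stallings splitting the image of $N_G(F)\to\mathrm{Aut}(F)$ equals $\mathrm{Inn}(F)$; Theorem \ref{strictly_minimal_theorem} carries no such hypothesis, and Section \ref{example} of the paper is precisely an example of a torsion-generated (virtually free Coxeter) group admitting an injective endomorphism $\bar{\sigma}$ that restricts to an isomorphism onto a conjugate on each vertex group of $A\ast_C B$ and yet is not surjective ($B$ is not in its image). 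So ``some power of $\psi$ is an automorphism'' is exactly the kind of conclusion that fails for general torsion-generated hyperbolic groups, and no amount of finiteness of conjugacy classes of finite subgroups or of one-ended factors will produce it: the obstruction lives in the outer action of edge-group normalizers, which your single morphism $\psi$ does not control. Even under the normalizer hypothesis, the conclusion of Proposition 8.15 is that \emph{some} automorphism agrees with $\varphi\circ(\varphi'\circ\varphi)^m$ up to conjugacy on vertex groups, not that the endomorphism itself is surjective, so the desired contradiction $\psi^k(G)\subseteq H\subsetneq G$ would still not follow.

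The paper's proof goes a different way and leans on torsion-generation much more heavily. It works with splittings of $G$ \emph{relative to} $H$ rather than with the absolute Stallings splitting: one first shows that every finite subgroup of $G$ is conjugate into $H$ (comparing the numbers of conjugacy classes of finite subgroups of $H$ and $G$ and transferring a universal non-conjugacy formula), then that $H$ is a one-ended factor of $G$ relative to $H$ (otherwise a degenerate centered preretraction would contradict Lemma \ref{injective2}), then that $G$ admits no centered splitting relative to $H$ — here torsion-generation enters: an element of a conical subgroup of the central vertex group not conjugate to the distinguished edge group could not be a product of conjugates of elements of $H$. Finally, the quasiprototype/quasicore structure theory from \cite{And18} shows that any one-ended factor not conjugate to $H$ would be a genus-zero finite-by-orbifold group with conical subgroups not conjugate into $H$, again contradicting the first step; hence the relative Stallings splitting is a point and $G=H$. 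I would suggest you reorient your argument around the relative splitting and the conjugacy of finite subgroups into $H$, since that is where the hypothesis that $G$ is generated by torsion actually does its work.
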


\begin{proof}Let $H$ be an $\mathrm{AE}$-embedded subgroup of $G$. If $G$ is finite the result is obvious, and if $G$ is virtually cyclic infinite the result is not much more difficult, so let us suppose that $G$ is non-elementary. Hence, $H$ is non-elementary as well. 

\smallskip \noindent
First, observe that every finite subgroup of $G$ is conjugate to a subgroup of $H$. Indeed, it is not hard to see that $H$ and $G$ have the same number of conjugacy classes of finite subgroups since they are $\mathrm{AE}$-equivalent. Moreover, if two finite subgroups $A = \lbrace a_1,\cdots, a_m\rbrace$ and $B = \lbrace b_1,\cdots, b_m\rbrace$ of $H$ are not conjugate in $H$, then $H$ satisfies a universal formula $\theta(a_1,\cdots, a_m, b_1,\cdots, b_m)$ expressing the fact, for every $h\in H$, there is an integer $1\leq i\leq m$ such that for every $1\leq i\leq m$, $a_i\neq hb_jh^{-1}$. Since $H$ is $\mathrm{AE}$-embedded in $G$, this formula is true in $G$ as well, therefore $A$ and $B$ are not conjugate in $G$.

\smallskip \noindent
Let us prove that $H$ is a one-ended factor of $G$, that is a one-ended vertex group in a Stallings splitting of $G$. Suppose toward contradiction that $H$ is not a one-ended factor of $G$. Then, by (relative versions of) Lemmas 3.5 and 3.7 in \cite{And21b}, there exist a centered splitting $\mathbf{C}_{G,H}$ of $G$ relative to $H$ and a degenerate $\mathbf{C}_{G,H}$-preretraction, which contradicts Lemma \ref{injective2} (please note that the definition of a degenerate $\mathbf{C}_{G,H}$-preretraction \cite[Definition 2.27]{And21b} is the inverse of Definition \ref{pre_centered} in this paper (which is consistent with Definition 3.6 in \cite{And18})). Hence, $H$ is a one-ended factor of $G$. 

\smallskip \noindent
Then, suppose towards a contradiction that $G$ admits a centered splitting $\mathbf{C}_{G,H}$ relative to $H$. Since $G$ is torsion-generated, by \cite[Lemma 5.9]{AP24} the underlying graph of $\mathbf{C}_{G,H}$ is a tree and the underlying orbifold of the central vertex group $G_v$ of $\mathbf{C}_{G,H}$ is orientable of genus 0, therefore it has at least three conical points or boundary components. Let $w$ denote the unique vertex of the Bass-Serre tree of $\mathbf{C}_{G,H}$ that is fixed by $H$ (uniqueness follows from the fact that $H$ is non-elementary whereas edges of $\mathbf{C}_{G,H}$ are virtually cyclic (possibly finite)). By definition of a centered splitting relative to $H$, this vertex $w$ is not in the orbit of $v$. After replacing $H$ with a conjugate if necessary, we can assume that $w$ is a vertex of $\mathbf{C}_{G,H}$ adjacent to the central vertex $v$, and let $e$ denote the edge $[v,w]$. Now, let $K$ be an extended boundary or conical subgroup of $G_v$ that is not conjugate to $G_e$, and let $k\in K$ be an element that is not in the fiber of $G_e$. Then $k$ cannot be written as a product of conjugates of elements of $H$ (which is contained in $G_w$), contradicting the fact that $G$ is torsion-generated and that every finite subgroup of $G$ is conjugate to a subgroup of $H$. Hence, $G$ does not admit a centered splitting relative to $H$. It follows that $G$ is a quasiprototype relative to $H$ (see Definition 5.9 in \cite{And18}) and that $G$ is its own quasicore (see Definition 5.11 in \cite{And18}).

\smallskip \noindent
Finally, by Proposition 6.9 and Lemma 6.11 in \cite{And18}, since $G$ and $H$ are their own quasicores relative to $H$, the one-ended factors of $G$ that are not conjugate to $H$ are finite-by-orbifold groups, and the underlying orbifolds are orientable of genus 0 (because $G$ is torsion-generated). Therefore, every one-ended factor of $G$ that is not conjugate to $H$ has at least one (in fact, two) extended conical subgroups that are not conjugate to $H$, which contradicts the fact that every finite subgroup of $G$ is conjugate to a subgroup of $H$. For the same reason, there is no zero-ended (in other words, finite) factor. Hence any reduced Stallings splitting of $G$ relative to $H$ is reduced to a point, and since $H$ is a one-ended factor we obtain $G=H$.\end{proof}

\subsection{A non-homogeneous hyperbolic Coxeter groups}\label{example}

\subsubsection{Strategy of proof}

We will construct a (virtually free) hyperbolic Coxeter group $G$ that is not $\mathrm{EAE}$-homogeneous (this is comparable to the virtually free group constructed in the sections 5 and 6 of \cite{And18b} by the first author, but performing the construction among Coxeter groups adds new constraints). As explained in the introduction, the proof consists in constructing an \emph{$\mathrm{EAE}$-extension} $G'$ of $G$ (see Definition \ref{EAE_def}) and two elements of $G$ that are automorphic in $G'$ but not in $G$. Hence, these elements have the same type in $G'$ and the same $\mathrm{EAE}$-type in $G$, which proves that $G$ is not EAE-homogeneous.

\begin{de}\label{EAE_def}
Let $G'$ be a group and let $G$ be a subgroup of $G'$. We say that $G'$ is an \emph{$\mathrm{EAE}$-extension} of $G$, or that $G$ is \emph{$\mathrm{EAE}$-embedded} in $G'$, if the following condition holds: for every $\mathrm{EAE}$-formula $\phi(x)$ and finite tuple $u\in G^{\vert x\vert}$, if $\phi(u)$ is true in $G$ then $\phi(u)$ is true in $G'$ (note in particular that if $\phi(x)$ is $\mathrm{AE}$, then $\phi(u)$ is true in $G$ if and only if it is true in $G'$).\end{de}

It is worth recalling that, as already mentioned in the introduction, Sela developed a quantifier elimination procedure down to the Boolean algebra of $\mathrm{AE}$-definable sets for torsion-free hyperbolic groups. More precisely, for any fixed torsion-free hyperbolic group $G$ and for every definable set $D(x)$ in the language of groups (where $x$ denotes a finite tuple of free variables), there is a definable set $D'(x)$ that belongs to the Boolean algebra of $\mathrm{AE}$-definable sets, such that \[\lbrace u\in G^{\vert x\vert} \ \vert \ G\models D(u)\rbrace=\lbrace u\in G^{\vert x\vert} \ \vert \ G\models D'(u)\rbrace.\]
In particular, two finite tuples of elements of $G$ have the same type if and only if they have the same $\mathrm{AE}$-type. We refer the reader to \cite{Sel09} and the previous papers in the series of papers on the Tarski problem for further details. It seems reasonable to conjecture that a similar phenomenon persists in the presence of torsion (even though not much has been proved in this direction yet). If this is indeed the case, then the group $G$ constructed in the present section is not homogeneous (not only not $\mathrm{EAE}$-homogeneous).

\subsubsection{Definition of the group}

Consider the finite Coxeter groups $A,B,C$ given by the following diagrams (recall that the vertices represent the elements of a Coxeter system $S$, and that there is no edge between two vertices if and only if the corresponding generators $s_i,s_j$ commute, an edge with no label if and only if $(s_is_j)^3=1$, and an edge with label $n\geq 4$ if and only if $(s_is_j)^n=1$):

\begin{figure}[h!]
  \centering
    \begin{tikzpicture}
\draw[black, very thick] (2,0) -- (6,0);
\draw[black, very thick] (8,0) -- (12,0);
\node[text=black] at (2,-0.5) {$e_1$};
\node[text=black] at (4,-0.5) {$e_2$};
\node[text=black] at (8,-0.5) {$e_3$};
\node[text=black] at (10,-0.5) {$e_4$};
\node[text=black] at (2,0.5) {$a_1$};
\node[text=black] at (3,0.5) {$a_2$};
\node[text=black] at (4,0.5) {$a_3$};
\node[text=black] at (5,0.5) {$a_4$};
\node[text=black] at (6,0.5) {$a_5$};
\node[text=black] at (8,0.5) {$a_6$};
\node[text=black] at (9,0.5) {$a_7$};
\node[text=black] at (10,0.5) {$a_8$};
\node[text=black] at (11,0.5) {$a_9$};
\node[text=black] at (12,0.5) {$a_{10}$};
\fill[black] (2,0) circle (0.1cm);
\fill[black] (3,0) circle (0.1cm);
\fill[black] (4,0) circle (0.1cm);
\fill[black] (5,0) circle (0.1cm);
\fill[black] (6,0) circle (0.1cm);
\fill[black] (8,0) circle (0.1cm);
\fill[black] (9,0) circle (0.1cm);
\fill[black] (10,0) circle (0.1cm);
\fill[black] (11,0) circle (0.1cm);
\fill[black] (12,0) circle (0.1cm);
\end{tikzpicture}
  \caption{The finite group $A$, isomorphic to $S_6\times S_6$.}
  \label{A}
\end{figure}

\begin{figure}[h!]
  \centering
    \begin{tikzpicture}
\draw[black, very thick] (2,0) -- (4,0);
\draw[black, very thick] (6,0) -- (7,0);
\draw[black, very thick] (9,0) -- (12,0);
\node[text=black] at (2,-0.5) {$e_2$};
\node[text=black] at (4,-0.5) {$e_3$};
\node[text=black] at (6,-0.5) {$e_1$};
\node[text=black] at (9,-0.5) {$e_4$};
\node[text=black] at (2,0.5) {$b_1$};
\node[text=black] at (3,0.5) {$b_2$};
\node[text=black] at (4,0.5) {$b_3$};
\node[text=black] at (6,0.5) {$b_4$};
\node[text=black] at (7,0.5) {$b_5$};
\node[text=black] at (9,0.5) {$b_6$};
\node[text=black] at (10,0.5) {$b_7$};
\node[text=black] at (11,0.5) {$b_8$};
\node[text=black] at (12,0.5) {$b_9$};
\fill[black] (2,0) circle (0.1cm);
\fill[black] (3,0) circle (0.1cm);
\fill[black] (4,0) circle (0.1cm);
\fill[black] (6,0) circle (0.1cm);
\fill[black] (7,0) circle (0.1cm);
\fill[black] (9,0) circle (0.1cm);
\fill[black] (10,0) circle (0.1cm);
\fill[black] (11,0) circle (0.1cm);
\fill[black] (12,0) circle (0.1cm);
\end{tikzpicture}
  \caption{The finite group $B$, isomorphic to $S_4\times S_3\times S_5$.}
  \label{BB}
\end{figure}

\begin{figure}[h!]
  \centering
    \begin{tikzpicture}
\fill[black] (0,0) circle (0.1cm);
\fill[black] (1,0) circle (0.1cm);
\fill[black] (2,0) circle (0.1cm);
\fill[black] (3,0) circle (0.1cm);
\node[text=black] at (0,-0.5) {$e_1$};
\node[text=black] at (1,-0.5) {$e_2$};
\node[text=black] at (2,-0.5) {$e_3$};
\node[text=black] at (3,-0.5) {$e_4$};
\end{tikzpicture}
  \caption{The finite group $C$, isomorphic to $(\mathbb{Z}/2\mathbb{Z})^4$.}
  \label{CC}
\end{figure}

\noindent Let $S_A=\lbrace a_1,\ldots,a_{10}\rbrace$, $S_B\lbrace b_1,\ldots,b_{9}\rbrace$ and $\lbrace e_1,\ldots,e_4\rbrace$ be the Coxeter generating sets of $A,B,C$ respectively shown in Figures \ref{A},\ref{BB},\ref{CC}, respectively. These figures illustrate two embeddings $i:C\rightarrow A:e_1\mapsto a_1,e_2\mapsto a_3, e_3\mapsto a_6,e_4\mapsto a_8$ and $j:C\rightarrow B:e_1\mapsto b_4,e_2\mapsto b_1, e_3\mapsto b_3,e_4\mapsto b_6$. Using these two embeddings, we define a group $G=A\ast_C B$. First, let us briefly explain why $G$ is a Coxeter group. Let  $A=\langle a_1,\ldots,a_m \ \vert \ (a_ia_j)^{r_{ij}}=1\rangle$ and $B=\langle b_1,\ldots,b_n \ \vert \ (b_ib_j)^{s_{ij}}=1\rangle$ be Coxeter presentations for $A$ and $B$ corresponding to the generating sets $S_A,S_B$. Define $I=\lbrace 1,\ldots ,10\rbrace$, $J=\lbrace 2,5,7,8,9\rbrace$, $K=\lbrace 1,3,4,6\rbrace$, and $\sigma : K \rightarrow I$ such that $\sigma(1)=3$, $\sigma(3)=6$, $\sigma(4)=1$, $\sigma(6)=8$, so that the relation $b_k=a_{\sigma(k)}$ holds in $G$, for every $k\in K$. The group $G$ admits the following presentation:


\[
\left\langle
\begin{aligned}
  & a_1,\ldots,a_{10},\, b_2,b_5,b_7,\ldots,b_9 \ \vert \  (a_i a_j)^{r_{ij}}=1 \ \forall (i,j)\in I^2, \ (b_i b_j)^{s_{ij}}=1 \ \forall (i,j)\in J^2,\\
  & \ \ \ \ \ \ \ \ \ \ \ \ \ \ \ \ \ \ \ \ \ \ \ \ \ \ \ \ \ \ \ \ \ \ \ \ \ \ (b_i a_{\sigma(j)})^{s_{ij}}=1 \ \forall (i,j)\in J\times K, \ (a_{\sigma(i)} b_j)^{s_{ij}}=1 \ \forall (i,j)\in K\times J
\end{aligned}
\right\rangle.
\]
This is clearly a Coxeter presentation, so $G$ is a Coxeter group. Moreover, as $A$ and $B$ are finite, $A\ast_C B$ is a virtually free group.

\smallskip \noindent
Write $A=A_1\times A_2$ with $e_1,e_2\in A_1$ and $e_3,e_4\in A_2$. Identifying $A_1$ and $A_2$ with $S_6$, write $e_1=(1 \ 2)\in A_1$, $e_2=(3 \ 4)\in A_1$, $e_3=(1 \ 2)\in A_2$, $e_4=(3 \ 4)\in A_2$. Then, consider the elements $x=(1 \ 3)(2 \ 4)(5 \ 6)\in A_1$ and $y=(1 \ 3)(2 \ 4)(5 \ 6)\in A_2$. We will prove that the obvious automorphism $\sigma$ of $A$ that exchanges $x$ and $y$ (that is the automorphism exchanging the two direct factors $A_1$ and $A_2$ in the obvious way) extends to an automorphism of an $\mathrm{EAE}$-extension $G'$ of $G$ (given by Theorem \ref{EAE} below), proving that $x$ and $y$ have the same $\mathrm{EAE}$-type. But we will prove that $x$ to $y$ are not automorphic in $G$. Hence, this will prove that the (odd) Coxeter group $G$ is not $\mathrm{EAE}$-homogeneous. Note that, contrary to the case of even Coxeter groups, there is no retraction from $A$ and $B$ onto the edge group $C$ here.

\subsubsection{An endomorphism of $G$ exchanging $x$ and $y$}\label{endo}

Let $\sigma$ be the obvious automorphism of $A$ that exchanges $x$ and $y$. Note that $\sigma(C)=C$. In fact, it is clear that $\sigma$ preserves the generating set $E=\lbrace e_1,e_2,e_3,e_4\rbrace$ of $C$, and that it induces the permutation $(e_1 \ e_3)(e_2 \ e_4)$.

\smallskip \noindent
Write $B=B_1\times B_2\times B_3$ with $e_2,e_3\in B_1$, $e_1\in B_2$ and $e_4\in B_3$. Identifying $B_1$ with $S_4$, write $e_2=(1 \ 2)\in B_1$ and $e_3=(3 \ 4)\in B_1$. An immediate calculation shows that the element $b=(1 \ 3)(2 \ 4)\in B_1$ satisfies $be_2b^{-1}=e_3$ and $be_3b^{-1}=e_2$. Moreover, $b$ commutes with $e_1$ and $e_4$ as these elements belong to $B_2$ and $B_3$ respectively. Hence, the element $b$ induces by conjugation the permutation $(e_2 \ e_3)$ of $E$. 

\smallskip \noindent
Then, observe that the elements $e_1=(1 \ 2), \ e_2=(3 \ 4), \ x=(1 \ 3)(2 \ 4)(5 \ 6)\in A_1$ satisfy the relations $xe_1x^{-1}=e_2$ and $xe_2x^{-1}=e_1$. Moreover, $x$ commutes with $e_3$ and $e_4$ as these elements belong to $A_2$. It follows that $x$ induces by conjugation the permutation $(e_1 \ e_2)$ of the set $E$. A similar argument shows that $y$ induces by conjugation the permutation $(e_3 \ e_4)$ of $E$.

\smallskip \noindent
Define $u=bxyb\in BAB$. One can easily verify that the restriction to $C$ of the inner automorphism $\mathrm{ad}(u)\in\mathrm{Inn}(G)$ coincides with $\sigma_{\vert C}$, therefore the morphism $\bar{\sigma}: G\rightarrow G$ defined by $\bar{\sigma}_{\vert A}=\sigma$ and $\bar{\sigma}_{\vert B}=\mathrm{ad}(u)_{\vert B}$ is well-defined.


\smallskip \noindent
Note that $\bar{\sigma}$ is not an automorphism of $G$ (and we will prove below that $x$ and $y$ are not automorphic in $G$). It is indeed not hard to see that $B$ is not contained in the image of $\bar{\sigma}$. On the other hand, it is worth observing (even though this will not be used in the rest of the proof) that $\bar{\sigma}$ is injective as it maps any element of $G$ written in non-trivial reduced normal form (with respect to the splitting $A\ast_C B$ of $G$) to an element written in non-trivial reduced normal form. Since $\bar{\sigma}$ swaps $x$ and $y$, this shows that $x$ and $y$ have the same existential type. 

\subsubsection{The elements $x$ and $y$ have the same $\mathrm{EAE}$-type}

In order to prove that $x$ and $y$ have the same $\mathrm{EAE}$-type, we will consider an $\mathrm{EAE}$-extension $G'$ of $G$ (given by Theorem \ref{EAE} below) and prove that the endomorphism $\bar{\sigma}$ of $G$ defined in the previous paragraph can be modified in order to get an automorphism of $G'$ that maps $x$ to $y$. Recall that a hyperbolic group is either finite or virtually cyclic infinite or contains a non-abelian free group, and in the latter case we say that this hyperbolic group is non-elementary. Recall also that, given a non-elementary subgroup $H$ of a hyperbolic group, there exists a unique maximal finite subgroup of the hyperbolic group that is normalized by $H$. We will need the following theorem.

\begin{te}[{\cite[Theorem 1.10]{And19a}}]\label{EAE}Let $G$ be a non-elementary hyperbolic group, and let $C$ be a finite subgroup of $G$. Suppose that $N_G(C)$ is non-elementary and that $C$ is the maximal finite subgroup of $G$ normalized by $N_G(C)$. Then the inclusion of $G$ into the group \[G'=\langle G,t \ \vert \ tg=gt, \ \forall g\in C\rangle\] is an $\mathrm{EAE}$-embedding (see Definition \ref{EAE_def}).\end{te}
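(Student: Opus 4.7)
The plan is to first exploit the rich collection of retractions $G' \to G$ afforded by the amalgam structure $G' = G \ast_C (C \times \langle t \rangle)$: for every element $s \in Z_G(C)$, the assignment $t \mapsto s$ extends uniquely to a retraction $\rho_s : G' \to G$ fixing $G$ pointwise. Since $N_G(C)/Z_G(C)$ embeds into the finite group $\mathrm{Aut}(C)$, the centralizer $Z_G(C)$ is of finite index in the non-elementary group $N_G(C)$, so in particular it contains elements of arbitrarily large translation length in $G$; moreover $G'$ itself is hyperbolic by the combination theorem for hyperbolic groups over finite edge groups. I would then reduce $\mathrm{EAE}$-embedding to $\mathrm{AE}$-embedding: given $\phi(\bar u) = \exists \bar y \forall \bar z \exists \bar w \, \psi(\bar u, \bar y, \bar z, \bar w)$ holding in $G$ with $\bar y$-witness $\bar a \in G$, the same $\bar a$ will serve as a witness in $G'$ as soon as the $\mathrm{AE}$-formula $\forall \bar z \exists \bar w \, \psi(\bar u, \bar a, \bar z, \bar w)$ passes from $G$ to $G'$.

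To prove this $\mathrm{AE}$-transfer, I would fix $\bar b \in (G')^{|z|}$ and a sequence $(s_n) \subset Z_G(C)$ whose translation lengths in $G$ tend to infinity, and set $\bar b_n := \rho_{s_n}(\bar b) \in G$. The hypothesis yields witnesses $\bar c_n \in G$ with $G \models \psi(\bar u, \bar a, \bar b_n, \bar c_n)$, and the task is to assemble from the sequence $(\bar c_n)$ a tuple $\bar c \in G'$ verifying $\psi(\bar u, \bar a, \bar b, \bar c)$ in $G'$ itself. The equalities $w = 1$ appearing in $\psi$ are straightforward to handle: they descend under each $\rho_s$ and can be lifted back to $G'$ once the witnesses are arranged to be compatible. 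The delicate step is the preservation of the inequalities, which requires that $\rho_{s_n}$ not introduce spurious identifications, and this is where the Bestvina--Paulin/Rips--Sela machinery would enter.

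The main obstacle, as I see it, is precisely this control of inequalities and of the witnesses $\bar c_n$. The standard remedy is a shortening argument: after rescaling the Cayley graph of $G$ by the translation length of $s_n$, the homomorphisms $\eta_n : L \to G$ (where $L \subset G'$ is the finitely generated subgroup generated by $\bar u, \bar a, \bar b$ together with a formal tuple of variables for $\bar c$) subconverge to a non-trivial isometric action of a limit quotient of $L$ on an $\mathbb{R}$-tree; by applying modular automorphisms of $G'$ relative to $\bar b$, one can arrange the $\bar c_n$ to have uniformly bounded word length, and extracting a constant subsequence then yields the required $\bar c$ in $G \subset G'$. The hypothesis that $C$ is the maximal finite subgroup of $G$ normalized by $N_G(C)$ is crucial at this last step: it rules out the possibility that all retractions $\rho_s$ factor through a strictly larger common quotient of $G'$ in which the $t$-structure would be partially erased, which would let inequalities involving $\bar b$ collapse to equalities in the limit and render the test-sequence argument powerless.
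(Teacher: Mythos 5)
First, a point of order: the paper does not prove this statement at all --- it is imported verbatim as \cite[Theorem 1.10]{And19a} and used as a black box in Subsection \ref{example}. So there is no in-paper proof to compare yours against; what follows is an assessment of your sketch on its own terms.

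Your setup is correct and matches the standard framework: $G'=G\ast_C(C\times\langle t\rangle)$, the retractions $\rho_s$ for $s\in Z_G(C)$ are well defined, $Z_G(C)$ has finite index in the non-elementary group $N_G(C)$ and hence contains loxodromic elements of unbounded translation length, $G'$ is hyperbolic, and the reduction of $\mathrm{EAE}$-transfer to the upward transfer of $\mathrm{AE}$-formulas with parameters in $G$ is the right first move (only the direction $G\Rightarrow G'$ is needed, per Definition \ref{EAE_def}). The genuine gap is in the core step, the $\mathrm{AE}$-transfer itself. Your mechanism --- get witnesses $\bar c_n\in G$ for the retracted tuples $\bar b_n=\rho_{s_n}(\bar b)$, then ``apply modular automorphisms of $G'$ relative to $\bar b$ to arrange the $\bar c_n$ to have uniformly bounded word length and extract a constant subsequence'' --- does not work as described. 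The witnesses $\bar c_n$ are in no way canonical functions of $\bar b_n$, so there is nothing to shorten: the shortening argument applies to a sequence of homomorphisms from a \emph{fixed} finitely generated source, shortened by precomposition with modular automorphisms of that source, and it controls the images of the \emph{generators}, not of existential witnesses chosen a posteriori. What is actually needed here is a Merzlyakov-type theorem (a formal solution $\bar w=W(\bar z)$ defined over $G'$, or over a suitable tower/closure above it, valid along the test sequence $(\rho_{s_n})$), adapted to hyperbolic groups with torsion; that is the real content of \cite[Theorem 1.10]{And19a} and it cannot be replaced by a compactness extraction. Even granting a constant subsequence $\bar c$, passing from $\psi(\bar u,\bar a,\bar b_n,\bar c)$ in $G$ to $\psi(\bar u,\bar a,\bar b,\bar c)$ in $G'$ requires the $\rho_{s_n}$ to be a discriminating sequence on the relevant finite set of words, which you assert but do not establish (it requires a Baumslag-type lemma and a careful choice of the $s_n$, e.g.\ with independent axes and controlled maximal elementary subgroups). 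Finally, your account of where the maximality of $C$ enters is speculative; its actual role is to guarantee that $C$ is the maximal normal finite subgroup of $N_{G'}(C)$, so that $C\times\langle t\rangle$ sits as a genuine new vertex/edge datum in the relevant canonical splitting of $G'$ and the test-sequence and shortening machinery apply to that splitting --- not merely to prevent the retractions from factoring through a common quotient.
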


\begin{rk}
Note that the group $G'$ is simply the HNN extension of $G$ over the identity of $C$. We conjecture that $G$ is elementarily embedded into $G'$, but the proof of this conjecture would require a quantifier elimination procedure which is currently only known for torsion-free hyperbolic groups, by the work of Sela \cite{Sel09}.
\end{rk}

For the Coxeter group $G$ and its subgroup $C$ defined at the beginning of Subsection \ref{example}, we want to prove that the assumptions of Theorem \ref{EAE} are verified. Let $C'$ denote the maximal finite subgroup of $G$ normalized by the normalizer $N_G(C)$ of $C$ in $G$ (as already recalled above, such a maximal finite subgroup always exists in a hyperbolic group). Clearly, $C$ is contained in $C'$, and we need to prove that $C$ is equal to  $C'$. Note that $G$ acts transitively on the edges of the Bass-Serre tree $T$ of the splitting $A\ast_C B$, so $N_G(C)$ acts transitively on the edges of $\mathrm{Fix}_T(C)$. But $G$ does not act transitively on the vertices of $T$, so $N_G(C)$ does not act transitively on the vertices of $\mathrm{Fix}_T(C)$. It follows that $\mathrm{Fix}_T(C)/N_G(C)$ is simply an edge, and thus that $N_G(C)=N_A(C)\ast_C N_B(C)$. 

\smallskip \noindent
Let $v_A$ and $v_B$ be the unique vertices of $T$ fixed by $A$ and $B$ respectively, which are also the unique vertices fixed by $x$ and $y$ respectively. Suppose towards a contradiction that $C$ is a proper subgroup of $C'$. This group $C'$ being finite, it fixes a vertex $v$ of $T$. Note that $x$ belongs to $N_A(C)$, so $x$ normalizes $C'$ and it follows that $C'$ fixes $xv$. Suppose that $xv\neq v$, then $C'$ is contained in the stabilizer of the path $[v,xv]$ whose order is $\leq \vert C\vert$ (because all the edge groups of $T$ are conjugates of $C$), contradicting our assumption that $C$ is strictly contained in $C'$. It follows that $xv=v$. But as $y$ belongs to $N_B(C)$, the same argument shows that $yv=v$. This is a contradiction because there is no vertex in $T$ that is fixed by both $x$ and $y$. Hence $C'=C$, and so Theorem \ref{EAE} can be applied to our Coxeter group $G$ and its finite subgroup $C$.

\smallskip \noindent
Recall that $u$ denotes the element $bxyb$ defined in the previous subsection, and define \[G'= \langle G, t \ \vert \ \mathrm{ad}(t)_{\vert C}=\mathrm{ad}(u)_{\vert C}\rangle=\langle G,t' \ \vert \ \mathrm{ad}(t')_{\vert C}=\mathrm{id}_C\rangle.\] The last equality is obtained by taking $t'=u^{-1}t$. By Theorem \ref{EAE}, the inclusion of $G$ into $G'$ is an $\mathrm{EAE}$-elementary embedding. 

\smallskip \noindent
We will prove that $x$ and $y$ are automorphic in $G'$. The idea consists in modifying the endomorphism $\bar{\sigma}$ of $G$ defined in the paragraph \ref{endo}. We define a morphism $\theta : G \rightarrow G'$ by $\theta_{\vert A}=\sigma$ and $\theta_{\vert B}=\mathrm{ad}(t)_{\vert B}$ (so we replace the $u$ in the definition of $\bar{\sigma}$ with the new letter $t$). This morphism is well-defined since $\sigma$ and $\mathrm{ad}(t)$ coincide (with $\mathrm{ad}(u)$) on $C$. Moreover, recall that $\sigma$ swaps $x$ and $y$, therefore $\theta$ swaps $x$ and $y$ as well. 

\smallskip \noindent
We will prove that this morphism $\theta : G \rightarrow G'$ extends to an endomorphism of $G'$. First, let us prove that $\theta(u)$ and $u$ induce the same action on $C$ by conjugation. Note that $\theta(u)=\theta(bxyb)=tbt^{-1}yxtbt^{-1}$, so $\theta(u)$ acts on $C$ by conjugation in the same way as $ubu^{-1}yxubu^{-1}$ (since $\mathrm{ad}(t)_{\vert C}=\mathrm{ad}(u)_{\vert C}$). But each of the elements $u,b,x,y$ preserves the set $E=\lbrace e_1,e_2,e_3,e_4\rbrace$, so $ubu^{-1}yxubu^{-1}$ induces a permutation of $E$. This permutation is the same as the one induced by $bxybxybxybxybxyb=(bxy)^5b$ (using the fact that $u=bxyb$, $\mathrm{ad}(u)_{\vert C}=\mathrm{ad}(u^{-1})_{\vert C}$ and $xy=yx$). Then, note that $bxy$ acts on $E$ as the 4-cycle $(e_3 \ e_4 \ e_2 \ e_1)$, hence $(bxy)^5$ acts on $E$ as $bxy$. It follows that $\theta(u)$ acts on $C$ as $bxyb=u$, which allows us to define $\theta(t)=t$ (indeed, the defining relation of the HNN extension, namely $ucu^{-1}=tct^{-1}$ for $c\in C$, is preserved by $\theta$ since we have $\theta(ucu^{-1})=\theta(u)\theta(c)\theta(u)^{-1}=u\theta(c)u^{-1}=t\theta(c)t^{-1}=\theta(tct^{-1})$ for every $c\in C$).

\smallskip \noindent
Then, observe that $\theta$ is surjective since $\theta(A)=A$, $\theta(B)=tBt^{-1}$ and $\theta(t)=t$, and $A\cup B\cup\lbrace t\rbrace$ is a generating set of $G'$. But $G'$ is a virtually free group, so it is Hopfian, therefore $\theta$ is an automorphism. It follows that $x$ and $y$ have the same type in $G'$. Moreover, $G$ being $\mathrm{EAE}$-embedded into $G'$, $x$ and $y$ have the same $\mathrm{EAE}$-type in $G$.


\subsubsection{There is no automorphism of $G$ mapping $x$ to $y$}

Suppose towards a contradiction that there is an automorphism $\sigma$ of $G$ such that $\sigma(x)=y$. Note in particular that $y$ belongs to $\sigma(A)\cap A$, as $x$ and $y$ belong to $A$. Let $T$ denote the Bass-Serre tree of the splitting $A\ast_C B$ of $G$. Note that the edge stabilizers of $T$ are the conjugates of $C$. 

\smallskip \noindent
Let $v_A$ denote the unique vertex of $T$ fixed by $A$. Note that $v_A$ is also the unique vertex of $T$ fixed by $y$: indeed, let $v$ be a vertex of $T$ fixed by $y$ and suppose that $v\neq v_A$. Then $y$ fixes the path joining $v$ to $v_A$, in particular $y$ fixes an edge adjacent to $v_A$, whose stabilizer is a conjugate of $C$ by an element $a\in A$. It follows that $a^{-1}ya$ belongs to $C$. But recall that $A=A_1\times A_2$ with $y\in A_2$, so one can write $a=a_1a_2$ with $a_1\in A_1$ and $a_2\in A_2$ and one gets that $a_2^{-1}ya_2$ belongs to $C$, which is impossible as $y$ is a product of three transpositions with disjoint supports in $A_2\simeq S_6$ whereas $C$ is generated by two commuting transpositions. Hence $v=v_A$ and so $v_A$ is the unique vertex of $T$ fixed by $y$. Therefore, since $y$ belongs to $\sigma(A)\cap A$, the vertex $v_A$ is also the only vertex of $T$ fixed by $\sigma(A)$, and it follows that $\sigma(A)=A$. Then, $\sigma(B)$ being elliptic in $T$ and not isomorphic to a subgroup of $A$, there is an element $g\in G$ such that $\sigma(B)=gBg^{-1}$, and since $\sigma$ is surjective it is not hard to prove that $g=ab$ with $a\in A$ and $b\in B$, thus $\sigma(B)=abBb^{-1}a^{-1}=aBa^{-1}$. 

\smallskip \noindent
Define $\sigma'=\mathrm{ad}(a^{-1})\circ \sigma$, so that $\sigma'(A)=A$ and $\sigma'(B)=B$ (and thus $\sigma'(C)=C$ as $C=A\cap B$). Write $B=B_1\times B_2\times B_3$ where $B_1,B_2,B_3$ denote the direct factors that appear in Figure \ref{B}, with $e_2,e_3\in B_1$, $e_1\in B_2$ and $e_4\in B_3$. By \cite{Bidwell,Bidwell2}, since $B_1,B_2,B_3$ are pairwise non-isomorphic, have trivial center, and have no common non-trivial direct factor, we have $\sigma'(B_i)=B_i$ for every $i\in\lbrace 1,2,3\rbrace$. Moreover, $\sigma'(C)=C$, so $\sigma'(e_1)$ belongs to $B_2\cap C=\langle e_1\rangle$ and thus $\sigma'(e_1)=e_1$. Now, recall that $A=A_1\times A_2$ where $A_1$ and $A_2$ denote the direct factors that appear in Figure \ref{A}, with $e_1,e_2\in A_1$ and $e_3,e_4\in A_2$. Still by \cite{Bidwell,Bidwell2}, we have $\sigma'(A_1)=A_1$ or $\sigma'(A_1)=A_2$, but since $\sigma'(e_1)=e_1$ the second option does not occur and thus $\sigma'(A_1)=A_1$ and $\sigma(A_1)=aA_1a^{-1}=A_1$, which contradicts our initial assumption that $\sigma(x)=y$ as $x$ belongs to $A_1$ and $y$ belongs to $A_2$.

\section{Direct products}\label{direct_products}

The main aim of this section is to prove that under certain conditions, homogeneity behaves well with regard to direct products, which allows us to combine results from Sections \ref{homogeneity_crystallographic} and \ref{homogeneity_hyperbolic}. The following definition was introduced in \cite{alg_geom_over_groups3}.

\begin{de} Let $G$ be a group. For $x, y \in G$ we define the following operation:
	$$x \diamond y = [\mathrm{gp}_G(x), \mathrm{gp}_G(y)],$$
where $\mathrm{gp}_G(z)$ denotes the normal closure of $x$ in $G$, that is, the smallest normal subgroup of $G$ containing $x$. We call a non-trivial element $x \in G$ a zero-divisor in $G$ if there exists a non-trivial element
$y \in G$ such that $x \diamond y = 1$. We say that the group $G$ is a domain if it has no zero-divisors. Finally, we write $x \perp y$ when $x \diamond y = 1$.
\end{de} 

The following lemma follows from standard techniques.

\begin{lemme}\label{standard}Every non-elementary hyperbolic group without a non-trivial normal finite subgroup is a domain. 
\end{lemme}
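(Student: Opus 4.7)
The plan is to argue by contradiction. Suppose $x,y \in G$ are both non-trivial with $x \diamond y = 1$, and set $N = \mathrm{gp}_G(x)$ and $M = \mathrm{gp}_G(y)$. Then $N$ and $M$ are non-trivial normal subgroups of $G$ that commute elementwise, i.e.\ $M \subseteq C_G(N)$. If $N$ were finite, then the hypothesis that $G$ has no non-trivial finite normal subgroup would force $N=1$, contradicting $x \neq 1$; so $N$ is infinite.

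Next I would invoke the classical fact (going back to Gromov and Ol'shanskii) that in a non-elementary hyperbolic group any infinite normal subgroup is itself non-elementary. Concretely, picking $g \in N$ of infinite order, a standard ping-pong argument applied to a sufficiently high power $g^n$ and a conjugate $h g^n h^{-1}$, where $h \in G$ is chosen so that the axes of $g$ and $hgh^{-1}$ have disjoint endpoints on $\partial G$, produces a non-abelian free subgroup of $N$. In particular, $N$ contains two loxodromic elements $a, b$ generating a free subgroup and having pairwise distinct fixed-point pairs on $\partial G$.

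The crucial step is then to show that $C_G(N)$ is finite. Recall that for any infinite-order element $a$ in a hyperbolic group, $C_G(a)$ is contained in the maximal virtually cyclic subgroup $E(a)$ fixing the pair $\{a^{+\infty}, a^{-\infty}\} \subset \partial G$. Any element centralising $N$ lies in $C_G(a) \cap C_G(b) \subseteq E(a) \cap E(b)$, and this intersection is finite since $a$ and $b$ have no common fixed point on $\partial G$. Hence $C_G(N)$ is finite; being the centraliser of a normal subgroup it is itself normal in $G$, so by the hypothesis on $G$ we get $C_G(N)=1$. But $M \subseteq C_G(N)$ then forces $M=1$, contradicting $y \neq 1$. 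The whole argument rests on two standard facts about non-elementary hyperbolic groups---existence of free subgroups of rank $2$ inside any infinite normal subgroup, and finiteness of centralisers of non-elementary subgroups---so no substantive obstacle arises.
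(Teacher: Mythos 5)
Your proof is correct, but it takes a genuinely different route from the paper's. The paper argues directly rather than by contradiction: by a theorem of Ol'shanskii, a non-elementary hyperbolic group with no non-trivial finite normal subgroup contains three infinite-order elements $g_1,g_2,g_3$ whose maximal elementary subgroups are exactly the cyclic groups $\langle g_i\rangle$ and pairwise intersect trivially; given non-trivial $x,y$ one picks $g_i$ with $x,y\notin M(g_i)$, and Baumslag's Lemma then shows that $[x, g_i^{n} y g_i^{-n}]\neq 1$ for $n$ large, which directly exhibits a non-commuting pair in $\mathrm{gp}_G(x)\times\mathrm{gp}_G(y)$ since $g_i^{n}yg_i^{-n}\in\mathrm{gp}_G(y)$. (The hypothesis on finite normal subgroups is what guarantees $M(g_i)=\langle g_i\rangle$.) You instead exploit the structure of normal subgroups and boundary dynamics: $\mathrm{gp}_G(x)$ is an infinite normal subgroup, hence non-elementary, hence contains independent loxodromics $a,b$; its centraliser sits in $E(a)\cap E(b)$, which is finite, and being normal it must be trivial, contradicting $\mathrm{gp}_G(y)\subseteq C_G(\mathrm{gp}_G(x))$. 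All the facts you invoke (infinite normal subgroups of non-elementary hyperbolic groups are non-elementary, centralisers of loxodromics lie in their maximal elementary subgroups, $E(a)\cap E(b)$ is finite for independent $a,b$, and normality of $C_G(N)$ for $N$ normal) are standard and correctly applied. The paper's argument is more explicit and equational, in keeping with the first-order-logic flavour of the rest of the paper (it produces a concrete witness to $x\diamond y\neq 1$), whereas yours is softer and arguably more conceptual; both are complete.
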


\begin{proof}
Let $G$ be a non-elementary hyperbolic group without a non-trivial normal finite subgroup. By \cite{ol93}, there exist three elements $g_1,g_2,g_3\in G$ of infinite order such that, for every $i,j\in\lbrace 1,2,3\rbrace$ with $i\neq j$, the following conditions hold:
\begin{itemize}
    \item the maximal virtually cyclic subgroup $M(g_i)$ of $G$ containing $g_i$ is equal to $\langle g_i\rangle$;
    \item $\langle g_i\rangle \cap \langle g_j\rangle$ is trivial.
\end{itemize}
Let $x,y$ be non-trivial elements of $G$. Pick $g_i$ with $i\in\lbrace 1,2,3\rbrace$ such that $x\notin M(g_i)$ and $y\notin M(g_j)$ (this is possible by the conditions above). By Baumslag's Lemma (see \cite[Corollary 2.20]{And20} or \cite{ol93}), the element $[x,g_i^nyg_i^{-n}]=xg_i^nyg_i^{-n}x^{-1}g_i^ny^{-1}g_i^{-n}$ is non-trivial for $n\geq 1$ sufficiently large. Therefore, $x$ is not a zero-divisor in $G$, and thus $G$ is a domain.
\end{proof}

	\begin{notation} 
	$\mathrm{Comp}(x,z) = \forall y (y \diamond z = 1 \rightarrow x \diamond y = 1)$ (cf. \cite[Proof of Lemma~4]{alg_geom_over_groups3}).
\end{notation}

	\begin{de}\label{Dk_notation} As in \cite{alg_geom_over_groups3}, we denote by $D_k$ the groups of the forms $G_1 \times \cdots \times G_k$ with each $G_i$ a domain.
\end{de}

	\begin{fact}[{\cite[Theorem~10.4]{AP24}}]\label{the_direct_dec_fact} Let $H = H_1 \times H_2$ with $H_2 \in D_k$ and $H_1$ abelian-by-finite. Then for every $K \equiv H$ there are $K_1, K_2 \leq K$ such that we have:
	\begin{enumerate}[(1)]
	\item $K = K_1 \times K_2$;
	\item $K_2 \in D_k$;
	\item $K_1 \equiv H_1$ and $K_2 \equiv H_2$. 
\end{enumerate}	
	\end{fact}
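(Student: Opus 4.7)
The plan is to encode the direct product decomposition $H = H_1 \times H_2$ as a first-order property of $H$ with a finite tuple of parameters, so that it transfers to any $K \equiv H$. The two key definability tools are the relation $x \diamond y = 1$, which is universally definable since it says every conjugate of $x$ commutes with every conjugate of $y$, and the predicate $\mathrm{Comp}(x, z)$; their combination lets one isolate the domain factors of $H_2$ from inside $H$.

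First I would write $H_2 = G_1 \times \cdots \times G_k$ and pick non-trivial witnesses $z_j \in G_j$, observing $z_i \perp z_j$ for $i \neq j$. Using the elementary fact that in a direct product $x \diamond y = 1$ holds iff it holds on each coordinate projection (since normal closures respect direct factors), together with the domain hypothesis on each $G_j$, one verifies that the formula $\phi_j(x, \bar z) := \mathrm{Comp}(x, z_j) \wedge \bigwedge_{i \neq j} (x \diamond z_i = 1)$ singles out the factor $G_j$ up to a controlled contribution from $H_1$; the latter is pinned down using Oger-style definability available in the abelian-by-finite setting, cf.~Fact~\ref{oger_fact}. A complementary formula $\phi_0(x, \bar z)$ of similar shape defines the abelian-by-finite factor $H_1$.

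Then I would assemble a single sentence $\Psi(\bar z)$ asserting that the definable sets $\phi_0(H, \bar z), \phi_1(H, \bar z), \ldots, \phi_k(H, \bar z)$ are pairwise commuting subgroups with trivial pairwise intersections whose product is all of $H$, and that each $\phi_j(H, \bar z)$ for $j \geq 1$ is a domain (a universal statement in $H$, since normality of each factor ensures that the ambient $\diamond$-relation and the internal one agree). Since $H \models \exists \bar z \, \Psi(\bar z)$ by construction, so does $K$, producing subgroups $K_0, K_1', \ldots, K_k' \leq K$ with $K = K_0 \times K_1' \times \cdots \times K_k'$ and each $K_j'$ a domain. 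Setting $K_1 := K_0$ and $K_2 := K_1' \times \cdots \times K_k'$ settles clauses (1) and (2).

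The main obstacle is clause (3), establishing $K_1 \equiv H_1$ and $K_2 \equiv H_2$. For each $\sigma \in \mathrm{Th}(H_i)$ the relativization $\sigma^{\phi_i(\cdot, \bar z)}$ is a first-order sentence of $H$ true in $H$, hence true in $K$ for some parameters, but a single $\bar w \in K$ must witness all such relativizations at once. I would handle this by enlarging $\Psi$ to the relativized type $\Theta(\bar z) = \Psi(\bar z) \cup \{\sigma^{\phi_1(\cdot, \bar z)} : \sigma \in \mathrm{Th}(H_1)\} \cup \{\tau^{\phi_2(\cdot, \bar z)} : \tau \in \mathrm{Th}(H_2)\}$, verifying finite satisfiability of $\Theta$ in $K$ by reading off each finite piece as a consequence of $K \equiv H$, realizing $\Theta$ in an $\omega$-saturated elementary extension $K^* \succeq K$, and then descending to $K$ via a Feferman--Vaught-type argument: the point is that the elementary theory of $\phi_i(M, \bar z)$ is controlled by the type of $\bar z$ modulo $\Psi$, and the uniform definability together with the special structure coming from the abelian-by-finite and $D_k$ hypotheses force agreement between the theories of the factors produced in $K$ and those of $H_1, H_2$. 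This descent is the crux of the proof.
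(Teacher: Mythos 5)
The paper does not actually prove this statement: it is imported as a black box from \cite[Theorem~10.4]{AP24}, so there is no internal proof to compare against, and your proposal must be judged on its own. It has two genuine gaps.

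First, the defining formulas do not define what you need. In $H = H_1 \times G_1 \times \cdots \times G_k$ the relation $x \diamond y = 1$ holds if and only if it holds coordinatewise, and a direct computation then shows that $\mathrm{Comp}(x, z_j)$ (for $1 \neq z_j \in G_j$) defines exactly $Z(H_1) \times G_j$, while your extra conjunct $\bigwedge_{i \neq j}(x \diamond z_i = 1)$ is already implied by $\mathrm{Comp}(x,z_j)$ and removes nothing. When $H_1$ is abelian (a case allowed by the statement) this set is $H_1 \times G_j$: it is not a domain (central elements are zero-divisors), and the sets $\phi_1(H,\bar z), \ldots, \phi_k(H,\bar z)$ have pairwise intersection $Z(H_1) \neq 1$, so the sentence $\Psi(\bar z)$ you build is false in $H$ and transfers nothing. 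The appeal to Fact~\ref{oger_fact} does not repair this: definability of the power subgroups gives no way to carve a complement of $Z(H_1)$ out of $Z(H_1) \times G_j$. Indeed the factor $1 \times G_j$ is not even canonical: whenever there is a nontrivial homomorphism $\chi : G_j \to Z(H_1)$, the map $(a,g) \mapsto (a\chi(g), g)$ (identity on the other factors) is an automorphism of $H$ carrying $1 \times G_j$ to a different complement, so no fixed formula can be expected to isolate it; a correct proof must produce \emph{some} complement in $K$ rather than the solution set of a formula copied from $H$. (By contrast, $H_1 = \{x : \bigwedge_j x \diamond z_j = 1\}$ \emph{is} definable over $\bar z$, so that half of your plan is sound.)

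Second, clause (3) is where the real work lies, and your treatment of it does not constitute a proof. Realizing the relativized type $\Theta(\bar z)$ in an $\omega$-saturated $K^* \succcurlyeq K$ decomposes $K^*$, and there is no mechanism for descending a direct-product decomposition, or even the elementary equivalence of its factors, from $K^*$ back to $K$; Feferman--Vaught goes in the wrong direction (from the theories of the factors to the theory of the product, not conversely). The workable route is to transfer, for each $\sigma \in \mathrm{Th}(H_i)$, the single sentence $\forall \bar z\,(\Psi(\bar z) \to \sigma^{\phi_i(\cdot,\bar z)})$ from $H$ to $K$; but this requires first proving that in $H$ \emph{every} tuple satisfying $\Psi$ yields factors elementarily equivalent to the $H_i$, a uniqueness statement that is exactly what is missing and where the hypotheses ``domain'' and ``abelian-by-finite'' must actually be used. (A small further slip: being a domain is an $\forall\exists$ condition, not a universal one, since $x \diamond y \neq 1$ is existential; this is harmless under full elementary equivalence but contradicts your parenthetical claim.)
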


    In the proof of the following proposition, when we refer to be a monster model of $G$, we mean a sufficiently saturated model of $\mathrm{Th}(G)$, i.e., a $\kappa$-saturated model of $\mathrm{Th}(G)$ for $\kappa$ large enough. This is standard technique in model theory, for more on the notion of monster model of a complete first-order theory see e.g. \cite[pg.~218-219]{marker}.

\begin{prop}\label{hom_direct_prod} Let $G = G_1 \times G_2$ with $G_1$ abelian-by-finite and $G_2 \in D_k$, for some $0 < k < \omega$. If $G_1$ and $G_2$ are homogeneous, then so is $G$.
\end{prop}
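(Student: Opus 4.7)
The plan is, given $u,v\in G^n$ with $\mathrm{tp}_G(u)=\mathrm{tp}_G(v)$, to decompose them componentwise as $u=u_1u_2$ and $v=v_1v_2$ with $u_i,v_i\in G_i^n$ (uniquely, since elements of $G_1$ and $G_2$ commute), and then to exhibit an automorphism of $G$ of the form $\sigma_1\times\sigma_2$ for suitable $\sigma_i\in\mathrm{Aut}(G_i)$ with $\sigma_i(u_i)=v_i$. This reduces the proposition to showing $\mathrm{tp}_{G_1}(u_1)=\mathrm{tp}_{G_1}(v_1)$ and $\mathrm{tp}_{G_2}(u_2)=\mathrm{tp}_{G_2}(v_2)$, after which the assumed homogeneity of each factor immediately yields the desired $\sigma_i$, and $\sigma_1\times\sigma_2$ settles the claim.

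To establish this transfer of types to the factors, I would first show that $G_1$ and $G_2$ are both definable in $G$, at the cost of some parameters, using the $\diamond$ and $\mathrm{Comp}$ machinery developed earlier. Writing $G_2=D_1\times\cdots\times D_k$ with each $D_j$ a domain, and choosing a nontrivial $d_j\in D_j$ for each $j$, the domain property of $D_j$ combined with $\mathrm{gp}_G(d_j)\subseteq D_j$ (valid because $D_j$ is a direct factor, hence normal in $G$) yields that $x\perp d_j$ holds in $G$ precisely when the $D_j$-component of $x$ is trivial. Hence $G_1=\{x\in G:\bigwedge_{j=1}^{k}(x\perp d_j)\}$ is definable with the parameter tuple $\bar d=(d_1,\ldots,d_k)$; a symmetric argument, exploiting the (finitely many) generators of $G_1$ through the $\mathrm{Comp}$ predicate, furnishes a parameter tuple $\bar e$ from $G_1$ over which $G_2$ is definable. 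Consequently the projections $\pi_1,\pi_2$ become definable maps with parameters $\bar d,\bar e$, so the $G_i$-components of any element of $G$ are recoverable as definable functions of that element.

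With such definable projections, the type equality of $u$ and $v$ in $G$ propagates to the factor types once one arranges matching parameters for $v$: any formula $\phi_1(x)$ in the language of $G_1$ satisfied by $u_1$ lifts, via the projection, to a formula in the language of $G$ involving the parameters $\bar d,\bar e$; this formula is satisfied by $u$, hence by $v$, and therefore $\phi_1$ is satisfied by $v_1$. Symmetrically for $\phi_2(x)$ in the language of $G_2$. Homogeneity of $G_1$ and $G_2$ then produces the automorphisms $\sigma_i$ required to complete the argument.

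The main obstacle is precisely the parameter-matching step: since the definitions of $G_1$ and $G_2$ depend on the choice of $\bar d$ and $\bar e$, transferring type equality from $G$ to the factors requires producing, from $\mathrm{tp}_G(u)=\mathrm{tp}_G(v)$, compatible tuples $(\bar d',\bar e')$ realizing over $v$ the same type that $(\bar d,\bar e)$ realizes over $u$ and witnessing the same kind of direct decomposition of $G$. This reduces to the essential uniqueness, up to an automorphism of $G$, of the splitting of $G$ into an abelian-by-finite part and a $D_k$-part, a feature that is already implicit in the proof of Fact~\ref{the_direct_dec_fact} from \cite{AP24}. Once that uniqueness is invoked, one can absorb the parameter change into an initial automorphism of $G$ and then carry out the transfer as described.
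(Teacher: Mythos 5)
There is a genuine gap, and it is already in the first sentence of your plan. You reduce the proposition to the claim that $\mathrm{tp}_G(u)=\mathrm{tp}_G(v)$ forces $\mathrm{tp}_{G_1}(u_1)=\mathrm{tp}_{G_1}(v_1)$ and $\mathrm{tp}_{G_2}(u_2)=\mathrm{tp}_{G_2}(v_2)$ for the componentwise decompositions with respect to the \emph{given} factorization, and then to produce an automorphism of the split form $\sigma_1\times\sigma_2$. This intermediate claim is false. Take $G=\mathbb{Z}\times F_2$ with $F_2=\langle a,b\rangle$ (here $\mathbb{Z}$ is abelian-by-finite and homogeneous, and $F_2$ is a homogeneous domain by Lemma \ref{standard}). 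For any homomorphism $\phi\colon F_2\to\mathbb{Z}$ the shear $(n,w)\mapsto(n+\phi(w),w)$ is an automorphism of $G$, so $u=(0,a)$ and $v=(1,a)$ are automorphic and hence have the same type in $G$; but $u_1=0$ and $v_1=1$ do not have the same type in $\mathbb{Z}$, and no automorphism of $G$ of the form $\sigma_1\times\sigma_2$ can send $u$ to $v$. So the reduction you aim for cannot be established, and the places where your argument for it breaks are exactly the two steps you flag as delicate. First, the ``symmetric argument'' defining $G_2$ over parameters from $G_1$ does not work: in an abelian (or abelian-by-finite) normal subgroup the $\diamond$-calculus degenerates (every nontrivial element of an abelian normal subgroup is a zero-divisor), so $\mathrm{Comp}(x,d_j)$ only cuts out $Z\times D_j$ for some subgroup $Z$ of $G_1$ that can be all of $G_1$; and indeed the complement $G_2$ is not even characteristic, as the shear automorphisms above show, so it is the choice of complement (not just of parameters) that is non-canonical. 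Second, the parameter-matching step requires realizing, inside $G$ itself, the complete type of $(\bar d,\bar e)$ over $u$ with $u$ replaced by $v$; from $\mathrm{tp}_G(u)=\mathrm{tp}_G(v)$ you only get this formula by formula, with possibly different witnesses (hence different decompositions of $G$) for different formulas, and there is no compactness available in $G$ to glue them. The appeal to ``essential uniqueness of the splitting'' does not repair this: the splitting is unique only up to an automorphism of $G$ that moves $v$, which returns you to the original problem.

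This is precisely why the paper's proof does not work inside $G$ at all: it passes to the monster model $\mathfrak{M}$ of $G$, applies Fact \ref{the_direct_dec_fact} there to get $\mathfrak{M}=\mathfrak{M}_1\times\mathfrak{M}_2$ with $\mathfrak{M}_i$ a monster model of $G_i$ and $G_i\preccurlyeq\mathfrak{M}_i$, realizes the type equality of $u$ and $v$ by an actual automorphism $\alpha$ of $\mathfrak{M}$, and only then argues about how $\alpha$ interacts with the factors before pulling the conclusion back to $G_1$ and $G_2$ via homogeneity. If you want to salvage your approach, the part worth keeping is the definability of $G_1$ from the parameters $\bar d$ (that step is correct); but everything after it must be rebuilt around the saturation of a monster model rather than around definable projections in $G$, and the interaction of automorphisms with the non-characteristic factor $G_2$ (the shear phenomenon above) has to be confronted rather than assumed away.
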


	\begin{proof} Let $\mathfrak{M}$ be a monster model of $G$. Then by \ref{the_direct_dec_fact} there are $\mathfrak{M}_1$ and $\mathfrak{M}_2$ such that:
	\begin{enumerate}[(1)]
	\item $\mathfrak{M} = \mathfrak{M}_1 \times \mathfrak{M}_2$;
	\item $\mathfrak{M}_2 \in D_k$;
	\item $\mathfrak{M}_1 \equiv G_1$ and $\mathfrak{M}_2 \equiv G_1$. 
\end{enumerate}	
In particular, easily $\mathfrak{M}_1$ is abelian-by-finite. Notice that w.l.o.g. we can suppose that, for $i = 1, 2$, $\mathfrak{M}_i$ is a monster model of $G_i$. Why? For $i = 1, 2$, replace $\mathfrak{M}_i$ with an elementary extension $\mathfrak{M}'_i$ that is saturated enough and embeds elementarily $G_i$. Then, for $i = 1, 2$, $\mathfrak{M}_i$ is a monster model of $G_i$ and furthermore $\mathfrak{M}_1 \times \mathfrak{M}_2 \preccurlyeq \mathfrak{M}'_1 \times \mathfrak{M}'_2$ (see e.g. \cite[Section~3.3, Exercise~7]{hodges}) and so, as $\mathfrak{M} = \mathfrak{M}_1 \times \mathfrak{M}_2$ was a monster model for $G$, so is $\mathfrak{M}' = \mathfrak{M}'_1 \times \mathfrak{M}'_2$. So, we can indeed assume that for $i = 1, 2$, $\mathfrak{M}_i$ is a monster model of $G_i$ and, renaming elements, we can also assume that $G = G_1 \times G_2 \preccurlyeq \mathfrak{M}_1 \times \mathfrak{M}_2 = \mathfrak{M}$.

\smallskip \noindent
Now clearly in $\mathfrak{M}$, having the same $\emptyset$-type (i.e., the type consisting of formulae without parameters) and being automorphic are the same equivalence relation, and so, in order to conclude that $G$ is homogeneous, it suffices to show that if $\bar{a}, \bar{b} \in G^\ell$, for some $\ell < \omega$, and there is an $\alpha \in \mathrm{Aut}(\mathfrak{M})$ which maps $\bar{a}$ to $\bar{b}$, then there is an $\beta \in \mathrm{Aut}(G)$ which maps $\bar{a}$ to $\bar{b}$. So suppose that $\bar{a}, \bar{b} \in G^\ell$ and $\alpha \in \mathrm{Aut}(\mathfrak{M})$ which maps $\bar{a}$ to $\bar{b}$ are given. Let $\bar{a} = (a_1, ..., a_\ell)$, then for every $1 \leq i \leq \ell$ there are unique $a^1_i \in G_1$ and $a^2_i \in G_2$ such that $a_i = a^1_i \cdot_G a^2_i$. Recall now that, as observed above, we have that $\mathfrak{M}_1$ is abelian-by-finite and $\mathfrak{M}_2 \in D_k$, hence necessarily we have that $\alpha(\mathfrak{M}_i) = \mathfrak{M}_i$, for $i = 1, 2$, that is $\alpha = \alpha_1 \times \alpha_2$ with $\alpha_i = \alpha \restriction \mathfrak{M}_i$. But by assumption $G_1$ and $G_2$ are homogeneous and, for $i = 1, 2$, $\mathfrak{M}_i$ is a monster model of $G_i$, hence we can find $\beta_i \in \mathrm{Aut}(G_i)$ which maps $(a^i_1, ..., a^i_\ell)$ to $(\alpha(a^i_1), ..., \alpha(a^i_\ell))$, for $i = 1, 2$. But then $\beta = \beta_1 \times \beta_2 \in \mathrm{Aut}(G)$ \mbox{is such that $\beta(\bar{a}) = \bar{b}$.}
	\end{proof}
	
\begin{co}\label{final_co} Let $W_1$ be an affine or spherical (that is, finite) Coxeter group and let $W_2$ be a finite product of non-elementary irreducible hyperbolic Coxeter groups. If $W_2$ is homogeneous then $W = W_1 \times W_2$ is homogeneous.
\end{co}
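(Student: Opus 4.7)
The plan is to apply Proposition~\ref{hom_direct_prod} with $G_1 := W_1$ and $G_2 := W_2$, so I need to verify its three hypotheses: $W_1$ is abelian-by-finite, $W_2 \in D_k$ for some $k$, and both $W_1$ and $W_2$ are homogeneous.

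First I would dispatch $W_1$. If $W_1$ is spherical then it is finite, hence trivially abelian-by-finite and homogeneous; if $W_1$ is affine then it is virtually abelian by definition, hence abelian-by-finite, and it is homogeneous by Theorem~\ref{main_affine_homogeneous}. Next, writing $W_2 = H_1 \times \cdots \times H_k$ with each $H_i$ a non-elementary irreducible hyperbolic Coxeter group, homogeneity of $W_2$ is assumed, so the only nontrivial step is to show that each $H_i$ is a domain, so that $W_2 \in D_k$ in the sense of Definition~\ref{Dk_notation}. By Lemma~\ref{standard}, it suffices to verify that each $H_i$ has no non-trivial finite normal subgroup.

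The main (and really the only) obstacle is this last point. I would argue as follows: $H_i$ acts geometrically on its Davis complex $X$, a CAT(0) hyperbolic space, and every finite subgroup of $H_i$ fixes some point of $X$ by the barycenter argument already used in the proof of Lemma~\ref{finitely_many}. A finite normal subgroup $N \triangleleft H_i$ therefore fixes a non-empty closed convex subset $\mathrm{Fix}_X(N) \subseteq X$, which is automatically $H_i$-invariant by normality of $N$. Since $H_i$ is non-elementary hyperbolic and acts geometrically, no proper closed convex subset can be $H_i$-invariant (the limit set on the Gromov boundary is the whole boundary), forcing $\mathrm{Fix}_X(N) = X$ and hence $N = \{1\}$ by properness of the action. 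Alternatively, one could just invoke the classical fact that irreducible infinite Coxeter groups have trivial finite radical.

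Once this is established, each $H_i$ is a domain by Lemma~\ref{standard}, so $W_2 \in D_k$, and Proposition~\ref{hom_direct_prod} delivers the homogeneity of $W = W_1 \times W_2$ immediately. Everything outside the finite-normal-subgroup verification is bookkeeping over facts already proven earlier in the paper.
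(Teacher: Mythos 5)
Your proposal is correct and follows exactly the paper's route: verify the hypotheses of Proposition~\ref{hom_direct_prod} using Theorem~\ref{main_affine_homogeneous} for $W_1$ and Lemma~\ref{standard} together with Definition~\ref{Dk_notation} for $W_2$. The only difference is that you explicitly justify the absence of non-trivial finite normal subgroups in each irreducible non-elementary hyperbolic factor (a point the paper leaves implicit), and your justification is sound.
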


\begin{proof} Clearly $W_1$ is abelian-by-finite and by Lemma \ref{standard} and Definition \ref{Dk_notation} $W_2$ belongs to $D_k$. Moreover, by Theorem~\ref{theorem1}, $W_1$ is homogeneous. Hence $W_1\times W_2$ is homogeneous by Proposition \ref{hom_direct_prod}.
\end{proof}

As explained in the introduction, recall that the direct product of two homogeneous hyperbolic groups is not homogeneous in general. The example given in the introduction, namely $F_3\times F_2$, is based on the fact that $F_3$ is not strictly minimal, so we ask the following question.

\begin{qu}Let $G_1$ and $G_2$ be homogeneous strictly minimal hyperbolic groups. Is $G_1\times G_2$ homogeneous?
\end{qu}

Recall that we proved in the previous section that torsion-generated hyperbolic groups are strictly minimal, so the following question (which is a particular case of the question above) is natural.

\begin{qu}Let $G_1$ and $G_2$ be homogeneous torsion-generated hyperbolic groups. Is $G_1\times G_2$ homogeneous?
\end{qu}

Recall that a hyperbolic group is called \emph{rigid} if it is not virtually cyclic (finite or infinite) and if it does not split non-trivially as an amalgamated product or as an HNN extension over a virtually cyclic group (finite or infinite). We conclude this paper with the following result, which gives a partial answer to the first question above (indeed, it is not difficult to prove that rigid hyperbolic groups are homogeneous and strictly minimal).

\begin{prop}\label{prod_rig_hyp}
Let $G_1$ and $G_2$ be rigid hyperbolic groups. Suppose that each non-trivial finite subgroup of $G_1$ and $G_2$ has virtually cyclic (possibly finite) normalizer. Then $G_1\times G_2$ is homogeneous. Moreover, the result remains true for any finite number of such direct factors. 
\end{prop}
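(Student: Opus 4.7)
The plan is to adapt the monster-model argument of Proposition \ref{hom_direct_prod} to a product of rigid hyperbolic groups, treating the case $n=2$ (the general case being identical).

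First, I would verify that each $G_i$ is a domain. Since $G_i$ is rigid it is not virtually cyclic, hence non-elementary; moreover the normalizer hypothesis precludes any non-trivial finite normal subgroup $F$ of $G_i$, since then $N_{G_i}(F)=G_i$ would be virtually cyclic. Lemma \ref{standard} then applies, so $G = G_1\times G_2$ lies in $D_2$. Second, each $G_i$ is homogeneous: this is classical for rigid hyperbolic groups via the Sela--Paulin theorem in the torsion-free case (\cite{Sel97,Sel09,Pau97}) and its extensions to the torsion case by Moioli \cite{Moi13} and Reinfeldt--Weidmann \cite{RW14}, the normalizer hypothesis being what controls the action on finite subgroups, combined with Lemma 3.5 of \cite{OH11}.

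Next, I would invoke a Krull--Schmidt-style uniqueness for products of centerless indecomposable groups (each $G_i$ being one-ended hyperbolic with trivial center) together with an analogue of Fact \ref{the_direct_dec_fact} adapted to products of non-abelian domains to obtain, for the monster model $\mathfrak{M}$ of $G$, a decomposition $\mathfrak{M} = \mathfrak{M}_1\times \mathfrak{M}_2$ with each $\mathfrak{M}_i$ a monster model of $G_i$, and such that every automorphism $\alpha$ of $\mathfrak{M}$ induces a permutation $\sigma$ of $\{1,2\}$ with $\alpha(\mathfrak{M}_i)=\mathfrak{M}_{\sigma(i)}$.

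Given $\bar{a},\bar{b}\in G^\ell$ with the same type in $G$, I would fix $\alpha\in\mathrm{Aut}(\mathfrak{M})$ sending $\bar a$ to $\bar b$. Writing each element componentwise as $a_k=a_k^1 a_k^2$ and $b_k=b_k^1 b_k^2$, the restriction $\alpha|_{\mathfrak{M}_i}\colon\mathfrak{M}_i\to \mathfrak{M}_{\sigma(i)}$ sends $\bar a^i := (a_1^i,\dots,a_\ell^i)\in G_i^\ell$ to $\bar b^{\sigma(i)}\in G_{\sigma(i)}^\ell$, so in particular $G_i\equiv G_{\sigma(i)}$. Upgrading this to $G_i\simeq G_{\sigma(i)}$ via first-order rigidity of rigid hyperbolic groups, homogeneity of each factor then furnishes isomorphisms $\beta_i\colon G_i\to G_{\sigma(i)}$ with $\beta_i(\bar a^i)=\bar b^{\sigma(i)}$; their product $\beta_1\times\beta_2\in\mathrm{Aut}(G)$ sends $\bar a$ to $\bar b$, giving homogeneity. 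The general case of $n$ factors is handled in exactly the same way.

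The main obstacle will be the upgrade from $G_i\equiv G_{\sigma(i)}$ to $G_i\simeq G_{\sigma(i)}$ when $\sigma$ is non-trivial: without this, an automorphism of $\mathfrak{M}$ that swaps elementarily equivalent but non-isomorphic factors would not descend to $G$, and homogeneity could fail. In the torsion-free setting this upgrade is due to Sela \cite{Sel09}, and the corresponding result in the presence of torsion under the normalizer hypothesis is precisely what makes the proposition go through.
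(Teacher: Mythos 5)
Your proposal takes a genuinely different route from the paper, and as written it has two real gaps.

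First, the decomposition of the monster model. Fact \ref{the_direct_dec_fact} is stated for $H = H_1 \times H_2$ with $H_1$ \emph{abelian-by-finite} and $H_2 \in D_k$; for $G_1 \times G_2$ with both factors non-abelian domains it does not apply, and the paper never proves the ``analogue adapted to products of non-abelian domains'' that you invoke. Such a statement may well be extractable from \cite{alg_geom_over_groups3}, but you would have to establish it (including that the factors $\mathfrak{M}_i$ of the monster model are again domains, so that Krull--Schmidt-type uniqueness applies and every automorphism permutes them); as it stands this is an unproved black box at the foundation of your argument.

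Second, and more seriously, the step you yourself flag as ``the main obstacle'' is exactly where the danger lies, and you do not resolve it. When the automorphism $\alpha$ of $\mathfrak{M}$ swaps $\mathfrak{M}_1$ and $\mathfrak{M}_2$, you need an actual isomorphism $G_1 \to G_2$ carrying $\bar a^1$ to $\bar b^2$; this is precisely the point at which the $F(a,b,c)\times F(a',b')$ example from the introduction breaks (the monster-model automorphism swaps elementarily equivalent factors that are not isomorphic). You appeal to ``first-order rigidity of rigid hyperbolic groups'' in the torsion case, but the paper neither proves nor cites such a result (and it explicitly notes that the relevant quantifier elimination in the presence of torsion is open). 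Moreover, even granting $G_1 \simeq G_2$, homogeneity of $G_1$ only gives automorphisms of $G_1$, so producing the isomorphism $\beta_1 : G_1 \to G_2$ with $\beta_1(\bar a^1) = \bar b^2$ requires an additional (doable but unwritten) argument transporting types along a fixed isomorphism.

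The paper's proof avoids both issues by never leaving $G$: using the normalizer hypothesis it shows that an element of $G$ lies in $G_1 \cup G_2$ if and only if its centralizer is not virtually abelian, and it writes down a single first-order formula $\theta(u)$ — built from a finite presentation, this centralizer criterion, and the finite test sets $F_{i,j}$ supplied by the shortening argument (Theorem \ref{shortening}) — whose satisfaction at $u'$ directly yields an endomorphism $\varphi$ of $G$ with $\varphi(u) = u'$ that maps each $G_i$ injectively into some $G_{\sigma(i)}$. Co-Hopficity of the rigid factors then makes $\varphi$ an automorphism, and in the swapping case the isomorphism $G_1 \simeq G_2$ comes for free from $\varphi$ itself rather than from an abstract elementary equivalence. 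If you want to salvage your monster-model strategy, you would need to supply the $D_k$ decomposition result for both-non-abelian products and to prove (for instance by the paper's own shortening-plus-co-Hopf mechanism) that elementarily equivalent rigid hyperbolic groups with torsion are isomorphic.
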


\begin{rk}
For example, we can take for $G_1$ and $G_2$ two hyperbolic Coxeter triangle groups. 
\end{rk}

\begin{proof}We will prove the result for two direct factors, and the proof can be easily extended to any finite number of direct factors.

\smallskip \noindent
According to Theorem \ref{shortening} (where we take for $H$ the trivial subgroup), for every $i,j\in\lbrace 1,2\rbrace$, there exists a finite subset $F_{i,j}\subset G_i\setminus \lbrace 1\rbrace$ with the property that any morphism $\varphi : G_i\rightarrow G_j$ such that $\ker(\varphi)\cap F_{i,j}=\varnothing$ is injective. Let $u,u'\in G^{\ell}$ (with $\ell\geq 1)$ such that $\mathrm{tp}(u)=\mathrm{tp}(u')$. For simplicity of notation, assume that $\ell=1$ (the case where $\ell\geq 2$ works in the same way). 

\smallskip \noindent
Let us start with the following preliminary observation: for any $g\in G$, the centralizer $C_G(g)$ is not virtually abelian if and only if $g$ belongs to $G_1$ or $G_2$. Indeed, writing $g=g_1g_2$ with $g_1\in G_1$ and $g_2\in G_2$, one easily sees that $C_G(g)=C_{G_1}(g_1)\times C_{G_2}(g_2)$. Note that if $g_i\neq 1$ then $C_{G_i}(g_i)$ is virtually cyclic (indeed, since $G_i$ is hyperbolic, the centralizer of any element of $G_i$ of infinite order is virtually cyclic infinite, and by assumption the centralizer of any non-trivial element of $G_i$ of finite order is virtually cyclic (finite or infinite)). Therefore, $C_G(g)$ is virtually abelian if and only if $g_1\neq 1$ and $g_2\neq 1$ if and only if $g$ does not belong to $G_1$ or $G_2$

\smallskip \noindent
Note that $u$ can be written in a unique way as $u=u_1u_2$ with $u_1\in G_1$ and $u_2\in G_2$. Moreover, if $u_1\neq 1$ and $u_2\neq 1$ then it is not difficult to see that the (unordered) pair $\lbrace u_1,u_2\rbrace$ can be characterized as follows: if $u=g_1g_2$ with $g_1,g_2\in G$ such that $C_G(g_1)$ is not virtually abelian and $C_G(g_2)$ is not virtually abelian, then $\lbrace g_1,g_2\rbrace=\lbrace u_{1},u_{2}\rbrace$. 

\smallskip \noindent
Write $u'=u'_1u'_2$ with $u'_1\in G_1$ and $u'_2\in G_2$. Let $S_1, S_2$ be finite generating sets for $G_1,G_2$. Define $S=S_1\cup S_2$ and let $G=\langle S \ \vert \ R\rangle$ be a finite presentation. Furthermore, we can assume that, for every $i\in\lbrace 1,2\rbrace$ and for every $s,s'\in S_i$, $s$ and $s'$ do not commute: indeed, if two generators $s,s'\in S_i$ commute, pick an element $g\in G_i$ that does not commute with any of the elements of $S_i$, and replace $S_i$ with $(S_1\setminus \lbrace s\rbrace)\cup \lbrace g,gs\rbrace$ (it is possible to find such an element $g$ because $G_i$ has no non-trivial normal finite subgroup, and so there exists a sequence $(g_n)_{n\in \mathbb{N}}\in G_i^{\mathbb{N}}$ of elements of infinite order such that, for every $n\in\mathbb{N}$, the maximal virtually cyclic subgroup of $G_i$ containing $g_n$ is equal to $\langle g_n\rangle$, and for every $n,m\in\mathbb{N}$ with $n\neq m$ the intersection of $\langle g_n\rangle$ and $\langle g_m\rangle$ is trivial). Observe that $\varphi=\mathrm{id}_G$ satisfies $\varphi(u)=u$ and the following conditions, for every $i,j\in\lbrace 1,2\rbrace$:
\begin{enumerate}[(1)]
    \item $\ker(\varphi)\cap F_{i,j}=\varnothing$;
    \item for every $s\in S_i$, 
    $\varphi(s)\neq 1$ and the centralizer of $\varphi(s)$ in $G$ is not virtually abelian;
    \item for every $s,s'\in S_i$, $\varphi(s)$ and $\varphi(s')$ do not commute;
    \item if $u_i\neq 1$ then the centralizer of $\varphi(u_i)$ in $G$ is not virtually abelian.
\end{enumerate}
This statement is expressible using a first-order formula $\theta(u)$ (the key point being that, given a finite presentation $\langle s_1,\ldots ,s_n \ \vert \ R(s_1,\ldots,s_n)=1\rangle$ for $G$, $\mathrm{Hom}(G,G)$ is in one-to-one correspondence with the solutions $(x_1,\ldots,x_n)$ in $G^n$ to the system of equations $R(x_1,\ldots,x_n)=1$). Hence, since $u$ and $u'$ have the same type by assumption, $G\models\theta(u')$ and thus there is an endomorphism $\varphi$ of $G$ such that $\varphi(u)=u'$ and the four conditions above hold.

\smallskip \noindent
\underline{\textbf{First case}}: suppose that $u_1\neq 1$ and $u_2\neq 1$. Then, for every $i\in\lbrace 1,2\rbrace$, the centralizer of $\varphi(u_i)\neq 1$ in $G$ is not virtually abelian (by the fourth point). Note also that $u'=u'_1u'_2$ with $u'_1\neq 1$ and $u'_2\neq 1$ (otherwise $C_G(u')$ would not be virtually abelian, contradicting the fact that $C_G(u)$ is virtually abelian and that $u$ and $u'$ have the same type in $G$). From the preliminary observation, as $u'=u'_1u'_2=\varphi(u_1)\varphi(u_2)$, it follows that $\lbrace u'_1,u'_2\rbrace=\lbrace \varphi(u_1),\varphi(u_2)\rbrace$. 

\smallskip \noindent
From the preliminary observation and from the second point above, for every $s\in S_1$, $\varphi(s)$ is contained in $G_1$ or $G_2$. Moreover, by the third point, if $s$ and $s'$ belong to $S_1$ then $\varphi(s),\varphi(s')$ do not commute. Therefore, $\varphi(G_1)$ is contained in $G_1$ or $G_2$. For the same reason, $\varphi(G_2)$ is contained in $G_1$ or $G_2$.
Moreover, as $\lbrace u'_1,u'_2\rbrace=\lbrace \varphi(u_1),\varphi(u_2)\rbrace$, we must have $\varphi(G_i)\subset G_{\sigma(i)}$ where $\sigma$ is a bijection of $\lbrace 1,2\rbrace$.

\smallskip \noindent
Furthermore, note that $\varphi_{\vert G_i}$ is injective since, by the first point, $\ker(\varphi)\cap F_{i,j}=\varnothing$ (for every $i,j\in\lbrace 1,2\rbrace$). 

\smallskip \noindent
\emph{First subcase}: if $\sigma$ is the identity of $ \lbrace 1,2\rbrace$ then $\varphi_{\vert G_i}$ is an automorphism of $G_i$ (indeed, as a one-ended hyperbolic group, $G_i$ is co-Hopfian). Hence $\varphi$ is an automorphism of $G$.

\smallskip \noindent
\emph{Second subcase}: if $\sigma(1)=2$ and $\sigma(2)=1$ then $G_1$ embeds into $G_2$ and $G_2$ embeds into $G_1$, so $G_1$ and $G_2$ are isomorphic (still by the co-Hopf property) and we conclude as in the first subcase.


\smallskip \noindent
\underline{\textbf{Second case}}: suppose that $u_1=1$ or $u_2=1$. Without loss of generality, suppose that $u_2=1$, or equivalently that $u$ belongs to $G_1$. Then $u'_1=1$ or $u'_2=1$. 

\smallskip \noindent 
\underline{\emph{First subcase}}: if $u'_2=1$ then, arguing as the first case, we get an endomorphism $\varphi$ of $G$ such that $\varphi_{\vert G_1}$ is an automorphism from $G_1$ to $G_1$ mapping $u$ to $u'$, and we can extend it to an automorphism of $G$ that is the identity map on $G_2$ and that maps $u$ to $u'$.

\smallskip \noindent
\underline{\emph{Second subcase}}: if $u'_1=1$ then, arguing as in the first case, we get an endomorphism $\varphi$ of $G$ such that $\varphi_{\vert G_1}$ is an injection from $G_1$ into $G_2$ mapping $u$ to $u'$. However, we cannot immediately conclude that $\varphi_{\vert G_1}$ is an isomorphism between $G_1$ and $G_2$. However, in the same way, we prove that there is an endomorphism $\varphi'$ of $G$ such that $\varphi'(u')=u$ and that injectively maps $G_2$ to $G_1$. The morphism $\varphi\circ \varphi'$ maps $G_2$ injectively into $G_2$, therefore $\varphi_{\vert G_1}\circ \varphi'_{\vert G_2}$ is an automorphism of $G_2$ (by the co-Hopf property), hence $\varphi_{\vert G_1}: G_1\rightarrow G_2$ is surjective and thus bijective, and we conclude as above.\end{proof}

Proposition~\ref{prod_rig_hyp} allows us to derive the following corollary.

\begin{co}\label{cor_triangle}
Any direct product of finitely many triangle groups is homogeneous.   
\end{co}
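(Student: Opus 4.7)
The plan is to split the product of triangle groups according to the three geometric types---spherical (finite), affine (Euclidean), and hyperbolic---and then glue the resulting blocks together using Proposition \ref{hom_direct_prod}, after treating the ``affine part'' with Theorem \ref{main_affine_homogeneous} and the ``hyperbolic part'' with Proposition \ref{prod_rig_hyp}.

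Concretely, let $G = T_1 \times \cdots \times T_n$ be a direct product of triangle groups $T_i = \Delta(p_i,q_i,r_i)$. First I would separate the factors into a spherical/affine block $G_1$ (collecting the $T_i$ with $1/p_i + 1/q_i + 1/r_i \geq 1$) and a hyperbolic block $G_2$ (collecting the $T_i$ with $1/p_i + 1/q_i + 1/r_i < 1$). Each spherical triangle group is finite and each affine triangle group is, up to isomorphism, one of $\tilde{A}_2, \tilde{B}_2, \tilde{G}_2$; thus $G_1$ is a finite product of finite Coxeter groups and irreducible affine Coxeter groups, hence is itself an affine Coxeter group in the sense used in Section \ref{homogeneity_crystallographic}. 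In particular $G_1$ is (virtually) abelian-by-finite, and Theorem \ref{main_affine_homogeneous} gives that $G_1$ is $\mathrm{AE}$-homogeneous, hence homogeneous.

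For the hyperbolic block $G_2$, each hyperbolic triangle group is a non-elementary one-ended hyperbolic Coxeter group that is rigid (its underlying orbifold is a genus $0$ orbifold with three cone points, so it admits no essential non-trivial splitting over a virtually cyclic subgroup) and in which every non-trivial finite subgroup has finite (in particular virtually cyclic) normalizer (as is noted in the remark following Proposition \ref{prod_rig_hyp}, and as can be checked directly since a maximal finite subgroup stabilizes a unique point of $\mathbb{H}^2$ and is self-normalizing, while the normalizer of a non-maximal finite subgroup is contained in the normalizer of the maximal one containing it). Hence Proposition \ref{prod_rig_hyp} applies and shows that $G_2$ is homogeneous. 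Moreover, each such hyperbolic triangle group is centerless and has no non-trivial finite normal subgroup, so by Lemma \ref{standard} it is a domain; thus $G_2 \in D_{k}$ with $k$ equal to the number of hyperbolic factors.

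To conclude, I would apply Proposition \ref{hom_direct_prod} to $G = G_1 \times G_2$ (with the obvious conventions if one of $G_1, G_2$ is trivial, in which case no gluing is needed). Since $G_1$ is abelian-by-finite and homogeneous, and $G_2 \in D_k$ is homogeneous, that proposition delivers the homogeneity of $G$. The only point that requires a touch of care is the verification that hyperbolic triangle groups satisfy the two hypotheses of Proposition \ref{prod_rig_hyp}, i.e.\ rigidity and the normalizer condition on finite subgroups; everything else is a direct assembly of results already proved in the paper, so I do not expect any serious obstacle.
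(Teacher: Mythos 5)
Your proposal is correct and follows essentially the same route as the paper: split off the spherical/affine factors (handled by Theorem \ref{main_affine_homogeneous}) from the hyperbolic ones (handled by Proposition \ref{prod_rig_hyp}), and glue via Proposition \ref{hom_direct_prod} --- the paper just packages this last step as Corollary \ref{final_co}. One small inaccuracy that does not affect the argument: the normalizer of a reflection subgroup in a hyperbolic triangle group is infinite virtually cyclic (it preserves the fixed geodesic), not finite, but ``virtually cyclic (possibly finite)'' is all that Proposition \ref{prod_rig_hyp} requires.
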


\begin{proof}
By Proposition~\ref{prod_rig_hyp}, any direct product of finitely many hyperbolic triangle groups is homogeneous. Moreover, any direct product of finitely many finite or irreducible affine triangle groups is homogeneous according to Theorem \ref{main_affine_homogeneous} (indeed, such a direct product is an affine Coxeter group). We conclude using Corollary \ref{final_co}.\end{proof}




\renewcommand{\refname}{Bibliography}
\bibliographystyle{alpha}
\bibliography{biblio}

\vspace{2cm}

\setlength{\parindent}{0pt}
Simon Andr{\' e}
\\
Sorbonne Universit{\' e} and Universit{\' e} Paris Cit{\' e}
\\
CNRS, Institut de mathématiques de Jussieu - Paris Rive Gauche
\\
F-75005 Paris, France.
\\
Email address: \href{mailto:simon.andre@imj-prg.fr}{simon.andre@imj-prg.fr}

\bigskip

Gianluca Paolini
\\
Department of Mathematics ``Giuseppe Peano'', University of Torino
\\
Via Carlo Alberto 10, 10123, Italy.
\\
Email address: \href{mailto:gianluca.paolini@unito.it}{gianluca.paolini@unito.it}

\end{document}